\newcommand{\vectornorm}[1]{\|#1\|}
\newcommand{\vectornormbig}[1]{\big\|#1\big\|}
\newcommand{\obs}{\boldsymbol{y}}
\newcommand{\sensing}{\boldsymbol{\mathcal{A}}}
\newcommand{\signal}{\boldsymbol{X}}
\newcommand{\bestsignal}{\boldsymbol{X}^\ast}
\newcommand{\xtrue}{\bestsignal}
\newcommand{\noise}{\boldsymbol{\varepsilon}}
\newcommand{\dimension}{m \times n}
\newcommand{\numsam}{p}
\newcommand{\sparsity}{s}
\newcommand{\id}{\mathbf{I}}
\newcommand{\rank}{k}
\DeclareMathOperator*{\argmin}{arg\,min}
\author{\IEEEauthorblockN{Anastasios Kyrillidis \and Volkan Cevher \\}}
\begin{document}
\title{Matrix Recipes for \\ Hard Thresholding Methods}

\author{Anastasios Kyrillidis \and Volkan Cevher}

\institute{A. Kyrillidis \at
              Laboratory for Information and Inference Systems, Ecole Polytechnique Federale de Lausanne \\
              Tel.: +41 21 69 31154\\
              \email{anastasios.kyrillidis@epfl.ch}
           \and
           V. Cevher \at
              Laboratory for Information and Inference Systems, Ecole Polytechnique Federale de Lausanne \\
              Tel.: +41 21 69 31101\\
              \email{volkan.cevher@epfl.ch}
}

\date{Received: date / Accepted: date}

\maketitle

\begin{abstract}
In this paper, we present and analyze a new set of low-rank recovery algorithms for linear inverse problems within the class of hard thresholding methods. We provide strategies on how to set up these algorithms via basic ingredients for different configurations to achieve complexity vs. accuracy tradeoffs. Moreover, we study acceleration schemes via memory-based techniques and randomized, $\epsilon$-approximate matrix projections to decrease the computational costs in the recovery process. For most of the configurations, we present theoretical analysis that guarantees convergence under mild problem conditions.  Simulation results demonstrate notable performance improvements as compared to state-of-the-art algorithms both in terms of reconstruction accuracy and computational complexity.

\keywords{Affine rank minimization \and hard thresholding \and $ \epsilon $-approximation schemes \and randomized algorithms.}
\end{abstract}

\section{Introduction}

In this work, we consider the general affine rank minimization (ARM) problem, described as follows:
\vskip.04in
\noindent \textsc{The ARM Problem:} {\it Assume $ \bestsignal \in  \mathbb{R}^{\dimension} $ is a rank-$\rank$ matrix of interest ($ \rank \ll \min\lbrace m, n \rbrace $) and let $ \sensing: \mathbb{R}^{\dimension} \rightarrow \mathbb{R}^\numsam $ be a known linear operator. Given a set of observations as $ \obs = \sensing \bestsignal + \noise \in \mathbb{R}^{\numsam}$, we desire to recover $\bestsignal $from $\obs$ in a scalable and robust manner.}
\vskip.04in
The challenge in this problem is to recover the true low-rank matrix in subsampled settings where $ \numsam \ll m \cdot n $. In such cases, we typically exploit the prior information that $\bestsignal $ is low-rank and thus, we are interested in finding a matrix $ \signal $ of rank at most $\rank$ that minimizes the data error $ f(\signal) :=  \vectornorm{\obs - \sensing \signal}_{2}^2 $ as follows:
\begin{equation}
	\begin{aligned}
	& \underset{\signal \in \mathbb{R}^{\dimension}}{\text{minimize}}
	& & f(\signal) \\
	& \text{subject to}
	& & \text{rank}(\signal) \leq \rank.
	\end{aligned} \label{opt:05}
\end{equation} The ARM problem appears in many applications; low dimensional embedding \cite{baraniuk2010low}, matrix completion \cite{candès2009exact}, image compression \cite{SVP}, function learning \cite{TyagiCevherRidge, TyagiCevherRidgeII} just to name a few. We present below important ARM problem cases, as characterized by the nature of the linear operator $\sensing$.

\textbf{General linear maps:} In many ARM problem cases, $\sensing$ or $\sensing^{\ast}$ has a dense range, satisfying specific incoherence or restricted isometry properties (discussed later in the paper); here, $\sensing^{\ast}$ is the adjoint operator of $\sensing$. In Quantum Tomography, \cite{liuuniversal} studies the Pauli operator, a {\it compressive} linear map $\sensing$ that consists of the kronecker product of $2 \times 2$ matrices and obeys restricted isometry properties, defined later in the paper. Furthermore, recent developments indicate connections of ridge function learning \cite{TyagiCevherRidge, hemant2012active} and phase retrieval \cite{candes2012solving} with the ARM problem where $\sensing$ is a Bernoulli and a Fourier operator, respectively. 

\textbf{Matrix Completion (MC):} Let $ \Omega $ be the set of ordered pairs that represent the coordinates of the observable entries in $\bestsignal$. Then, the set of observations satisfy $\obs = \sensing_{\Omega} \bestsignal + \noise$ where $\sensing_{\Omega} $ defines a linear mask over the observable entries $\Omega $. To solve the MC problem, a potential criterion is given by (\ref{opt:05}) \cite{candès2009exact}. As a motivating example, consider the famous Netflix problem \cite{netflix}, a recommender system problem where users' movie preferences are inferred by a limited subset of entries in a database. 

\textbf{Principal Component Analysis:} In Principal Component Analysis (PCA), we are interested in identifying a low rank subspace that best explains the data in the Euclidean sense from the observations $\obs = \sensing \bestsignal $ where $\sensing: \mathbb{R}^{\dimension} \rightarrow \mathbb{R}^p$ is an identity linear map that stacks the columns of the matrix $\bestsignal$ into a 
single column vector with $p = m \cdot n$. We observe that the PCA problem falls under the ARM criterion in (\ref{opt:05}). While (\ref{opt:05}) is generally NP-hard to solve optimally, PCA can be solved in polynomial time using the truncated Singular Value Decomposition (SVD) of $\sensing^{\ast} \obs$. As an extension to the PCA setting, \cite{candes2011robust} considers the Robust PCA problem where $\boldsymbol{y}$ is further corrupted by gross sparse noise. We extend the framework proposed in this paper for the RPCA case and its generalizations in \cite{KyrillidisCevherSSP}.

For the rest of the paper, we consider only the low rank estimation case in (\ref{opt:05}). As running test cases to support our claims, we consider the MC setting as well as the general ARM setting where $\sensing$ is constituted by permuted subsampled noiselets \cite{sparcs}. 

\subsection{Two camps of recovery algorithms}

~~~~\textbf{Convex relaxations:} In \cite{brecht2010}, the authors study the nuclear norm $ \vectornorm{\signal}_{\ast} $ $:= \sum_{i = 1}^{\text{rank}(\signal)} \sigma_{i} $ as a convex surrogate of $ \text{rank}(\signal) $ operator so that we can leverage convex optimization approaches, such as interior-point methods---here, $\sigma_i$ denotes the $i$-th singular value of $\signal$. Under basic incoherence properties of the sensing linear mapping $ \sensing $, \cite{brecht2010} provides provable guarantees for unique low rank matrix recovery using the nuclear norm.

Once (\ref{opt:05}) is relaxed to a convex problem, decades of knowledge on convex analysis and optimization can be leveraged. Interior point methods find a solution with fixed precision in polynomial time but their complexity might be prohibitive even for moderate-sized problems \cite{Liu2009, Fazel2010}. More suitable for large-scale data analysis, first-order methods constitute low-complexity alternatives but most of them introduce complexity vs. accuracy tradeoffs \cite{SVT, ParallelRecht, ALM, APG}.

\vskip.04in
\textbf{Non-convex approaches:} In contrast to the convex relaxation approaches, iterative greedy algorithms maintain the nonconvex nature of (\ref{opt:05}).
Unfortunately, solving (\ref{opt:05}) optimally is in general NP-hard \cite{natarajan1995sparse}. Due to this computational intractability, the algorithms in this class greedily refine a rank-$\rank$ solution using only ``local'' information available at the current iteration \cite{admira2010, Goldfarb:2011, beck2011linearly}.

\subsection{Contributions}

In this work, we study a special class of iterative greedy algorithms known as hard thresholding methods. Similar results have been derived for the vector case \cite{KyrillidisCevherRecipes}. Note that the transition from sparse vector approximation to ARM is {\it non-trivial}; while $ \sparsity $-sparse signals ``live'' in the union of finite number of subspaces, the set of rank-$ \rank $ matrices expands to infinitely many subspaces. Thus, the selection rules do not generalize in a straightforward way.

Our contributions are the following:

\textbf{Ingredients of hard thresholding methods:} We analyze the behaviour and performance of hard thresholding methods from a global perspective. Five building blocks are studied: $ i) $ step size selection $ \mu_i $, $ ii) $ gradient or least-squares updates over restricted low-rank subspaces (e.g., adaptive block coordinate descent), $ iii) $ memory exploitation, $iv) $ active low-rank subspace tracking and, $v)$ low-rank matrix approximations (described next). We highlight the impact of these key pieces on the convergence rate and signal reconstruction performance and provide optimal and/or efficient strategies on how to set up these ingredients under different problem conditions. 

\textbf{Low-rank matrix approximations in hard thresholding methods:} 
In \cite{clash}, the authors show that the solution efficiency can be significantly improved by $\epsilon$-approximation algorithms. Based on similar ideas, we analyze the impact of $\epsilon$-approximate low rank-revealing schemes in the proposed algorithms with well-characterized time and space co- mplexities. Moreover, we provide extensive analysis to prove convergence using $\epsilon$-approximate low-rank projections.

\textbf{Hard thresholding-based framework with improved convergence conditions:} We study hard thresholding variants that provide salient computational tradeoffs for the class of greedy methods on low-rank matrix recovery. These methods, as they iterate, exploit the non-convex scaffold of low rank subspaces on which the approximation problem resides. Using simple analysis tools, we derive improved conditions that guarantee convergence, compared to state-of-the-art approaches. 

The organization of the paper is as follows. In Section \ref{sec:prel}, we set up the notation and provide some definitions and properties, essential for the rest of the paper. In Section \ref{sec:ALPS}, we describe the basic algorithmic frameworks in a nutshell, while in Section {\ref{section:ingredients}} we provide important ``ingredients'' for the class of hard-thresholding methods; detailed convergence analysis proofs are provided in Section \ref{section:convergence}. The complexity analysis of the proposed algorithms is provided in Section \ref{section:complexity}. We study two acceleration schemes in Sections \ref{sec:memory} and \ref{section:approximate}, based on memory utilization and $\epsilon$-approximate low-rank projections, respectively. We further improve convergence speed by exploiting randomized low rank projections in Section 9, based on power iteration-based subspace finder tools \cite{findingstructure}. We provide empirical support for our claims through experimental results on synthetic and real data in Section \ref{section:experiments}. Finally, we conclude with future work directions in Section \ref{sec: conc}.

\section{Elementary Definitions and Properties}{\label{sec:prel}}

We reserve lower-case and bold lower-case letters for scalar and vector variable representation, respectively. Bold upper-case letters denote matrices while bold calligraphic upper-case letters represent linear operators. We use calligraphic upper-case letters for set representations. We use $ \signal(i) $ to represent the matrix estimate at the $i$-th iteration. 

The rank of $ \signal $ is denoted as $ \text{rank}(\signal) \leq \min\lbrace m, n \rbrace $. The empirical data error is denoted as $f(\signal) := \vectornorm{\obs - \sensing \signal}_2^2 $ with gradient $ \nabla f(\signal) := -2 \sensing^\ast(\obs - \sensing \signal)$,  where $^\ast $ is the adjoint operation over the linear mapping $ \sensing $. The inner product between matrices $ \boldsymbol{A},~\boldsymbol{B} \in \mathbb{R}^{\dimension} $ is denoted as $ \left\langle \boldsymbol{A}, \boldsymbol{B} \right\rangle = \text{trace}(\boldsymbol{B}^T\boldsymbol{A})$, where $ ^T $ represents the transpose operation. $ \mathbf{I} $ represents an identity matrix with dimensions apparent from the context. 

Let $\mathcal{S} $ be a set of orthonormal, rank-1 matrices that span an arbitrary subspace in $\mathbb{R}^{\dimension}$. We reserve $ \text{span}(\mathcal{S}) $ to denote the subspace spanned by $\mathcal{S}$. With slight abuse of notation, we use:
\begin{align}
\text{rank}(\text{span}(\mathcal{S})) \equiv \max_{\boldsymbol{X}} \left \{ \text{rank}(\boldsymbol{X}):~\boldsymbol{X} \in \text{span}(\mathcal{S}) \right \},
\end{align} to denote the {\it maximum} rank a matrix $\boldsymbol{X} \in \mathbb{R}^{\dimension}$ can have such that $\boldsymbol{X}$ lies in the subspace spanned by the set $ \mathcal{S} $. Given a finite set $ \mathcal{S} $, $ |\mathcal{S}| $ denotes the cardinality of $ \mathcal{S} $. For any matrix $ \boldsymbol{X} $, we use $ R(\boldsymbol{X}) $ to denote its range.

We define a {\it minimum cardinality} set of orthonormal, rank-1 matrices that span the subspace induced by a set of rank-1 (and possibly non-orthogonal) matrices $\mathcal{S}$ as:
\begin{align}
\text{ortho}(\mathcal{S}) \in \argmin_{\mathcal{T}} \lbrace |\mathcal{T}|: \mathcal{T} \subseteq \mathcal{U}~\text{s.t.}~ \text{span}(\mathcal{T}) = \text{span}(\mathcal{S}) \rbrace, \nonumber
\end{align} where $ \mathcal{U} $ denotes the superset that includes all the sets of {\it orthonormal}, rank-1 matrices in $\mathbb{R}^{\dimension}$ such that $ \langle \boldsymbol{T}_i, \boldsymbol{T}_j \rangle = 0,~i \neq j, $ $\forall \boldsymbol{T}_i,\boldsymbol{T}_j \in \mathcal{T} $ and, $\vectornormbig{\boldsymbol{T}_i}_F = 1,~\forall i$. In general, $ \text{ortho}(\mathcal{S}) $ is not unique.

A well-known lemma used in the convergence rate proofs of this class of greedy hard thresholding algorithms is defined next.

\begin{lemma}{\label{lemma:5}}\cite{Bertsekas} \textit{Let $ \mathcal{J} \subseteq \mathbb{R}^{\dimension} $ be a closed convex set and $ f: \mathcal{J} \rightarrow \mathbb{R} $ be a smooth objective function defined over $ \mathcal{J} $. Let $ \signal^\ast \in \mathcal{J} $ be a local minimum of the objective function $ f $ over the set $ \mathcal{J} $. Then}
\begin{align}
\langle \nabla f(\signal^\ast), \signal - \signal^\ast \rangle \geq 0, \;\; \forall \signal \in \mathcal{J}.
\end{align}
\end{lemma}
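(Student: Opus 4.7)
The plan is to argue by contradiction: assume that there exists some $\signal \in \mathcal{J}$ with $\langle \nabla f(\signal^\ast), \signal - \signal^\ast \rangle < 0$, and then construct a nearby feasible point at which $f$ strictly decreases, contradicting local minimality of $\signal^\ast$. The key combination to exploit is that $\mathcal{J}$ is convex (so line segments stay in $\mathcal{J}$) and that $f$ is smooth (so a first-order Taylor expansion controls the local behavior).

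First, I would use convexity to produce a feasible one-parameter family: for every $t \in [0,1]$, set $\signal_t := (1-t)\signal^\ast + t\signal = \signal^\ast + t(\signal - \signal^\ast)$. Since $\mathcal{J}$ is convex and $\signal, \signal^\ast \in \mathcal{J}$, we have $\signal_t \in \mathcal{J}$, and $\vectornorm{\signal_t - \signal^\ast}_F = t\vectornorm{\signal - \signal^\ast}_F \to 0$ as $t \to 0^+$. So $\signal_t$ lies in every neighborhood of $\signal^\ast$ for sufficiently small $t$.

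Next, I would invoke smoothness of $f$ to Taylor-expand along this segment,
\begin{align}
f(\signal_t) = f(\signal^\ast) + t \langle \nabla f(\signal^\ast), \signal - \signal^\ast \rangle + r(t),
\end{align}
where the remainder satisfies $r(t)/t \to 0$ as $t \to 0^+$. Writing $\alpha := \langle \nabla f(\signal^\ast), \signal - \signal^\ast \rangle$ and assuming $\alpha < 0$ for contradiction, I can pick $t_0 > 0$ small enough that $|r(t)| \leq t|\alpha|/2$ for all $t \in (0, t_0]$. Then $f(\signal_t) - f(\signal^\ast) \leq t\alpha/2 < 0$ for such $t$, while $\signal_t \in \mathcal{J}$ lies arbitrarily close to $\signal^\ast$. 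This contradicts the local optimality of $\signal^\ast$, so $\alpha \geq 0$, as claimed.

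I do not anticipate a real obstacle here; the argument is a textbook first-order optimality condition. The only small care needed is the Taylor remainder bound: one must be explicit that smoothness of $f$ on the closed convex set $\mathcal{J}$ yields the $o(t)$ control on $r(t)$, and that the line segment $\signal_t$ lies entirely in $\mathcal{J}$ so that $f(\signal_t)$ is well-defined and $\signal_t$ is an admissible competitor in the local-minimum inequality.
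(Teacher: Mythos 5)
Your argument is correct and is the standard first-order necessary optimality condition proof; the paper itself gives no proof of this lemma, citing it directly from Bertsekas, and your feasible-direction/Taylor-remainder contradiction is precisely the textbook argument behind that citation. The one point you rightly flag—that differentiability of $f$ at $\signal^\ast$ yields $r(t) = o(t)$ along the segment $\signal_t = \signal^\ast + t(\signal - \signal^\ast) \in \mathcal{J}$—is handled adequately, so there is nothing to add.
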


\subsection{Singular Value Decomposition (SVD) and its properties}

\begin{definition}{\label{def:svd}}[SVD]
Let $ \signal \in \mathbb{R}^{\dimension} $ be a rank-$ l $ ($ l < \min $ $\lbrace m, n \rbrace $) matrix. Then, the SVD of $ \signal $ is given by:
\begin{align}
\signal = \boldsymbol{U} \boldsymbol{\Sigma} \boldsymbol{V}^{T} = \begin{bmatrix} \boldsymbol{U}_{\alpha} & \boldsymbol{U}_{\beta} \end{bmatrix} \begin{bmatrix} \widetilde{\boldsymbol{\Sigma}} & \boldsymbol{0} \\ \boldsymbol{0} & \boldsymbol{0} \end{bmatrix} \begin{bmatrix} \boldsymbol{V}_{\alpha}^{T} \\ \boldsymbol{V}_{\beta}^T \end{bmatrix}, \label{prel:eq:00}
\end{align} where $ \boldsymbol{U}_{\alpha} \in \mathbb{R}^{m \times l}, \boldsymbol{U}_{\beta} \in \mathbb{R}^{m \times (m - l)}, \boldsymbol{V}_{\alpha} \in \mathbb{R}^{n \times l}, \boldsymbol{V}_{\beta} \in \mathbb{R}^{n \times (n - l)} $ and $ \widetilde{\boldsymbol{\Sigma}} = \text{diag}(\sigma_1, \dots, \sigma_l) \in \mathbb{R}^{l \times l} $ for $ \sigma_1, \dots, $ $\sigma_l \in \mathbb{R}_{+} $. Here, the columns of $ \boldsymbol{U}, \boldsymbol{V} $ represent the set of left and right singular vectors, respectively, and $ \sigma_1, \dots, \sigma_l $ denote the singular values.
\end{definition}

For any matrix $ \boldsymbol{X} \in \mathbb{R}^{\dimension} $ with arbitrary $ \text{rank}(\boldsymbol{X}) \leq \min \lbrace m, n \rbrace $, its best orthogonal projection $ \mathcal{P}_{\rank}(\signal) $ onto the set of rank-$ \rank $ ($ k < \text{rank}(\boldsymbol{X}) $) matrices $ \mathcal{C}_\rank := \lbrace \boldsymbol{A} \in \mathbb{R}^{m \times n}: \text{rank}(\boldsymbol{A}) \leq k \rbrace $ defines the optimization problem:
\begin{align}
\mathcal{P}_k(\boldsymbol{X}) \in \argmin_{\boldsymbol{Y} \in \mathcal{C}_{\rank}} \vectornormbig{\boldsymbol{Y} - \boldsymbol{X}}_{F}. \label{eq:svd_proj}
\end{align} According to the Eckart-Young theorem \cite{horn1990matrix}, the best rank-$ \rank $ approximation of a matrix $ \boldsymbol{X} $ corresponds to its truncated SVD: if $ \boldsymbol{X} = \boldsymbol{U} \boldsymbol{\Sigma} \boldsymbol{V}^{T} $, then $ \mathcal{P}_k(\boldsymbol{X}) := \boldsymbol{U}_k \boldsymbol{\Sigma}_k \boldsymbol{V}_k^{T} $ where $ \boldsymbol{\Sigma}_k \in \mathbb{R}^{\rank \times \rank} $ is a diagonal matrix that contains the first $ \rank $ diagonal entries of $ \boldsymbol{\Sigma} $ and $\boldsymbol{U}_k, ~\boldsymbol{V}_k$ contain the corresponding left and right singular vectors, respectively. Moreover, this projection is not always unique. In the case of multiple identical singular values, the lexicographic approach is used to break ties. In any case, $\vectornormbig{\mathcal{P}_k(\boldsymbol{X}) - \boldsymbol{X}}_F \leq \vectornormbig{\boldsymbol{W} - \boldsymbol{X}}_F$ for any rank-$\rank$ $\boldsymbol{W} \in \mathbb{R}^{\dimension}$.

\subsection{Subspace projections}
Given a set of orthonormal, rank-1 matrices $ \mathcal{S} $, we denote the orthogonal projection operator onto the subspace induced by $ \mathcal{S} $ as $ \mathcal{P}_{\mathcal{S}} $\footnote{The distinction between $ \mathcal{P}_{\mathcal{S}} $ and $ \mathcal{P}_{\rank} $ for $ \rank $ positive integer is apparent from context.} which is an idempotent linear transformation; furthermore, we denote the orthogonal projection operator onto the orthogonal subspace of $ \mathcal{S} $ as $ \mathcal{P}_{\mathcal{S}^{\bot}} $. 
We can always decompose a matrix $ \signal \in \mathbb{R}^{\dimension} $ into two matrix components, as follows:
\begin{align}
\boldsymbol{X} := \mathcal{P}_{\mathcal{S}} \boldsymbol{X} + \mathcal{P}_{\mathcal{S}^{\bot}} \boldsymbol{X}, ~~\text{such that}~ \langle \mathcal{P}_{\mathcal{S}} \boldsymbol{X}, \mathcal{P}_{\mathcal{S}^{\bot}}\boldsymbol{X} \rangle = 0. \nonumber
\end{align} If $ \signal \in \text{span}(\mathcal{S}) $, the best projection of $ \signal $ onto the subspace induced by $ \mathcal{S} $ is the matrix $ \signal $ itself. Moreover, $ \vectornormbig{\mathcal{P}_{\mathcal{S}}\signal}_F \leq \vectornormbig{\signal}_F $ for any $\mathcal{S}$ and $\signal$.
\begin{definition}{\label{def:svd_proj}}[Orthogonal projections using SVD]
Let $ \signal \in \mathbb{R}^{\dimension} $ be a matrix with arbitrary $ \text{rank} $ and SVD decomposition given by (\ref{prel:eq:00}). Then, $ \mathcal{S}:= \lbrace \boldsymbol{u}_i \boldsymbol{v}_i^T: i = 1,\dots, \rank \rbrace $ ($\rank \leq \text{rank}(\signal)$) constitutes a set of orthonormal, rank-1 matrices that spans the best $k$-rank subspace in $R(\signal)$ and $R(\signal^T)$; here, $ \boldsymbol{u}_i $ and $\boldsymbol{v}_i$ denote the $ i $-th left and right singular vectors, respectively. The orthogonal projection onto this subspace is given by \cite{candès2009exact}:
\begin{align}
\mathcal{P}_{\mathcal{S}} \signal = \mathcal{P}_{\mathcal{U}} \signal + \signal \mathcal{P}_{\mathcal{V}} - \mathcal{P}_{\mathcal{U}} \signal \mathcal{P}_{\mathcal{V}} \label{eq:newproj}
\end{align} where $\mathcal{P}_{\mathcal{U}} = \boldsymbol{U}_{:, 1:\rank} \boldsymbol{U}_{:, 1:\rank}^T$ and $\mathcal{P}_{\mathcal{V}} = \boldsymbol{V}_{:, 1:\rank} \boldsymbol{V}_{:, 1:\rank}^T$ in \textsc{Matlab} notation. Moreover, the orthogonal projection onto the $\mathcal{S}^{\bot}$ is given by:
\begin{align}
\mathcal{P}_{\mathcal{S}^{\bot}} \signal = \signal - \mathcal{P}_{\mathcal{S}} \signal. \label{eq:neworthoproj}
\end{align}
\end{definition}

In the algorithmic descriptions, we use $\mathcal{S} \leftarrow \mathcal{P}_{\rank}\left(\signal\right)$ to denote the set of rank-1, orthonormal matrices as outer products of the $\rank$ left $\boldsymbol{u}_i$ and right $\boldsymbol{v}_i$ principal singular vectors of $\signal$ that span the best rank-$\rank$ subspace of $\signal$; e.g. $\mathcal{S} = \lbrace \boldsymbol{u}_i \boldsymbol{v}_i,~i = 1,\dots,\rank\rbrace$. Moreover, $\widehat{\signal} \leftarrow \mathcal{P}_{\rank}\left(\signal\right)$ denotes a/the best rank-$\rank$ projection matrix of $\signal$. In some cases, we use $\lbrace \mathcal{S},~\widehat{\signal} \rbrace \leftarrow \mathcal{P}_{\rank}\left(\signal\right)$ when we compute both. The distiction between these cases is apparent from the context.

\subsection{Restricted Isometry Property}

Many conditions have been proposed in the literature to establish solution uniqueness and recovery stability such as null space property \cite{cohen06}, exact recovery condition \cite{Tro04:Greed-Good}, etc.
For the matrix case, \cite{brecht2010} proposed the {\it restricted isometry property} (RIP) for the ARM problem. 

\begin{definition}{\label{def:RIP}}[Rank Restricted Isometry Property (R-RIP) for matrix linear operators \cite{brecht2010}] A linear operator $ \sensing: \mathbb{R}^{\dimension} $ $\rightarrow \mathbb{R}^{\numsam} $ satisfies the R-RIP with constant $ \delta_{\rank}(\sensing) \in (0,1)$ if and only if:
\begin{equation}\label{eq:RIP}
  (1-\delta_{\rank}(\sensing))\vectornormbig{\signal}_F^2 \leq \vectornormbig{\sensing \signal}_2^2 \leq (1+\delta_{\rank}(\sensing))\vectornormbig{\signal}_F^2, 
\end{equation} $ ~~\forall \boldsymbol{X} \in \mathbb{R}^{\dimension} ~\text{such that} ~\text{rank}(\signal) \leq \rank. $ We write $\delta_{\rank}$ to mean $\delta_{\rank}(\sensing)$, unless otherwise stated.
\end{definition} \cite{liuuniversal} shows that Pauli operators satisfy the rank-RIP in compressive settings while, in function learning, the linear map $\sensing$ is designed specifically to satisfy the rank-RIP \cite{hemant2012active}.

\subsection{Some useful bounds using R-RIP}

In this section, we present some lemmas that are useful in our subsequent developments---these lemmas are consequen- ces of the R-RIP of $ \sensing $.

\begin{lemma}{\label{lemma:1}}\cite{admira2010}
Let $ \sensing: \mathbb{R}^{\dimension} \rightarrow \mathbb{R}^{\numsam} $ be a linear operator that satisfies the R-RIP with constant $ \delta_{\rank} $. Then, $ \forall \boldsymbol{v} \in \mathbb{R}^{\numsam} $, the following holds true:
\begin{align}
\vectornormbig{\mathcal{P}_{\mathcal{S}}(\sensing^\ast \boldsymbol{v})}_F \leq \sqrt{1 + \delta_{\rank}}\vectornormbig{\boldsymbol{v}}_2, \label{eq:prel:00}
\end{align} where $ \mathcal{S} $ is a set of orthonormal, rank-1 matrices in $\mathbb{R}^{\dimension}$ such that $ \text{rank}(\mathcal{P}_{\mathcal{S}}\signal) \leq \rank, ~\forall \signal \in \mathbb{R}^{\dimension} $.
\end{lemma}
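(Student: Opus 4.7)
The plan is a short duality argument that turns the claim into the upper R-RIP bound applied to a single low-rank matrix. Set $\signal := \mathcal{P}_{\mathcal{S}}(\sensing^{\ast}\boldsymbol{v})$. The hypothesis on $\mathcal{S}$ forces $\mathrm{rank}(\signal)\le \rank$, which is the crucial ingredient that lets us invoke R-RIP on $\signal$ itself rather than on the whole (generally high-rank) matrix $\sensing^{\ast}\boldsymbol{v}$.

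First I would rewrite $\|\signal\|_F^2$ as an inner product and move $\mathcal{P}_{\mathcal{S}}$ across using that orthogonal projections are self-adjoint and idempotent, together with $\mathcal{P}_{\mathcal{S}}\signal = \signal$. Explicitly,
\begin{align}
\|\signal\|_F^2 = \langle \signal, \mathcal{P}_{\mathcal{S}}(\sensing^{\ast}\boldsymbol{v})\rangle = \langle \mathcal{P}_{\mathcal{S}}\signal, \sensing^{\ast}\boldsymbol{v}\rangle = \langle \signal, \sensing^{\ast}\boldsymbol{v}\rangle. \nonumber
\end{align}
Next I would pass to the observation side using the adjoint identity $\langle \signal, \sensing^{\ast}\boldsymbol{v}\rangle = \langle \sensing \signal, \boldsymbol{v}\rangle$, and then apply Cauchy--Schwarz in $\mathbb{R}^{\numsam}$ to obtain $\|\signal\|_F^2 \le \|\sensing\signal\|_2\,\|\boldsymbol{v}\|_2$.

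Now the low-rank reduction pays off: since $\mathrm{rank}(\signal)\le \rank$, the upper half of the R-RIP inequality (\ref{eq:RIP}) gives $\|\sensing\signal\|_2 \le \sqrt{1+\delta_{\rank}}\,\|\signal\|_F$. Chaining these bounds yields $\|\signal\|_F^2 \le \sqrt{1+\delta_{\rank}}\,\|\signal\|_F\,\|\boldsymbol{v}\|_2$, and dividing by $\|\signal\|_F$ (the case $\signal=\boldsymbol{0}$ is trivial) delivers the claim.

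There is no real obstacle here; the only subtlety worth stating carefully is the justification that $\mathcal{P}_{\mathcal{S}}$ can be ``moved'' onto $\signal$ inside the inner product, which relies on $\mathcal{P}_{\mathcal{S}}$ being an orthogonal (hence self-adjoint) projector, and on the hypothesis guaranteeing that the resulting matrix $\signal$ has rank at most $\rank$ so that R-RIP applies. Everything else is Cauchy--Schwarz plus the definition of the adjoint.
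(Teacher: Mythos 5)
Your proof is correct. The paper does not prove this lemma itself---it imports it from the ADMiRA reference---and your argument (set $\signal := \mathcal{P}_{\mathcal{S}}(\sensing^{\ast}\boldsymbol{v})$, use self-adjointness and idempotence of the orthogonal projector to write $\vectornormbig{\signal}_F^2 = \langle \sensing\signal, \boldsymbol{v}\rangle$, then Cauchy--Schwarz and the upper R-RIP bound on the rank-$\leq\rank$ matrix $\signal$) is precisely the standard derivation given there, with the zero case handled correctly.
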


\begin{lemma}{\label{lemma:2}}\cite{admira2010}
Let $ \sensing: \mathbb{R}^{\dimension} \rightarrow \mathbb{R}^{\numsam} $ be a linear operator that satisfies the R-RIP with constant $ \delta_{\rank} $. Then, $ \forall \signal \in \mathbb{R}^{\dimension} $, the following holds true:
\begin{align}
(1-\delta_{\rank})\vectornormbig{\mathcal{P}_{\mathcal{S}}\signal}_F &\leq \vectornormbig{\mathcal{P}_{\mathcal{S}}\sensing^\ast \sensing \mathcal{P}_{\mathcal{S}}\signal}_F \nonumber \\ &\leq (1+\delta_{\rank})\vectornormbig{\mathcal{P}_{\mathcal{S}}\signal}_F, \label{eq:prel:01}
\end{align} where $ \mathcal{S} $ is a set of orthonormal, rank-1 matrices in $\mathbb{R}^{\dimension}$ such that $ \text{rank}(\mathcal{P}_{\mathcal{S}}\signal) \leq \rank, ~\forall \signal \in \mathbb{R}^{\dimension} $.
\end{lemma}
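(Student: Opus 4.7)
\textbf{Proof plan for Lemma \ref{lemma:2}.} Let $\mathbf{Y} := \mathcal{P}_{\mathcal{S}}\signal$, so by the hypothesis on $\mathcal{S}$ we have $\mathbf{Y}\in\text{span}(\mathcal{S})$, $\mathcal{P}_{\mathcal{S}}\mathbf{Y} = \mathbf{Y}$, and $\text{rank}(\mathbf{Y})\le \rank$. The plan is to handle the two inequalities separately, using an inner-product/polarization argument to reduce everything to the ordinary R-RIP applied to rank-$\rank$ matrices; crucially, because every matrix of $\text{span}(\mathcal{S})$ has rank at most $\rank$, we will never have to invoke a constant $\delta_{2\rank}$.

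For the \emph{lower bound}, I would test the projected vector $\mathcal{P}_{\mathcal{S}}\sensing^\ast\sensing\mathbf{Y}$ against the unit-norm direction $\mathbf{Y}/\vectornorm{\mathbf{Y}}_F$. By Cauchy--Schwarz and self-adjointness/idempotence of $\mathcal{P}_{\mathcal{S}}$,
\begin{align}
\vectornormbig{\mathcal{P}_{\mathcal{S}}\sensing^\ast\sensing\mathbf{Y}}_F \;\ge\; \tfrac{1}{\vectornorm{\mathbf{Y}}_F}\langle \mathcal{P}_{\mathcal{S}}\sensing^\ast\sensing\mathbf{Y},\mathbf{Y}\rangle \;=\; \tfrac{\vectornorm{\sensing\mathbf{Y}}_2^2}{\vectornorm{\mathbf{Y}}_F}, \nonumber
\end{align}
and then the R-RIP applied to the rank-$\rank$ matrix $\mathbf{Y}$ yields $\vectornorm{\sensing\mathbf{Y}}_2^2\ge (1-\delta_\rank)\vectornorm{\mathbf{Y}}_F^2$, giving the claim.

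For the \emph{upper bound}, I would use duality: choosing the normalized test matrix $\mathbf{W}:=\mathcal{P}_{\mathcal{S}}\sensing^\ast\sensing\mathbf{Y}/\vectornorm{\mathcal{P}_{\mathcal{S}}\sensing^\ast\sensing\mathbf{Y}}_F$, which lies in $\text{span}(\mathcal{S})$ with unit Frobenius norm, we get $\vectornorm{\mathcal{P}_{\mathcal{S}}\sensing^\ast\sensing\mathbf{Y}}_F = \langle \sensing\mathbf{Y},\sensing\mathbf{W}\rangle$. Now I would invoke the polarization identity
\begin{align}
\langle\sensing\mathbf{Y},\sensing\mathbf{W}\rangle-\langle\mathbf{Y},\mathbf{W}\rangle = \tfrac{1}{4}\sum_{\pm}(\pm 1)\bigl(\vectornorm{\sensing(\mathbf{Y}\pm\mathbf{W})}_2^2-\vectornorm{\mathbf{Y}\pm\mathbf{W}}_F^2\bigr), \nonumber
\end{align}
and the key observation that $\mathbf{Y}\pm\mathbf{W}\in\text{span}(\mathcal{S})$, so $\text{rank}(\mathbf{Y}\pm\mathbf{W})\le\rank$. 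The R-RIP then bounds each bracketed term by $\delta_\rank\vectornorm{\mathbf{Y}\pm\mathbf{W}}_F^2$; summing and using the parallelogram identity (plus a homogeneity rescaling of $\mathbf{Y},\mathbf{W}$ to unit norm) gives the standard ``near-orthogonality'' estimate $|\langle\sensing\mathbf{Y},\sensing\mathbf{W}\rangle-\langle\mathbf{Y},\mathbf{W}\rangle|\le \delta_\rank\vectornorm{\mathbf{Y}}_F\vectornorm{\mathbf{W}}_F$. Combining with $\langle\mathbf{Y},\mathbf{W}\rangle\le \vectornorm{\mathbf{Y}}_F\vectornorm{\mathbf{W}}_F=\vectornorm{\mathbf{Y}}_F$ yields the upper bound $(1+\delta_\rank)\vectornorm{\mathbf{Y}}_F$.

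The only delicate step is verifying that the polarization/R-RIP argument only requires $\delta_\rank$ rather than $\delta_{2\rank}$; this is exactly where the hypothesis on $\mathcal{S}$ (namely that $\text{rank}(\mathcal{P}_{\mathcal{S}}\signal)\le \rank$ for every $\signal$) enters, since it guarantees that the entire subspace $\text{span}(\mathcal{S})$ is contained in the rank-$\rank$ variety and is closed under addition of elements---so the rank does not double when we form $\mathbf{Y}\pm\mathbf{W}$. Everything else is routine Cauchy--Schwarz and self-adjointness of the orthogonal projection $\mathcal{P}_{\mathcal{S}}$.
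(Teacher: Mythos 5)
Your argument is correct. Note first that the paper itself does not prove this lemma---it is imported verbatim from \cite{admira2010}---so there is no in-paper proof to match against; judged on its own terms, your proposal is sound. The two pillars both check out: (i) the hypothesis that $\text{rank}(\mathcal{P}_{\mathcal{S}}\signal)\leq\rank$ for \emph{every} $\signal$ is exactly the statement that the range of the projector, which is a linear subspace, sits entirely inside the rank-$\rank$ variety, so $\mathbf{Y}\pm\mathbf{W}$ indeed stays at rank $\leq\rank$ and only $\delta_{\rank}$ is ever invoked; (ii) the lower bound via Cauchy--Schwarz against $\mathbf{Y}/\vectornorm{\mathbf{Y}}_F$ together with $\langle\mathcal{P}_{\mathcal{S}}\sensing^\ast\sensing\mathbf{Y},\mathbf{Y}\rangle=\vectornorm{\sensing\mathbf{Y}}_2^2$ is clean, and the polarization step for the upper bound is the standard restricted-orthogonality computation. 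The one route worth contrasting is the spectral argument that the paper implicitly relies on elsewhere: Lemma~\ref{lemma:3} records that the self-adjoint, positive-semidefinite operator $\mathcal{P}_{\mathcal{S}}\sensing^\ast\sensing\mathcal{P}_{\mathcal{S}}$, restricted to $\text{span}(\mathcal{S})$, has quadratic form $\vectornorm{\sensing\mathbf{Y}}_2^2\in[(1-\delta_{\rank})\vectornorm{\mathbf{Y}}_F^2,(1+\delta_{\rank})\vectornorm{\mathbf{Y}}_F^2]$, hence eigenvalues in $[1-\delta_{\rank},1+\delta_{\rank}]$, and the two-sided norm bound of Lemma~\ref{lemma:2} falls out immediately since for such an operator $\vectornorm{\mathcal{B}\mathbf{Y}}_F\in[\lambda_{\min}\vectornorm{\mathbf{Y}}_F,\lambda_{\max}\vectornorm{\mathbf{Y}}_F]$. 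That route derives Lemmas~\ref{lemma:2} and~\ref{lemma:3} in one stroke; yours avoids the spectral theorem at the cost of the polarization/parallelogram bookkeeping. Both hinge on the same structural fact you correctly isolated as ``the only delicate step.''
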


\begin{lemma}{\label{lemma:3}}\cite{Goldfarb:2011}
Let $ \sensing: \mathbb{R}^{\dimension} \rightarrow \mathbb{R}^{\numsam} $ be a linear operator that satisfies the R-RIP with constant $ \delta_{\rank} $ and $ \mathcal{S} $ be a set of orthonormal, rank-1 matrices in $\mathbb{R}^{\dimension}$ such that $ \text{rank}(\mathcal{P}_{\mathcal{S}}\signal) \leq \rank, ~\forall \signal \in \mathbb{R}^{\dimension} $. Then, for $ \mu > 0 $, $ \sensing $ satisfies:
\begin{align}
\lambda(\mu\mathcal{P}_{\mathcal{S}}\sensing^\ast \sensing \mathcal{P}_{\mathcal{S}}) \in [\mu(1-\delta_{\rank}), \mu(1+\delta_{\rank})].
\end{align} where $ \lambda(\boldsymbol{\mathcal{B}}) $ represents the range of eigenvalues of the linear operator $ \boldsymbol{\mathcal{B}}: \mathbb{R}^{\numsam} \rightarrow \mathbb{R}^{\dimension} $.
Moreover, $ \forall \signal \in \mathbb{R}^{\dimension} $, it follows that:
\begin{align}
&\vectornormbig{(\id - \mu\mathcal{P}_{\mathcal{S}}\sensing^\ast \sensing \mathcal{P}_{\mathcal{S}})\mathcal{P}_{\mathcal{S}}\signal}_F \nonumber \\ &\leq \max \left\{ \mu(1 + \delta_{\rank}) - 1, 1- \mu(1 - \delta_{\rank}) \right\} \vectornormbig{\mathcal{P}_{\mathcal{S}}\signal}_F.
\end{align}
\end{lemma}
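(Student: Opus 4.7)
The plan is to exploit the self-adjointness of $\mathcal{B} := \mathcal{P}_{\mathcal{S}}\sensing^\ast \sensing \mathcal{P}_{\mathcal{S}}$ and reduce both assertions to statements about its action on the invariant subspace $\text{span}(\mathcal{S})$. Since $\mathcal{P}_{\mathcal{S}}$ is an orthogonal projection (idempotent and self-adjoint under the trace inner product) and $\sensing^\ast\sensing$ is self-adjoint, $\mathcal{B}$ is a symmetric positive semidefinite linear operator on $\mathbb{R}^{\dimension}$. Consequently, its spectrum admits a Rayleigh quotient characterization, and the Frobenius-to-Frobenius operator norm of any polynomial in $\mathcal{B}$ equals its spectral radius on the effective range.

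For the first claim, I would pick any unit-Frobenius-norm $\signal \in \text{span}(\mathcal{S})$, so that $\mathcal{P}_{\mathcal{S}}\signal = \signal$ and, by the standing hypothesis on $\mathcal{S}$, $\text{rank}(\signal) \leq \rank$. A direct calculation using the adjoint property and the idempotence and self-adjointness of $\mathcal{P}_{\mathcal{S}}$ gives
\begin{align*}
\langle \mathcal{B}\signal,\signal \rangle = \langle \sensing^\ast\sensing\signal, \mathcal{P}_{\mathcal{S}}\signal \rangle = \vectornormbig{\sensing\signal}_2^2.
\end{align*}
The R-RIP of Definition \ref{def:RIP} then sandwiches this Rayleigh quotient in $[1-\delta_{\rank}, 1+\delta_{\rank}]$. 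Because $\mathcal{B}$ vanishes on $\text{span}(\mathcal{S})^\bot$, its set of nonzero eigenvalues, after the scaling by $\mu>0$, lies precisely in $[\mu(1-\delta_{\rank}), \mu(1+\delta_{\rank})]$, which is the first assertion.

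For the second claim, I would observe that $\id - \mu\mathcal{B}$ leaves $\text{span}(\mathcal{S})$ invariant and remains self-adjoint on this subspace, with spectrum $\{1-\mu\lambda : \lambda \in \lambda(\mathcal{B})\}$ contained in $[1-\mu(1+\delta_{\rank}),\, 1-\mu(1-\delta_{\rank})]$ by the first part. Hence its operator norm on $\text{span}(\mathcal{S})$ equals its spectral radius, $\max\{\mu(1+\delta_{\rank}) - 1,\, 1 - \mu(1-\delta_{\rank})\}$. Since the argument $\mathcal{P}_{\mathcal{S}}\signal$ already lies in $\text{span}(\mathcal{S})$ and is mapped back into $\text{span}(\mathcal{S})$ by $\id - \mu\mathcal{B}$, the Frobenius-norm bound follows at once. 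The only real subtlety is checking that R-RIP applies in the Rayleigh step: it requires the test matrix to have rank at most $\rank$, which is exactly what the hypothesis $\text{rank}(\mathcal{P}_{\mathcal{S}}\signal) \leq \rank$ supplies; edge regimes where $\mu(1-\delta_{\rank}) > 1$ or $\mu(1+\delta_{\rank}) < 1$ require no separate treatment since the $\max$ in the final bound absorbs them automatically.
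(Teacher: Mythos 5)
The paper does not actually prove this statement: Lemma~\ref{lemma:3} is imported verbatim from \cite{Goldfarb:2011}, so there is no in-paper argument to compare against. Your derivation is correct and is the standard spectral one: $\mathcal{P}_{\mathcal{S}}\sensing^\ast\sensing\mathcal{P}_{\mathcal{S}}$ is self-adjoint for the trace inner product, every eigenvector with nonzero eigenvalue lies in $\text{span}(\mathcal{S})$ and hence has rank at most $\rank$ by hypothesis, so its Rayleigh quotient $\vectornormbig{\sensing\signal}_2^2/\vectornormbig{\signal}_F^2$ is pinned to $[1-\delta_{\rank},1+\delta_{\rank}]$ by (\ref{eq:RIP}); the second bound then follows because $\text{span}(\mathcal{S})$ is invariant under $\id-\mu\mathcal{P}_{\mathcal{S}}\sensing^\ast\sensing\mathcal{P}_{\mathcal{S}}$ and the operator norm of a self-adjoint restriction is its spectral radius. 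You correctly flag the two points that need care --- the zero eigenvalues on $\text{span}(\mathcal{S})^{\bot}$ (so the stated eigenvalue range must be read as the spectrum of the restriction) and the sign cases absorbed by the $\max$ --- so I see no gap.
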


\begin{lemma}{\label{lemma:4}}\cite{Goldfarb:2011}
Let $ \sensing: \mathbb{R}^{\dimension} \rightarrow \mathbb{R}^{\numsam} $ be a linear operator that satisfies the R-RIP with constant $ \delta_{\rank} $ and $ \mathcal{S}_1, \mathcal{S}_2 $ be two sets of orthonormal, rank-1 matrices in $\mathbb{R}^{\dimension}$ such that 
\begin{align}
\text{rank}(\mathcal{P}_{\mathcal{S}_1 \cup \mathcal{S}_2}\signal) \leq \rank, ~\forall \signal \in \mathbb{R}^{\dimension}. 
\end{align} Then, the following inequality holds:
\begin{align}
\vectornormbig{\mathcal{P}_{\mathcal{S}_1}\sensing^\ast \sensing \mathcal{P}_{\mathcal{S}_1^{\bot}}\signal}_F \leq \delta_{\rank}\vectornormbig{\mathcal{P}_{\mathcal{S}_1^{\bot}}\signal}_F, \forall \boldsymbol{X} \in \text{span}(\mathcal{S}_2).
\end{align}
\end{lemma}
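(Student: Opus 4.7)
The plan is to prove the lemma by combining a duality argument for the Frobenius norm with the standard ``near-orthogonality'' consequence of R-RIP obtained via the parallelogram identity.

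First, set $\boldsymbol{A} := \mathcal{P}_{\mathcal{S}_1}\sensing^\ast \sensing \mathcal{P}_{\mathcal{S}_1^{\bot}}\signal$, which lies in $\text{span}(\mathcal{S}_1)$ by construction. By duality of the Frobenius norm restricted to this subspace,
\begin{align}
\vectornormbig{\boldsymbol{A}}_F = \sup_{\substack{\boldsymbol{W} \in \text{span}(\mathcal{S}_1) \\ \|\boldsymbol{W}\|_F = 1}} \langle \boldsymbol{W}, \boldsymbol{A} \rangle = \sup_{\boldsymbol{W}} \langle \sensing \boldsymbol{W}, \sensing \mathcal{P}_{\mathcal{S}_1^{\bot}}\signal \rangle, \nonumber
\end{align}
where the last equality uses $\mathcal{P}_{\mathcal{S}_1}\boldsymbol{W} = \boldsymbol{W}$ and moves one $\mathcal{P}_{\mathcal{S}_1}$ across the adjoint. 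So it suffices to bound $|\langle \sensing \boldsymbol{W}, \sensing \boldsymbol{V} \rangle|$, where $\boldsymbol{V} := \mathcal{P}_{\mathcal{S}_1^{\bot}}\signal$.

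Next I would verify the structural ingredients required to invoke R-RIP. Since $\signal \in \text{span}(\mathcal{S}_2)$, I can write $\boldsymbol{V} = \signal - \mathcal{P}_{\mathcal{S}_1}\signal \in \text{span}(\mathcal{S}_1 \cup \mathcal{S}_2)$, and $\boldsymbol{W} \in \text{span}(\mathcal{S}_1) \subseteq \text{span}(\mathcal{S}_1 \cup \mathcal{S}_2)$. Consequently $\boldsymbol{W}\pm\boldsymbol{V}$ both lie in $\text{span}(\mathcal{S}_1 \cup \mathcal{S}_2)$, and the hypothesis guarantees they have rank at most $\rank$. Moreover, $\boldsymbol{W}$ and $\boldsymbol{V}$ are Frobenius-orthogonal because $\boldsymbol{W} \in \text{span}(\mathcal{S}_1)$ while $\boldsymbol{V} \in \text{span}(\mathcal{S}_1^{\bot})$, giving $\vectornormbig{\boldsymbol{W}\pm\boldsymbol{V}}_F^2 = \vectornormbig{\boldsymbol{W}}_F^2 + \vectornormbig{\boldsymbol{V}}_F^2$.

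Now apply the polarization/parallelogram identity,
\begin{align}
\langle \sensing \boldsymbol{W}, \sensing \boldsymbol{V} \rangle = \tfrac{1}{4}\left(\vectornormbig{\sensing(\boldsymbol{W}+\boldsymbol{V})}_2^2 - \vectornormbig{\sensing(\boldsymbol{W}-\boldsymbol{V})}_2^2\right), \nonumber
\end{align}
and invoke Definition \ref{def:RIP} on both rank-$\rank$ matrices $\boldsymbol{W}\pm\boldsymbol{V}$ to obtain $|\langle \sensing \boldsymbol{W}, \sensing \boldsymbol{V} \rangle| \leq \tfrac{\delta_{\rank}}{2}(\vectornormbig{\boldsymbol{W}}_F^2 + \vectornormbig{\boldsymbol{V}}_F^2)$. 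A standard homogenization step, rescaling $\boldsymbol{W}$ and $\boldsymbol{V}$ to unit Frobenius norm before applying the above (equivalently, applying the bound to $\boldsymbol{W}/\|\boldsymbol{W}\|_F$ and $\boldsymbol{V}/\|\boldsymbol{V}\|_F$), upgrades this to the sharper
\begin{align}
|\langle \sensing \boldsymbol{W}, \sensing \boldsymbol{V} \rangle| \leq \delta_{\rank} \vectornormbig{\boldsymbol{W}}_F \vectornormbig{\boldsymbol{V}}_F. \nonumber
\end{align}
Taking the supremum over $\boldsymbol{W}$ with $\vectornormbig{\boldsymbol{W}}_F = 1$ yields the claim. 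The only real subtlety, and the one I would state carefully, is confirming that $\boldsymbol{W}\pm\boldsymbol{V}$ genuinely lie in a rank-$\rank$ subspace so that R-RIP applies to both terms simultaneously; the hypothesis $\text{rank}(\mathcal{P}_{\mathcal{S}_1 \cup \mathcal{S}_2}\signal)\le\rank$ is precisely what supplies this.
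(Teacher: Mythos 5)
Your proof is correct. Note that the paper does not actually prove this statement: Lemma \ref{lemma:4} is imported verbatim from \cite{Goldfarb:2011} and used as a black box, so there is no in-paper argument to compare against. Your derivation is the standard one for cross-term R-RIP bounds (the matrix analogue of the Cand\`es--Tao / Needell--Tropp near-orthogonality lemma), and every step checks out: the duality representation of $\vectornormbig{\boldsymbol{A}}_F$ over $\text{span}(\mathcal{S}_1)$ is valid because $\boldsymbol{A}$ lies in that subspace; moving $\mathcal{P}_{\mathcal{S}_1}$ across the inner product uses its self-adjointness and $\mathcal{P}_{\mathcal{S}_1}\boldsymbol{W}=\boldsymbol{W}$; the orthogonality $\langle\boldsymbol{W},\boldsymbol{V}\rangle=0$ holds since $\boldsymbol{V}$ is in the orthogonal complement of the range of $\mathcal{P}_{\mathcal{S}_1}$; and the hypothesis $\text{rank}(\mathcal{P}_{\mathcal{S}_1\cup\mathcal{S}_2}\signal)\le\rank$ is exactly what licenses applying Definition \ref{def:RIP} to $\boldsymbol{W}\pm\boldsymbol{V}$, since both summands lie in $\text{span}(\mathcal{S}_1\cup\mathcal{S}_2)$. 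The polarization identity plus the two-sided R-RIP bound gives the factor $\tfrac{1}{4}\big((1+\delta_{\rank})-(1-\delta_{\rank})\big)=\tfrac{\delta_{\rank}}{2}$ on $\vectornormbig{\boldsymbol{W}}_F^2+\vectornormbig{\boldsymbol{V}}_F^2$, and the rescaling to unit-norm arguments (which preserves both the orthogonality and the subspace membership) correctly sharpens this to $\delta_{\rank}\vectornormbig{\boldsymbol{W}}_F\vectornormbig{\boldsymbol{V}}_F$. You rightly identify the rank condition on $\boldsymbol{W}\pm\boldsymbol{V}$ as the one point that genuinely needs the hypothesis; the only cosmetic caveat is that the paper's $\mathcal{P}_{\mathcal{S}}$ (Definition \ref{def:svd_proj}) projects onto a subspace larger than the literal linear span of the outer products in $\mathcal{S}$, but your argument only uses that $\mathcal{P}_{\mathcal{S}_1}$ is an orthogonal projector whose range sits inside the range of $\mathcal{P}_{\mathcal{S}_1\cup\mathcal{S}_2}$, so nothing breaks.
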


\section{Algrebraic Pursuits in a nutshell}{\label{sec:ALPS}}

\begin{algorithm*}[t!]
   \caption{\textsc{Matrix ALPS I}}\label{algo: class}
\begin{algorithmic}[1]
   \Statex {\bfseries Input:} $\obs$, $\sensing$, $\rank$, Tolerance $ \eta $, MaxIterations
   \Statex {\bfseries Initialize:} $ \signal(0) \leftarrow 0 $, $ \mathcal{X}_0 \leftarrow \lbrace \emptyset \rbrace $, $ i \leftarrow 0 $
   \Statex {\bfseries repeat} 
   \State \hspace{0.16cm} $ \mathcal{D}_i \leftarrow \mathcal{P}_{\rank}\big( \mathcal{P}_{\mathcal{X}_i^{\bot}} \nabla f(\signal(i)) \big) $ \hspace*{\fill}\textit{(Best rank-$ \rank $ subspace orthogonal to $ \mathcal{X}_i $)~~~~~~~~~}
   \State \hspace{0.16cm} $ \mathcal{S}_i \leftarrow \mathcal{D}_i \cup \mathcal{X}_i$ \hspace*{\fill}\textit{(Active subspace expansion)~~~~~~~~~}
   \State \hspace{0.16cm} $ \mu_i \leftarrow \argmin_{\mu} \vectornormbig{\obs - \sensing\big( \signal(i) - \frac{\mu}{2} \mathcal{P}_{\mathcal{S}_i} \nabla f(\signal(i)) \big)}_2^2 = \frac{\vectornorm{\mathcal{P}_{\mathcal{S}_i} \nabla f(\signal(i))}_F^2}{\vectornorm{\sensing \mathcal{P}_{\mathcal{S}_i} \nabla f(\signal(i))}_2^2} $ \hspace*{\fill} \textit{(Step size selection)~~~~~~~~~}
   \State \hspace{0.16cm} $ \boldsymbol{V}(i) \leftarrow \signal(i) - \frac{\mu_i}{2} \mathcal{P}_{\mathcal{S}_i}\nabla f(\signal(i)) $ \hspace*{\fill} \textit{(Error norm reduction via gradient descent)~~~~~~~~~}
   \State \hspace{0.16cm} $ \lbrace \mathcal{W}_i,~\boldsymbol{W}(i)\rbrace \leftarrow \mathcal{P}_{\rank}(\boldsymbol{V}(i))  $ \hspace*{\fill}{\textit{(Best rank-$ \rank $ subspace selection)~~~~~~~~~}}
   \State \hspace{0.16cm} $ \xi_i \leftarrow \argmin_{\xi} \vectornormbig{\obs - \sensing\big( \boldsymbol{W}(i) - \frac{\xi}{2} \mathcal{P}_{\mathcal{W}_i} \nabla f(\boldsymbol{W}(i)) \big)}_2^2 = \frac{\vectornorm{\mathcal{P}_{\mathcal{W}_i} \nabla f(\boldsymbol{W}(i))}_F^2}{\vectornorm{\sensing \mathcal{P}_{\mathcal{W}_i} \nabla f(\boldsymbol{W}(i))}_2^2} $ \hspace*{\fill} \textit{(Step size selection)~~~~~~~~~}
   \State \hspace{0.16cm} $ \signal(i+1) \leftarrow \boldsymbol{W}(i) - \frac{\xi_i}{2} \mathcal{P}_{\mathcal{W}_i} \nabla f(\boldsymbol{W}(i)) ~~\text{with}~ \mathcal{X}_{i+1} \leftarrow \mathcal{P}_k(\signal(i+1)) $ \hspace*{\fill}\textit{(De-bias using gradient descent)~~~~~~~~~}
   \Statex \hspace{0.16cm} $ i \leftarrow i + 1 $
   \Statex {\bfseries until} $\vectornorm{\signal(i) - \signal(i-1)}_2 \leq \eta \vectornorm{\signal(i)}_2 $ or MaxIterations.
\end{algorithmic}
\end{algorithm*}

\begin{algorithm*}[t!]
   \caption{ADMiRA Instance}\label{algo: class}
\begin{algorithmic}[1]
   \Statex {\bfseries Input:} $\obs$, $\sensing$, $\rank$, Tolerance $ \eta $, MaxIterations
   \Statex {\bfseries Initialize:} $ \signal(0) \leftarrow 0 $, $ \mathcal{X}_0 \leftarrow \lbrace \emptyset \rbrace $, $ i \leftarrow 0 $
   \Statex {\bfseries repeat} 
   \State \hspace{0.16cm} $ \mathcal{D}_i \leftarrow \mathcal{P}_{\rank}\big(\mathcal{P}_{\mathcal{X}_i^{\bot}}\nabla f(\signal(i)) \big) $ \hspace*{\fill}\textit{(Best rank-$ \rank $ subspace orthogonal to $ \mathcal{X}_i $)~~~~~~~~~}
   \State \hspace{0.16cm} $ \mathcal{S}_i \leftarrow \mathcal{D}_i \cup \mathcal{X}_i$ \hspace*{\fill}\textit{(Active subspace expansion)~~~~~~~~~}
   \State \hspace{0.16cm} $ \boldsymbol{V}(i) \leftarrow \argmin_{\boldsymbol{V}: \boldsymbol{V} \in \text{span}(\mathcal{S}_i)} \vectornormbig{\obs - \sensing \boldsymbol{V}}_2^2 $ \hspace*{\fill} \textit{(Error norm reduction via least-squares optimization)~~~~~~~~~}
   \State \hspace{0.16cm} $ \lbrace \mathcal{X}_{i+1},~\signal(i+1)\rbrace \leftarrow \mathcal{P}_{\rank}(\boldsymbol{V}(i))  $ \hspace*{\fill}{\textit{(Best rank-$ \rank $ subspace selection)~~~~~~~~~}}
   \Statex \hspace{0.16cm} $ i \leftarrow i + 1 $
   \Statex {\bfseries until} $\vectornorm{\signal(i) - \signal(i-1)}_2 \leq \eta \vectornorm{\signal(i)}_2 $ or MaxIterations.
\end{algorithmic}
\end{algorithm*}

Explicit descriptions of the proposed algorithms are provided in Algorithms 1 and 2. Algorithm 1 follows from the ALgrebraic PursuitS (ALPS) scheme for the vector case \cite{cevher2011alps}. \textsc{Matrix ALPS I} provides efficient strategies for adaptive step size selection and additional signal estimate updates at each iteration (these motions are explained in detail in the next subsection). Algorithm 2 (ADMiRA) \cite{admira2010} further improves the performance of Algorithm 1 by introducing least squares optimization steps on restricted subspaces---this technique borrows from a series of vector reconstruction algorithms such as CoSaMP \cite{cosamp}, Subspace Pursuit (SP) \cite{SP} and Hard Thresholding Pursuit (HTP) \cite{HTP}. 

In a nutshell, both algorithms simply seek to improve the subspace selection by iteratively collecting an extended subspace $ \mathcal{S}_i $ with $ \text{rank}(\text{span}(\mathcal{S}_i)) \leq 2\rank $ and then finding the rank-$ \rank $ matrix that fits the measurements in this restricted subspace using least squares or gradient descent motions. 

At each iteration, the Algorithms 1 and 2 perform motions from the following list:

\begin{framed}
1) \textit{Best rank-$ \rank $ subspace orthogonal to $ \mathcal{X}_i $ and active subspace expansion:} We identify the best rank-$ \rank $ subspace of the current gradient $ \nabla f(\signal(i)) $, orthogonal to $ \mathcal{X}_i $ and then merge this low-rank subspace with $ \mathcal{X}_i $. This motion guarantees that, at each iteration, we expand the current rank-$ \rank $ subspace estimate with $ \rank $ new, rank-1 orthogonal subspaces to explore. 

2a) \textit{Error norm reduction via greedy descent with adaptive step size selection (Algorithm 1):} We decrease the data error by performing a single gradient descent step. This scheme is based on a one-shot step size selection procedure (Step size selection step)---detailed description of this approach is given in Section \ref{section:ingredients}.

2b) \textit{Error norm reduction via least squares optimization (Algorithm 2):} We decrease the data error $f(\signal)$ on the active $ O(\rank) $-low rank subspace. Assuming $ \sensing $ is well-conditioned over low-rank subspaces, the main complexity of this operation is dominated by the solution of a symmetric linear system of equations. 

3) \textit{Best rank-$ \rank $ subspace selection:} We project the constrained solution onto the set of rank-$\rank$ matrices $ \mathcal{C}_\rank := \lbrace \boldsymbol{A} \in \mathbb{R}^{m \times n}: \text{rank}(\boldsymbol{A}) \leq k \rbrace $ to arbitrate the active support set. 
This step is calculated in polynomial time complexity as a function of $ \dimension $ using SVD or other matrix rank-revealing decomposition algorithms---further discussions about this step and its approximations can be found in Sections \ref{section:approximate} and \ref{section:QR}.

4) \textit{De-bias using gradient descent (Algorithm 1):} We de-bias the current estimate $ \boldsymbol{W}(i) $ by performing an additional gradient descent step, decreasing the data error. The step size selection procedure follows the same motions as in 2a).

\end{framed}

\section{Ingredients for hard thresholding methods}{\label{section:ingredients}}

\subsection{Step size selection}

For the sparse vector approximation problem, recent works on the performance of the IHT algorithm provide strong convergence rate guarantees in terms of RIP constants \cite{Blumensath_iterativehard}. However, as a prerequisite to achieve these strong isometry constant bounds, the step size is set $ \mu_i = 1, \forall i, $ given that the sensing matrix satisfies $ \vectornorm{\boldsymbol{\Phi}}_2^2 < 1 $ where $\vectornorm{\cdot}_2$ denotes the spectral norm \cite{HTP}; similar analysis can be found in \cite{SVP} for the matrix case. From a different perspective, \cite{garg2009gradient} proposes a constant step size $ \mu_i = 1/(1+\delta_{2K}), ~\forall i $, based on a simple but intuitive convergence analysis of the gradient descent method. 

Unfortunately, most of the above problem assumptions are not naturally met; the authors in \cite{NIHT} provide an intuitive example where IHT algorithm behaves differently under various scalings of the sensing matrix; similar counterexamples can be devised for the matrix case. Violating these assumptions usually leads to unpredictable signal recovery performance of the class of hard thresholding methods. 
Therefore, more sophisticated step size selection procedures should be devised to tackle these issues during actual recovery. On the other hand, the computation of R-RIP constants has exponential time complexity for the strategy of \cite{SVP}. 

To this end, existing approaches broadly fall into two categories: constant and adaptive step size selection. In this work, we present efficient strategies to adaptively select the step size $ \mu_i $ that implies fast convergence rate, for mild R-RIP assumptions on $ \sensing $. Constant step size strategies easily follow from \cite{KyrillidisCevherRecipes} and are not listed in this work. 

\textbf{Adaptive step size selection.} 
There is limited work on the adaptive step size selection for hard thresholding methods. To the best of our knowledge, apart from \cite{KyrillidisCevherRecipes}, \cite{NIHT}-\cite{AIHT} are the only studies that attempt this via line searching for the vector case. At the time of review process, we become aware of \cite{tannernormalized} which implements ideas presented in \cite{NIHT} for the matrix case.

\begin{figure*}[!t]
\hspace{-0.2cm}\centering
\begin{tabular}{ccc}
\centerline{\subfigure[]{\includegraphics[width = 0.34\textwidth]{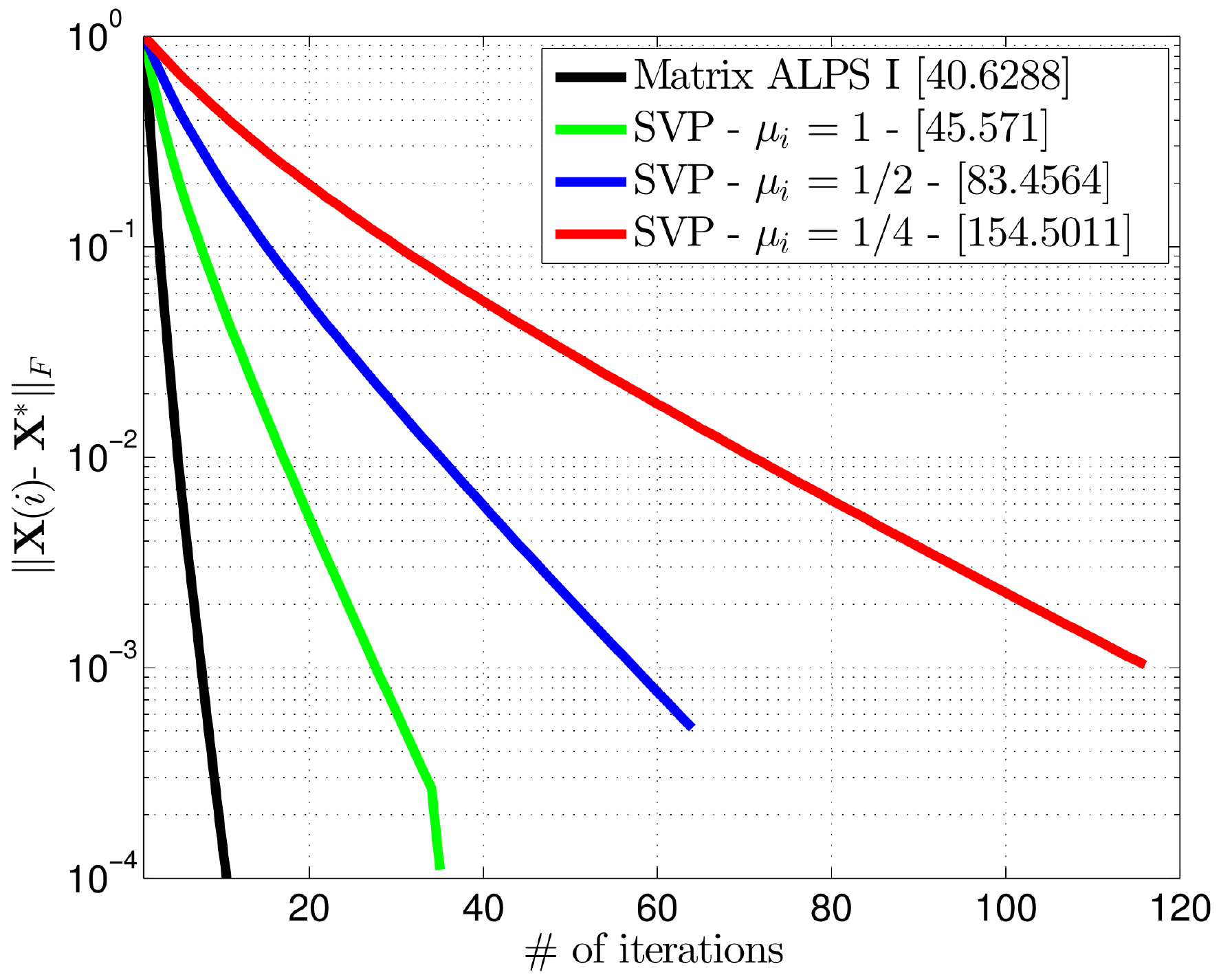}\label{fig:1a}} 
\hfill
\subfigure[]{\includegraphics[width = 0.34\textwidth]{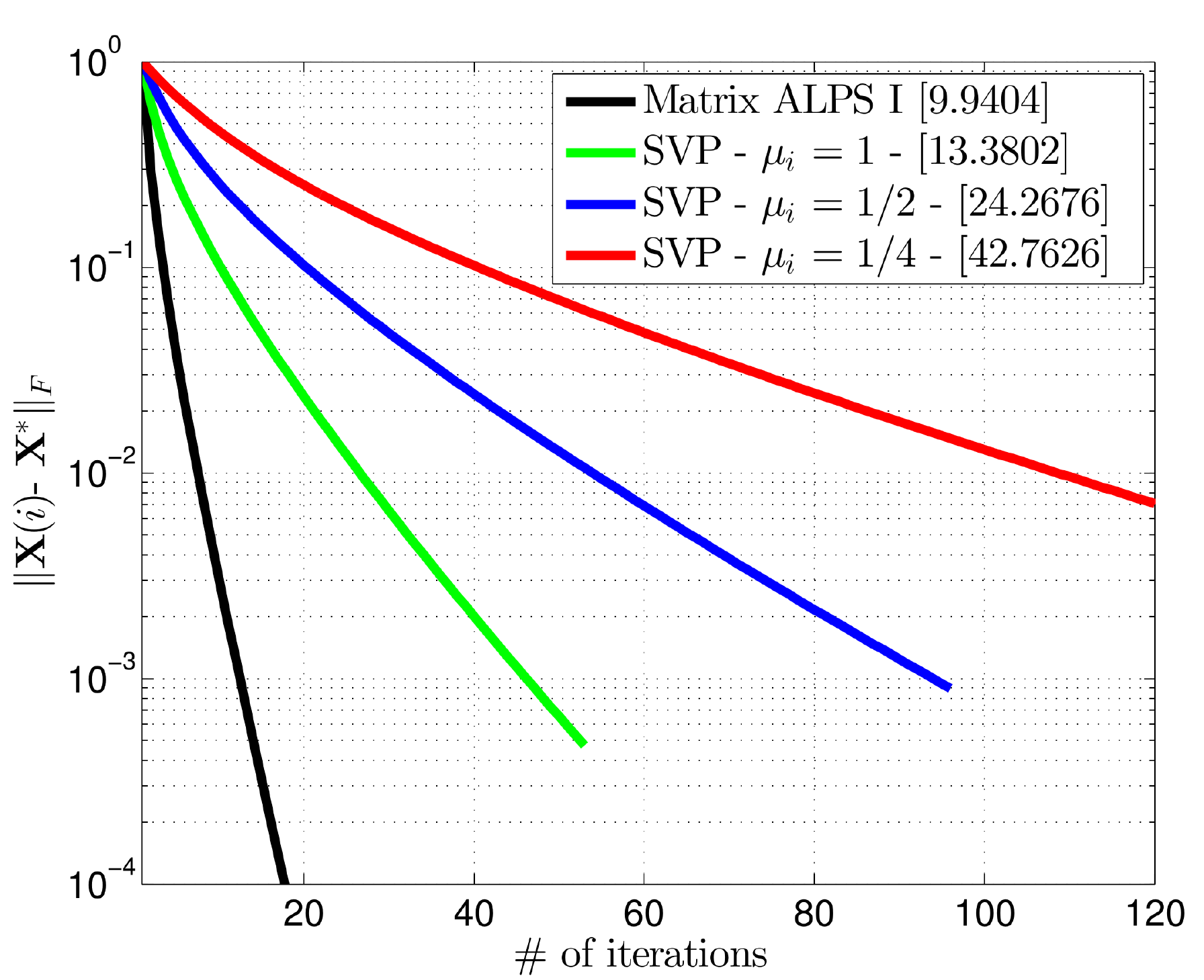}\label{fig:1b}}
\hfill
\subfigure[]{\includegraphics[width = 0.34\textwidth]{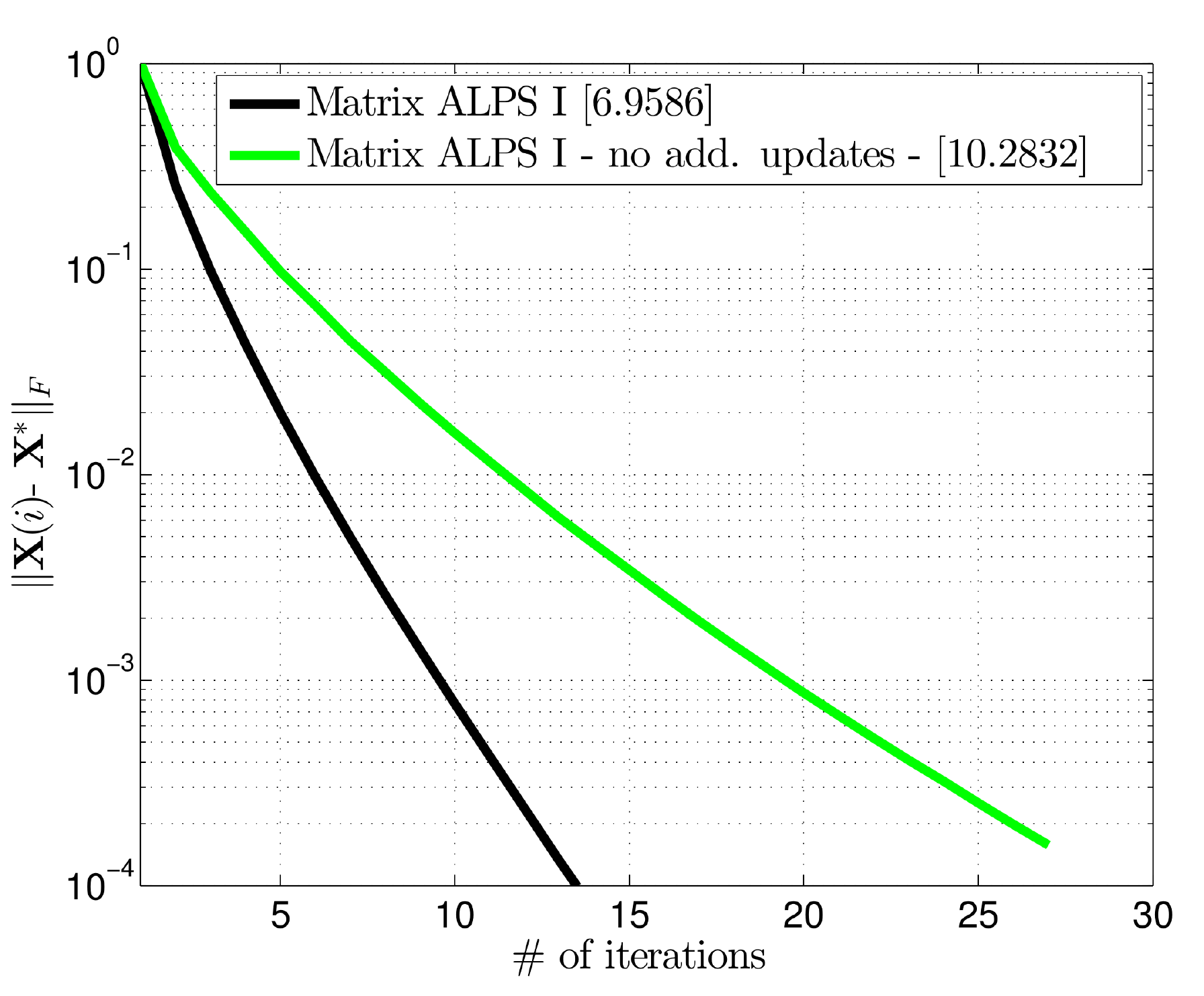}\label{fig:1c}}}
\end{tabular}
\caption{\small\sl Median error per iteration for various step size policies and 20 Monte-Carlo repetitions. In brackets, we present the median time consumed for convergene in seconds. (a) $m = n = 2048 $, $p = 0.4n^2, $ and rank $ \rank = 70 $---$\sensing $ is formed by permuted and subsampled noiselets \cite{coifman2001noiselets}. (b) $n = 2048 $ , $ m = 512 $, $p = 0.4n^2$, and rank $\rank = 50$---we use underdetermined linear map $\sensing$ according to the MC problem (c) $ n = 2048 $, $ m = 512 $, $p = 0.4n^2, $ and rank $ \rank = 40 $---we use underdetermined linear map $\sensing$ according to the MC problem.}
\end{figure*}

According to Algorithm 1, let $ \signal(i) $ be the current rank-$ \rank $ matrix estimate spanned by the set of orthonormal, rank-1 matrices in $ \mathcal{X}_i $. Using regular gradient descent motions, the new rank-$ \rank $ estimate $ \boldsymbol{W}(i) $ can be calculated through: 
\begin{align}
\boldsymbol{V}_i = \signal(i) - \frac{\mu}{2} \nabla f(\signal(i)), ~~~\lbrace \mathcal{W}_i,~\boldsymbol{W}(i)\rbrace \leftarrow \mathcal{P}_{\rank}(\boldsymbol{V}(i)). \nonumber
\end{align} We highlight that the rank-$\rank$ approximate matrix may not be unique. It then holds that the subspace spanned by $ \mathcal{W}_i $ originates: $ i) $ either from the subspace of $ \mathcal{X}_i $, $ ii) $ or from the best subspace (in terms of the Frobenius norm metric) of the current gradient $ \nabla f(\signal(i)) $, {\it orthogonal to $ \mathcal{X}_i $}, $ iii) $ or from the combination of orthonormal, rank-1 matrices lying on the union of the above two subspaces. The statements above can be summarized in the following expression:
\begin{align}
\text{span}(\mathcal{W}_i) \in \text{span}\left( \mathcal{D}_i \cup \mathcal{X}_i\right)
\end{align} for any step size $ \mu_i $ and $\mathcal{D}_i \leftarrow \mathcal{P}_{\rank}\big( \mathcal{P}_{\mathcal{X}_i^{\bot}} \nabla f(\signal(i)) \big)$. Since $ \text{rank}(\text{span}(\mathcal{W}_i)) \leq \rank $, we easily deduce the following key observation: let $ \mathcal{S}_i \leftarrow \mathcal{D}_i \cup \mathcal{X}_i $ be a set of rank-1, orthonormal matrices where $ \text{rank}(\text{span}(\mathcal{S}_i)) \leq 2\rank $. 
Given $ \mathcal{W}_{i} $ is unknown before the $ i $-th iteration, $ \mathcal{S}_i $ spans the smallest subspace that contains $ \mathcal{W}_i $ such that the following equality
\begin{align}
\mathcal{P}_{\rank}&\left(\signal(i) - \frac{\mu_i}{2}\nabla f(\signal(i)) \right) \nonumber \\ &= \mathcal{P}_{\rank}\left(\signal(i) - \frac{\mu_i}{2}\mathcal{P}_{\mathcal{S}_i}\nabla f(\signal(i)) \right)
\end{align} necessarily holds.\footnote{In the case of multiple identical singular values, any ties are lexicographically dissolved.}

To compute step-size $ \mu_i $, we use:
\begin{align}
\mu_i &= \argmin_\mu \vectornormbig{\obs - \sensing \left(\signal(i)  - \frac{\mu}{2} \mathcal{P}_{\mathcal{S}_i}\nabla f(\signal(i))\right)}_2^2 \nonumber \\ &= \frac{\| \mathcal{P}_{\mathcal{S}_i}\nabla f(\signal(i))\|_F^2}{\|\sensing\mathcal{P}_{\mathcal{S}_i}\nabla f(\signal(i))\|_2^2}, \label{eq:step}
\end{align} i.e., $ \mu_i $ is the minimizer of the objective function, given the current gradient $\nabla f(\signal(i))$. 
Note that:
\begin{align}{\label{eq:additional:1}}
1-\delta_{2\rank}(\sensing)\le \frac{1}{\mu_i} \le 1+\delta_{2\rank}(\sensing),
\end{align} due to R-RIP---i.e., we select $ 2\rank $ subspaces such that $ \mu_i $ satisfies (\ref{eq:additional:1}). We can derive similar arguments for the additional step size selection $ \xi_i $ in Step 6 of Algorithm 1. 

Adaptive $ \mu_i $ scheme results in more restrictive worst-case isometry constants compared to \cite{HTP, foucart2010sparse, SVP}, but faster convergence and better stability are empirically observed in general. In \cite{SVP}, the authors present the Singular Value Projection (SVP) algorithm, an iterative hard thresholding algorithm for the ARM problem. According to \cite{SVP}, both constant and iteration dependent (but user-defined) step sizes are considered. Adaptive strategies presented in \cite{SVP} require the computation of R-RIP constants which has exponential time complexity. Figures 1(a)-(b) illustrate some characteristic examples. The performance varies for different problem configurations. For $\mu > 1$, SVP {\it diverges} for various test cases. We note that, for large fixed matrix dimensions $m, n$, adaptive step size selection becomes computationally expensive compared to constant step size selection strategies, as the rank of $\bestsignal$ increases.

\subsection{Updates on restricted subspaces}

In Algorithm 1, at each iteration, the new estimate $ \boldsymbol{W}(i) \leftarrow \mathcal{P}_{\rank}\left(\boldsymbol{V}(i)\right) $ can be further refined by applying a single or multiple gradient descent updates with line search restricted on $ \mathcal{W}_{i} $ \cite{HTP} (Step 7 in Algorithm 1): 
\begin{align}
\signal(i+1) \leftarrow \boldsymbol{W}(i) - \frac{\xi_i}{2} \mathcal{P}_{\mathcal{W}_i} \nabla f(\boldsymbol{W}(i)),  \nonumber
\end{align} $\text{where}~ \xi_i = \frac{\vectornorm{\mathcal{P}_{\mathcal{W}_i} \nabla f(\boldsymbol{W}(i))}_F^2}{\vectornorm{\sensing \mathcal{P}_{\mathcal{W}_i} \nabla f(\boldsymbol{W}(i))}_2^2}. $ In spirit, the gradient step above is the same as block coordinate descent in convex optimization where we find the subspaces adaptively. Figure 1(c) depicts the acceleration achieved by using additional gradient updates over restricted low-rank subspaces for a test case.

\subsection{Acceleration via memory-based schemes and low-rank matrix approximations}
Memory-based techniques can be used to improve convergence speed. Furthermore, low-rank matrix approximation tools overcome the computational overhead of computing the best low-rank projection by inexactly solving (\ref{eq:svd_proj}). We keep the discussion on memory utilization for Section \ref{sec:memory} and low-rank matrix approximations for Sections \ref{section:approximate} and \ref{section:QR} where we present new algorithmic frameworks for low-rank matrix recovery.

\begin{figure*}[!t]
\hspace{-0.2cm}\centering
\begin{tabular}{ccc}
\centerline{\subfigure[]{\includegraphics[width = 0.3\textwidth]{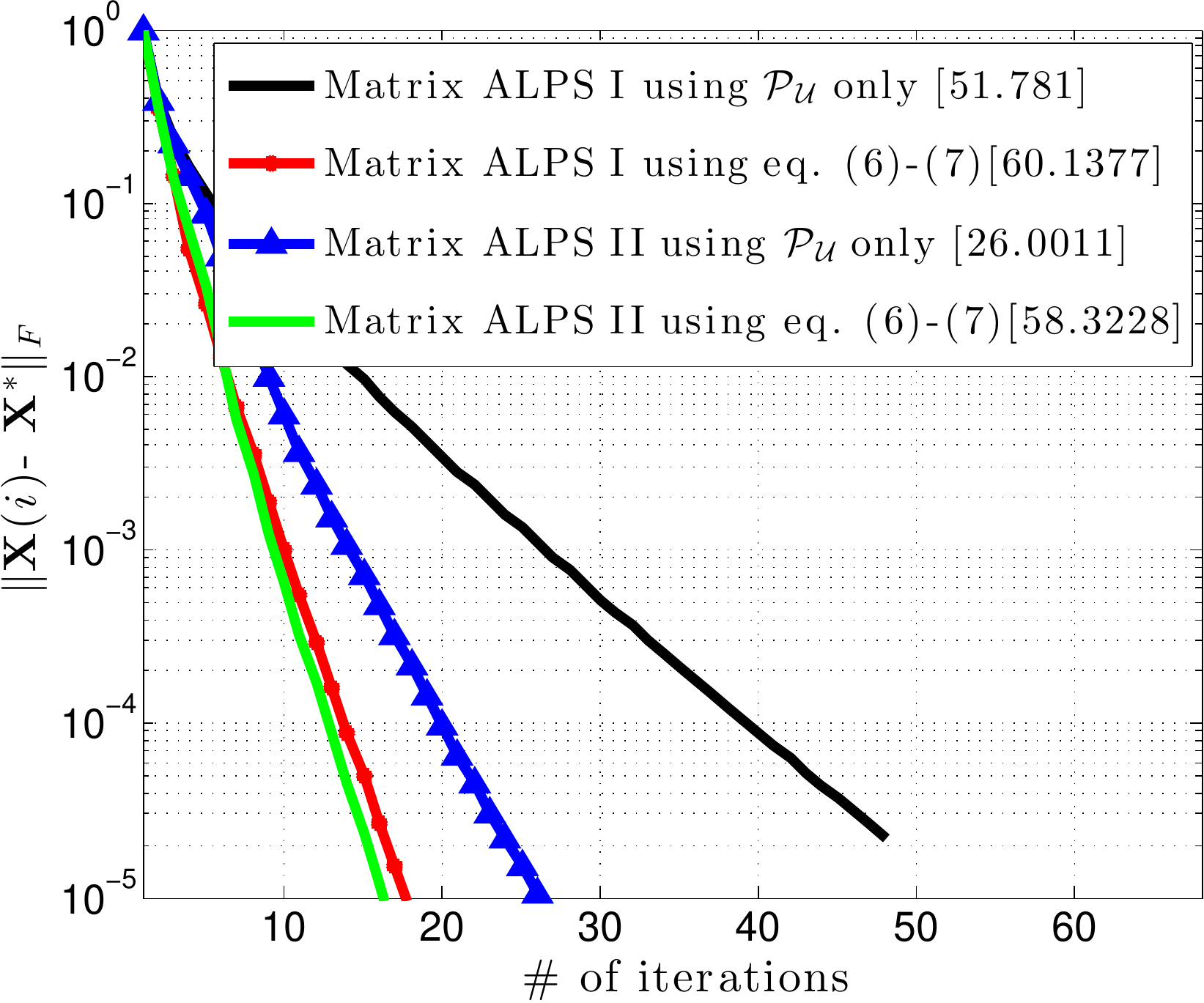}} 
\hfill
\subfigure[]{\includegraphics[width = 0.3\textwidth]{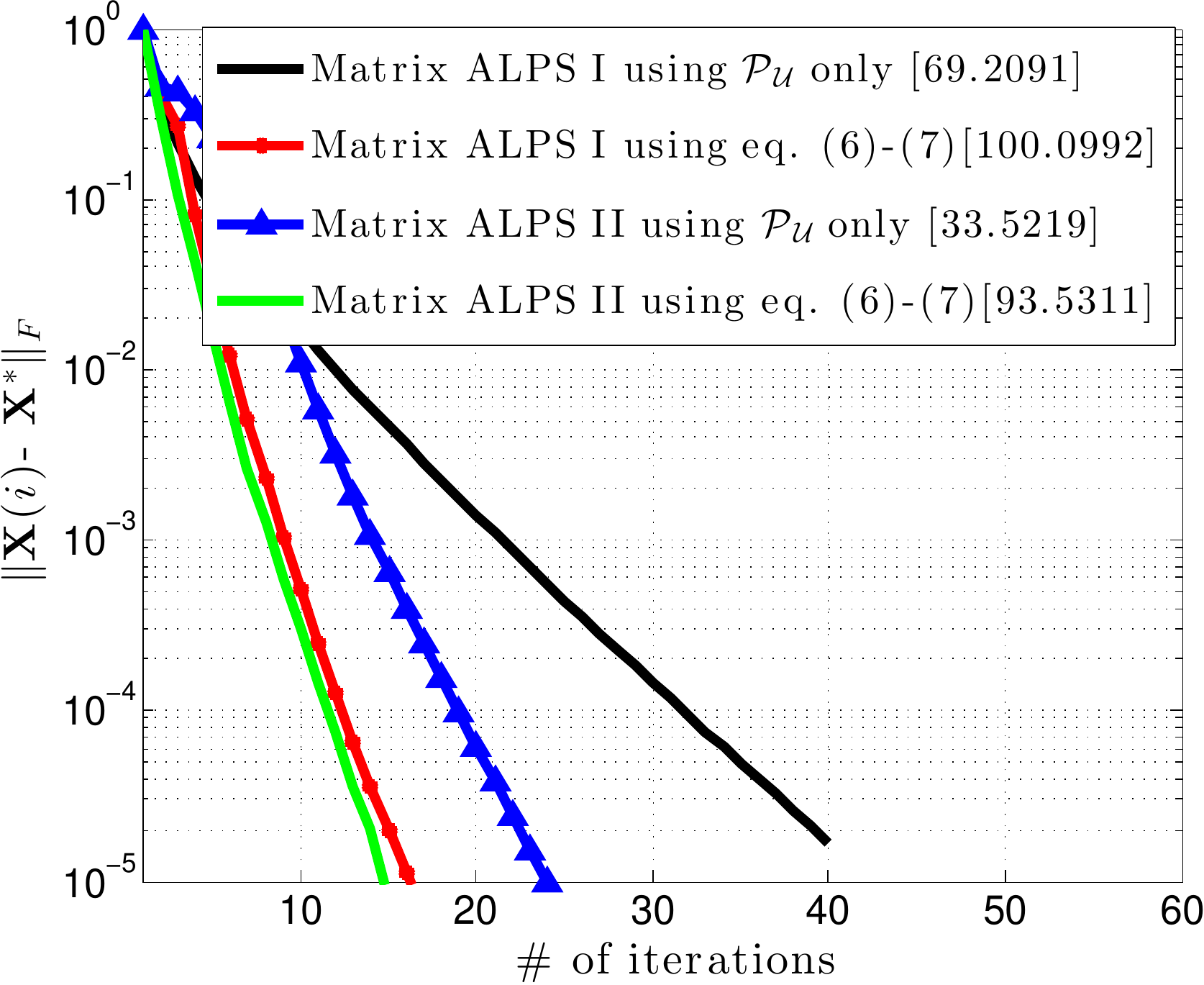}}
\hfill
\subfigure[]{\includegraphics[width = 0.3\textwidth]{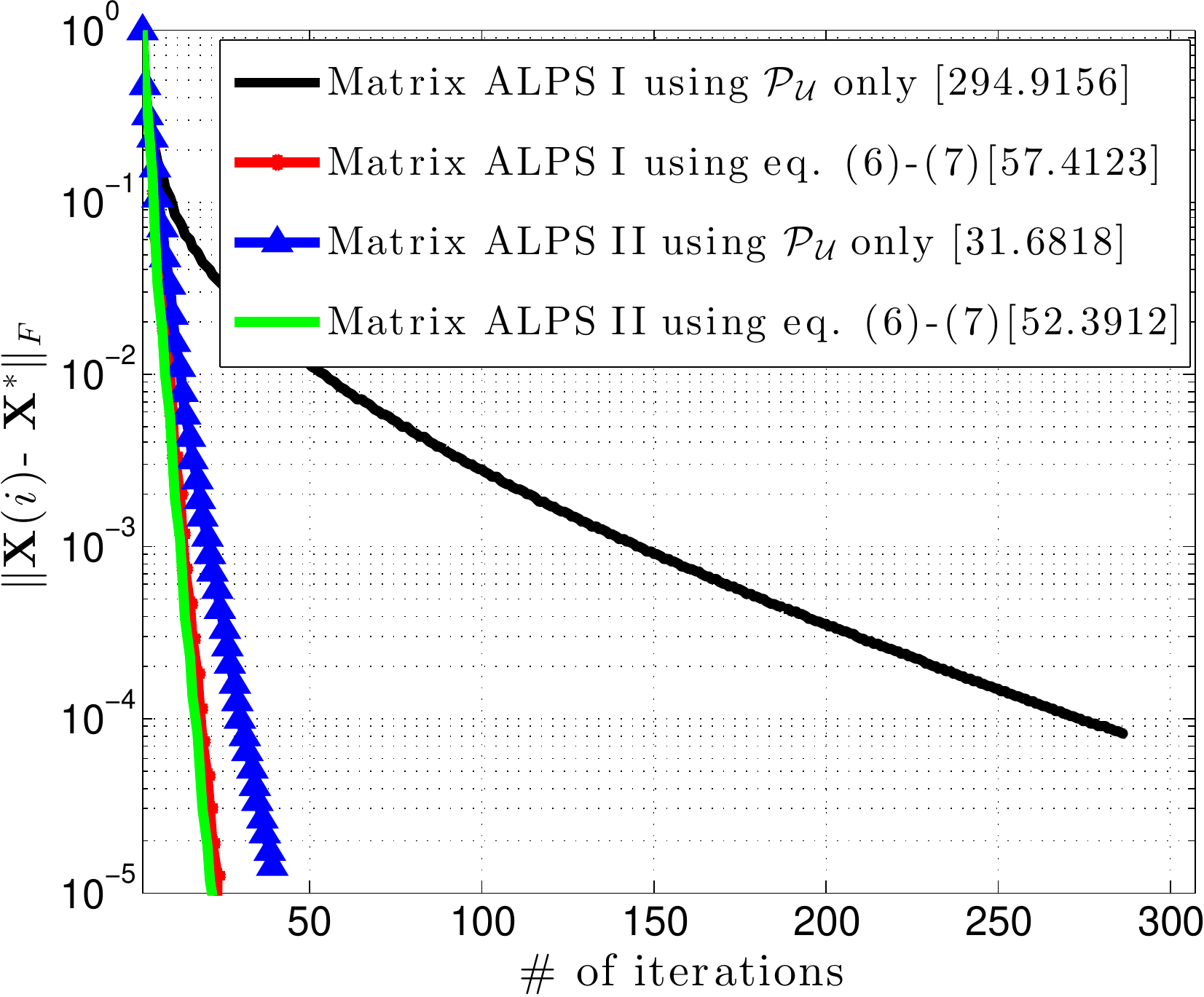}}}
\end{tabular}
\caption{\small\sl Median error per iteration for \textsc{Matrix ALPS I} and \textsc{Matrix ALPS II} variants over 10 Monte-Carlo repetitions. In brackets, we present the median time consumed for convergene in seconds. (a) $n = 2048, m = 512$, $p = 0.25n^2, $ and rank $ \rank = 40 $. (b) $n = 2000, m = 1000$, $p = 0.25n^2, $ and rank $ \rank = 50 $. (c) $ n = m = 1000 $, $p = 0.25n^2, $ and rank $ \rank = 50 $. }\label{proj_figure}
\end{figure*}

\subsection{Active low-rank subspace tracking}
Per iteration of Algorithms 1 and 2, we perform projection operations $\mathcal{P}_{\mathcal{S}}\signal$ and $\mathcal{P}_{\mathcal{S}^{\bot}}\signal$ where $\signal \in \mathbb{R}^{\dimension}$, as described by (\ref{eq:newproj}) and (\ref{eq:neworthoproj}), respectively. Since $\mathcal{S} $ is constituted by outer products of left and right singular vectors as in Definition \ref{def:svd_proj}, $\mathcal{P}_{\mathcal{S}}\signal $ (resp. $\mathcal{P}_{\mathcal{S}^{\bot}}\signal$) projects onto the (resp. complement of the) best low-rank subspace in $R(\signal)$ and $R(\signal^T)$. These operations are highly connected with the adaptive step size selection and the updates on restricted subspaces. Unfortunately, the time-complexity to compute $\mathcal{P}_{\mathcal{S}}\signal$ is dominated by three matrix-matrix multiplications which decelerates the convergence of the proposed schemes in high-dimensional settings. To accelerate the convergence in many test cases, it turns out that we do not have to use the best projection $\mathcal{P}_{\mathcal{S}}$ in practice.\footnote{From a different perspective and for a different problem case, similar ideas have been used in \cite{ALM}.} Rather, employing {\it inexact} projections is sufficient to converge to the optimal solution: either $i)$ $\mathcal{P}_{\mathcal{U}}\signal$ onto the best low-rank subspace in $R(\signal)$ only (if $m \ll n$) or $ii)$ $\signal\mathcal{P}_{\mathcal{V}}$ onto the best low-rank subspace in $R(\signal^T)$ only (if $m \gg n$)\footnote{We can move between these two cases by a simple transpose of the problem.}; $\mathcal{P}_{\mathcal{U}}$ and $\mathcal{P}_{\mathcal{V}}$ are defined in Definition \ref{def:svd_proj} and require only one matrix-matrix multiplication. 

Figure 2 shows the time overhead due to the exact projection application $\mathcal{P}_{\mathcal{S}}$ compared to $\mathcal{P}_{\mathcal{U}}$ for $m \leq n$. In Figure 2(a), we use subsampled and permuted noiselets for linear map $\sensing$ and in Figures 2(b)-(c), we test the MC problem. While in the case $m = n$ the use of (\ref{eq:newproj})-(\ref{eq:neworthoproj}) has a clear advantage over inexact projections using only $\mathcal{P}_{\mathcal{U}}$, the latter case converges faster to the desired accuracy $5\cdot 10^{-4}$ when $m \ll n$ as shown in Figures 2(a)-(b).
In our derivations, we assume $\mathcal{P}_{\mathcal{S}}$ and $\mathcal{P}_{\mathcal{S}^{\bot}}$ as defined in  (\ref{eq:newproj}) and (\ref{eq:neworthoproj}). 

\section{Convergence guarantees}{\label{section:convergence}}

In this section, we present the theoretical convergence guarantees of Algorithms 1 and 2 as functions of R-RIP constants. 
To characterize the performance of the proposed algorithms, both in terms of convergence rate and noise resilience, we use the following recursive expression:
\begin{align}
\vectornorm{\signal(i+1) - \xtrue}_F \leq \rho \vectornorm{\signal(i) - \xtrue}_F + \gamma \vectornorm{\noise}_2. \label{it:00}
\end{align} In (\ref{it:00}), $ \gamma $ denotes the approximation guarantee and provides insights into algorithm's reconstruction capabilities when additive noise is present; $ \rho < 1 $ expresses the convergence rate towards a region around $ \xtrue $, whose radius is determined by $ \frac{\gamma}{1-\rho} \vectornorm{\noise}_2 $. In short, (\ref{it:00}) characterizes how the distance to the true signal $ \bestsignal $ is decreased and how the noise level affects the accuracy of the solution, at each iteration.

\subsection{\textsc{Matrix ALPS I}}
An important lemma for our derivations below is given next:

\begin{lemma}\label{lemma:act_subspace_exp}[Active subspace expansion] Let $ \signal(i) $ be the matrix estimate at the $ i $-th iteration and let $ \mathcal{X}_i $ be a set of orthonormal, rank-1 matrices such that $ \mathcal{X}_i \leftarrow \mathcal{P}_{\rank}(\signal(i)) $.  Then, at each iteration, the Active Subspace Expansion step in Algorithms 1 and 2 identifies information in $ \bestsignal $, such that:
\begin{align}
\vectornormbig{\mathcal{P}_{\mathcal{X}^\ast} \mathcal{P}_{\mathcal{S}_i^{\bot}}\bestsignal}_F &\leq (2\delta_{2\rank} + 2\delta_{3\rank})\vectornormbig{\signal(i) - \bestsignal}_F \nonumber \\ &+ \sqrt{2(1+\delta_{2\rank})}\vectornormbig{\noise}_2, \label{eq:lemma6}
\end{align} where $ \mathcal{S}_i \leftarrow \mathcal{X}_i \cup \mathcal{D}_i $ and $ \mathcal{X}^\ast \leftarrow \mathcal{P}_{\rank}(\bestsignal) $.
\end{lemma}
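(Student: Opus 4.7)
The approach I would take mirrors the CoSaMP/ADMiRA-style ``identification lemma'' (cf.~\cite{cosamp,admira2010}), adapted to the matrix subspace setting. The core idea is to exploit the \emph{optimality} of $\mathcal{D}_i$ as the best rank-$\rank$ approximation of $\mathcal{P}_{\mathcal{X}_i^\bot}\nabla f(\signal(i))$, and then convert the resulting inequality into a bound on $\vectornormbig{\mathcal{P}_{\mathcal{X}^\ast}\mathcal{P}_{\mathcal{S}_i^\bot}\bestsignal}_F$ via the R-RIP lemmas established earlier.

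First, I would record two enabling structural observations. By (\ref{eq:neworthoproj}), $\mathcal{P}_{\mathcal{X}_i^\bot}\nabla f(\signal(i)) = (\mathbf{I}-\mathcal{P}_{\mathcal{U}_i})\nabla f(\signal(i))(\mathbf{I}-\mathcal{P}_{\mathcal{V}_i})$ has column and row spaces lying in $\mathcal{U}_i^\bot$ and $\mathcal{V}_i^\bot$, so the singular vectors defining $\mathcal{D}_i$ sit in those complements, giving $\text{span}(\mathcal{D}_i)\subseteq\mathcal{X}_i^\bot$ and the orthogonal decomposition
\begin{align*}
\mathcal{P}_{\mathcal{S}_i^\bot}=\mathcal{P}_{\mathcal{X}_i^\bot}-\mathcal{P}_{\mathcal{D}_i}.
\end{align*}
By the same identity, $\mathcal{P}_{\mathcal{X}_i^\bot}\bestsignal=(\mathbf{I}-\mathcal{P}_{\mathcal{U}_i})\bestsignal(\mathbf{I}-\mathcal{P}_{\mathcal{V}_i})$ has rank at most $\rank$, since $\bestsignal$ does, and $\mathcal{P}_{\mathcal{X}_i^\bot}\signal(i)=0$.

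Next, I would invoke the optimality of the truncated SVD applied to $A:=\mathcal{P}_{\mathcal{X}_i^\bot}\nabla f(\signal(i))$: for every rank-$\rank$ matrix $W$,
\begin{align*}
\vectornormbig{\mathcal{P}_{\mathcal{S}_i^\bot}\nabla f(\signal(i))}_F = \vectornormbig{A - \mathcal{P}_{\mathcal{D}_i}A}_F \leq \vectornormbig{A - W}_F,
\end{align*}
using the decomposition above for the left equality. The strategic choice is $W=-2\mathcal{P}_{\mathcal{X}_i^\bot}(\bestsignal-\signal(i))=-2\mathcal{P}_{\mathcal{X}_i^\bot}\bestsignal$, which is rank-$\leq\rank$. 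After substituting $\nabla f(\signal(i))=-2\sensing^\ast\sensing(\bestsignal-\signal(i))-2\sensing^\ast\noise$ the RHS collapses to $2\vectornormbig{\mathcal{P}_{\mathcal{X}_i^\bot}(\sensing^\ast\sensing-\mathbf{I})(\bestsignal-\signal(i))+\mathcal{P}_{\mathcal{X}_i^\bot}\sensing^\ast\noise}_F$. I would then project the inequality through $\mathcal{P}_{\mathcal{X}^\ast}$ to surface the target quantity and bound each term: the deterministic cross-term is controlled by Lemma~\ref{lemma:4} with $\mathcal{S}_1\cup\mathcal{S}_2=\mathcal{X}^\ast\cup\mathcal{S}_i$ (rank $\leq 3\rank$, contributing $\delta_{3\rank}$) and by Lemma~\ref{lemma:3} on $\mathcal{S}_i$ (rank $\leq 2\rank$, contributing $\delta_{2\rank}$); the noise term is bounded by Lemma~\ref{lemma:1} on a rank-$\leq 2\rank$ subspace, producing the $\sqrt{1+\delta_{2\rank}}$ factor. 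The outer factor of $2$ in $2\delta_{2\rank}+2\delta_{3\rank}$ is inherited from the coefficient $-2$ in $\nabla f$, and the $\sqrt{2}$ in the noise term arises from a $\sqrt{a^2+b^2}\leq \sqrt{2}\,\max\{a,b\}$ bound when combining two noise contributions that appear on opposite sides of the optimality comparison.

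The main obstacle will be the careful subspace-rank bookkeeping throughout: at each step one must verify whether the relevant R-RIP constant is $\delta_{2\rank}$ (applicable on $\mathcal{S}_i$-type subspaces) or $\delta_{3\rank}$ (on $\mathcal{X}^\ast\cup\mathcal{S}_i$-type subspaces), and the comparison matrix $W$ must be chosen with enough precision that, after triangle inequalities and RIP substitutions, the residual quantity on the left is exactly $\vectornormbig{\mathcal{P}_{\mathcal{X}^\ast}\mathcal{P}_{\mathcal{S}_i^\bot}\bestsignal}_F$ and not a looser majorant. A secondary subtlety is that $\mathcal{P}_{\mathcal{X}^\ast}\mathcal{P}_{\mathcal{S}_i}$ is a composition of non-commuting projections and is not itself a projection, so any reordering of projectors must be justified explicitly rather than by a symbolic manipulation.
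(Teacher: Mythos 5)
Your high-level strategy---exploit the optimality of $\mathcal{D}_i$ and then convert via the R-RIP lemmas---is the same as the paper's, but two of your concrete steps do not go through. First, ``projecting the inequality through $\mathcal{P}_{\mathcal{X}^\ast}$'' is not a valid operation: from $\vectornormbig{X}_F \leq \vectornormbig{Y}_F$ one cannot conclude anything about $\vectornormbig{\mathcal{P}_{\mathcal{X}^\ast}X}_F$ versus $\vectornormbig{\mathcal{P}_{\mathcal{X}^\ast}Y}_F$, so the target quantity cannot be ``surfaced'' this way. Second, your choice of comparison matrix $W = -2\mathcal{P}_{\mathcal{X}_i^{\bot}}\bestsignal$ produces a right-hand side $2\vectornormbig{\mathcal{P}_{\mathcal{X}_i^{\bot}}\big[(\sensing^\ast\sensing - \id)(\bestsignal - \signal(i)) + \sensing^\ast\noise\big]}_F$ in which $(\sensing^\ast\sensing - \id)$ acts on a rank-$2\rank$ matrix but is only screened by the \emph{high-rank} projector $\mathcal{P}_{\mathcal{X}_i^{\bot}}$; Lemmas \ref{lemma:3} and \ref{lemma:4} require the operator to be sandwiched between projectors onto rank-$O(\rank)$ subspaces, so this term is not controllable by R-RIP constants at all.

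The paper's proof avoids both issues by working with squared captured energies rather than residuals: Eckart--Young gives $\vectornormbig{\mathcal{P}_{\mathcal{D}_i}\mathcal{P}_{\mathcal{X}_i^{\bot}}\nabla f(\signal(i))}_F^2 \geq \vectornormbig{\mathcal{P}_{\mathcal{X}^\ast}\mathcal{P}_{\mathcal{X}_i^{\bot}}\nabla f(\signal(i))}_F^2$ (equivalently, your $W$ should be $\mathcal{P}_{\mathcal{X}^\ast}A$, a projection of $A$ itself, not a multiple of $\mathcal{P}_{\mathcal{X}_i^{\bot}}\bestsignal$). Setting $\mathcal{S}_i^\ast \leftarrow \text{ortho}(\mathcal{X}_i\cup\mathcal{X}^\ast)$ and cancelling the common subspace yields $\vectornormbig{\mathcal{P}_{\mathcal{S}_i\setminus\mathcal{S}_i^\ast}\nabla f(\signal(i))}_F \geq \vectornormbig{\mathcal{P}_{\mathcal{S}_i^\ast\setminus\mathcal{S}_i}\nabla f(\signal(i))}_F$, where now every projector is onto a rank-$\leq 3\rank$ subspace. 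The key structural facts are that $\mathcal{P}_{\mathcal{S}_i\setminus\mathcal{S}_i^\ast}(\signal(i)-\bestsignal) = \mathbf{0}$ (so the left side reduces to pure R-RIP error terms bounded by $2\delta_{3\rank}\vectornormbig{\signal(i)-\bestsignal}_F$ plus noise) and that $\mathcal{P}_{\mathcal{S}_i^\ast\setminus\mathcal{S}_i}(\bestsignal-\signal(i)) = \mathcal{P}_{\mathcal{X}^\ast}\mathcal{P}_{\mathcal{S}_i^{\bot}}\bestsignal$, which is exactly how the target quantity emerges---as the leading term of a lower bound on the right side obtained by adding and subtracting the identity, not by applying a projector to both sides of a norm inequality. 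Your bookkeeping for the constants ($\delta_{3\rank}$ on the $3\rank$-dimensional unions, $\delta_{2\rank}$ on $\mathcal{S}_i$-type sets, and the $\sqrt{2}$ from combining the two orthogonal noise contributions) is consistent with the paper once the argument is repaired along these lines.
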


Lemma \ref{lemma:act_subspace_exp} states that, at each iteration, the active subspace expansion step identifies a 2$ \rank $ rank subspace such that the amount of unrecovered energy of $ \bestsignal $---i.e., the projection of $ \bestsignal $ onto the orthogonal subspace of $ \text{span}(\mathcal{S}_i) $---is bounded by (\ref{eq:lemma6}).

Then, Theorem 1 characterizes the iteration invariant of Algorithm 1 for the matrix case:
\begin{theorem}\label{thm:mALPS0}[Iteration invariant for \textsc{Matrix ALPS I}] The $(i+1)$-th matrix estimate $\signal(i+1)$ of \textsc{Matrix ALPS I} satisfies the following recursion:
\begin{align}
\vectornormbig{\signal(i+1) - \bestsignal}_F &\leq \rho \vectornormbig{\signal(i) - \bestsignal}_F + \gamma \vectornormbig{\noise}_2, \label{eq:mALPS0:thm}
\end{align} where $ \rho:= \Big( \frac{1 + 2\delta_{2\rank}}{1-\delta_{2\rank}}\Big)\Big(\frac{4\delta_{2\rank}}{1-\delta_{2\rank}} + (2\delta_{2\rank} + 2\delta_{3\rank})\frac{2\delta_{3\rank}}{1-\delta_{2\rank}}\Big) $ and $\gamma := \Big( \frac{1 + 2\delta_{2\rank}}{1-\delta_{2\rank}}\Big)\Big(\frac{2\sqrt{1+\delta_{2\rank}}}{1 - \delta_{2\rank}} + \frac{2\delta_{3\rank}}{1-\delta_{2\rank}}  \sqrt{2(1+\delta_{2\rank})}\Big) + \frac{\sqrt{1+\delta_{\rank}}}{1-\delta_{\rank}}.$ Moreover, when $\delta_{3\rank} < 0.1235$, the iterations are contractive.
\end{theorem}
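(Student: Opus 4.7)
I will obtain the recurrence by tracking the error through the four algorithmic substeps of one outer iteration -- active subspace expansion, gradient descent on the rank-$2\rank$ subspace $\mathcal{S}_i$, best rank-$\rank$ projection onto $\mathcal{W}_i$, and a debias gradient step on $\mathcal{W}_i$ -- and by pairing each substep with the R-RIP lemma that controls it. Lemma \ref{lemma:act_subspace_exp} controls the ``missing mass'' of $\bestsignal$ outside $\mathcal{S}_i$; Lemma \ref{lemma:3} controls the contraction of the restricted gradient map $\id - \mu_i\mathcal{P}_{\mathcal{S}_i}\sensing^\ast\sensing\mathcal{P}_{\mathcal{S}_i}$; Lemma \ref{lemma:4} controls the cross-subspace term $\mathcal{P}_{\mathcal{S}_i}\sensing^\ast\sensing\mathcal{P}_{\mathcal{S}_i^\bot}\bestsignal$; and Lemma \ref{lemma:1} controls the noise term $\mathcal{P}_{\mathcal{S}_i}\sensing^\ast\noise$. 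The specific numerical constants in $\rho$ and $\gamma$ are then produced by carefully composing these four lemmas in the correct order.

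Concretely, substituting $\nabla f(\signal(i)) = 2\sensing^\ast\sensing(\signal(i) - \bestsignal) - 2\sensing^\ast\noise$ into the definition of $\boldsymbol{V}(i)$ and exploiting $\signal(i) \in \text{span}(\mathcal{S}_i)$ yields the three-term identity
\begin{align*}
\boldsymbol{V}(i) - \mathcal{P}_{\mathcal{S}_i}\bestsignal &= (\id - \mu_i\mathcal{P}_{\mathcal{S}_i}\sensing^\ast\sensing\mathcal{P}_{\mathcal{S}_i})(\signal(i) - \mathcal{P}_{\mathcal{S}_i}\bestsignal) \\
&\quad + \mu_i\mathcal{P}_{\mathcal{S}_i}\sensing^\ast\sensing\mathcal{P}_{\mathcal{S}_i^\bot}\bestsignal + \mu_i\mathcal{P}_{\mathcal{S}_i}\sensing^\ast\noise.
\end{align*}
The bound (\ref{eq:additional:1}) on $\mu_i$ feeds Lemma \ref{lemma:3} to contract the first term by $\tfrac{2\delta_{2\rank}}{1-\delta_{2\rank}}$; Lemma \ref{lemma:4}, applied with $\mathcal{S}_1 = \mathcal{S}_i$ and $\mathcal{S}_2 = \mathcal{X}^\ast$ of combined rank at most $3\rank$, bounds the second term by $\tfrac{\delta_{3\rank}}{1-\delta_{2\rank}}$ times a missing-mass term; and Lemma \ref{lemma:1} bounds the noise term by $\tfrac{\sqrt{1+\delta_{2\rank}}}{1-\delta_{2\rank}}\|\noise\|_2$. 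The optimality property $\|\boldsymbol{W}(i)-\boldsymbol{V}(i)\|_F \le \|\bestsignal - \boldsymbol{V}(i)\|_F$ of the best rank-$\rank$ projection (because $\bestsignal$ is itself a rank-$\rank$ candidate), combined with the triangle inequality, yields $\|\boldsymbol{W}(i)-\bestsignal\|_F \le 2\|\boldsymbol{V}(i)-\bestsignal\|_F$; this doubling is the origin of the $\tfrac{4\delta_{2\rank}}{1-\delta_{2\rank}}$ coefficient and of the factor of two multiplying $\tfrac{\delta_{3\rank}}{1-\delta_{2\rank}}$ inside the inner parenthesis of $\rho$. Substituting the Lemma \ref{lemma:act_subspace_exp} bound on the missing-mass term then converts the residual $\mathcal{P}_{\mathcal{S}_i^\bot}\bestsignal$ contributions into multiples of $\|\signal(i)-\bestsignal\|_F$ and $\|\noise\|_2$.

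The final debias step $\signal(i+1) = \boldsymbol{W}(i) + \xi_i\mathcal{P}_{\mathcal{W}_i}\sensing^\ast(\obs - \sensing\boldsymbol{W}(i))$ is handled by an identical three-term decomposition, this time with projections onto $\mathcal{W}_i$, which has rank at most $\rank$; Lemma \ref{lemma:3} and Lemma \ref{lemma:1} on this rank-$\rank$ subspace produce, respectively, the outer multiplicative factor $\tfrac{1+2\delta_{2\rank}}{1-\delta_{2\rank}}$ and the additive tail term $\tfrac{\sqrt{1+\delta_{\rank}}}{1-\delta_{\rank}}\|\noise\|_2$ appearing in $\gamma$. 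Composition with the bound on $\|\boldsymbol{W}(i)-\bestsignal\|_F$ reproduces the recursion (\ref{eq:mALPS0:thm}) exactly; the contractivity condition $\rho < 1$ then reduces to a monotone scalar inequality in $\delta_{3\rank}$ whose numerical root gives the threshold $\delta_{3\rank} < 0.1235$.

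The main obstacle is not conceptual but combinatorial: keeping track of the three distinct rank budgets ($\rank$ for $\mathcal{W}_i$ and $\mathcal{X}^\ast$, $2\rank$ for $\mathcal{S}_i$, $3\rank$ for $\mathcal{S}_i \cup \mathcal{X}^\ast$) that each lemma requires, arranging the decomposition so that the factor of two injected by the best rank-$\rank$ projection multiplies exactly the terms appearing inside the inner parenthesis of $\rho$ and no others, and then relating the particular form of the missing-mass bound furnished by Lemma \ref{lemma:act_subspace_exp} to the expression produced after the Lemma \ref{lemma:4} cross-subspace estimate. No additional tool beyond the four R-RIP lemmas and the defining optimality property of $\mathcal{P}_{\rank}$ is needed.
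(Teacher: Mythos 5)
Your architecture---the three-term decomposition of the gradient update, Lemmas \ref{lemma:3}, \ref{lemma:4} and \ref{lemma:1} for the contraction, cross-subspace and noise terms, Lemma \ref{lemma:act_subspace_exp} for the missing mass, and a separate pass through the de-bias step producing the outer factor $\frac{1+2\delta_{2\rank}}{1-\delta_{2\rank}}$ and the tail $\frac{\sqrt{1+\delta_{\rank}}}{1-\delta_{\rank}}\vectornorm{\noise}_2$---matches the paper's. The gap is in how you pass through the best rank-$\rank$ projection. You bound $\vectornorm{\boldsymbol{V}(i)-\bestsignal}_F$ first and then double it via the triangle inequality, attributing the factors of two in $\rho$ to this doubling. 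But since the gradient step moves only within $\text{span}(\mathcal{S}_i)$, any bound on $\vectornorm{\boldsymbol{V}(i)-\bestsignal}_F$ necessarily carries the term $\vectornorm{\mathcal{P}_{\mathcal{S}_i^{\bot}}\bestsignal}_F$ additively with coefficient at least $1$ (indeed $\vectornorm{\boldsymbol{V}(i)-\bestsignal}_F^2 = \vectornorm{\boldsymbol{V}(i)-\mathcal{P}_{\mathcal{S}_i}\bestsignal}_F^2 + \vectornorm{\mathcal{P}_{\mathcal{S}_i^{\bot}}\bestsignal}_F^2$). After your doubling and the substitution of Lemma \ref{lemma:act_subspace_exp}, the inner parenthesis of $\rho$ acquires an extra additive contribution of about $2(2\delta_{2\rank}+2\delta_{3\rank})$, which is first order in $\delta$ and comparable to the leading $\frac{4\delta_{2\rank}}{1-\delta_{2\rank}}$; in the theorem the missing mass enters only with the second-order weight $\frac{2\delta_{3\rank}}{1-\delta_{2\rank}}$. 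Your route therefore proves a valid recursion of the form (\ref{it:00}), but with a strictly larger $\rho$ and a contractivity threshold well below $0.1235$, so it does not establish the stated constants.

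The missing idea is the paper's quadratic use of projection optimality: from $\vectornorm{\boldsymbol{W}(i)-\boldsymbol{V}(i)}_F^2 \leq \vectornorm{\bestsignal-\boldsymbol{V}(i)}_F^2$ one expands squares to get $\vectornorm{\boldsymbol{W}(i)-\bestsignal}_F^2 \leq 2\langle \boldsymbol{W}(i)-\bestsignal, \boldsymbol{V}(i)-\bestsignal\rangle$, substitutes the three-term decomposition of $\boldsymbol{V}(i)-\bestsignal$ \emph{inside} this inner product, applies Cauchy--Schwarz term by term, and divides through by $\vectornorm{\boldsymbol{W}(i)-\bestsignal}_F$. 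With that ordering the factors of two are inherited from the $2\langle\cdot,\cdot\rangle$ (this, not a triangle-inequality doubling, is the origin of $\frac{4\delta_{2\rank}}{1-\delta_{2\rank}}$ and of the $2\mu_i\delta_{3\rank}$ weight), and the component $\mathcal{P}_{\mathcal{S}_i^{\bot}}\bestsignal$ appears only through the cross term $\mu_i\mathcal{P}_{\mathcal{S}_i}\sensing^{\ast}\sensing\mathcal{P}_{\mathcal{S}_i^{\bot}}(\cdot)$, weighted by $\frac{\delta_{3\rank}}{1-\delta_{2\rank}}$ via Lemma \ref{lemma:4}, never additively with an $O(1)$ coefficient. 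You would need to replace your triangle-inequality step with this argument (for both the intermediate projection and, analogously, the de-bias stage) to recover $\rho$, $\gamma$, and the threshold $\delta_{3\rank} < 0.1235$ as stated.
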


To provide some intuition behind this result, assume that $ \bestsignal $ is a rank-$ \rank $ matrix. Then, according to Theorem \ref{thm:mALPS0}, for $ \rho < 1 $, the approximation parameter $ \gamma $ in (\ref{eq:mALPS0:thm}) satisfies:
\begin{align}
\gamma < 5.7624, ~~\text{for}~~ \delta_{3\rank} < 0.1235. \nonumber 
\end{align} Moreover, we derive the following:
\begin{align}
\rho < \frac{1+2\delta_{3\rank}}{(1-\delta_{3\rank})^2} \big( 4\delta_{3\rank} + 8 \delta_{3\rank}^2\big) < \frac{1}{2} \Rightarrow \delta_{3\rank} < 0.079, \nonumber
\end{align} which is {\it a stronger} R-RIP condition assumption compared to state-of-the-art approaches \cite{admira2010}. In the next section, we further improve this guarantee using Algorithm 2.

Unfolding the recursive formula (\ref{eq:mALPS0:thm}), we obtain the following upper bound for $ \vectornormbig{\signal(i) - \bestsignal}_F $ at the $ i $-th iteration:
\begin{align}
\vectornormbig{\signal(i) - \bestsignal}_F \leq \rho^{i}\vectornormbig{\signal(0) - \bestsignal}_F + \frac{\gamma}{1-\rho}\vectornormbig{\noise}_2.
\end{align} Then, given $ \signal(0) = \mathbf{0} $, \textsc{Matrix ALPS I} finds a rank-$ \rank $ solution $ \widehat{\signal} \in \mathbb{R}^{\dimension} $ such that $ \vectornormbig{\widehat{\signal} - \bestsignal}_F \leq \frac{\gamma + 1 -\rho}{1-\rho}\vectornormbig{\noise}_2 $ after $ i:= \Big\lceil \frac{\log (\vectornorm{\bestsignal}_F/\vectornorm{\noise}_2)}{\log(1/\rho)} \Big\rceil $ iterations.

If we ignore steps 5 and 6 in Algorithm 1, we obtain another projected gradient descent variant for the affine rank minimization problem, for which we obtain the following performance guarantees---the proof follows from the proof of Theorem \ref{thm:mALPS0}.

\begin{corollary}{\label{cor:1}}[\textsc{Matrix ALPS I} Instance] 
In Algorithm 1, we ignore steps 5 and 6 and let $ \lbrace \mathcal{X}_{i+1},~\signal(i+1) \rbrace \leftarrow \mathcal{P}_\rank (\boldsymbol{V}_i) $. Then, by the same analysis, we observe that the following recursion is satisfied:
\begin{align}
\vectornormbig{\signal(i+1) - \bestsignal}_F &\leq \rho \vectornormbig{\signal(i) - \bestsignal}_F + \gamma \vectornormbig{\noise}_2,
\end{align} for $ \rho:= \Big(\frac{4\delta_{2\rank}}{1-\delta_{2\rank}} + (2\delta_{2\rank} + 2\delta_{3\rank})\frac{2\delta_{3\rank}}{1-\delta_{2\rank}}\Big) $ and $ \gamma := \Big(\frac{2\sqrt{1+\delta_{2\rank}}}{1 - \delta_{2\rank}} + \frac{2\delta_{3\rank}}{1-\delta_{2\rank}}  \sqrt{2(1+\delta_{2\rank})}\Big) $. Moreover, $ \rho < 1 $ when $ \delta_{3\rank} < 0.1594 $.
\end{corollary}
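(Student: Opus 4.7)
The observation underlying my proof is that the \textsc{Matrix ALPS I} Instance in Corollary \ref{cor:1} is simply Algorithm 1 with the de-bias Steps 5--7 excised; equivalently, its iterate $\signal(i+1) = \mathcal{P}_\rank(\boldsymbol{V}(i))$ plays precisely the role of $\boldsymbol{W}(i)$ produced just before the de-bias phase in the original \textsc{Matrix ALPS I}. The proof is therefore a direct adaptation of the proof of Theorem \ref{thm:mALPS0}, terminating at the intermediate bound on $\|\boldsymbol{W}(i) - \bestsignal\|_F$ and thereby avoiding the $\frac{1+2\delta_{2\rank}}{1-\delta_{2\rank}}$ amplification factor and the additive $\frac{\sqrt{1+\delta_{\rank}}}{1-\delta_{\rank}}\|\noise\|_2$ contribution that the de-bias step otherwise generates.

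Concretely, the plan proceeds in four steps. First, since $\signal(i+1)$ is the best rank-$\rank$ approximation of $\boldsymbol{V}(i)$ and $\bestsignal$ is rank-$\rank$, Eckart--Young gives $\|\signal(i+1) - \boldsymbol{V}(i)\|_F \leq \|\bestsignal - \boldsymbol{V}(i)\|_F$, so the triangle inequality yields $\|\signal(i+1) - \bestsignal\|_F \leq 2\|\boldsymbol{V}(i) - \bestsignal\|_F$. Second, I would expand $\boldsymbol{V}(i) - \bestsignal$ by substituting $\nabla f(\signal(i)) = 2\sensing^\ast\sensing(\signal(i) - \bestsignal) - 2\sensing^\ast\noise$ and the decomposition $\signal(i) - \bestsignal = \mathcal{P}_{\mathcal{S}_i}(\signal(i) - \bestsignal) - \mathcal{P}_{\mathcal{S}_i^\bot}\bestsignal$ (legitimate because $\signal(i) \in \text{span}(\mathcal{S}_i)$) to obtain
\begin{align*}
\boldsymbol{V}(i) - \bestsignal &= (\id - \mu_i\mathcal{P}_{\mathcal{S}_i}\sensing^\ast\sensing\mathcal{P}_{\mathcal{S}_i})\mathcal{P}_{\mathcal{S}_i}(\signal(i) - \bestsignal) \\
&\quad - \mathcal{P}_{\mathcal{S}_i^\bot}\bestsignal + \mu_i\mathcal{P}_{\mathcal{S}_i}\sensing^\ast\sensing\mathcal{P}_{\mathcal{S}_i^\bot}\bestsignal + \mu_i\mathcal{P}_{\mathcal{S}_i}\sensing^\ast\noise.
\end{align*}
Lemma \ref{lemma:3} with the step-size property $1/\mu_i \in [1-\delta_{2\rank}, 1+\delta_{2\rank}]$ bounds the first summand by $\tfrac{2\delta_{2\rank}}{1-\delta_{2\rank}}\|\signal(i)-\bestsignal\|_F$; Lemma \ref{lemma:4} with $\mathcal{S}_1=\mathcal{S}_i$ (rank $\leq 2\rank$) and $\mathcal{S}_2=\mathcal{X}^\ast$ (so $\text{rank}(\text{span}(\mathcal{S}_1 \cup \mathcal{S}_2)) \leq 3\rank$) bounds the third by $\tfrac{\delta_{3\rank}}{1-\delta_{2\rank}}\|\mathcal{P}_{\mathcal{S}_i^\bot}\bestsignal\|_F$; and Lemma \ref{lemma:1} bounds the noise summand by $\tfrac{\sqrt{1+\delta_{2\rank}}}{1-\delta_{2\rank}}\|\noise\|_2$.

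Third, I would invoke Lemma \ref{lemma:act_subspace_exp} (active subspace expansion), which is applicable because $\bestsignal$ being rank-$\rank$ gives $\bestsignal = \mathcal{P}_{\mathcal{X}^\ast}\bestsignal$, to control the missing-subspace piece by $(2\delta_{2\rank} + 2\delta_{3\rank})\|\signal(i) - \bestsignal\|_F + \sqrt{2(1+\delta_{2\rank})}\|\noise\|_2$. Fourth, exploiting the Pythagorean decomposition $\|\boldsymbol{V}(i) - \bestsignal\|_F^2 = \|\mathcal{P}_{\mathcal{S}_i}(\boldsymbol{V}(i) - \bestsignal)\|_F^2 + \|\mathcal{P}_{\mathcal{S}_i^\bot}\bestsignal\|_F^2$ and the factor $2$ from step one, substituting the Lemma \ref{lemma:act_subspace_exp} bound, and collecting terms yields the announced recursion with $\rho$ and $\gamma$ as stated. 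The condition $\rho < 1$ is then verified by specializing $\delta_{2\rank} \leq \delta_{3\rank} =: \delta$: the inequality reduces to $8\delta^2 + 5\delta - 1 < 0$, whose positive root is $\frac{-5+\sqrt{57}}{16} \approx 0.1594$.

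The main obstacle I anticipate is the tight bookkeeping at the interface of steps two and three: the coefficient of $\|\mathcal{P}_{\mathcal{S}_i^\bot}\bestsignal\|_F$ in the intermediate bound on $\|\boldsymbol{V}(i) - \bestsignal\|_F$ must be tracked carefully, since a careless application of $\sqrt{a^2 + b^2} \leq a + b$ inflates this coefficient and would force a stricter R-RIP requirement than the claimed $\delta_{3\rank} < 0.1594$. Exploiting the orthogonality between the $\text{span}(\mathcal{S}_i)$ and $\text{span}(\mathcal{S}_i^\bot)$ components of $\boldsymbol{V}(i) - \bestsignal$---rather than applying a blanket triangle inequality---is what matches the stated constants, exactly as in the analogous step of the proof of Theorem \ref{thm:mALPS0}.
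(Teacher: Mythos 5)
Your high-level reading is right: the paper proves Corollary \ref{cor:1} by literally truncating the proof of Theorem \ref{thm:mALPS0} at the intermediate bound (\ref{eq:mALPS0:04}) on $\vectornormbig{\boldsymbol{W}(i)-\bestsignal}_F$, since without steps 5--6 the new iterate coincides with $\boldsymbol{W}(i)$. The gap is in your first step, which replaces the paper's mechanism by a genuinely weaker one and therefore cannot reach the stated $\rho$ and $\gamma$. The paper does \emph{not} pass through $\vectornormbig{\boldsymbol{W}(i)-\bestsignal}_F \leq 2\vectornormbig{\boldsymbol{V}(i)-\bestsignal}_F$. It expands the square in $\vectornormbig{\boldsymbol{W}(i)-\boldsymbol{V}(i)}_F^2 \leq \vectornormbig{\bestsignal-\boldsymbol{V}(i)}_F^2$ to obtain the inner-product inequality $\vectornormbig{\boldsymbol{W}(i)-\bestsignal}_F^2 \leq 2\langle \boldsymbol{W}(i)-\bestsignal, \boldsymbol{V}(i)-\bestsignal\rangle$, decomposes the second argument, applies Cauchy--Schwarz to each piece, and divides out one power of $\vectornormbig{\boldsymbol{W}(i)-\bestsignal}_F$. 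The payoff of this route is that the unrecovered energy $\mathcal{P}_{\mathcal{S}_i^{\bot}}\bestsignal$ enters the final bound only through the cross operator $\mathcal{P}_{\mathcal{S}_i}\sensing^{\ast}\sensing\mathcal{P}_{\mathcal{S}_i^{\bot}}$, i.e., with the small factor $\frac{2\delta_{3\rank}}{1-\delta_{2\rank}}$ supplied by Lemma \ref{lemma:4}; that factor is exactly the coefficient multiplying $(2\delta_{2\rank}+2\delta_{3\rank})$ in the stated $\rho$.

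Your route cannot reproduce this. Once you commit to $\vectornormbig{\signal(i+1)-\bestsignal}_F \leq 2\vectornormbig{\boldsymbol{V}(i)-\bestsignal}_F$, the Pythagorean identity forces $\vectornormbig{\boldsymbol{V}(i)-\bestsignal}_F \geq \vectornormbig{\mathcal{P}_{\mathcal{S}_i^{\bot}}\bestsignal}_F$, so however carefully you avoid $\sqrt{a^2+b^2}\leq a+b$, the term $\vectornormbig{\mathcal{P}_{\mathcal{S}_i^{\bot}}\bestsignal}_F$ survives with coefficient at least $2$ rather than $\frac{2\delta_{3\rank}}{1-\delta_{2\rank}}$. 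Feeding Lemma \ref{lemma:act_subspace_exp} into your bound yields $\rho' = \frac{4\delta_{2\rank}}{1-\delta_{2\rank}} + (2\delta_{2\rank}+2\delta_{3\rank})\big(2+\frac{2\delta_{3\rank}}{1-\delta_{2\rank}}\big)$ and $\gamma' = \gamma + 2\sqrt{2(1+\delta_{2\rank})}$; with $\delta_{2\rank}\leq\delta_{3\rank}=\delta$ the contraction condition $\rho'<1$ degrades to $\delta<1/13\approx 0.077$, not $0.1594$. Your closing computation of the positive root of $8\delta^2+5\delta-1$ is correct, but it is computed from the claimed $\rho$, not from the $\rho'$ your argument actually produces. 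To repair the proof, start from the inner-product inequality exactly as in the proof of Theorem \ref{thm:mALPS0} and stop at (\ref{eq:mALPS0:04}).
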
 

We observe that the absence of the additional estimate update over restricted support sets results in less restrictive isometry constants compared to Theorem \ref{thm:mALPS0}. In practice, additional updates result in faster convergence, as shown in Figure 1(c).

\subsection{ADMiRA Instance}

In \textsc{Matrix ALPS I}, the gradient descent steps constitute a first-order approximation to least-squares minimization problems. Replacing Step 4 in Algorithm 1 with the following optimization problem:
\begin{align}
\boldsymbol{V}(i) \leftarrow \argmin_{\boldsymbol{V}: \boldsymbol{V} \in \text{span}(\mathcal{S}_i)} \vectornormbig{\obs - \sensing \boldsymbol{V}}_2^2, \label{eq:opt:1}
\end{align}
we obtain ADMiRA (furthermore, we remove the de-bias step in Algorithm 1). Assuming that the linear operator $ \sensing $, restricted on sufficiently low-rank subspaces, is well conditioned in terms of the R-RIP assumption, the optimization problem (\ref{eq:opt:1}) has a unique optimal minimizer. By exploiting the optimality condition in Lemma \ref{lemma:5}, ADMiRA instance in Algorithm 2 features the following guarantee:

\begin{theorem}\label{thm:mALPS5}[Iteration invariant for ADMiRA instance] The $(i+1)$-th matrix estimate $\signal(i+1)$ of ADMiRA answers the following recursive expression:
\begin{equation}\nonumber
\vectornormbig{\signal(i+1) - \bestsignal}_F \leq \rho \vectornormbig{\signal(i) - \bestsignal}_F + \gamma \vectornormbig{\noise}_F,
\end{equation} $\rho := \big(2\delta_{2\rank} + 2\delta_{3\rank}\big)\sqrt{\frac{1+3\delta_{3\rank}^2}{1-\delta_{3\rank}^2}}, $ and $\gamma := \sqrt{\frac{1+3\delta_{3\rank}^2}{1-\delta_{3\rank}^2}} \sqrt{2(1+\delta_{3\rank})} $ $+ \Big(\frac{\sqrt{1+3\delta_{3\rank}^2}}{1-\delta_{3\rank}} + \sqrt{3}\Big)\sqrt{1+\delta_{2\rank}}. $ Moreover, when $\delta_{3\rank} < 0.2267 $, the iterations are contractive.
\end{theorem}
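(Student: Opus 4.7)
The strategy is to mirror the CoSaMP/Subspace Pursuit style analysis adapted to low-rank matrices, combining Lemma~\ref{lemma:act_subspace_exp} on the active-subspace step with the exact least-squares optimization Algorithm~2 performs on $\text{span}(\mathcal{S}_i)$. The key structural observation I would use throughout is that $\boldsymbol{V}(i)\in\text{span}(\mathcal{S}_i)$ has rank at most $2\rank$, so its best rank-$\rank$ approximation $\signal(i+1)=\mathcal{P}_\rank(\boldsymbol{V}(i))$ also lies in $\text{span}(\mathcal{S}_i)$. This yields the Pythagorean decomposition
\begin{align*}
\vectornormbig{\signal(i+1)-\bestsignal}_F^2 = \vectornormbig{\signal(i+1)-\mathcal{P}_{\mathcal{S}_i}\bestsignal}_F^2 + \vectornormbig{\mathcal{P}_{\mathcal{S}_i^{\bot}}\bestsignal}_F^2,
\end{align*}
which cleanly separates the in-subspace estimation error from the ``missed energy'' of $\bestsignal$ outside $\text{span}(\mathcal{S}_i)$.

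For the in-subspace term, I would apply Lemma~\ref{lemma:5} to the LS subproblem defining $\boldsymbol{V}(i)$: because $\text{span}(\mathcal{S}_i)$ is a linear subspace, the first-order optimality condition sharpens to the normal equation $\mathcal{P}_{\mathcal{S}_i}\sensing^{\ast}(\obs-\sensing\boldsymbol{V}(i))=0$. Substituting $\obs=\sensing\bestsignal+\noise$ and splitting $\bestsignal-\boldsymbol{V}(i)$ into its $\mathcal{S}_i$ and $\mathcal{S}_i^{\bot}$ components rearranges this to
\begin{align*}
\mathcal{P}_{\mathcal{S}_i}\sensing^{\ast}\sensing\mathcal{P}_{\mathcal{S}_i}(\boldsymbol{V}(i)-\bestsignal) = \mathcal{P}_{\mathcal{S}_i}\sensing^{\ast}\sensing\mathcal{P}_{\mathcal{S}_i^{\bot}}\bestsignal + \mathcal{P}_{\mathcal{S}_i}\sensing^{\ast}\noise.
\end{align*}
Lemma~\ref{lemma:2} lower-bounds the LHS norm by $(1-\delta_{2\rank})\vectornorm{\mathcal{P}_{\mathcal{S}_i}(\boldsymbol{V}(i)-\bestsignal)}_F$; Lemma~\ref{lemma:4} upper-bounds the cross term by $\delta_{3\rank}\vectornorm{\mathcal{P}_{\mathcal{S}_i^{\bot}}\bestsignal}_F$ (since $\mathcal{P}_{\mathcal{S}_i^{\bot}}\bestsignal$ lies in a rank-$\rank$ subspace while $\mathcal{S}_i$ has rank $2\rank$, so their union has rank at most $3\rank$); and Lemma~\ref{lemma:1} controls the noise residual by $\sqrt{1+\delta_{2\rank}}\vectornorm{\noise}_2$. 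This delivers a bound on $\vectornorm{\mathcal{P}_{\mathcal{S}_i}(\boldsymbol{V}(i)-\bestsignal)}_F$ purely in terms of $\vectornorm{\mathcal{P}_{\mathcal{S}_i^{\bot}}\bestsignal}_F$ and $\vectornorm{\noise}_2$.

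Next, I would use the best rank-$\rank$ approximation property $\vectornorm{\signal(i+1)-\boldsymbol{V}(i)}_F \le \vectornorm{\bestsignal-\boldsymbol{V}(i)}_F$ (valid because $\bestsignal$ itself is rank-$\rank$) together with a triangle inequality inside $\text{span}(\mathcal{S}_i)$ to express $\vectornorm{\signal(i+1)-\mathcal{P}_{\mathcal{S}_i}\bestsignal}_F$ via $\vectornorm{\boldsymbol{V}(i)-\mathcal{P}_{\mathcal{S}_i}\bestsignal}_F$ and $\vectornorm{\mathcal{P}_{\mathcal{S}_i^{\bot}}\bestsignal}_F$. Substituting into the Pythagorean identity and regrouping cross terms by AM--GM/Cauchy--Schwarz splits produces a composite prefactor $\sqrt{(1+3\delta_{3\rank}^2)/(1-\delta_{3\rank}^2)}$ multiplying $\vectornorm{\mathcal{P}_{\mathcal{S}_i^{\bot}}\bestsignal}_F$. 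Finally, Lemma~\ref{lemma:act_subspace_exp} converts $\vectornorm{\mathcal{P}_{\mathcal{S}_i^{\bot}}\bestsignal}_F$ into $(2\delta_{2\rank}+2\delta_{3\rank})\vectornorm{\signal(i)-\bestsignal}_F + \sqrt{2(1+\delta_{2\rank})}\vectornorm{\noise}_2$, which yields $\rho$. Bundling all noise contributions and using $\delta_{2\rank}\le\delta_{3\rank}$ where convenient gives the stated $\gamma$; the contractive threshold $\delta_{3\rank}<0.2267$ then follows from numerically solving $\rho<1$.

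The main technical obstacle is the cross-term bookkeeping in the combining step: the exact prefactor $\sqrt{(1+3\delta_{3\rank}^2)/(1-\delta_{3\rank}^2)}$ only emerges when the best-rank-$\rank$ approximation inequality is folded into the Pythagorean expansion with the right weighted split of cross terms, rather than the cruder factor-of-two triangle bound on $\vectornorm{\signal(i+1)-\boldsymbol{V}(i)}_F+\vectornorm{\boldsymbol{V}(i)-\bestsignal}_F$. Preserving both $\delta_{2\rank}$ and $\delta_{3\rank}$ in the exact positions dictated by the $\gamma$ formula (in particular keeping the $\sqrt{2(1+\delta_{3\rank})}$ and $\sqrt{1+\delta_{2\rank}}$ terms distinct) requires the RIP constants to thread consistently through each AM--GM step; the rest of the proof is routine algebraic manipulation.
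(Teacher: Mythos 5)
Your overall architecture matches the paper's: exploit the least-squares optimality condition on $\text{span}(\mathcal{S}_i)$ together with Lemmas \ref{lemma:1}--\ref{lemma:4} to control the in-subspace residual, then use Lemma \ref{lemma:act_subspace_exp} to convert $\vectornormbig{\mathcal{P}_{\mathcal{S}_i^{\bot}}\bestsignal}_F$ into $(2\delta_{2\rank}+2\delta_{3\rank})\vectornormbig{\signal(i)-\bestsignal}_F+\sqrt{2(1+\delta_{2\rank})}\vectornormbig{\noise}_2$; that is exactly how the paper closes the argument. The genuine gap is in the pruning step, and it is not mere bookkeeping. You invoke the best rank-$\rank$ approximation property in the form $\vectornormbig{\signal(i+1)-\boldsymbol{V}(i)}_F\le\vectornormbig{\bestsignal-\boldsymbol{V}(i)}_F$. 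Writing $a:=\vectornormbig{\mathcal{P}_{\mathcal{S}_i}(\boldsymbol{V}(i)-\bestsignal)}_F$ and $b:=\vectornormbig{\mathcal{P}_{\mathcal{S}_i^{\bot}}\bestsignal}_F$, your normal-equation step gives $a\lesssim\delta_{3\rank}\,b$, but $\vectornormbig{\bestsignal-\boldsymbol{V}(i)}_F=\sqrt{a^2+b^2}\ge b$, so $\vectornormbig{\signal(i+1)-\mathcal{P}_{\mathcal{S}_i}\bestsignal}_F\le\vectornormbig{\signal(i+1)-\boldsymbol{V}(i)}_F+a$ remains of order $b$ even as $\delta_{3\rank}\to 0$; your Pythagorean identity then yields $\vectornormbig{\signal(i+1)-\bestsignal}_F^2\lesssim 2b^2$, i.e.\ a prefactor tending to $\sqrt{2}$, whereas the theorem's prefactor $\sqrt{(1+3\delta_{3\rank}^2)/(1-\delta_{3\rank}^2)}$ tends to $1$. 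No AM--GM or Cauchy--Schwarz reweighting of cross terms repairs this, because the loss sits in the squared term $\vectornormbig{\signal(i+1)-\boldsymbol{V}(i)}_F^2$ itself, not in a cross term; with a $\sqrt{2}$ inflation of $\rho$ you cannot reach the stated threshold $\delta_{3\rank}<0.2267$.

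The two devices the paper uses, and which your proposal is missing, are these. First (Lemma \ref{lemma:comb_selection}), the truncation is compared against the \emph{in-subspace} competitor $\mathcal{P}_{\mathcal{S}_i}\bestsignal\in\text{span}(\mathcal{S}_i)$, giving $\vectornormbig{\signal(i+1)-\boldsymbol{V}(i)}_F\le\vectornormbig{\mathcal{P}_{\mathcal{S}_i}(\boldsymbol{V}(i)-\bestsignal)}_F=a$, which is $O(\delta_{3\rank})\,b$ rather than $O(b)$. Second (Lemma \ref{lemma:noname}), one expands $\vectornormbig{\signal(i+1)-\bestsignal}_F^2=\vectornormbig{\boldsymbol{V}(i)-\bestsignal}_F^2+\vectornormbig{\boldsymbol{V}(i)-\signal(i+1)}_F^2-2\langle\boldsymbol{V}(i)-\bestsignal,\boldsymbol{V}(i)-\signal(i+1)\rangle$ and bounds the inner product not by plain Cauchy--Schwarz but by reusing the least-squares optimality through the R-RIP, obtaining $\delta_{3\rank}\vectornormbig{\boldsymbol{V}(i)-\bestsignal}_F\vectornormbig{\boldsymbol{V}(i)-\signal(i+1)}_F+\sqrt{1+\delta_{2\rank}}\vectornormbig{\boldsymbol{V}(i)-\signal(i+1)}_F\vectornormbig{\noise}_2$; completing the square is what produces the $\sqrt{1+3\delta_{3\rank}^2}$ factor. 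Relatedly, the paper keeps the normal-equation bound implicit, $a\le\delta_{3\rank}\vectornormbig{\boldsymbol{V}(i)-\bestsignal}_F+\sqrt{1+\delta_{2\rank}}\vectornormbig{\noise}_2$, and solves the resulting quadratic in $\vectornormbig{\boldsymbol{V}(i)-\bestsignal}_F$ to obtain the $1/\sqrt{1-\delta_{3\rank}^2}$ factor (Lemma \ref{lemma:greedy}), rather than dividing through by $1-\delta_{2\rank}$ as you do. Without these moves your route proves a correct but strictly weaker recursion and cannot recover the stated $\rho$, $\gamma$, or the $0.2267$ threshold.
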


Similarly to \textsc{Matrix ALPS I} analysis, the parameter $ \gamma $ in Theorem \ref{thm:mALPS5} satisfies:
\begin{align}
\gamma < 5.1848, ~\text{for}~ \delta_{3\rank} < 0.2267. \nonumber
\end{align} Furthermore, to compare the approximation guarantees of Theorem \ref{thm:mALPS5} with \cite{admira2010}, we further observe:
\begin{align}
\delta_{3\rank} < 0.1214, ~\text{for}~ \rho < 1/2. \nonumber
\end{align} We remind that \cite{admira2010} provides convergence guarantees for ADMiRA with $ \delta_{4\rank} < 0.04 $ for $ \rho = 1/2 $.

\section{Complexity Analysis}{\label{section:complexity}}

In each iteration, computational requirements of the proposed hard thresholding methods mainly depend on the total number of linear mapping operations $\sensing$, gradient descent steps, least-squares optimizations, projection operations and matrix decompositions for low rank approximation. Different algorithmic configurations (e.g. removing steps 6 and 7 in Algorithm 1) lead to hard thresholding variants with less computational complexity per iteration and better R-RIP conditions for convergence but a degraded performance in terms of stability and convergence speed is observed in practice. On the other hand, these additional processing steps increase the required time-complexity per iteration; hence, low iteration counts are desired to tradeoff these operations. 

A non-exhaustive list of linear map examples includes the identity operator (Principal component analysis (PCA) problem), Fourier/Wavelets/Noiselets tranformations and the famous Matrix Completion problem where $\sensing $ is a mask operator such that only a fraction of elements in $\signal $ is observed. Assuming the most demanding case where $ \sensing $ and $ \sensing^\ast $ are dense linear maps with no structure, the computation of the gradient $  \nabla f(\signal(i)) $ at each iteration requires $ O(\numsam \rank m n) $ arithmetic operations. 

Given a set $ \mathcal{S} $ of orthonormal, rank-1 matrices, the projection $ \mathcal{P}_{\mathcal{S}}\signal $ for any matrix $ \signal \in \mathbb{R}^{\dimension} $ requires time complexity $ O(\max\lbrace m^2 n,$ $ m n^2 \rbrace) $ as a sequence of matrix-matrix multiplication operations.\footnote{While such operation has $O(\max\lbrace m^2 n,$ $ m n^2 \rbrace)$ complexity, each application of $\mathcal{P}_{\mathcal{S}} \signal$ requires three matrix-matrix multiplications. To reduce such computational cost, we {\it relax} this operation in Section \ref{section:experiments} where in practice we use only $\mathcal{P}_{\mathcal{U}}$ that needs one matrix-matrix multiplication. } In \textsc{Matrix ALPS I}, the adaptive step size selection steps require $ O(\max\lbrace \numsam \rank m n, m^2 n\rbrace) $ time complexity for the calculation of $ \mu_i $ and $ \xi_i $ quantities. In ADMiRA solving a least-squares system restricted on rank-2$ \rank $ and rank-$ \rank  $ subspaces requires $ O(\numsam k^2) $ complexity; according to \cite{cosamp}, \cite{admira2010}, the complexity of this step can be further reduced using iterative techniques such as the Richardson method or conjugate gradients algorithm. 

Using the Lanczos method, we require $ O(\rank m n) $ arithmetic operations to compute a rank-$ \rank $ matrix approximation for a given constant accuracy; a prohibitive time-complexity that does not scale well for many practical applications. Sections \ref{section:approximate} and \ref{section:QR} describe approximate low rank matrix projections and how they affect the convergence guarantees of the proposed algorithms.

Overall, the operation that dominates per iteration requires $ O(\max\lbrace \numsam \rank m n, m^2 n ,m n^2\rbrace) $ time complexity in the proposed schemes. 

\section{Memory-based Acceleration}{\label{sec:memory}}
Iterative algorithms can use memory to gain momentum in convergence. Based on Nesterov's optimal gradient methods \cite{nesterov2007gradient}, we propose a hard thresholding variant, described in Algorithm 3 where an additional update on $\signal(i+1)$ with momentum step size $\tau_i$ is performed using previous matrix estimates.

\begin{algorithm*}[th!]
   \caption{\textsc{Matrix ALPS II}}\label{algo: class}
\begin{algorithmic}[1]
   \Statex {\bfseries Input:} $\obs$, $\sensing$, $\rank$, Tolerance $ \eta $, MaxIterations
   \Statex {\bfseries Initialize:} $ \signal(0) \leftarrow 0 $, $ \mathcal{X}_0 \leftarrow \lbrace \emptyset \rbrace $, $ \boldsymbol{Q}(0) \leftarrow 0 $, $ \mathcal{Q}_0 \leftarrow \lbrace \emptyset \rbrace $, $ \tau_i ~\forall i $, $ i \leftarrow 0 $
   \Statex {\bfseries repeat} 
   \State \hspace{0.16cm} $ \mathcal{D}_i \leftarrow \mathcal{P}_{\rank}\big( \mathcal{P}_{\mathcal{Q}_i^{\bot}} \nabla f(\boldsymbol{Q}(i)) \big) $ \hspace*{\fill}\textit{(Best rank-$ \rank $ subspace orthogonal to $ \mathcal{Q}_i $)~~~~~~~~~}
   \State \hspace{0.16cm} $ \mathcal{S}_i \leftarrow \mathcal{D}_i \cup \mathcal{Q}_i$ \hspace*{\fill}\textit{(Active subspace expansion)~~~~~~~~~}
   \State \hspace{0.16cm} $ \mu_i \leftarrow \argmin_{\mu} \vectornormbig{\obs - \sensing\big( \boldsymbol{Q}(i) - \frac{\mu}{2} \mathcal{P}_{\mathcal{S}_i} \nabla f(\boldsymbol{Q}(i)) \big)}_2^2 = \frac{\vectornorm{\mathcal{P}_{\mathcal{S}_i} \nabla f(\boldsymbol{Q}(i))}_F^2}{\vectornorm{\sensing \mathcal{P}_{\mathcal{S}_i} \nabla f(\boldsymbol{Q}(i))}_2^2} $ \hspace*{\fill} \textit{(Step size selection)~~~~~~~~~}
   \State \hspace{0.16cm} $ \boldsymbol{V}(i) \leftarrow \boldsymbol{Q}(i) - \frac{\mu_i}{2} \mathcal{P}_{\mathcal{S}_i}\nabla f(\boldsymbol{Q}(i)) $ \hspace*{\fill} \textit{(Error norm reduction via gradient descent)~~~~~~~~~}
   \State \hspace{0.16cm} $ \lbrace \mathcal{X}_{i+1},~\signal(i+1) \rbrace \leftarrow \mathcal{P}_{\rank}(\boldsymbol{V}(i)) $ \hspace*{\fill}{\textit{(Best rank-$ \rank $ subspace selection)~~~~~~~~~}}
   \State \hspace{0.16cm} $ \boldsymbol{Q}(i+1) \leftarrow \signal(i+1) + \tau_i(\signal(i+1) - \signal(i)) $ \hspace*{\fill}\textit{(Momentum update)~~~~~~~~~}
   \State \hspace{0.16cm} $\mathcal{Q}_{i+1} \leftarrow \text{ortho}(\mathcal{X}_i \cup \mathcal{X}_{i+1})$
   \Statex \hspace{0.16cm} $ i \leftarrow i + 1 $
   \Statex {\bfseries until} $\vectornorm{\signal(i) - \signal(i-1)}_2 \leq \eta \vectornorm{\signal(i)}_2 $ or MaxIterations.
\end{algorithmic}
\end{algorithm*}

Similar to $ \mu_i $ strategies, $ \tau_i $ can be preset as constant or adaptively computed at each iteration. Constant momentum step size selection has no additional computational cost but convergence rate acceleration is not guaranteed for some problem formulations in practice. On the other hand, empirical evidence has shown that adaptive $ \tau_i $ selection strategies result in faster convergence compared to zero-memory methods with {\it similar complexity}. 

For the case of strongly convex objective functions, Nesterov \cite{nesterov} proposed the following constant momentum step size selection scheme:
$\tau_i = \frac{\alpha_i(1-\alpha_i)}{\alpha_i^2 + \alpha_{i+1}} $, 
where $ \alpha_0 \in (0,1) $ and $ \alpha_{i+1} $ is computed as the root $ \in (0,1) $ of
\begin{align}
\alpha_{i+1}^2 = (1 - \alpha_{i+1}) \alpha_i^2 + q\alpha_{i+1},  ~\text{for}~~q \triangleq \frac{1}{\kappa^2(\sensing)}, 
\end{align} where $ \kappa(\sensing) $ denotes the condition number of $ \sensing $. In this scheme, exact calculation of $ q $ parameter is computationally expensive for large-scale data problems and approximation schemes are leveraged to compensate this complexity bottleneck.

Based upon adaptive $ \mu_i $ selection, we propose to select $ \tau_i $ as the minimizer of the objective function:
\begin{align}
\tau_i &= \argmin_{\tau} \vectornorm{\obs - \sensing \boldsymbol{Q}(i+1)}_2^2  \nonumber \\ 
       &= \frac{\langle \obs - \sensing \signal(i), \sensing \signal(i) - \sensing \signal(i-1)\rangle }{\vectornorm{\sensing \signal(i) - \sensing \signal(i-1)}_2^2}, \label{tau_optimized:00}
\end{align} where $ \sensing \signal(i), \sensing \signal(i-1) $ are already {\it pre-computed} at each iteration. According to (\ref{tau_optimized:00}), $ \tau_i $ is dominated by the calculation of a vector inner product, a computationally cheaper process than $ q $ calculation. 

Theorem \ref{thm:mALPS0:memory} characterizes Algorithm 3 for {\it constant} momentum step size selection. To keep the main ideas simple, we ignore the additional gradient updates in Algorithm 3. In addition, we only consider the noiseless case for clarity. The convergence rate proof for these cases is provided in the appendix.

\begin{theorem}\label{thm:mALPS0:memory}[Iteration invariant for \textsc{Matrix ALPS II}] Let $ \obs = \sensing \bestsignal $ be a noiseless set of observations. To recover $ \bestsignal $ from $ \obs $ and $ \sensing $, the $(i+1)$-th matrix estimate $\signal(i+1)$ of \textsc{Matrix ALPS II} satisfies the following recursion:
\begin{align}
\vectornormbig{\signal(i+1) - \bestsignal}_F &\leq \alpha(1+\tau_i) \vectornormbig{\signal(i) - \bestsignal}_F \nonumber \\ &+ \alpha \tau_i \vectornormbig{\signal(i-1) - \bestsignal}_F, \label{eq:mALPS0_memory:thm}
\end{align} where $ \alpha:= \frac{4\delta_{3\rank}}{1-\delta_{3\rank}} + (2\delta_{3\rank} + 2\delta_{4\rank})\frac{2\delta_{3\rank}}{1-\delta_{3\rank}} $. Moreover, solving the above second-order recurrence, the following inequality holds true:
\begin{align}
\vectornormbig{\signal(i+1) - \bestsignal}_F \leq \rho^{i+1} \vectornormbig{\signal(0) - \bestsignal}_F, \label{eq:mALPS0_memory_result}
\end{align} for $\rho:= \frac{\alpha(1+\tau_i) + \sqrt{\alpha^2(1+\tau_i)^2 + 4\alpha\tau_i}}{2}$.
\end{theorem}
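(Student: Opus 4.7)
The plan is to reduce one iteration of \textsc{Matrix ALPS II} to one iteration of the stripped-down Matrix ALPS I variant of Corollary \ref{cor:1}, but issued from the momentum point $\boldsymbol{Q}(i)$ in place of $\signal(i)$. The single-iteration machinery (active subspace expansion in Lemma \ref{lemma:act_subspace_exp}, restricted-operator bounds in Lemmas \ref{lemma:1}--\ref{lemma:4}, and the best rank-$\rank$ projection of $\boldsymbol{V}(i)$) is agnostic to the base point, so I can replay it verbatim; the momentum contribution is then absorbed at the very end by a triangle inequality on $\boldsymbol{Q}(i) - \bestsignal$.

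Concretely, first I would re-derive Lemma \ref{lemma:act_subspace_exp} with $\signal(i) \mapsto \boldsymbol{Q}(i)$ and $\mathcal{X}_i \mapsto \mathcal{Q}_i$. Because $\mathcal{Q}_i = \text{ortho}(\mathcal{X}_{i-1} \cup \mathcal{X}_i)$ has rank at most $2\rank$, the expanded subspace $\mathcal{S}_i = \mathcal{D}_i \cup \mathcal{Q}_i$ has rank at most $3\rank$, and $\mathcal{S}_i \cup \mathcal{X}^\ast$ has rank at most $4\rank$; this is precisely why the isometry constants in the theorem are $\delta_{3\rank}, \delta_{4\rank}$ instead of the $\delta_{2\rank}, \delta_{3\rank}$ that appear in Corollary \ref{cor:1}. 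Repeating the Corollary \ref{cor:1} argument---decomposing $\signal(i+1) - \bestsignal$ along $\mathcal{S}_i$ and $\mathcal{S}_i^\bot$, invoking Lemmas \ref{lemma:2}--\ref{lemma:4} on the restricted operator $\mathcal{P}_{\mathcal{S}_i}\sensing^\ast \sensing \mathcal{P}_{\mathcal{S}_i}$ with the adaptive step size $\mu_i$, and using that $\signal(i+1)$ is the best rank-$\rank$ approximation of $\boldsymbol{V}(i)$---yields the noiseless single-step contraction
\begin{align}
\vectornormbig{\signal(i+1) - \bestsignal}_F \leq \alpha \vectornormbig{\boldsymbol{Q}(i) - \bestsignal}_F. \nonumber
\end{align}

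Injecting the momentum is then immediate: from $\boldsymbol{Q}(i) - \bestsignal = (1+\tau_i)(\signal(i) - \bestsignal) - \tau_i(\signal(i-1) - \bestsignal)$ and the triangle inequality one obtains (\ref{eq:mALPS0_memory:thm}). Setting $a_i := \vectornormbig{\signal(i) - \bestsignal}_F$, this is the linear recurrence $a_{i+1} \leq \alpha(1+\tau_i) a_i + \alpha \tau_i a_{i-1}$, whose characteristic polynomial $\lambda^2 - \alpha(1+\tau_i)\lambda - \alpha\tau_i = 0$ has larger root precisely the $\rho$ stated in the theorem. Induction then closes (\ref{eq:mALPS0_memory_result}): with base case $\signal(0) = \signal(-1) = \mathbf{0}$, if $a_{i-1} \leq \rho^{i-1} a_0$ and $a_i \leq \rho^i a_0$, the defining equation for $\rho$ gives $a_{i+1} \leq \rho^{i-1}\bigl[\alpha(1+\tau_i)\rho + \alpha\tau_i\bigr] a_0 = \rho^{i+1} a_0$.

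The hard part is the single-step contraction: to obtain a sharp $\alpha$ without artifacts, every intermediate bound in the Corollary \ref{cor:1}-style derivation must be expressed in terms of $\vectornormbig{\boldsymbol{Q}(i) - \bestsignal}_F$ rather than $\vectornormbig{\signal(i) - \bestsignal}_F$, so that only one application of the triangle inequality (at the end) introduces $\tau_i$ and we do not double-count the momentum. Equally delicate is the bookkeeping of subspace ranks---ensuring that $\mathcal{D}_i \subseteq \mathcal{Q}_i^\bot$ is invoked exactly where it is needed (to split $\delta_{3\rank}$ from $\delta_{4\rank}$ cleanly) and that each R-RIP call uses the correct index---since any miscount directly inflates $\alpha$ and hence $\rho$.
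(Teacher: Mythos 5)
Your proposal matches the paper's own proof in all essentials: the single-step contraction $\vectornormbig{\signal(i+1)-\bestsignal}_F \leq \alpha\vectornormbig{\boldsymbol{Q}(i)-\bestsignal}_F$ obtained by replaying the Theorem~\ref{thm:mALPS0}/Corollary~\ref{cor:1} machinery from the momentum point (with the rank bookkeeping $\mathcal{Q}_i \leq 2\rank$, $\mathcal{S}_i \leq 3\rank$, $\mathcal{S}_i\cup\mathcal{X}^\ast \leq 4\rank$ shifting the constants to $\delta_{3\rank},\delta_{4\rank}$ exactly as you say), followed by the triangle inequality on $\boldsymbol{Q}(i)-\bestsignal$ and the characteristic-root treatment of the resulting second-order recurrence. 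The only cosmetic difference is that the paper closes the recurrence via the general solution $b_1 r_1^{i+1}+b_2 r_2^{i+1}$ with $b_1+b_2=1$, whereas you use induction on the larger root --- an equivalent (and if anything slightly cleaner) way to finish.
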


\begin{figure}[!t]
\centering
\begin{tabular}{cc}
\centerline{\subfigure{\includegraphics[width = 0.38\textwidth]{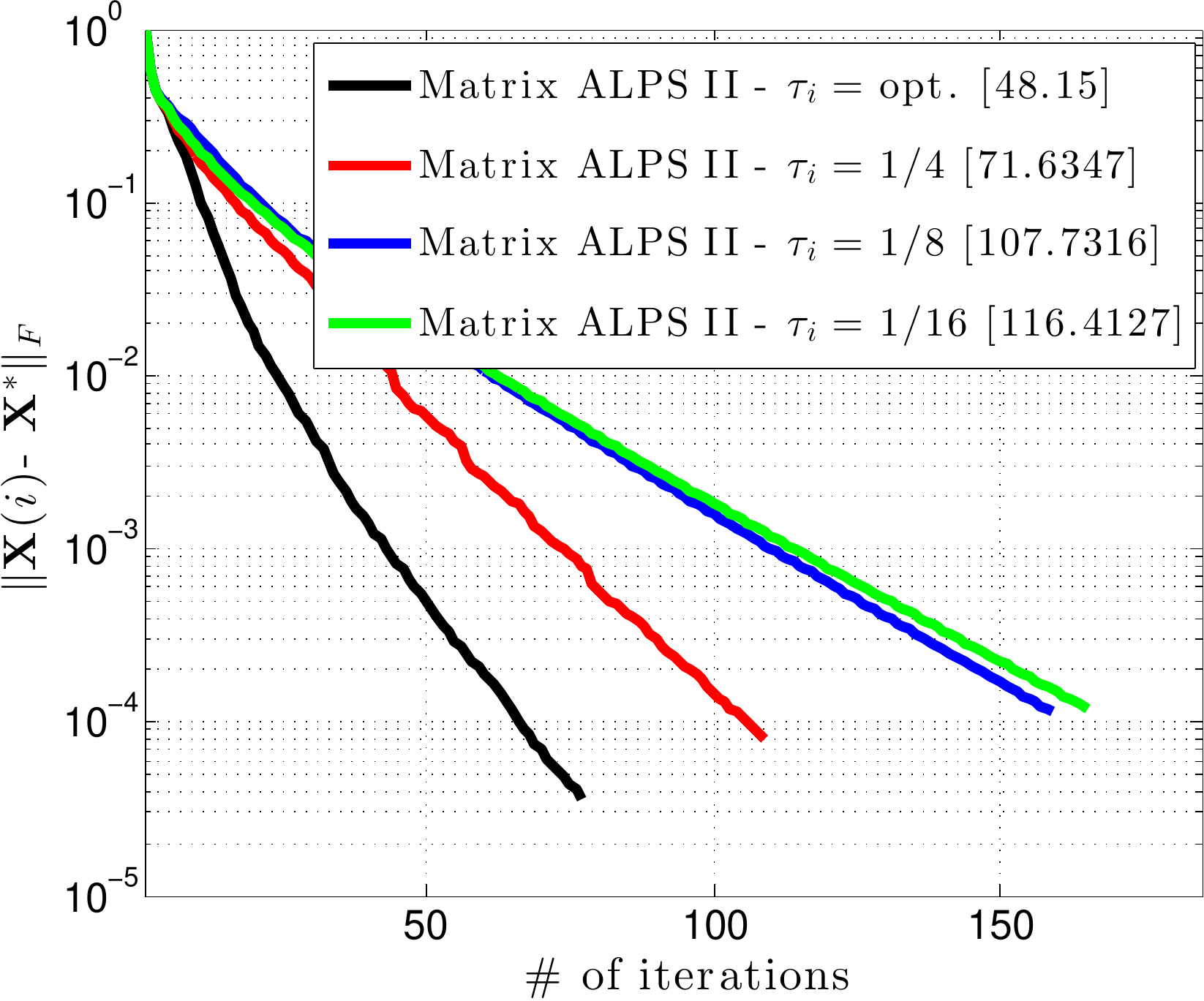}\label{fig:3b}}}
\end{tabular}
\caption{\small\sl Median error per iteration for various momentum step size policies and 10 Monte-Carlo repetitions. Here, $n = 1024$, $ m = 256 $, $p = 0.25n^2, $ and rank $ \rank = 40 $. We use permuted and subsampled noiselets for the linear map $\sensing $. In brackets, we present the median time for convergence in seconds. 
}\label{fig: lambda}
\end{figure}

Theorem \ref{thm:mALPS0:memory} provides convergence rate behaviour proof for the case where $ \tau_i $ is constant $\forall i $. The more elaborate case where $\tau_i$ follows the policy described in (\ref{tau_optimized:00}) is left as an open question for future work. To provide some insight for (\ref{eq:mALPS0_memory_result}), for $\tau_i = 1/4, ~\forall i$ and $\tau_i = 1/2, ~\forall i$, $\delta_{4\rank} < 0.1187 $ and $\delta_{4\rank} < 0.095 $ guarantee convergence in Algorithm 3, respectively. While the RIP requirements for memory-based \textsc{Matrix ALPS II} are more stringent than the schemes proposed in the previous section, it outperforms Algorithms 1 and 2. Figure 2 shows the acceleration achieved in \textsc{Matrix ALPS II} by using inexact projections $\mathcal{P}_{\mathcal{U}}$. Using the proper projections (\ref{eq:newproj})-(\ref{eq:neworthoproj}), Figure \ref{fig: lambda} shows acceleration in practice when using the adaptive momentum step size strategy: while a wide range of constant momentum step sizes leads to convergence, providing flexibility to select an appropriate $\tau_i$, adaptive $\tau_i$ avoids this arbitrary $\tau_i$ selection while further decreases the number of iterations needed for convergence in most cases.  

\section{Accelerating \textsc{Matrix ALPS}: $ \epsilon $-Approximation of SVD via Column Subset Selection}{\label{section:approximate}}

A time-complexity bottleneck in the proposed schemes is the computation of the singular value decomposition to find subspaces that  describe the unexplored information in matrix $\bestsignal $. Unfortunately, the computational cost of regular SVD for best subspace tracking is prohibitive for many applications. 

Based on \cite{drineas1, drineas2},  we can obtain randomized SVD approximations of a matrix $\signal $ using {\it column subset selection} ideas: we compute a leverage score for each column that represents its ``significance''. In particular, we define a probability distribution that weights each column depending on the amount of information they contain; usually, the distribution is related to the $\ell_2$-norm of the columns. The main idea of this approach is to compute a surrogate rank-$\rank$ matrix $\mathcal{P}_{\rank}^{\epsilon}(\signal) $ by subsampling the columns according to this distribution. It turns out that the total number of sampled columns is a function of the parameter $\epsilon$.  Moreover, \cite{deshpande1, deshpande2} proved that, given a target rank $\rank$ and an approximation parameter $\epsilon$, we can compute an $\epsilon$-approximate rank-$\rank$ matrix $\mathcal{P}_{\rank}^\epsilon(\signal) $ according to the following defintion.

\begin{definition}{\label{def:appr_svd}}[$ \epsilon $-approximate low-rank projection]
Let $ \signal $ be an arbitrary matrix. Then, $ \mathcal{P}_{\rank}^{\epsilon}(\signal) $ projection provides a rank-$ \rank $ matrix approximation to $ \signal $ such that:
\begin{align}
\vectornormbig{\mathcal{P}_{\rank}^{\epsilon}(\signal) - \boldsymbol{X}}_F^2 \leq (1 + \epsilon) \vectornormbig{\mathcal{P}_{\rank}(\signal) - \boldsymbol{X}}_F^2, \label{eq:appr_svd:00}
\end{align} where $ \mathcal{P}_{\rank}(\signal) \in \argmin_{\boldsymbol{Y}: \text{rank}(\boldsymbol{Y}) \leq \rank} \vectornorm{\signal - \boldsymbol{Y}}_F $.
\end{definition}

For the following theoretical results, we assume the following condition on the sensing operator $\sensing:$ $\vectornormbig{\sensing^{\ast} \boldsymbol{\beta}}_F \leq \lambda, ~\forall \boldsymbol{\beta} \in \mathbb{R}^{p}$ where $\lambda > 0$. 
Using $ \epsilon $-approximation schemes to perform the Active subspace selection step, the following upper bound holds. The proof is provided in the Appendix:

\begin{lemma}\label{lemma:appr_act_subspace_exp}[$ \epsilon $-approximate active subspace expansion] Let $ \signal(i) $ be the matrix estimate at the $ i $-th iteration and let $ \mathcal{X}_i $ be a set of orthonormal, rank-1 matrices in $\mathbb{R}^{\dimension}$ such that $ \mathcal{X}_i \leftarrow \mathcal{P}_{\rank}(\signal(i)) $. Furthermore, let 
\begin{align}
\mathcal{D}_i^{\epsilon} \leftarrow \mathcal{P}_{\rank}^{\epsilon}\big( \mathcal{P}_{\mathcal{X}_i^{\bot}} \nabla f(\signal(i)) \big), \nonumber 
\end{align} be a set of orthonormal, rank-1 matrices that span rank-$ \rank $ subspace such that (\ref{eq:appr_svd:00}) is satisfied for $ \boldsymbol{X} := \mathcal{P}_{\mathcal{X}_i^{\bot}} \nabla f(\signal(i)) $. Then, at each iteration, the Active Subspace Expansion step in Algorithms 1 and 2 captures information contained in the true matrix $ \bestsignal $, such that:
\begin{align}
\vectornormbig{&\mathcal{P}_{\mathcal{X}^\ast} \mathcal{P}_{\mathcal{S}_i^{\bot}}\bestsignal}_F \nonumber \\
&\leq \big(2\delta_{2\rank} + 2\delta_{3\rank}\big)\vectornormbig{\signal(i) - \bestsignal}_F + \sqrt{2(1+\delta_{2\rank})}\vectornormbig{\noise}_2 \nonumber \\ &+ 2\lambda \sqrt{\epsilon}, \label{eq:lemma6_appr}
\end{align} where $ \mathcal{S}_i \leftarrow \mathcal{X}_i \cup \mathcal{D}_i^{\epsilon} $ and $ \mathcal{X}^\ast \leftarrow \mathcal{P}_{\rank}(\bestsignal) $. 
\end{lemma}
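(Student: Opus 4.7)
The plan is to parallel the proof of the exact active subspace expansion result (Lemma \ref{lemma:act_subspace_exp}) and to inject the approximation slack at the single step where optimality of the rank-$\rank$ projection was invoked. Set $\boldsymbol{G} := \mathcal{P}_{\mathcal{X}_i^{\bot}} \nabla f(\signal(i))$, so that in the exact version $\mathcal{D}_i = \mathcal{P}_{\rank}(\boldsymbol{G})$ is the best rank-$\rank$ subspace of $\boldsymbol{G}$, while here $\mathcal{D}_i^{\epsilon}$ only satisfies the near-optimality guarantee (\ref{eq:appr_svd:00}) with $\boldsymbol{X} = \boldsymbol{G}$.

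First, I would convert the multiplicative $(1+\epsilon)$ guarantee into a clean additive comparison. Since $\mathcal{P}_{\mathcal{D}_i}$ and $\mathcal{P}_{\mathcal{D}_i^{\epsilon}}$ are orthogonal projections onto rank-$\rank$ subspaces, Pythagoras applied to (\ref{eq:appr_svd:00}) yields
\begin{align}
\vectornormbig{\boldsymbol{G}}_F^2 - \vectornormbig{\mathcal{P}_{\mathcal{D}_i^{\epsilon}}\boldsymbol{G}}_F^2 \leq (1+\epsilon)\left(\vectornormbig{\boldsymbol{G}}_F^2 - \vectornormbig{\mathcal{P}_{\mathcal{D}_i}\boldsymbol{G}}_F^2\right), \nonumber
\end{align}
which after rearrangement and the elementary estimate $a-b \leq \sqrt{a^2-b^2}$ for $a \geq b \geq 0$ gives
\begin{align}
\vectornormbig{\mathcal{P}_{\mathcal{D}_i}\boldsymbol{G}}_F \leq \vectornormbig{\mathcal{P}_{\mathcal{D}_i^{\epsilon}}\boldsymbol{G}}_F + \sqrt{\epsilon}\,\vectornormbig{\boldsymbol{G}}_F. \nonumber
\end{align}
The hypothesis $\vectornormbig{\sensing^{\ast}\boldsymbol{\beta}}_F \leq \lambda$ on the sensing operator implies $\vectornormbig{\boldsymbol{G}}_F \leq \vectornormbig{\nabla f(\signal(i))}_F \leq 2\lambda$, so this slack is at most $2\lambda\sqrt{\epsilon}$.

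Second, I would rerun the proof of Lemma \ref{lemma:act_subspace_exp} essentially verbatim. The only place in that argument that exploits the optimality of $\mathcal{D}_i$ is the step that compares the energy of $\boldsymbol{G}$ captured by an auxiliary rank-$\rank$ subspace---chosen to expose the unrecovered portion of $\bestsignal$ on $\mathcal{X}^{\ast}$---with the energy captured by $\mathcal{D}_i$ itself. All remaining estimates rest on the R-RIP consequences of Lemmas \ref{lemma:1}--\ref{lemma:4} together with the identity $\nabla f(\signal(i)) = -2\sensing^{\ast}\sensing(\signal(i)-\bestsignal) - 2\sensing^{\ast}\noise$, and do not refer to the exact SVD. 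Replacing $\mathcal{D}_i$ by $\mathcal{D}_i^{\epsilon}$ at that single step and applying the additive bound above contributes the extra $2\lambda\sqrt{\epsilon}$ term in (\ref{eq:lemma6_appr}), while the coefficients of $\vectornormbig{\signal(i)-\bestsignal}_F$ and $\vectornormbig{\noise}_2$ inherited from (\ref{eq:lemma6}) remain untouched.

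The principal obstacle is to prevent the multiplicative $(1+\epsilon)$ factor from propagating through the subsequent R-RIP inequalities, which would contaminate $\delta_{2\rank},\delta_{3\rank}$ by $\sqrt{1+\epsilon}$-type factors and destroy the clean additive form of the statement. The conversion carried out in Step~1 handles this directly: by absorbing the approximation error into a single additive $\sqrt{\epsilon}\vectornormbig{\boldsymbol{G}}_F$ correction \emph{before} any R-RIP bound is applied, all downstream estimates compose additively, and the final inequality differs from (\ref{eq:lemma6}) only by the isolated $2\lambda\sqrt{\epsilon}$ tail.
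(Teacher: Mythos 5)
Your overall strategy is the right one and matches the paper's: rerun the proof of Lemma \ref{lemma:act_subspace_exp}, isolate the approximation slack at the single step where optimality of the rank-$\rank$ projection is invoked, and bound that slack by $2\lambda\sqrt{\epsilon}$ using the assumption $\vectornormbig{\sensing^{\ast}\boldsymbol{\beta}}_F\leq\lambda$. Your first displayed inequality---the squared-norm comparison $\vectornormbig{\boldsymbol{G}}_F^2-\vectornormbig{\mathcal{P}_{\mathcal{D}_i^{\epsilon}}\boldsymbol{G}}_F^2\leq(1+\epsilon)\big(\vectornormbig{\boldsymbol{G}}_F^2-\vectornormbig{\mathcal{P}_{\mathcal{D}_i}\boldsymbol{G}}_F^2\big)$---is exactly the ingredient the paper uses. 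The gap is in the next move. You convert this into the additive bound $\vectornormbig{\mathcal{P}_{\mathcal{D}_i}\boldsymbol{G}}_F\leq\vectornormbig{\mathcal{P}_{\mathcal{D}_i^{\epsilon}}\boldsymbol{G}}_F+\sqrt{\epsilon}\vectornormbig{\boldsymbol{G}}_F$ \emph{before} running the rest of the argument, and then assert that all downstream estimates compose additively. They do not. Between the best-subspace comparison and the point where the R-RIP lemmas take over, the proof of Lemma \ref{lemma:act_subspace_exp} performs two Pythagorean manipulations on \emph{squared} norms: it adds $\vectornormbig{\mathcal{P}_{\mathcal{X}_i}\nabla f(\signal(i))}_F^2$ to both sides and then subtracts the energy on the common subspace of $\mathcal{S}_i$ and $\mathcal{S}_i^{\ast}$ from both sides. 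An additive slack on unsquared norms survives the first operation (the map $x\mapsto\sqrt{x^2+c^2}$ is $1$-Lipschitz) but not the second: from $u\leq v+s$ with $u,v\geq d\geq 0$ one cannot conclude $\sqrt{u^2-d^2}\leq\sqrt{v^2-d^2}+s$ (take $v=d=1$ and $u=1+s$; the left-hand side is $\sqrt{2s+s^2}$, which exceeds $s$ for small $s$, while the right-hand side is $s$). So the claim that the coefficients of (\ref{eq:lemma6}) are inherited untouched with only an isolated $2\lambda\sqrt{\epsilon}$ tail does not follow from the chain you describe.

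The fix is to postpone the square root rather than take it early. Keep the slack in squared form, i.e.\ $\vectornormbig{\mathcal{P}_{(\mathcal{D}_i^{\epsilon})^{\bot}}\boldsymbol{G}}_F^2\leq(1+\epsilon)\vectornormbig{\mathcal{P}_{(\mathcal{X}^{\ast}\setminus\mathcal{X}_i)^{\bot}}\nabla f(\signal(i))}_F^2$, carry the resulting additive term $\epsilon\vectornormbig{\mathcal{P}_{(\mathcal{X}^{\ast}\setminus\mathcal{X}_i)^{\bot}}\nabla f(\signal(i))}_F^2$ through the add-and-subtract steps (adding or subtracting the same quantity from both sides of a squared-norm inequality is always valid), and only after arriving at $\vectornormbig{\mathcal{P}_{\mathcal{S}_i\setminus\mathcal{S}_i^{\ast}}\nabla f(\signal(i))}_F^2+\epsilon\vectornormbig{\mathcal{P}_{(\mathcal{X}^{\ast}\setminus\mathcal{X}_i)^{\bot}}\nabla f(\signal(i))}_F^2\geq\vectornormbig{\mathcal{P}_{\mathcal{S}_i^{\ast}\setminus\mathcal{S}_i}\nabla f(\signal(i))}_F^2$ invoke the subadditivity $\sqrt{a^2+b^2}\leq a+b$ to split off the $\sqrt{\epsilon}$ term. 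This is precisely what the paper does in (\ref{eq:start})--(\ref{eq:appr_act:04a}); the remainder of your plan (bounding the residual factor by $2\lambda$ and reusing the two R-RIP estimates (\ref{ser:eq:02}) and (\ref{ser:eq:03})) then goes through as you describe.
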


Furthermore, to prove the following theorems, we extend Lemma \ref{lemma:comb_selection}, provided in the Appendix, as follows. The proof easily follows from the proof of Lemma \ref{lemma:comb_selection}, using Definition \ref{def:appr_svd}:
\begin{lemma}{\label{lemma:appr_comb_selection}}[$ \epsilon $-approximation rank-$ \rank $ subspace selection] Let $ \boldsymbol{V}(i) $ be a rank-$ 2\rank $ proxy matrix in the subspace spanned by $ \mathcal{S}_i $ and let $ \widehat{\boldsymbol{W}}(i) \leftarrow \mathcal{P}_{\rank}^{\epsilon}(\boldsymbol{V}(i)) $ denote the rank-$ \rank $ $ \epsilon $-approxi- mation to $ \boldsymbol{V}(i) $, according to (\ref{eq:svd_proj}). Then:
\begin{align}
\vectornormbig{\widehat{\boldsymbol{W}}(i)  - \boldsymbol{V}(i)}_F^2 &\leq (1 + \epsilon) \vectornormbig{\boldsymbol{W}(i)  - \boldsymbol{V}(i)}_F \nonumber \\ &\leq (1+\epsilon) \vectornormbig{\mathcal{P}_{\mathcal{S}_i}(\boldsymbol{V}(i) - \bestsignal)}_F \nonumber \\ &\leq (1+\epsilon)\vectornormbig{\boldsymbol{V}(i) - \bestsignal}_F \label{eq:mALPS5:13_appr}
\end{align} where $ \boldsymbol{W}(i) \leftarrow \mathcal{P}_{\rank}(\boldsymbol{V}(i)) $.
\end{lemma}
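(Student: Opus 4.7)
The plan is to treat this lemma as a straightforward $\epsilon$-relaxation of Lemma \ref{lemma:comb_selection}, so I would proceed by chaining together three short steps, each of which specializes one of the three inequalities in the statement.

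First I would pin down the rank-$k$ approximation error of $\widehat{\boldsymbol{W}}(i)$ relative to the exact rank-$k$ projection of $\boldsymbol{V}(i)$. Applying Definition \ref{def:appr_svd} with the matrix $\boldsymbol{X}$ in (\ref{eq:appr_svd:00}) instantiated as $\boldsymbol{V}(i)$, the $\epsilon$-approximate low-rank projection satisfies
\begin{equation}
\vectornormbig{\widehat{\boldsymbol{W}}(i) - \boldsymbol{V}(i)}_F^2 \;\leq\; (1+\epsilon)\vectornormbig{\mathcal{P}_{\rank}(\boldsymbol{V}(i)) - \boldsymbol{V}(i)}_F^2 \;=\; (1+\epsilon)\vectornormbig{\boldsymbol{W}(i) - \boldsymbol{V}(i)}_F^2. \nonumber
\end{equation}
This gives the first inequality essentially by definition, with no use of R-RIP or of properties of $\sensing$.

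Next I would invoke Lemma \ref{lemma:comb_selection} to pass from $\vectornormbig{\boldsymbol{W}(i) - \boldsymbol{V}(i)}_F$ to $\vectornormbig{\mathcal{P}_{\mathcal{S}_i}(\boldsymbol{V}(i) - \bestsignal)}_F$. The underlying reason this step works is that $\boldsymbol{W}(i) = \mathcal{P}_{\rank}(\boldsymbol{V}(i))$ is the best rank-$\rank$ approximation of $\boldsymbol{V}(i)$, while $\mathcal{P}_{\mathcal{S}_i}\bestsignal$ is itself a rank-$\rank$ candidate lying in $\text{span}(\mathcal{S}_i)$ (using $\text{rank}(\mathcal{P}_{\mathcal{S}_i}\bestsignal)\leq \rank$ and the Eckart--Young minimality of $\mathcal{P}_{\rank}$). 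Since $\boldsymbol{V}(i) \in \text{span}(\mathcal{S}_i)$, one has $\boldsymbol{V}(i) - \mathcal{P}_{\mathcal{S}_i}\bestsignal = \mathcal{P}_{\mathcal{S}_i}(\boldsymbol{V}(i) - \bestsignal)$, which yields the second inequality. I would cite Lemma \ref{lemma:comb_selection} here rather than re-derive it, since the statement of the current lemma explicitly positions itself as its $\epsilon$-approximate analogue.

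Finally, the third inequality follows from the non-expansiveness of the orthogonal projection $\mathcal{P}_{\mathcal{S}_i}$, a fact recalled in Section \ref{sec:prel}: for any matrix $\boldsymbol{Z} \in \mathbb{R}^{\dimension}$, $\vectornormbig{\mathcal{P}_{\mathcal{S}_i}\boldsymbol{Z}}_F \leq \vectornormbig{\boldsymbol{Z}}_F$, applied with $\boldsymbol{Z} = \boldsymbol{V}(i) - \bestsignal$. Chaining the three bounds completes the proof.

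The only real obstacle is bookkeeping rather than mathematics: one must make sure the squared-vs-unsquared Frobenius norms match when combining the $\epsilon$-bound (which is naturally stated in squared form in Definition \ref{def:appr_svd}) with the Lemma \ref{lemma:comb_selection} bound (stated in unsquared form). I expect this to reduce to taking square roots on both sides of the first inequality and absorbing the factor $\sqrt{1+\epsilon}$, which is standard, but it is the step where a reader would most likely stumble on notation.
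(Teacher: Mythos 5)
Your proof is correct and takes essentially the same route as the paper, which itself only remarks that the lemma ``easily follows from the proof of Lemma \ref{lemma:comb_selection}, using Definition \ref{def:appr_svd}''---precisely your chain of (i) Definition \ref{def:appr_svd} applied with $\boldsymbol{X}:=\boldsymbol{V}(i)$, (ii) the Eckart--Young/optimality argument of Lemma \ref{lemma:comb_selection} with the rank-$\rank$ candidate $\mathcal{P}_{\mathcal{S}_i}\bestsignal$, and (iii) non-expansiveness of $\mathcal{P}_{\mathcal{S}_i}$. Your closing remark about squared versus unsquared Frobenius norms is also well taken, since the displayed chain (\ref{eq:mALPS5:13_appr}) as printed mixes $\vectornormbig{\cdot}_F^2$ on the left with $\vectornormbig{\cdot}_F$ on the right, so the careful reader must indeed take square roots and absorb a $\sqrt{1+\epsilon}$ exactly as you describe.
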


\subsection{\textsc{Matrix ALPS I} using $ \epsilon $-approximate low-rank projection via column subset selection}

Using $ \epsilon $-approximate SVD in \textsc{Matrix ALPS I}, the following iteration invariant theorem holds:

\begin{theorem}\label{thm:mALPS0_appr}[Iteration invariant with $ \epsilon $-approximate projections for \textsc{Matrix ALPS I}] The $(i+1)$-th matrix estimate $\signal(i+1)$ of \textsc{Matrix ALPS I} with $ \epsilon $-approximate projections $ \mathcal{D}_i^\epsilon \leftarrow \mathcal{P}_{\rank}^{\epsilon}\big( \mathcal{P}_{\mathcal{X}_i^{\bot}} \nabla f(\signal(i)) \big) $ and $ \widehat{\boldsymbol{W}}(i) \leftarrow \mathcal{P}_{\rank}^{\epsilon}(\boldsymbol{V}(i)) $ in Algorithm 1 satisfies the following recursion:
\begin{align}
\vectornormbig{\signal(i+1) - \bestsignal}_F \leq \rho\vectornormbig{\signal(i) - \bestsignal}_F 
+\gamma \vectornorm{\noise}_2 + \beta \lambda, \label{eq:mALPS0:thm_appr}
\end{align}
where $\rho:= \left(1 + \frac{3\delta_{\rank}}{1-\delta_{\rank}}\right)\left(2 + \epsilon\right)\big [ ( 1+ \frac{\delta_{3\rank}}{1-\delta_{2\rank}})4\delta_{3\rank} + \frac{2\delta_{2\rank}}{1-\delta_{2\rank}}\big], $ $\beta := \left(1 + \frac{3\delta_{\rank}}{1-\delta_{\rank}}\right)\left(2 + \epsilon\right)\left(1+ \frac{\delta_{3\rank}}{1-\delta_{2\rank}} \right)2\sqrt{\epsilon}, $ and \\ $
\gamma := \left(1 + \frac{3\delta_{\rank}}{1-\delta_{\rank}}\right)\big(2 + \epsilon\big)\Big[ \big(1+ \frac{\delta_{3\rank}}{1-\delta_{2\rank}}\big)\sqrt{2(1+\delta_{2\rank})} + $ \\ $ 2\frac{\sqrt{1+\delta_{2\rank}}}{1-\delta_{2\rank}}\Big]. $
\end{theorem}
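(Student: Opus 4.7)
The plan is to follow the proof of Theorem~\ref{thm:mALPS0} for the exact version of \textsc{Matrix ALPS I} almost verbatim, replacing its two key subroutines with their $\epsilon$-approximate counterparts: wherever the original argument uses exact active-subspace expansion I substitute Lemma~\ref{lemma:appr_act_subspace_exp}, which adds a $2\lambda\sqrt{\epsilon}$ term; wherever it uses the exact best rank-$\rank$ projection inequality I substitute Lemma~\ref{lemma:appr_comb_selection}, which adds a $\sqrt{1+\epsilon}\le 1+\epsilon$ factor. All R-RIP ingredients---Lemmas~\ref{lemma:1}--\ref{lemma:4}, the adaptive step-size bounds~(\ref{eq:additional:1}), and the Pythagoras/triangle splits---are re-used unchanged.

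Operationally I chain three inequalities. \emph{De-bias step.} Lemma~\ref{lemma:3} on the rank-$\rank$ subspace $\widehat{\mathcal{W}}_i$ with $\xi_i \in [1/(1+\delta_\rank), 1/(1-\delta_\rank)]$ yields $\|\signal(i+1)-\bestsignal\|_F \le \big(1 + 3\delta_\rank/(1-\delta_\rank)\big)\|\widehat{\boldsymbol{W}}(i)-\bestsignal\|_F + \frac{\sqrt{1+\delta_\rank}}{1-\delta_\rank}\|\noise\|_2$, producing the outermost multiplicative factor of $\rho$ and $\beta$. \emph{Rank-$\rank$ selection.} Triangle inequality together with Lemma~\ref{lemma:appr_comb_selection} gives $\|\widehat{\boldsymbol{W}}(i)-\bestsignal\|_F \le (1+\sqrt{1+\epsilon})\|\boldsymbol{V}(i)-\bestsignal\|_F \le (2+\epsilon)\|\boldsymbol{V}(i)-\bestsignal\|_F$, producing the $(2+\epsilon)$ factor. \emph{Gradient step.} Because $\signal(i) \in \text{span}(\mathcal{X}_i) \subseteq \text{span}(\mathcal{S}_i)$, write $\bestsignal-\signal(i) = \mathcal{P}_{\mathcal{S}_i}(\bestsignal-\signal(i)) + \mathcal{P}_{\mathcal{S}_i^\bot}\bestsignal$ and expand $\boldsymbol{V}(i)-\bestsignal$ through the gradient update; a Pythagoras split over $\mathcal{S}_i \oplus \mathcal{S}_i^\bot$ separates the in-subspace residual from $\mathcal{P}_{\mathcal{S}_i^\bot}\bestsignal$. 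Lemma~\ref{lemma:3} with $\mu_i \in [1/(1+\delta_{2\rank}), 1/(1-\delta_{2\rank})]$ bounds $\|\mathbf{I}-\mu_i \mathcal{P}_{\mathcal{S}_i}\sensing^\ast\sensing \mathcal{P}_{\mathcal{S}_i}\|$ by $2\delta_{2\rank}/(1-\delta_{2\rank})$; Lemma~\ref{lemma:4} handles the cross term $\mu_i\mathcal{P}_{\mathcal{S}_i}\sensing^\ast\sensing \mathcal{P}_{\mathcal{S}_i^\bot}\bestsignal$, contributing a $\delta_{3\rank}/(1-\delta_{2\rank})$ scaling; and Lemma~\ref{lemma:1} handles $\mu_i\mathcal{P}_{\mathcal{S}_i}\sensing^\ast\noise$. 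Finally, Lemma~\ref{lemma:appr_act_subspace_exp} controls $\|\mathcal{P}_{\mathcal{S}_i^\bot}\bestsignal\|_F$, contributing $4\delta_{3\rank}\|\signal(i)-\bestsignal\|_F$ (after $2\delta_{2\rank}+2\delta_{3\rank}\le 4\delta_{3\rank}$), a $\sqrt{2(1+\delta_{2\rank})}\|\noise\|_2$ piece, and a $2\lambda\sqrt{\epsilon}$ piece. Multiplying the three steps and regrouping terms by channel---those proportional to $\|\signal(i)-\bestsignal\|_F$, to $\|\noise\|_2$, and to $\sqrt{\epsilon}$---reproduces the coefficients $\rho$, $\gamma$, $\beta$ of~(\ref{eq:mALPS0:thm_appr}) exactly.

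The main obstacle is bookkeeping rather than any new inequality. First, Lemma~\ref{lemma:appr_act_subspace_exp} bounds $\|\mathcal{P}_{\mathcal{X}^\ast}\mathcal{P}_{\mathcal{S}_i^\bot}\bestsignal\|_F$, whereas the Pythagoras decomposition in the gradient-step analysis wants $\|\mathcal{P}_{\mathcal{S}_i^\bot}\bestsignal\|_F$; this passage mirrors the exact case of Theorem~\ref{thm:mALPS0} and relies on $\bestsignal \in \text{span}(\mathcal{X}^\ast)$ (so that the residual $\bestsignal-\signal(i)$ lives in the rank-$2\rank$ subspace $\text{span}(\mathcal{X}_i \cup \mathcal{X}^\ast)$), which lets the $2\lambda\sqrt{\epsilon}$ term propagate without further loss. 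Second, care is required so that the additive $2\lambda\sqrt{\epsilon}$ term picks up exactly one copy of each of the three outer multiplicative factors---$(1 + 3\delta_\rank/(1-\delta_\rank))$, $(2+\epsilon)$, and $(1 + \delta_{3\rank}/(1-\delta_{2\rank}))$---and is not inflated by the contractive RIP scalars from the gradient step; this pattern is precisely what the stated $\beta$ encodes.
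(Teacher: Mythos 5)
Your proposal follows the paper's own proof essentially verbatim: the paper likewise chains (i) a gradient-step lemma (Lemma~\ref{lemma:appr_greedy}) that splits $\boldsymbol{V}(i)-\bestsignal$ over $\mathcal{S}_i\oplus\mathcal{S}_i^{\bot}$, invokes Lemmas~\ref{lemma:1}, \ref{lemma:3}, \ref{lemma:4} with the adaptive step-size bounds, and controls $\vectornormbig{\mathcal{P}_{\mathcal{S}_i^{\bot}}\bestsignal}_F$ via Lemma~\ref{lemma:appr_act_subspace_exp} to introduce the $2\lambda\sqrt{\epsilon}$ term, (ii) the $(2+\epsilon)$ factor from Lemma~\ref{lemma:appr_comb_selection} plus the triangle inequality, and (iii) the de-bias step giving the outer $\left(1+\frac{3\delta_{\rank}}{1-\delta_{\rank}}\right)$ factor. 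The bookkeeping you describe for how $\beta$ collects exactly the three outer factors matches the paper's assembly of (\ref{eq:mALPS0_appr:00}), (\ref{eq:mALPS0_appr:01}) and (\ref{eq:mALPS0_appr:04}).
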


\begin{figure}[!ht]
\centering
\subfigure[]{\includegraphics[width = 0.40\textwidth]{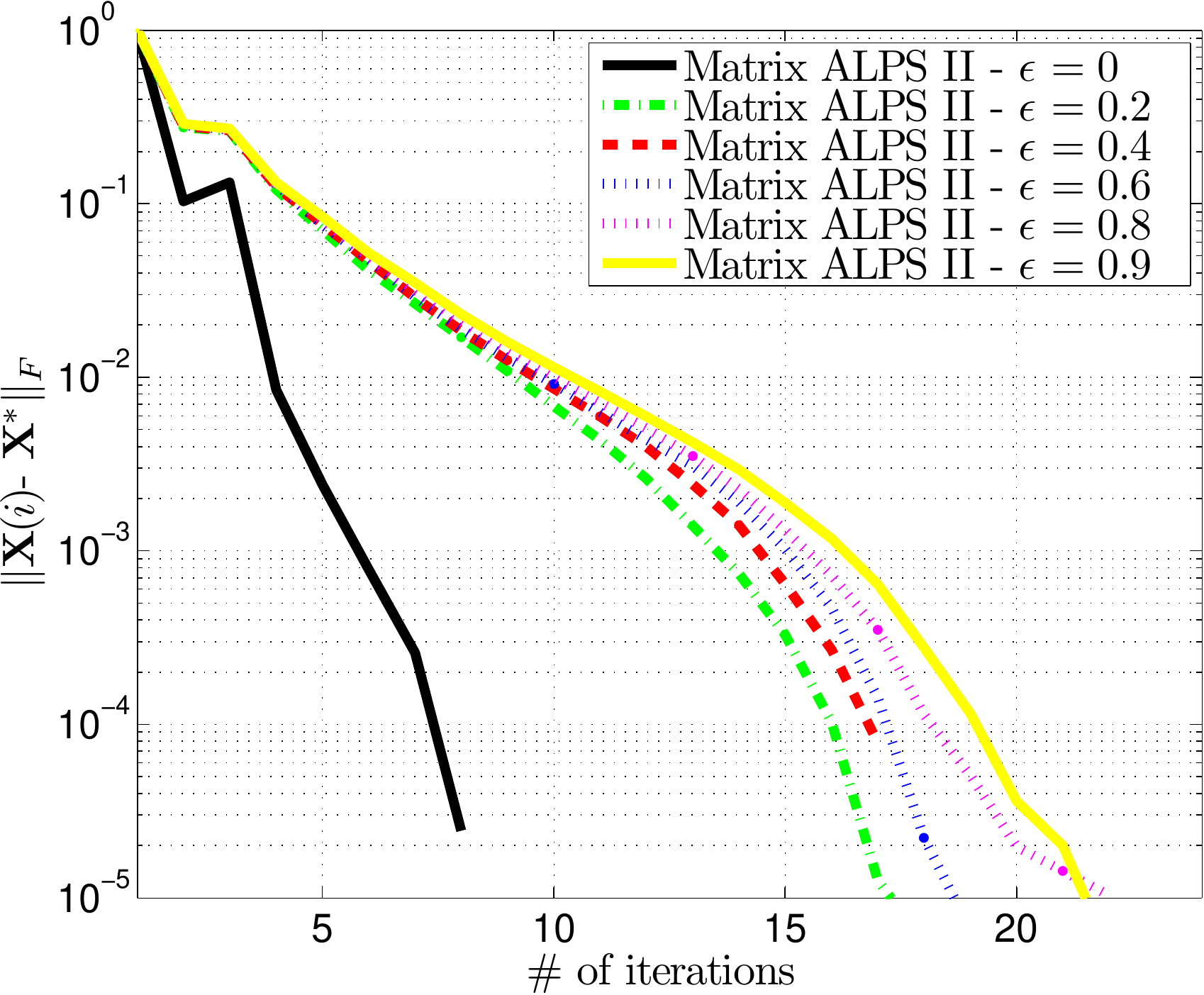}\label{fig:3cc}}
\caption{\small\sl Performance comparison using $\epsilon$-approximation SVD \cite{deshpande2} in \textsc{Matrix ALPS II}. $m = n = 256$, $p = 0.4n^2$, rank of $\bestsignal$ equals $2$ and $\sensing $ constituted by permuted noiselets. The non-smoothness in the error curves is due to the extreme low rankness of $\bestsignal$ for this setting. } \label{fig: lambda1}
\end{figure}

Similar analysis can be conducted for the ADMiRA algorithm. To illustrate the impact of SVD $\epsilon$-approximation on the signal reconstruction performance of the proposed methods, we replace the {\it best} rank-$\rank$ projections in steps 1 and 5 of Algorithm 1 by the $\epsilon$-approximation SVD algorithm, presented in \cite{deshpande2}. In this paper, the column subset selection algorithm satisfies the following theorem:

\begin{theorem}{\label{thm:adaptiveVolume}} 
Let $\signal \in \mathbb{R}^{\dimension} $ be a signal of interest with arbitrary $ \text{rank} < \min\lbrace m, n \rbrace $ and let $\signal_{\rank}$ represent the {\it best} rank-$\rank$ approximation of $\signal $. After $2 (\rank + 1)(\log(\rank + 1) + 1) $ passes over the data, the Linear Time Low-Rank Matrix Approximation algorithm in \cite{deshpande2} computes a rank-$\rank$ approximation $\mathcal{P}_{\rank}^{\epsilon}(\signal) \in \mathbb{R}^{\dimension} $ such that Definition \ref{def:appr_svd} is satisfied with probability at least 3/4.
\end{theorem}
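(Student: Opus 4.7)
The plan is to invoke the framework of adaptive sampling / volume sampling developed by Deshpande--Rademacher--Vempala, of which the cited algorithm from \cite{deshpande2} is an instance, and then assemble the pieces so that the pass count and success probability match the claimed bounds. I would present the argument in three stages: a single-pass adaptive sampling bound, an iterative amplification that shrinks the multiplicative error geometrically, and a final probabilistic cleanup.

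\textbf{Stage 1 (single-pass adaptive sampling).} First I would set up the following primitive: given a current rank-$r$ subspace $\mathcal{T}_{t-1}$, form the residual matrix $\boldsymbol{E}_{t-1} := \signal - \mathcal{P}_{\mathcal{T}_{t-1}}\signal$, sample $s = O(k)$ columns of $\signal$ with probabilities proportional to $\|\boldsymbol{E}_{t-1}(:,j)\|_2^2$ (a single pass over the data suffices to compute these weights and to draw the sample), and augment $\mathcal{T}_{t-1}$ by the span of the sampled columns. The key lemma I would quote from \cite{deshpande1,deshpande2} is that the resulting subspace $\mathcal{T}_t$ satisfies, in expectation,
\begin{align}
\mathbb{E}\vectornormbig{\signal - \mathcal{P}_{\mathcal{T}_t}\signal}_F^2 \;\leq\; \vectornormbig{\signal - \signal_{\rank}}_F^2 + \tfrac{\rank}{s}\vectornormbig{\boldsymbol{E}_{t-1}}_F^2 . \nonumber
\end{align}
Choosing $s = \rank+1$ makes the contraction factor $\rank/(\rank+1)$, so each pass shrinks the excess error by a constant factor strictly less than one.

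\textbf{Stage 2 (iterative amplification).} Next I would iterate Stage 1 for $T_1 := (\rank+1)(\log(\rank+1)+1)$ passes. Unrolling the recursion $a_t \leq \vectornorm{\signal - \signal_\rank}_F^2 + \frac{\rank}{\rank+1}\, a_{t-1}$ (where $a_t$ abbreviates the relevant expected excess), after $T_1$ passes the excess error satisfies $\mathbb{E}[a_{T_1}] \leq (1 + 1/(\rank+1))\vectornorm{\signal-\signal_\rank}_F^2$ up to a constant-order multiplicative slack, since $(\rank/(\rank+1))^{T_1} = e^{-\Theta(\log(\rank+1))} = O(1/(\rank+1))$. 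After this stage we have a subspace that is a constant-factor approximation in expectation; to convert this to the $(1+\epsilon)$ guarantee of Definition \ref{def:appr_svd} I would re-run Stage 1 another $T_1$ passes seeded by the refined subspace, where the shrinkage now acts on the already-small excess and pushes it below $\epsilon\vectornorm{\signal-\signal_\rank}_F^2$. The total is $2T_1 = 2(\rank+1)(\log(\rank+1)+1)$ passes, matching the statement.

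\textbf{Stage 3 (probability amplification).} Finally I would apply Markov's inequality to the expected-error guarantee of Stage 2 to convert it into a tail bound: the event $\vectornorm{\signal - \mathcal{P}_{\mathcal{T}_{2T_1}}\signal}_F^2 > (1+\epsilon)\vectornorm{\signal-\signal_\rank}_F^2$ occurs with probability at most $1/4$ once the expected excess is tuned to be at most $\epsilon/4$ times the reference norm (which only changes constants inside $T_1$). Returning $\mathcal{P}_\rank^\epsilon(\signal) := \mathcal{P}_{\mathcal{T}_{2T_1}}\signal$ then fulfils Definition \ref{def:appr_svd} with probability at least $3/4$, as claimed.

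The main obstacle I expect is the first stage: carefully verifying the expected single-pass contraction inequality and checking that the sampling distribution (which depends on the residuals from the previous subspace) can in fact be computed in a single streaming pass. The iterative amplification in Stage 2 and the Markov-style cleanup in Stage 3 are comparatively routine, but the bookkeeping of the number of passes — ensuring the $\log(\rank+1)+1$ factor is exactly what the geometric decay $(\rank/(\rank+1))^{T_1}$ demands and that the extra factor of two suffices to convert a constant-factor bound into a $(1+\epsilon)$-bound — requires care. Since the theorem is essentially a restatement of the main result of \cite{deshpande2}, I would lean on that paper for the fine constants rather than re-deriving them.
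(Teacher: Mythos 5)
The paper never actually proves this statement: Theorem \ref{thm:adaptiveVolume} is imported from \cite{deshpande2}, and the sentence immediately following it says ``The proof is provided in \cite{deshpande2}.'' So there is no in-paper argument to match yours against; what matters is whether your reconstruction is sound, and Stage 2 contains a genuine quantitative gap. With $s=\rank+1$ columns per round, unrolling the recursion $a_t \le \vectornormbig{\signal-\signal_{\rank}}_F^2 + \tfrac{\rank}{\rank+1}\,a_{t-1}$ gives
\begin{align}
a_{T} \;\le\; \Big(\sum_{j=0}^{T-1}\big(\tfrac{\rank}{\rank+1}\big)^{j}\Big)\vectornormbig{\signal-\signal_{\rank}}_F^2 \;+\; \big(\tfrac{\rank}{\rank+1}\big)^{T}\vectornormbig{\signal}_F^2, \nonumber
\end{align}
and the geometric sum is $\rank+1$, not $1+1/(\rank+1)$ as you assert: the fixed point of the recursion is $(\rank+1)\vectornormbig{\signal-\signal_{\rank}}_F^2$, so no number of additional rounds with the same per-round sample size---including your ``re-run Stage 1 seeded by the refined subspace'' step---can push the multiplicative factor below $\rank+1$. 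Obtaining a $(1+\epsilon)$ factor forces $s=\Theta(\rank/\epsilon)$ columns per round; note that $\epsilon$ never enters your construction at all, whereas it must (and does, in \cite{deshpande2}) appear in the per-pass sample size, which is exactly the $\rank/\epsilon$ term in the time and space complexities quoted right after the theorem.

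Second, the decaying term $(\rank/s)^{T}\vectornormbig{\signal}_F^2$ is additive relative to $\vectornormbig{\signal}_F^2$, not to $\vectornormbig{\signal-\signal_{\rank}}_F^2$; if $\signal$ has rank exactly $\rank$ the latter vanishes while the former does not, so adaptive sampling alone can never yield a purely relative $(1+\epsilon)$ guarantee in a number of passes independent of the ratio $\vectornormbig{\signal}_F/\vectornormbig{\signal-\signal_{\rank}}_F$. This is precisely why \cite{deshpande2} couples adaptive sampling with (approximately emulated) volume sampling: roughly, $\rank$ single-column adaptive rounds emulate volume sampling up to a $\rank!$ factor and deliver a purely multiplicative $(\rank+1)!$-approximation, after which further adaptive rounds with $\Theta(\rank/\epsilon)$ columns each shrink that factor geometrically down to $1+\epsilon$; the pass count $2(\rank+1)(\log(\rank+1)+1)$ is on the order of $\log\big((\rank+1)!\big)$ for exactly this reason. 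Your outline omits the volume-sampling component entirely, and without it the claimed bound is unreachable. Your Stage 1 lemma (the single-round adaptive-sampling inequality) and the Markov-inequality cleanup in Stage 3 are correct as stated.
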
 

The proof is provided in \cite{deshpande2}. In total, Linear Time Low-Rank Matrix Approximation algorithm \cite{deshpande2} requires $O(mn $ $(\rank/ \epsilon + \rank^2 \log \rank) + (m + n)(\rank^2/\epsilon^2 + \rank^3 \log \rank/\epsilon $ $+ \rank^4 \log^2 \rank))$ and $O(\min\lbrace m, n \rbrace (\rank/\epsilon + \rank^2 \log \rank)) $ time and space complexity, respectively. However, while column subset selection methods such as \cite{deshpande2} reduce the overall complexity of low-rank projections in theory, in practice this applies only in very high-dimensional settings. To strengthen this argument, in Figure \ref{fig: lambda1} we compare SVD-based \textsc{Matrix ALPS II}	with \textsc{Matrix ALPS II} using the $\epsilon$-approximate column subset selection method in \cite{deshpande2}. We observe that the total number of iterations for convergence increases due to $\epsilon$-approximate low-rank projections, as expected. Nevertheless, we observe that, on average, the column subset selection process \cite{deshpande2} is computationally prohibitive compared to regular SVD due to the time overhead in the column selection procedure---fewer passes over the data are desirable in practice to tradeoff the increased number of iterations for convergence. In the next section, we present alternatives based on recent trends in randomized matrix decompositions and how we can use them in low-rank recovery.

\section{Accelerating \textsc{Matrix ALPS}: SVD Approximation using Randomized Matrix Decompositions }{\label{section:QR}}

\begin{algorithm*}[Htp!]
   \caption{Randomized \textsc{Matrix ALPS II} with QR Factorization}\label{algo: class}
\begin{algorithmic}[1]
   \Statex {\bfseries Input:} $\obs$, $\sensing$, $\rank$, $q$, Tolerance $ \eta $, MaxIterations
   \Statex {\bfseries Initialize:} $ \signal(0) \leftarrow 0 $, $ \mathcal{X}_0 \leftarrow \lbrace \emptyset \rbrace $, $ \boldsymbol{Q}(0) \leftarrow 0 $, $ \mathcal{Q}_0 \leftarrow \lbrace \emptyset \rbrace $, $ \tau_i ~\forall i $, $ i \leftarrow 0 $
   \Statex {\bfseries repeat} 
   \State \hspace{0.16cm} $ \mathcal{D}_i \leftarrow $ \textsc{RandomizedPowerIteration}$ \big(\mathcal{P}_{\mathcal{Q}_i^{\bot}} \nabla f(\mathbf{Q}(i)), ~\rank, ~q\big) $ \hspace*{\fill}\textit{(Rank-$ \rank $ subspace via Randomized Power Iteration)~~~~~~~~~}
   \State \hspace{0.16cm} $ \mathcal{S}_i \leftarrow \mathcal{D}_i \cup \mathcal{Q}_i$ \hspace*{\fill}\textit{(Active subspace expansion)~~~~~~~~~}
   \State \hspace{0.16cm} $ \mu_i \leftarrow \argmin_{\mu} \vectornormbig{\obs - \sensing\big( \boldsymbol{Q}(i) - \frac{\mu}{2} \mathcal{P}_{\mathcal{S}_i} \nabla f(\boldsymbol{Q}(i)) \big)}_2^2 = \frac{\vectornorm{\mathcal{P}_{\mathcal{S}_i} \nabla f(\boldsymbol{Q}(i))}_F^2}{\vectornorm{\sensing \mathcal{P}_{\mathcal{S}_i} \nabla f(\boldsymbol{Q}(i))}_2^2} $ \hspace*{\fill} \textit{(Step size selection)~~~~~~~~~}
   \State \hspace{0.16cm} $ \boldsymbol{V}(i) \leftarrow \boldsymbol{Q}(i) - \frac{\mu_i}{2} \mathcal{P}_{\mathcal{S}_i}\nabla f(\boldsymbol{Q}(i)) $ \hspace*{\fill} \textit{(Error norm reduction via gradient descent)~~~~~~~~~}
   \State \hspace{0.16cm} $\mathcal{W} \leftarrow $ \textsc{RandomizedPowerIteration}$ \big(\mathbf{V}(i), ~\rank, ~q\big) $  \hspace*{\fill}\textit{(Rank-$ \rank $ subspace via Randomized Power Iteration)~~~~~~~~~}
   \State \hspace{0.16cm} $ \signal(i+1) \leftarrow \mathcal{P}_{\mathcal{W}} \mathbf{V}(i)  $ \hspace*{\fill}{\textit{(Best rank-$ \rank $ subspace selection)~~~~~~~~~}}
   \State \hspace{0.16cm} $ \boldsymbol{Q}(i+1) \leftarrow \signal(i+1) + \tau_i(\signal(i+1) - \signal(i)) $ \hspace*{\fill}\textit{(Momentum update)~~~~~~~~~}
   \State \hspace{0.16cm} $\mathcal{Q}_{i+1} \leftarrow \text{ortho}(\mathcal{X}_i \cup \mathcal{X}_{i+1})$
   \Statex \hspace{0.16cm} $ i \leftarrow i + 1 $
   \Statex {\bfseries until} $\vectornorm{\signal(i) - \signal(i-1)}_2 \leq \eta \vectornorm{\signal(i)}_2 $ or MaxIterations.
\end{algorithmic}
\end{algorithm*} 

Finding low-cost SVD approximations to tackle the above complexity issues is a challenging task. Recent works on probabilistic methods for matrix approximation \cite{findingstructure} provide a family of efficient approximate projections on the set of rank-deficient matrices with clear computational advantages over regular SVD computation in practice and attractive theoretical guarantees. In this work, we build on the low-cost, power-iteration {\it subspace tracking} scheme, described in Algorithms 4.3 and 4.4 in \cite{findingstructure}. Our proposed algorithm is described in Algorithm 4.

The convergence guarantees of Algorithm 4 follow the same motions described in Section \ref{section:approximate}, where $\epsilon $ is a function of $m, ~n, ~\rank $ and $ q $. 

\section{Experiments}{\label{section:experiments}}

\subsection{List of algorithms}
In the following experiments, we compare the following algorithms: $(i)$ the Singular Value Projection (SVP) algorithm \cite{SVP}, a non-convex first-order projected gradient descent algorithm with {\it constant} step size selection (we study the case where $\mu = 1 $), $(ii)$ the inexact ALM algorithm \cite{ALM} based on augmented Langrance multiplier method, $(iii)$ the OptSpace algorithm \cite{OptSpace}, a gradient descent algorithm on the Grassmann manifold, $(iv)$ the Grassmannian Rank-One Update Subspace Estimation (GROUSE) and the Grassmannian Robust Adaptive Subspace Tracking methods (GRASTA) \cite{GROUSE, GRASTA}, two stochastic gradient descent algorithms that operate on the Grassmannian---moreover, to allay the impact of outliers in the subspace selection step, GRASTA incorporates the augmented Lagrangian of $\ell_1$-norm loss function into the Grassmannian optimization framework, $(v)$ the Riemannian Trust Region Matrix Completion algorithm (RTRMC) \cite{RTRMC}, a matrix completion method using first- and second-order Riemannian trust-region approaches, $(vi)$ the Low rank Matrix Fitting algorithm (LMatFit) \cite{LMatFit}, a nonlinear successive over-relaxation algorithm and $(vii)$ the algorithms \textsc{Matrix ALPS I}, ADMiRA \cite{admira2010}, \textsc{Matrix ALPS II} and Randomized \textsc{Matrix ALPS II} with QR Factorization (referred shortly as \textsc{Matrix ALPS II} with QR) presented in this paper.

\begin{figure*}[!t]
\hspace{-0.2cm}\centering
\begin{tabular}{ccc}
\centerline{\subfigure[]{\includegraphics[width = 0.3\textwidth]{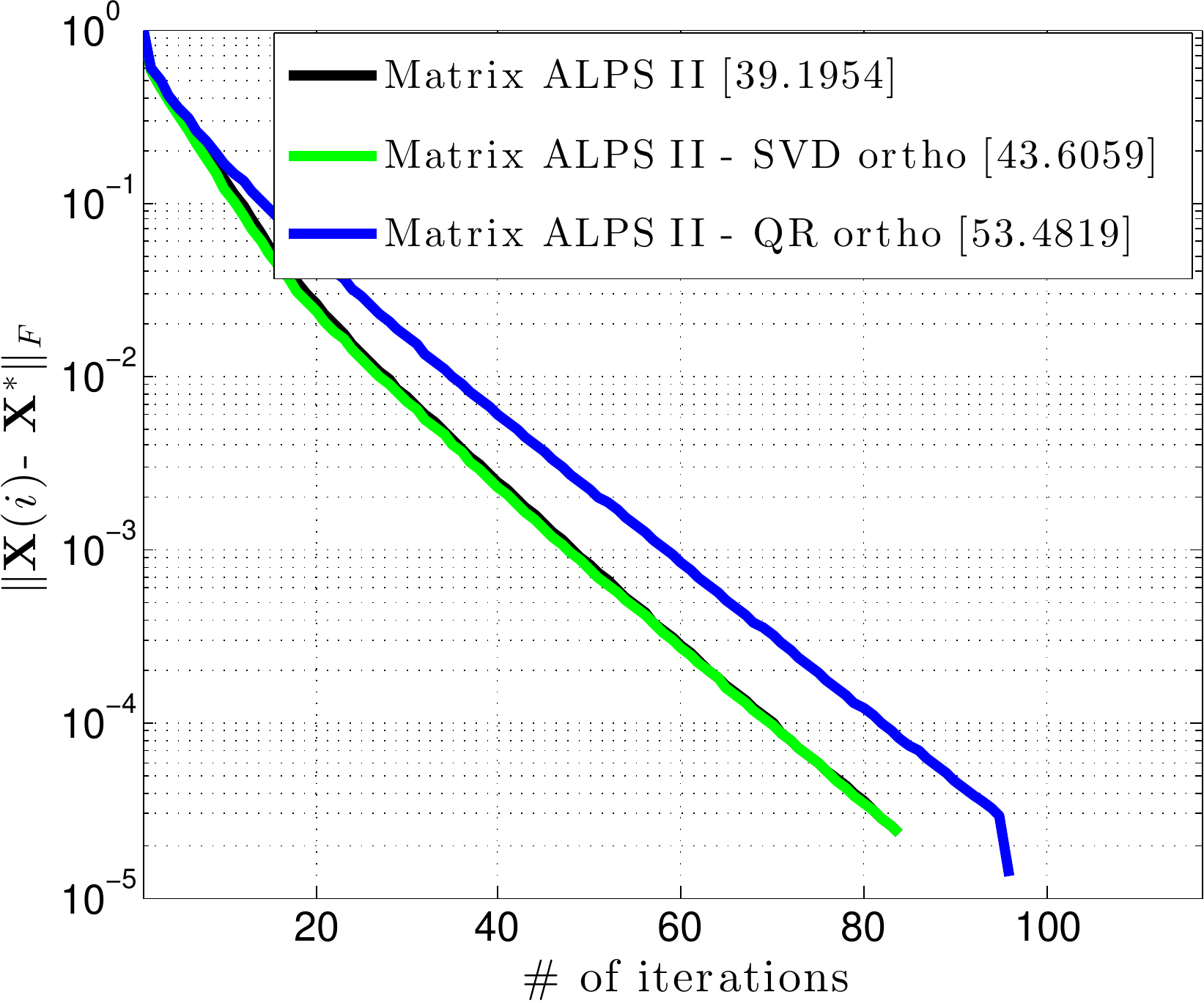}} 
\hfill
\subfigure[]{\includegraphics[width = 0.3\textwidth]{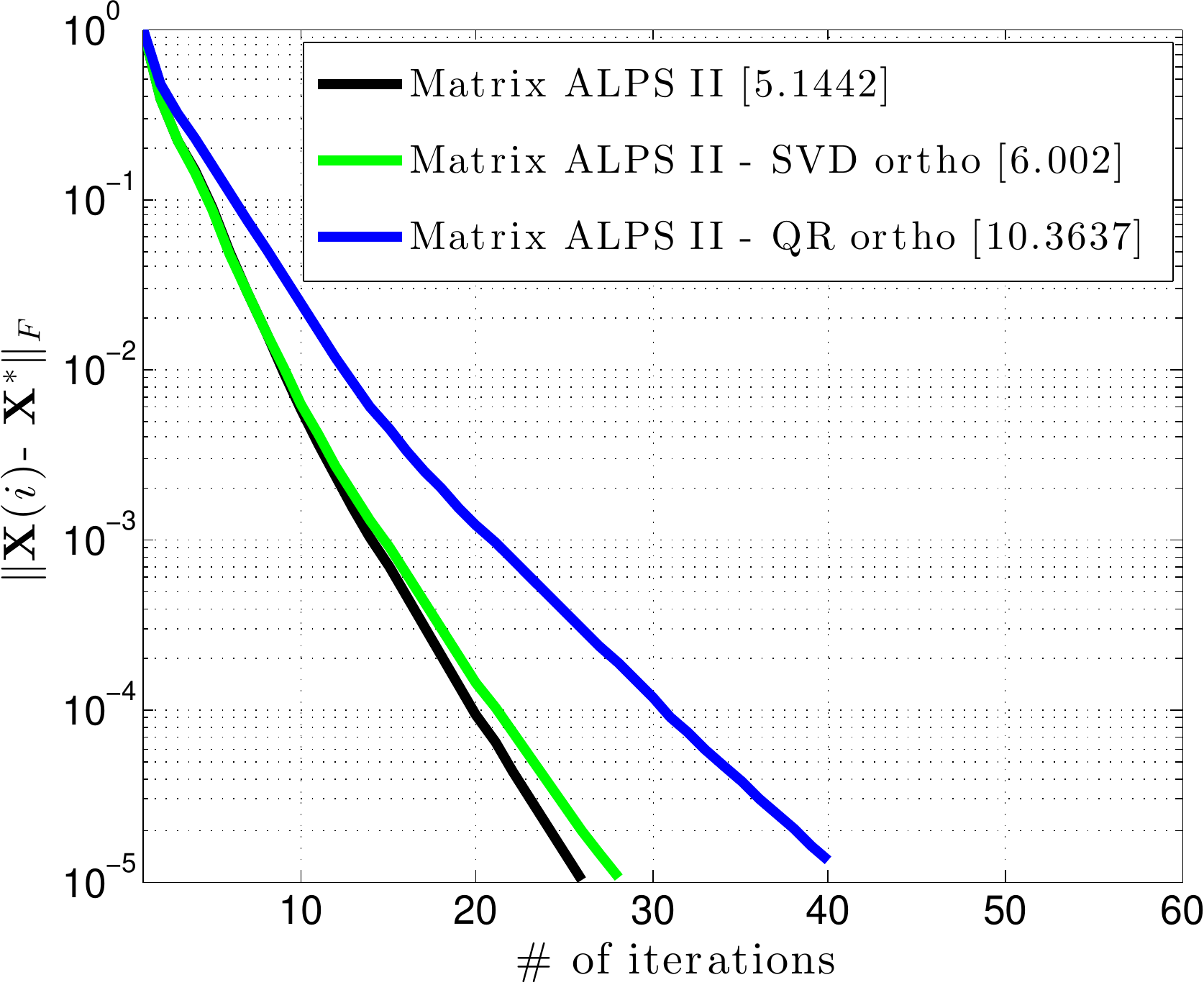}}
\hfill
\subfigure[]{\includegraphics[width = 0.3\textwidth]{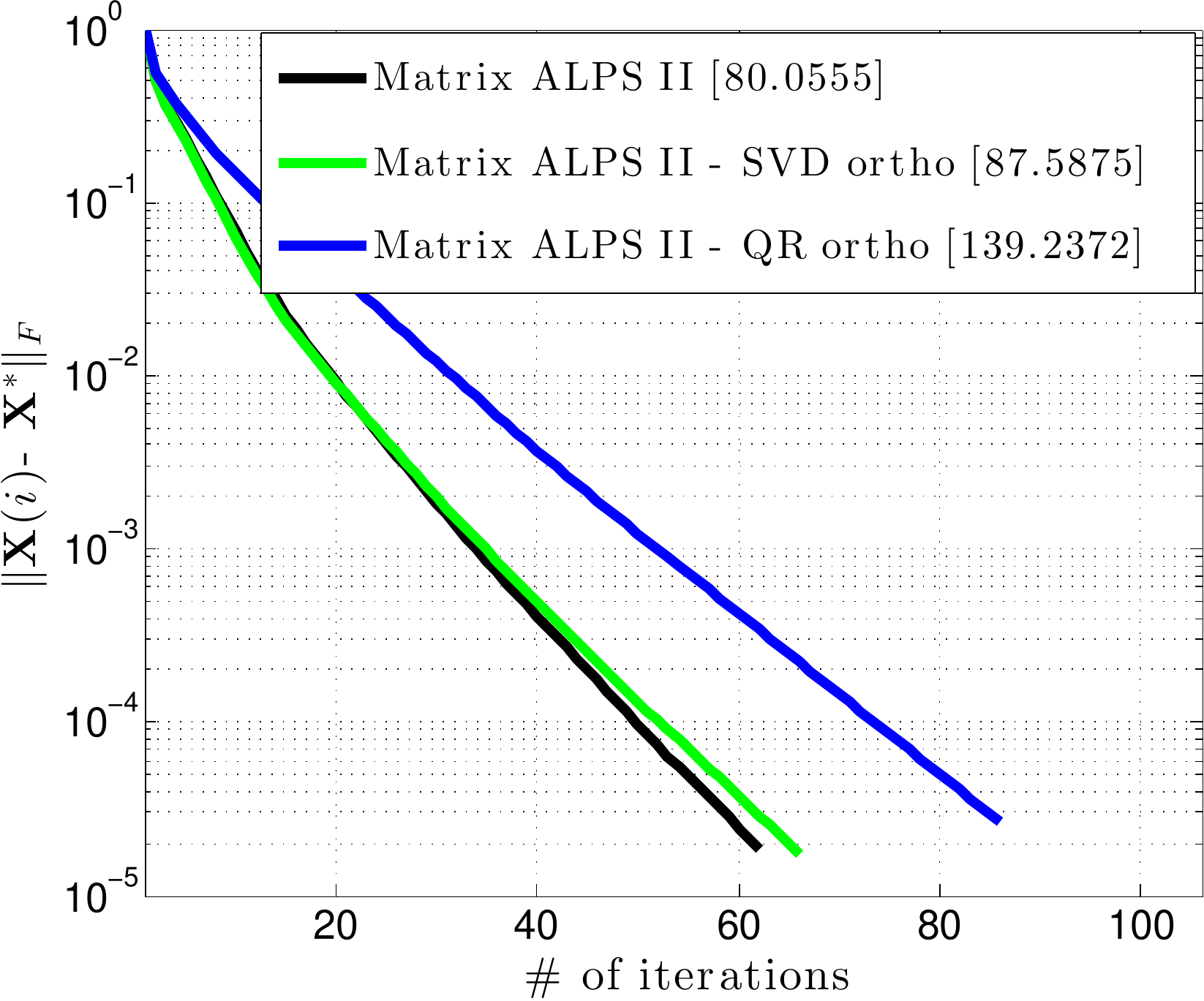}}}
\end{tabular}
\caption{\small\sl Median error per iteration for \textsc{Matrix ALPS II} variants over 10 Monte-Carlo repetitions. In brackets, we present the mean time consumed for convergene in seconds. (a) $n = 1024, m = 256$, $p = 0.25n^2, $ and rank $ \rank = 20 $. (b) $n = 2048, m = 512$, $p = 0.25n^2, $ and rank $ \rank = 60 $. (c) $ n = 1000 $, $ m = 500 $, $p = 0.25n^2, $ and rank $ \rank = 50 $. }\label{ortho_figure}
\end{figure*}

\subsection{Implementation details}
To properly compare the algorithms in the above list, we preset a set of parameters that are common. We denote the ratio between the number of observed samples and the number of variables in $\bestsignal$ as $\text{SR}:=\numsam/(m \cdot n)$ (sampling ratio). Furthemore, we reserve $\text{FR}$ to represent the degree of freedom in a rank-$\rank$ matrix to the number of observations---this corresponds to the following definition $\text{FR}:=(\rank(m + n -\rank))/\numsam$. In most of the experiments, we fix the number of observable data $\numsam = 0.3mn$ and vary the dimensions and the rank $\rank$ of the matrix $\bestsignal$. This way, we create a wide range of different problem configurations with variable $\text{FR}$. 

Most of the algorithms in comparison as well as the proposed schemes are implemented in \textsc{Matlab}. We note that the LMaFit software package contains parts implemented in C that reduce the per iteration computational time. This provides insights for further time savings in our schemes; we leave a fully optimized implementation of our algorithms as future work. In this paper, we mostly test cases where $m \ll n$. Such settings can be easily found in real-world problems such as recommender systems (e.g. Netflix, Amazon, etc.) where the number of products, movies, etc. is much greater than the number of active users.

In all algorithms, we fix the maximum number of iterations to 500, unless otherwise stated. To solve a least squares problem over a restricted low-rank subspace, we use conjugate gradients with maximum number of iterations given by $\rm{cg\_maxiter}:= 500$ and tolerance parameter $\rm{cg\_tol} := 10^{-10}$. We use the same stopping criteria for the majority of algorithms under consideration:
\begin{align}
\frac{\vectornormbig{\signal(i) - \signal(i-1)}_F}{\vectornormbig{\signal(i)}_F} \leq \rm{tol},
\end{align} where $\signal(i), ~\signal(i-1) $ denote the current and the previous estimate of $\bestsignal$ and $\rm{tol} := 5\cdot 10^{-5}$. If this is not the case, we tweak the algorithms to minimize the total execution time and achieve similar reconstruction performance as the rest of the algorithms. For SVD calculations, we use the $\rm{lansvd}$ implementation in PROPACK package \cite{propack}---moreover, all the algorithms in comparison use the same linear operators $\sensing$ and $\sensing^{\ast}$ for gradient and SVD calculations and conjugate-gradient least-squares minimizations. For fairness, we modified all the algorithms so that they {\it exploit the true rank}. Small deviations from the true rank result in relatively small degradation in terms of the reconstruction performance. In case the rank of $\bestsignal$ is unknown, one has to predict the dimension of the principal singular space. The authors in \cite{SVP}, based on ideas in \cite{OptSpace}, propose to compute singular values incrementally until a significant gap between singular values is found. Similar strategies can be found in \cite{ALM} for the convex case.

In \textsc{Matrix ALPS II} and \textsc{Matrix ALPS II} with QR, we perform $\mathcal{Q}_i \leftarrow \text{ortho}(\mathcal{X}_i \cup \mathcal{X}_{i+1})$ to construct a set of orthonormal rank-1 matrices that span the subspace, spanned by $\mathcal{X}_i \cup \mathcal{X}_{i+1}$. 
While such operation can be implemented using factorization procedures (such as SVD or QR decompositions), in practice this degrades the time complexity of the algorithm substantially as the rank $\rank$ and the problem dimensionality increase. In our implementations, we simply {\it union} the set of orthonormal rank-1 matrices, without further orthogonalization. Thus, we employ {\it inexact} projections for computational efficiency which results in faster convergence. Figure 5 shows the time overhead due to the additional orthogonalization process. We compare three algorithms: \textsc{Matrix ALPS II} (no orthogonalization step), \textsc{Matrix ALPS II} using SVD for orthogonalization and, \textsc{Matrix ALPS II} using QR for orthogonalization. In Figures 5(a)-(b), we use subsampled and permuted noiselets for linear map $\sensing$ and in Figure 5(c), we test the MC problem. In all the experimental cases considered in this work, we observed identical performace in terms of reconstruction accuracy for the three variants, as can be also seen in Figure 5. To this end, for the rest of the paper, we use \textsc{Matrix ALPS II} where $\mathcal{Q}_i \leftarrow \mathcal{X}_i \cup \mathcal{X}_{i+1}$.

\subsection{Limitations of $\vectornormbig{\cdot}_{\ast}$-based algorithms: a toy example}

While nucluear norm heuristic is widely used in solving the low-rank minimization problem, \cite{nonuclear} presents simple problem cases where convex, nuclear norm-based, algorithms {\it fail} in practice. Using the  $\vectornormbig{\cdot}_{\ast}$-norm in the objective function as the convex surrogate of the $\text{rank}(\cdot)$ metric might lead to a candidate set with multiple solutions, introducing ambiguity in the selection process. Borrowing the example in \cite{nonuclear}, we test the list of algorithms above on a toy problem setting that does not satisfy the rank-RIP. To this end, we design the following problem: let $\bestsignal \in \mathbb{R}^{5 \times 4}$ be the matrix of interest with $\text{rank}(\bestsignal) = 2$, as shown in Figure \ref{fig:toy}(a). We consider the case where we have access to $\bestsignal $ only through a subset of its entries, as shown in Figure \ref{fig:toy}(b).

\begin{figure}[ht]
\begin{center}
\begin{minipage}[c]{0.25\linewidth}
\begin{align}
\left( \begin{array}{cccc}
2 & 2 & 1 & 1 \\
2 & 2 & 1 & 1 \\
2 & 2 & 1 & 1 \\
2 & 2 & 1 & 1 \\
1 & 1 & 2 & 1 \end{array} \right) \nonumber 
\end{align} \hspace{0.6cm} (a)
\end{minipage}
\hspace{-0.5cm}
\begin{minipage}[c]{0.25\linewidth}
\begin{align}
\left( \begin{array}{cccc}
2 & 2 & 1 & 1 \\
2 & 2 & 1 & 1 \\
? & ? & ? & 1 \\
2 & ? & ? & 1 \\
1 & 1 & 2 & 1 \end{array} \right) \nonumber 
\end{align} \hspace{0.6cm}  (b)
\end{minipage}
\end{center}
\caption{\small\sl Matrix Completion toy example for $ \bestsignal \in \mathbb{R}^{5 \times 4}$. We use `?' to denote the unobserved entried. } \label{fig:toy}
\end{figure}

In Figure \ref{fig:toy2}, we present the reconstruction performance of various matrix completion solvers after 300 iterations. Although there are multiple solutions that induce the recovered matrix and have the same rank as $\bestsignal$, most of the algorithms in comparison reconstruct $\bestsignal$ successfully. We note that, in some cases, the inadequancy of an algorithm to reconstruct $\bestsignal$ is not because of the (relaxed) problem formulation but due to its fast---but inaccurate---implementation (fast convergence versus reconstruction accuracy tradeoff).

\begin{figure*}[ht]
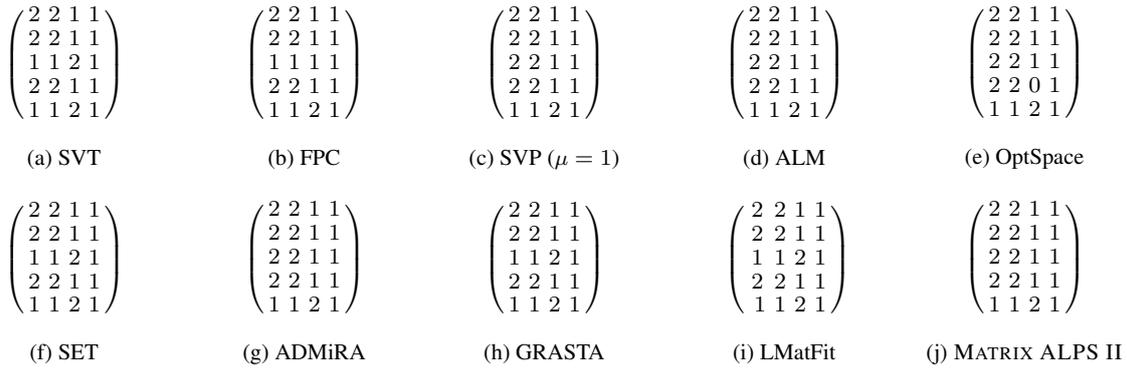

\hspace{1.5cm}\begin{minipage}[c]{0.15\linewidth}
\centering
\begin{align}
\left( \begin{array}{cccc}
2 & 2 & 1 & 1 \\
2 & 2 & 1 & 1 \\
1 & 1 & 2 & 1 \\
2 & 2 & 1 & 1 \\
1 & 1 & 2 & 1 \end{array} \right) \nonumber 
\end{align} \hspace{-1.1cm} (a) SVT
\end{minipage}
\hspace{0.4cm}
\begin{minipage}[c]{0.15\linewidth}
\centering
\begin{align}
\left( \begin{array}{cccc}
2 & 2 & 1 & 1 \\
2 & 2 & 1 & 1 \\
1 & 1 & 1 & 1 \\
2 & 2 & 1 & 1 \\
1 & 1 & 2 & 1 \end{array} \right) \nonumber 
\end{align} \hspace{-1.1cm} (b) FPC
\end{minipage}
\hspace{0.4cm}
\begin{minipage}[c]{0.15\linewidth}
\centering
\begin{align}
\left( \begin{array}{cccc}
2 & 2 & 1 & 1 \\
2 & 2 & 1 & 1 \\
2 & 2 & 1 & 1 \\
2 & 2 & 1 & 1 \\
1 & 1 & 2 & 1 \end{array} \right) \nonumber 
\end{align} \hspace{-1.1cm} (c) SVP ($\mu = 1$)
\end{minipage}
\hspace{0.4cm}
\begin{minipage}[c]{0.15\linewidth}
\centering
\begin{align}
\left( \begin{array}{cccc}
2 & 2 & 1 & 1 \\
2 & 2 & 1 & 1 \\
2 & 2 & 1 & 1 \\
2 & 2 & 1 & 1 \\
1 & 1 & 2 & 1 \end{array} \right) \nonumber 
\end{align} \hspace{-1.1cm} (d) ALM
\end{minipage}
\hspace{0.4cm}
\begin{minipage}[c]{0.15\linewidth}
\centering
\begin{align}
\left( \begin{array}{cccc}
2 & 2 & 1 & 1 \\
2 & 2 & 1 & 1 \\
2 & 2 & 1 & 1 \\
2 & 2 & 0 & 1 \\
1 & 1 & 2 & 1 \end{array} \right) \nonumber 
\end{align} \hspace{-1.1cm} (e) OptSpace
\end{minipage} 
\\

\hspace{1.5cm}\begin{minipage}[c]{0.15\linewidth}
\centering
\begin{align}
\left( \begin{array}{cccc}
2 & 2 & 1 & 1 \\
2 & 2 & 1 & 1 \\
1 & 1 & 2 & 1 \\
2 & 2 & 1 & 1 \\
1 & 1 & 2 & 1 \end{array} \right) \nonumber 
\end{align} \hspace{-1.1cm} (f) SET
\end{minipage}
\hspace{0.4cm}
\begin{minipage}[c]{0.15\linewidth}
\centering
\begin{align}
\left( \begin{array}{cccc}
2 & 2 & 1 & 1 \\
2 & 2 & 1 & 1 \\
2 & 2 & 1 & 1 \\
2 & 2 & 1 & 1 \\
1 & 1 & 2 & 1 \end{array} \right) \nonumber 
\end{align} \hspace{-1.1cm} (g) ADMiRA
\end{minipage}
\hspace{0.4cm}
\begin{minipage}[c]{0.15\linewidth}
\centering
\begin{align}
\left( \begin{array}{cccc}
2 & 2 & 1 & 1 \\
2 & 2 & 1 & 1 \\
1 & 1 & 2 & 1 \\
2 & 2 & 1 & 1 \\
1 & 1 & 2 & 1 \end{array} \right) \nonumber 
\end{align} \hspace{-1.1cm} (h) GRASTA
\end{minipage}
\hspace{0.4cm}
\begin{minipage}[c]{0.15\linewidth}
\centering
\begin{align}
\left( \begin{array}{cccc}
2 & 2 & 1 & 1 \\
2 & 2 & 1 & 1 \\
1 & 1 & 2 & 1 \\
2 & 2 & 1 & 1 \\\
1 & 1 & 2 & 1 \end{array} \right) \nonumber 
\end{align} \hspace{-1.1cm} (i) LMatFit
\end{minipage}
\hspace{0.4cm}
\begin{minipage}[c]{0.15\linewidth}
\centering
\begin{align}
\left( \begin{array}{cccc}
2 & 2 & 1 & 1 \\
2 & 2 & 1 & 1 \\
2 & 2 & 1 & 1 \\
2 & 2 & 1 & 1 \\
1 & 1 & 2 & 1 \end{array} \right) \nonumber 
\end{align} \hspace{-1.1cm} (j) \textsc{Matrix ALPS II}
\end{minipage}
\caption{\small\sl Toy example reconstruction performance for various algorithms. We observe that $\bestsignal$ is an integer matrix---since the algorithms under consideration return real matrices as solutions, we round the solution elementwise. } \label{fig:toy2}
\end{figure*}

\subsection{Synthetic data}

\textbf{General affine rank minimization using noiselets:} In this experiment, the set of observations $\obs \in \mathbb{R}^{\numsam}$ satisfy:
\begin{align}
\obs = \sensing \bestsignal + \noise
\end{align} Here, we use permuted and subsampled noiselets for the linear operator $\sensing $ \cite{sparcs}. The signal $\bestsignal$ is generated as the multiplication of two low-rank matrices, $\mathbf{L} \in \mathbb{R}^{m \times \rank} $ and $\mathbf{R} \in \mathbb{R}^{n \times \rank} $, such that $\bestsignal = \mathbf{L} \mathbf{R}^T$ and $\vectornormbig{\bestsignal}_F = 1 $. Both $\mathbf{L} $ and $\mathbf{R} $ have random independent and identically distributed (iid) Gaussian entries with zero mean and unit variance. In the noisy case, the additive noise term $\noise \in \mathbb{R}^{\numsam}$ contains entries drawn from a zero mean Gaussian distribution with $\vectornormbig{\noise}_2 \in \lbrace 10^{-3}, 10^{-4} \rbrace $. 

We compare the following algorithms: SVP, ADMiRA, \textsc{Matrix ALPS I}, \textsc{Matrix ALPS II} and \textsc{Matrix ALPS II} with QR for various problem configurations, as depicted in Table \ref{table:1} (there is no available code with arbitrary sensing operators for the rest algorithms). In Table \ref{table:1}, we show the median values of reconstruction error, number of iterations and execution time over 50 Monte Carlo iterations. For all cases, we assume $\text{SR} = 0.3 $ and we set the maximum number of iterations to 500. Bold font denotes the fastest execution time. Furthermore, Figure \ref{fig: TableI_fig} illustrates the effectiveness of the algorithms for some representative problem configurations.

In Table \ref{table:1}, \textsc{Matrix ALPS II} and \textsc{Matrix ALPS II} with QR obtain accurate low-rank solutions much faster than the rest of the algorithms in comparison. In high dimensional settings, \textsc{Matrix ALPS II} with QR scales better as the problem dimensions increase, leading to faster convergence. Moreover, its execution time is at least a few orders of magnitude smaller compared to SVP, ADMiRA and \textsc{Matrix ALPS I} implementations.

\begin{table*}
\caption{General ARM using Noiselets.} {\label{table:1}}
\begin{center}
\begin{tabular}{|c|c|c|c|c|c|c|c|c|c|c|c|c|c}
\multicolumn{4}{c|}{Configuration} & FR & \multicolumn{3}{|c|}{SVP} & \multicolumn{3}{|c|}{ADMiRA} & \multicolumn{3}{|c}{\textsc{Matrix ALPS I}} \\
\hline \hline
\multicolumn{1}{c}{$m$}  & \multicolumn{1}{c}{$n$} & \multicolumn{1}{c}{$\rank$} & \multicolumn{1}{c|}{$\vectornormbig{\noise}_2$} & & 
\multicolumn{1}{|c}{\rm{iter.}} & \multicolumn{1}{c}{\rm{err.}} & \multicolumn{1}{c|}{\rm{time}} &
\multicolumn{1}{|c}{\rm{iter.}} & \multicolumn{1}{c}{\rm{err.}} & \multicolumn{1}{c|}{\rm{time}} &
\multicolumn{1}{|c}{\rm{iter.}} & \multicolumn{1}{c}{\rm{err.}} & \multicolumn{1}{c}{\rm{time}} \\
\hline\hline
\multicolumn{1}{c}{$256$} & \multicolumn{1}{c}{$512$} & \multicolumn{1}{c}{$5$} & \multicolumn{1}{c|}{$0$} & $ 0.097 $ & 
\multicolumn{1}{|c}{$38$} & \multicolumn{1}{c}{$2.2\cdot 10^{-4}$} & \multicolumn{1}{c|}{$0.78$} &
\multicolumn{1}{|c}{$27$} & \multicolumn{1}{c}{$4.4\cdot 10^{-5}$} & \multicolumn{1}{c|}{$2.26$} &
\multicolumn{1}{|c}{$13.5$} & \multicolumn{1}{c}{$1\cdot 10^{-5}$} & \multicolumn{1}{c}{$0.7$} \\
\hline
\multicolumn{1}{c}{$256$} & \multicolumn{1}{c}{$512$} & \multicolumn{1}{c}{$5$} & \multicolumn{1}{c|}{$10^{-3}$} & $ 0.097 $ & 
\multicolumn{1}{|c}{$38$} & \multicolumn{1}{c}{$6\cdot 10^{-4}$} & \multicolumn{1}{c|}{$0.91$} &
\multicolumn{1}{|c}{$700$} & \multicolumn{1}{c}{$2\cdot 10^{-3}$} & \multicolumn{1}{c|}{$65.94$} &
\multicolumn{1}{|c}{$16$} & \multicolumn{1}{c}{$7\cdot 10^{-4}$} & \multicolumn{1}{c}{$0.92$} \\
\hline
\multicolumn{1}{c}{$256$} & \multicolumn{1}{c}{$512$} & \multicolumn{1}{c}{$5$} & \multicolumn{1}{c|}{$10^{-4}$} & $ 0.097 $ & 
\multicolumn{1}{|c}{$38$} & \multicolumn{1}{c}{$2.1\cdot 10^{-4}$} & \multicolumn{1}{c|}{$0.94$} &
\multicolumn{1}{|c}{$700$} & \multicolumn{1}{c}{$4.1\cdot 10^{-4}$} & \multicolumn{1}{c|}{$69.03$} &
\multicolumn{1}{|c}{$11.5$} & \multicolumn{1}{c}{$7.9\cdot 10^{-5}$} & \multicolumn{1}{c}{$0.72$} \\
\hline
\multicolumn{1}{c}{$256$} & \multicolumn{1}{c}{$512$} & \multicolumn{1}{c}{$10$} & \multicolumn{1}{c|}{$0$} & $ 0.193 $ & 
\multicolumn{1}{|c}{$50$} & \multicolumn{1}{c}{$3.4\cdot 10^{-4}$} & \multicolumn{1}{c|}{$1.44$} &
\multicolumn{1}{|c}{$38$} & \multicolumn{1}{c}{$5\cdot 10^{-5}$} & \multicolumn{1}{c|}{$4.42$} &
\multicolumn{1}{|c}{$13$} & \multicolumn{1}{c}{$3.9\cdot 10^{-5}$} & \multicolumn{1}{c}{$0.92$} \\
\hline
\multicolumn{1}{c}{$256$} & \multicolumn{1}{c}{$512$} & \multicolumn{1}{c}{$10$} & \multicolumn{1}{c|}{$10^{-3}$} & $ 0.193 $ & 
\multicolumn{1}{|c}{$50$} & \multicolumn{1}{c}{$9\cdot 10^{-4}$} & \multicolumn{1}{c|}{$1.39$} &
\multicolumn{1}{|c}{$700$} & \multicolumn{1}{c}{$1.7\cdot 10^{-3}$} & \multicolumn{1}{c|}{$56.94$} &
\multicolumn{1}{|c}{$29$} & \multicolumn{1}{c}{$1.2\cdot 10^{-3}$} & \multicolumn{1}{c}{$1.78$} \\
\hline
\multicolumn{1}{c}{$256$} & \multicolumn{1}{c}{$512$} & \multicolumn{1}{c}{$10$} & \multicolumn{1}{c|}{$10^{-4}$} & $ 0.193 $ & 
\multicolumn{1}{|c}{$50$} & \multicolumn{1}{c}{$3.5\cdot 10^{-4}$} & \multicolumn{1}{c|}{$1.38$} &
\multicolumn{1}{|c}{$700$} & \multicolumn{1}{c}{$9.3\cdot 10^{-5}$} & \multicolumn{1}{c|}{$64.69$} &
\multicolumn{1}{|c}{$14$} & \multicolumn{1}{c}{$1.4\cdot 10^{-4}$} & \multicolumn{1}{c}{$0.93$} \\
\hline
\multicolumn{1}{c}{$256$} & \multicolumn{1}{c}{$512$} & \multicolumn{1}{c}{$20$} & \multicolumn{1}{c|}{$0$} & $ 0.38 $ & 
\multicolumn{1}{|c}{$86$} & \multicolumn{1}{c}{$7\cdot 10^{-4}$} & \multicolumn{1}{c|}{$3.32$} &
\multicolumn{1}{|c}{$700$} & \multicolumn{1}{c}{$4.1\cdot 10^{-5}$} & \multicolumn{1}{c|}{$81.93$} &
\multicolumn{1}{|c}{$45$} & \multicolumn{1}{c}{$2\cdot 10^{-4}$} & \multicolumn{1}{c}{$4.09$} \\
\hline
\multicolumn{1}{c}{$256$} & \multicolumn{1}{c}{$512$} & \multicolumn{1}{c}{$20$} & \multicolumn{1}{c|}{$10^{-3}$} & $ 0.38 $ & 
\multicolumn{1}{|c}{$86$} & \multicolumn{1}{c}{$1.5\cdot 10^{-3}$} & \multicolumn{1}{c|}{$3.45$} &
\multicolumn{1}{|c}{$700$} & \multicolumn{1}{c}{$4.2\cdot 10^{-2}$} & \multicolumn{1}{c|}{$77.35$} &
\multicolumn{1}{|c}{$69$} & \multicolumn{1}{c}{$2.3\cdot 10^{-3}$} & \multicolumn{1}{c}{$5.05$} \\
\hline
\multicolumn{1}{c}{$256$} & \multicolumn{1}{c}{$512$} & \multicolumn{1}{c}{$20$} & \multicolumn{1}{c|}{$10^{-4}$} & $ 0.38 $ & 
\multicolumn{1}{|c}{$86$} & \multicolumn{1}{c}{$7\cdot 10^{-4}$} & \multicolumn{1}{c|}{$3.26$} &
\multicolumn{1}{|c}{$700$} & \multicolumn{1}{c}{$4\cdot 10^{-2}$} & \multicolumn{1}{c|}{$79.47$} &
\multicolumn{1}{|c}{$46$} & \multicolumn{1}{c}{$4\cdot 10^{-4}$} & \multicolumn{1}{c}{$4.1$} \\
\hline
\multicolumn{1}{c}{$512$} & \multicolumn{1}{c}{$1024$} & \multicolumn{1}{c}{$30$} & \multicolumn{1}{c|}{$0$} & $ 0.287 $ & 
\multicolumn{1}{|c}{$66$} & \multicolumn{1}{c}{$4.9\cdot 10^{-4}$} & \multicolumn{1}{c|}{$8.79$} &
\multicolumn{1}{|c}{$295$} & \multicolumn{1}{c}{$5.4\cdot 10^{-5}$} & \multicolumn{1}{c|}{$143.53$} &
\multicolumn{1}{|c}{$24$} & \multicolumn{1}{c}{$1\cdot 10^{-4}$} & \multicolumn{1}{c}{$8.01$} \\
\hline
\multicolumn{1}{c}{$512$} & \multicolumn{1}{c}{$1024$} & \multicolumn{1}{c}{$40$} & \multicolumn{1}{c|}{$0$} & $ 0.38 $ & 
\multicolumn{1}{|c}{$86$} & \multicolumn{1}{c}{$7\cdot 10^{-4}$} & \multicolumn{1}{c|}{$10.09$} &
\multicolumn{1}{|c}{$700$} & \multicolumn{1}{c}{$4.3\cdot 10^{-2}$} & \multicolumn{1}{c|}{$251.27$} &
\multicolumn{1}{|c}{$45$} & \multicolumn{1}{c}{$2\cdot 10^{-4}$} & \multicolumn{1}{c}{$11.08$} \\
\hline
\multicolumn{1}{c}{$1024$} & \multicolumn{1}{c}{$2048$} & \multicolumn{1}{c}{$50$} & \multicolumn{1}{c|}{$0$} & $ 0.24 $ & 
\multicolumn{1}{|c}{$57$} & \multicolumn{1}{c}{$4.3\cdot 10^{-4}$} & \multicolumn{1}{c|}{$42.88$} &
\multicolumn{1}{|c}{$103$} & \multicolumn{1}{c}{$5.2\cdot 10^{-5}$} & \multicolumn{1}{c|}{$312.62$} &
\multicolumn{1}{|c}{$18$} & \multicolumn{1}{c}{$5.7\cdot 10^{-5}$} & \multicolumn{1}{c}{$35.86$} \\
\hline \hline \hline

\multicolumn{4}{c|}{} & & \multicolumn{3}{|c|}{\textsc{Matrix ALPS II}} & \multicolumn{6}{|c}{\textsc{Matrix ALPS II} with QR}  \\
\hline \hline
\multicolumn{1}{c}{$m$}  & \multicolumn{1}{c}{$n$} & \multicolumn{1}{c}{$\rank$} & \multicolumn{1}{c|}{$\vectornormbig{\noise}_2$} & & 
\multicolumn{1}{|c}{\rm{iter.}} & \multicolumn{1}{c}{\rm{err.}} & \multicolumn{1}{c|}{\rm{time}} &
\multicolumn{2}{|c}{\rm{iter.}} & \multicolumn{2}{c}{\rm{err.}} & \multicolumn{2}{c}{\rm{time}}  \\
\hline\hline
\multicolumn{1}{c}{$256$} & \multicolumn{1}{c}{$512$} & \multicolumn{1}{c}{$5$}  & \multicolumn{1}{c|}{$0$} & $ 0.097 $ & 
\multicolumn{1}{|c}{$8$} & \multicolumn{1}{c}{$7.1 \cdot 10^{-6}$} & \multicolumn{1}{c|}{$0.42$} &
\multicolumn{2}{|c}{$10$} & \multicolumn{2}{c}{$9.1 \cdot 10^{-6}$} & \multicolumn{2}{c}{$\mathbf{0.39}$} \\
\hline
\multicolumn{1}{c}{$256$} & \multicolumn{1}{c}{$512$} & \multicolumn{1}{c}{$5$}  & \multicolumn{1}{c|}{$10^{-3}$} & $ 0.097 $ & 
\multicolumn{1}{|c}{$9$} & \multicolumn{1}{c}{$7 \cdot 10^{-4}$} & \multicolumn{1}{c|}{$\mathbf{0.56}$} &
\multicolumn{2}{|c}{$20$} & \multicolumn{2}{c}{$7 \cdot 10^{-4}$} & \multicolumn{2}{c}{$0.93$} \\
\hline
\multicolumn{1}{c}{$256$} & \multicolumn{1}{c}{$512$} & \multicolumn{1}{c}{$5$}  & \multicolumn{1}{c|}{$10^{-4}$} & $ 0.097 $ & 
\multicolumn{1}{|c}{$8$} & \multicolumn{1}{c}{$7 \cdot 10^{-5}$} & \multicolumn{1}{c|}{$0.5$} &
\multicolumn{2}{|c}{$10$} & \multicolumn{2}{c}{$7.8 \cdot 10^{-5}$} & \multicolumn{2}{c}{$\mathbf{0.46}$} \\
\hline
\multicolumn{1}{c}{$256$} & \multicolumn{1}{c}{$512$} & \multicolumn{1}{c}{$10$}  & \multicolumn{1}{c|}{$0$} & $ 0.193 $ & 
\multicolumn{1}{|c}{$10$} & \multicolumn{1}{c}{$2.3 \cdot 10^{-5}$} & \multicolumn{1}{c|}{$0.68$} &
\multicolumn{2}{|c}{$13$} & \multicolumn{2}{c}{$2.4 \cdot 10^{-5}$} & \multicolumn{2}{c}{$\mathbf{0.64}$} \\
\hline
\multicolumn{1}{c}{$256$} & \multicolumn{1}{c}{$512$} & \multicolumn{1}{c}{$10$}  & \multicolumn{1}{c|}{$10^{-3}$} & $ 0.193 $ & 
\multicolumn{1}{|c}{$19$} & \multicolumn{1}{c}{$1 \cdot 10^{-3}$} & \multicolumn{1}{c|}{$\mathbf{1.29}$} &
\multicolumn{2}{|c}{$27$} & \multicolumn{2}{c}{$1 \cdot 10^{-3}$} & \multicolumn{2}{c}{$1.35$} \\
\hline
\multicolumn{1}{c}{$256$} & \multicolumn{1}{c}{$512$} & \multicolumn{1}{c}{$10$}  & \multicolumn{1}{c|}{$10^{-4}$} & $ 0.193 $ & 
\multicolumn{1}{|c}{$10$} & \multicolumn{1}{c}{$1.1 \cdot 10^{-4}$} & \multicolumn{1}{c|}{$0.68$} &
\multicolumn{2}{|c}{$13$} & \multicolumn{2}{c}{$1.1 \cdot 10^{-4}$} & \multicolumn{2}{c}{$\mathbf{0.62}$} \\
\hline
\multicolumn{1}{c}{$256$} & \multicolumn{1}{c}{$512$} & \multicolumn{1}{c}{$20$}  & \multicolumn{1}{c|}{$0$} & $ 0.38 $ & 
\multicolumn{1}{|c}{$21$} & \multicolumn{1}{c}{$1 \cdot 10^{-4}$} & \multicolumn{1}{c|}{$1.92$} &
\multicolumn{2}{|c}{$24$} & \multicolumn{2}{c}{$1 \cdot 10^{-4}$} & \multicolumn{2}{c}{$\mathbf{1.26}$} \\
\hline
\multicolumn{1}{c}{$256$} & \multicolumn{1}{c}{$512$} & \multicolumn{1}{c}{$20$}  & \multicolumn{1}{c|}{$10^{-3}$} & $ 0.38 $ & 
\multicolumn{1}{|c}{$36$} & \multicolumn{1}{c}{$1.5 \cdot 10^{-3}$} & \multicolumn{1}{c|}{$2.67$} &
\multicolumn{2}{|c}{$39$} & \multicolumn{2}{c}{$1.5 \cdot 10^{-3}$} & \multicolumn{2}{c}{$\mathbf{1.69}$} \\
\hline
\multicolumn{1}{c}{$256$} & \multicolumn{1}{c}{$512$} & \multicolumn{1}{c}{$20$}  & \multicolumn{1}{c|}{$10^{-4}$} & $ 0.38 $ & 
\multicolumn{1}{|c}{$21$} & \multicolumn{1}{c}{$2 \cdot 10^{-4}$} & \multicolumn{1}{c|}{$1.87$} &
\multicolumn{2}{|c}{$24$} & \multicolumn{2}{c}{$2 \cdot 10^{-4}$} & \multicolumn{2}{c}{$\mathbf{1.22}$} \\
\hline
\multicolumn{1}{c}{$512$} & \multicolumn{1}{c}{$1024$} & \multicolumn{1}{c}{$30$}  & \multicolumn{1}{c|}{$0$} & $ 0.287 $ & 
\multicolumn{1}{|c}{$14$} & \multicolumn{1}{c}{$4.5 \cdot 10^{-5}$} & \multicolumn{1}{c|}{$4.7$} &
\multicolumn{2}{|c}{$18$} & \multicolumn{2}{c}{$3.3 \cdot 10^{-5}$} & \multicolumn{2}{c}{$\mathbf{4.15}$} \\
\hline
\multicolumn{1}{c}{$512$} & \multicolumn{1}{c}{$1024$} & \multicolumn{1}{c}{$40$}  & \multicolumn{1}{c|}{$0$} & $ 0.38 $ & 
\multicolumn{1}{|c}{$21$} & \multicolumn{1}{c}{$1\cdot 10^{-4}$} & \multicolumn{1}{c|}{$6.01$} &
\multicolumn{2}{|c}{$24$} & \multicolumn{2}{c}{$1 \cdot 10^{-4}$} & \multicolumn{2}{c}{$\mathbf{4.53}$} \\
\hline
\multicolumn{1}{c}{$1024$} & \multicolumn{1}{c}{$2048$} & \multicolumn{1}{c}{$50$}  & \multicolumn{1}{c|}{$0$} & $ 0.24 $ & 
\multicolumn{1}{|c}{$12$} & \multicolumn{1}{c}{$2.5\cdot 10^{-5}$} & \multicolumn{1}{c|}{$22.76$} &
\multicolumn{2}{|c}{$15$} & \multicolumn{2}{c}{$3.3 \cdot 10^{-5}$} & \multicolumn{2}{c}{$\mathbf{17.94}$} \\
\hline
\end{tabular}
\end{center}
\end{table*}

\begin{figure*}[!htp]
\centering
\begin{tabular}{ccc}
\centerline{\subfigure[]{\includegraphics[width = 0.33\textwidth]{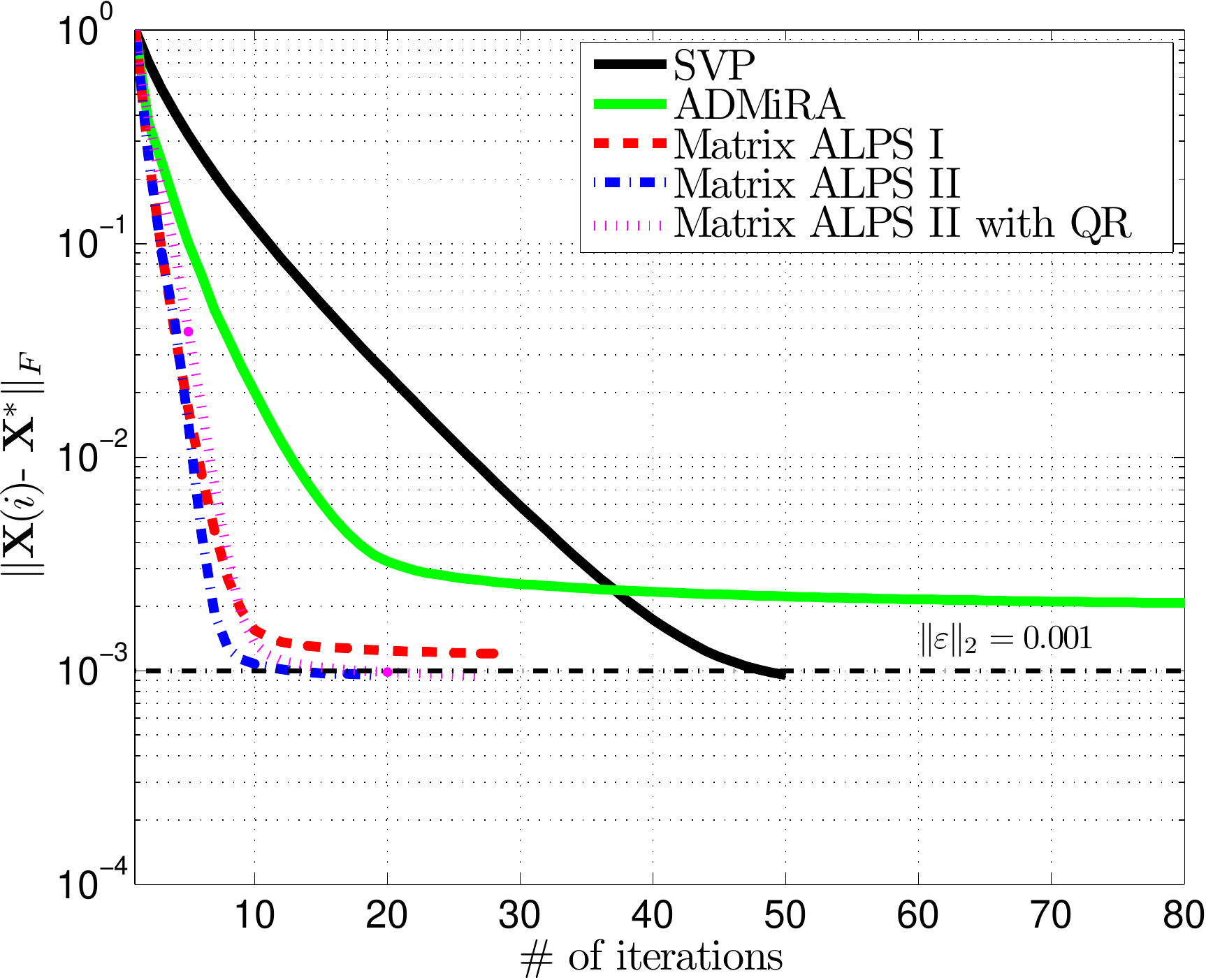}\label{fig:2a}} 
\hfill
\subfigure[]{\includegraphics[width = 0.33\textwidth]{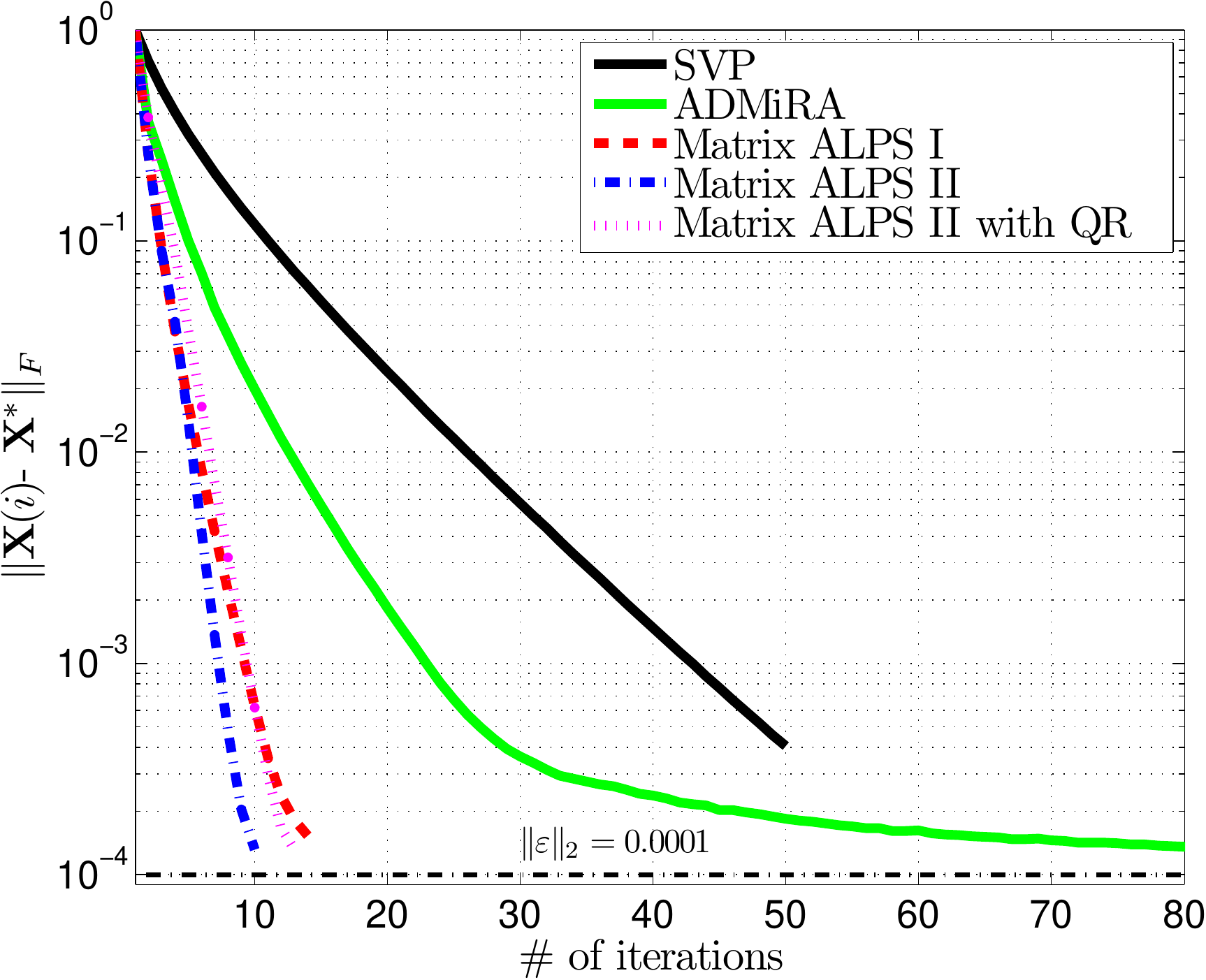}\label{fig:2b}}
\hfill 
\subfigure[]{\includegraphics[width = 0.33\textwidth]{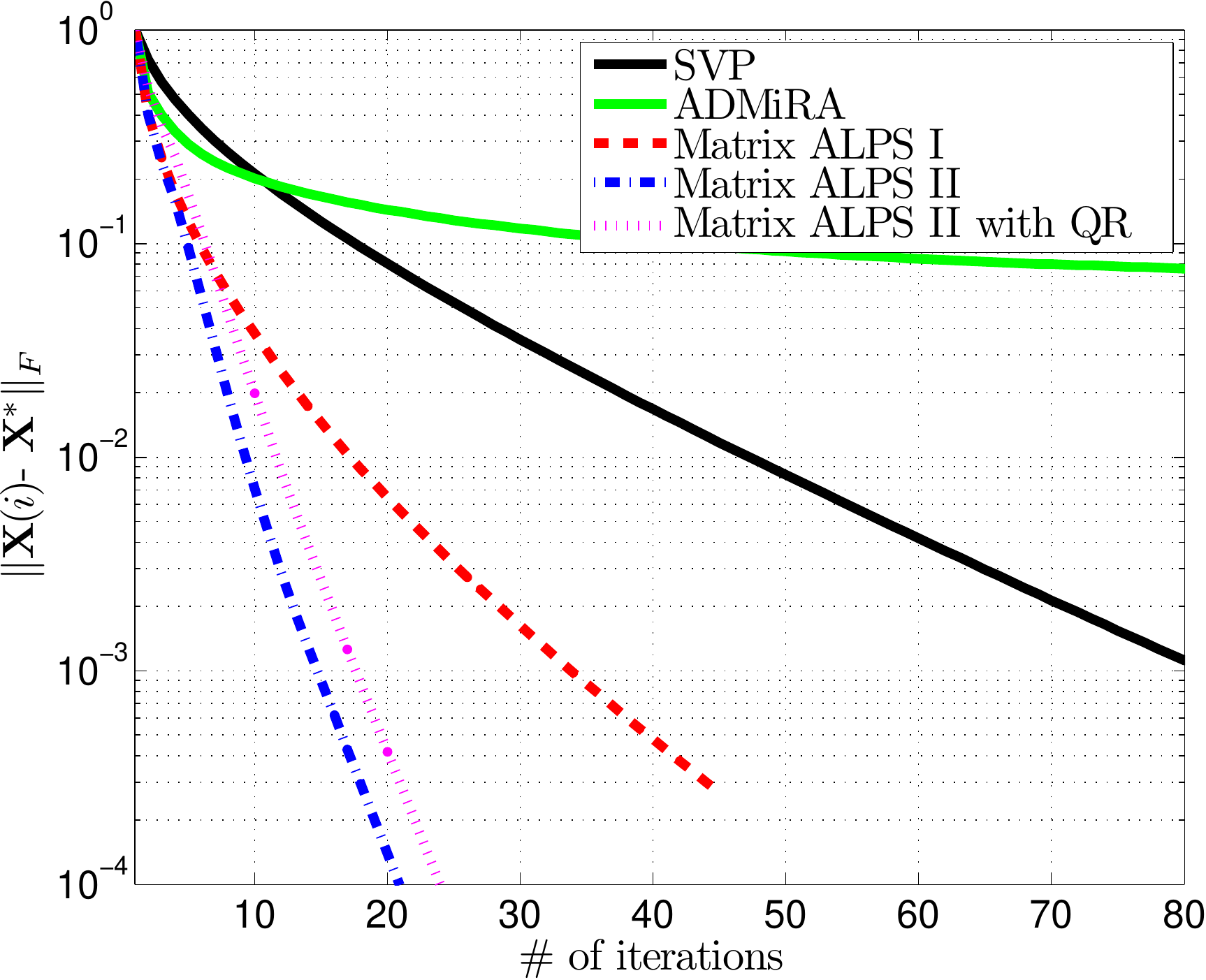}\label{fig:2c}}} \\
\centerline{\subfigure[]{\includegraphics[width = 0.33\textwidth]{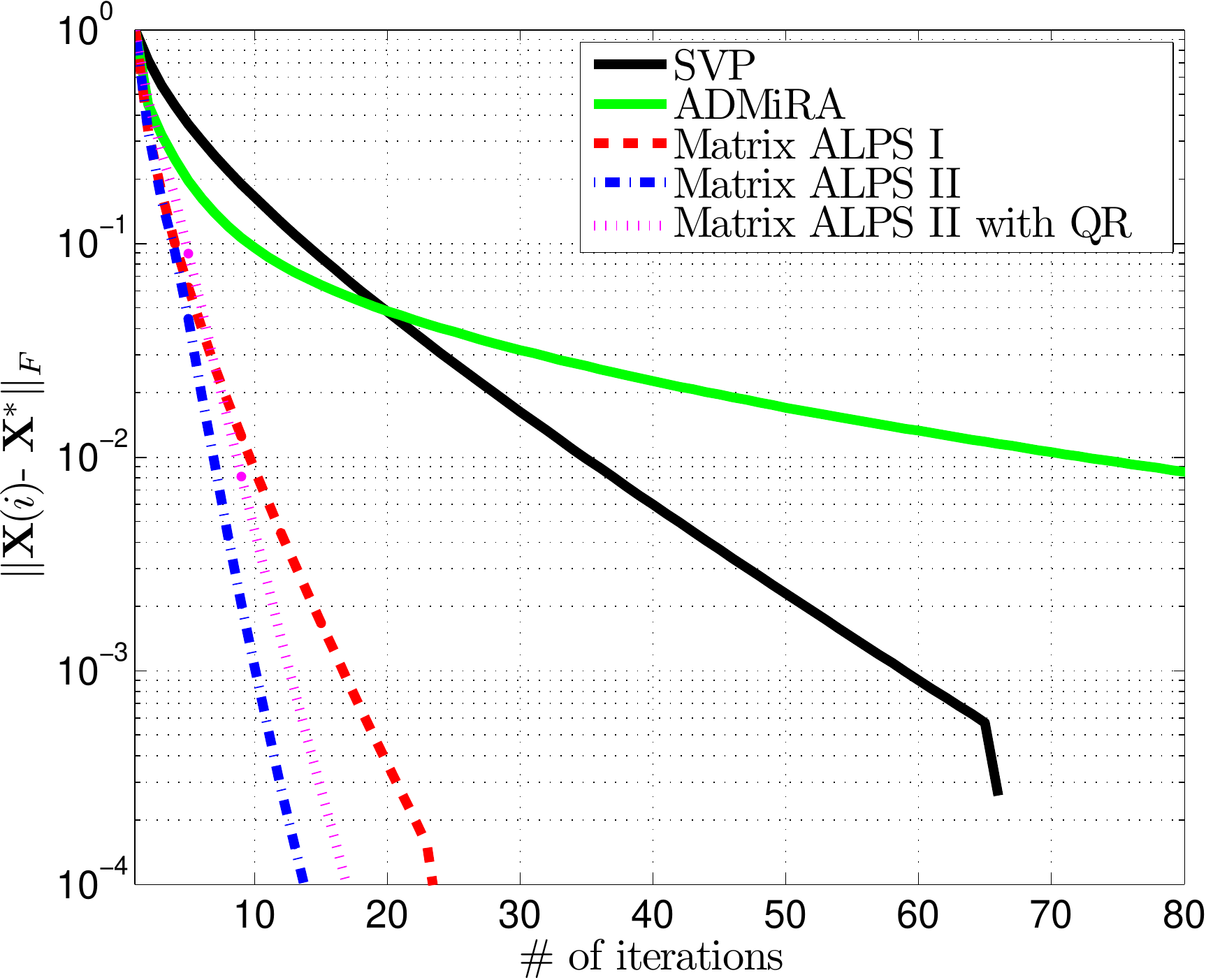}\label{fig:2a}} 
\hfill
\subfigure[]{\includegraphics[width = 0.33\textwidth]{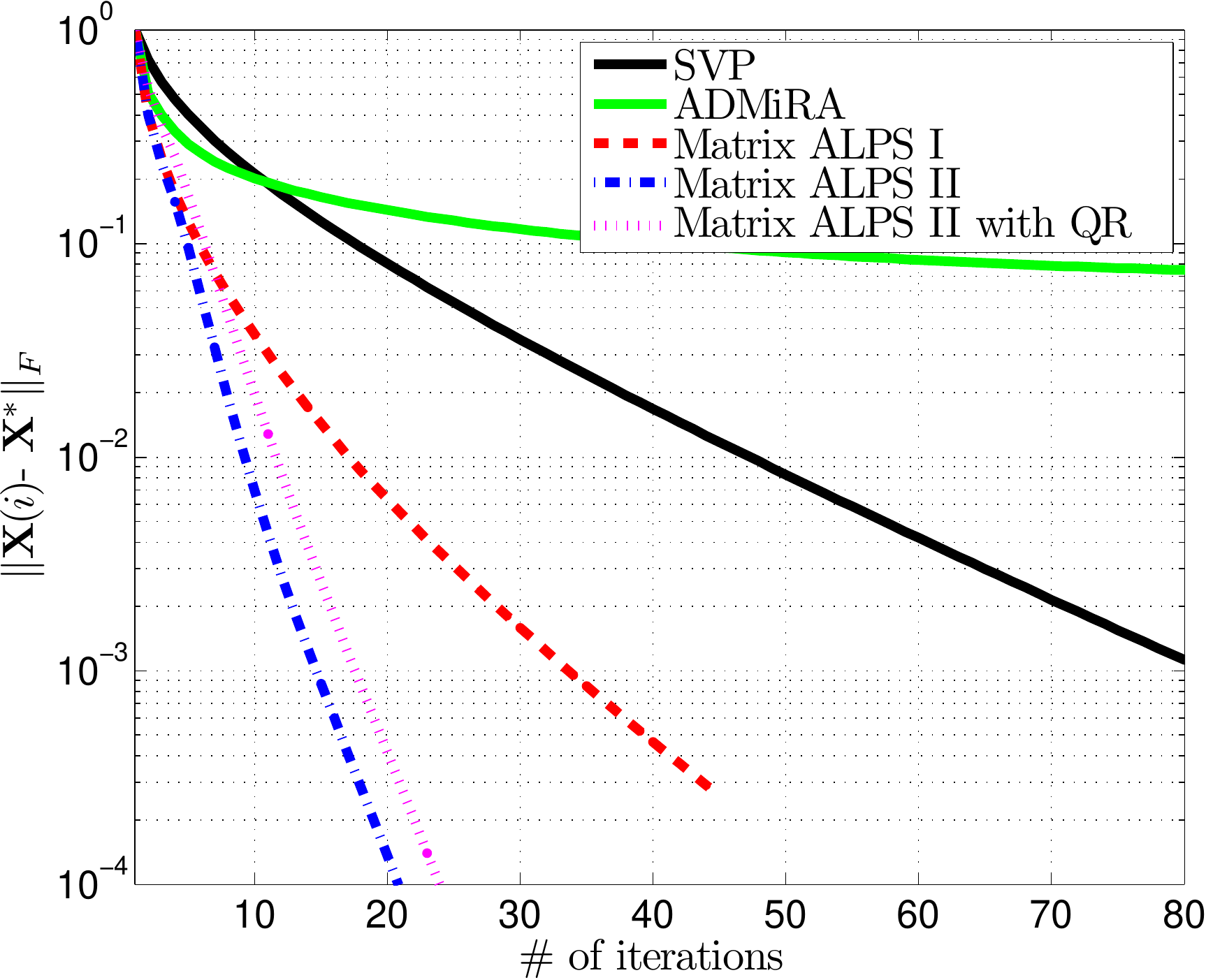}\label{fig:2b}}
\hfill 
\subfigure[]{\includegraphics[width = 0.33\textwidth]{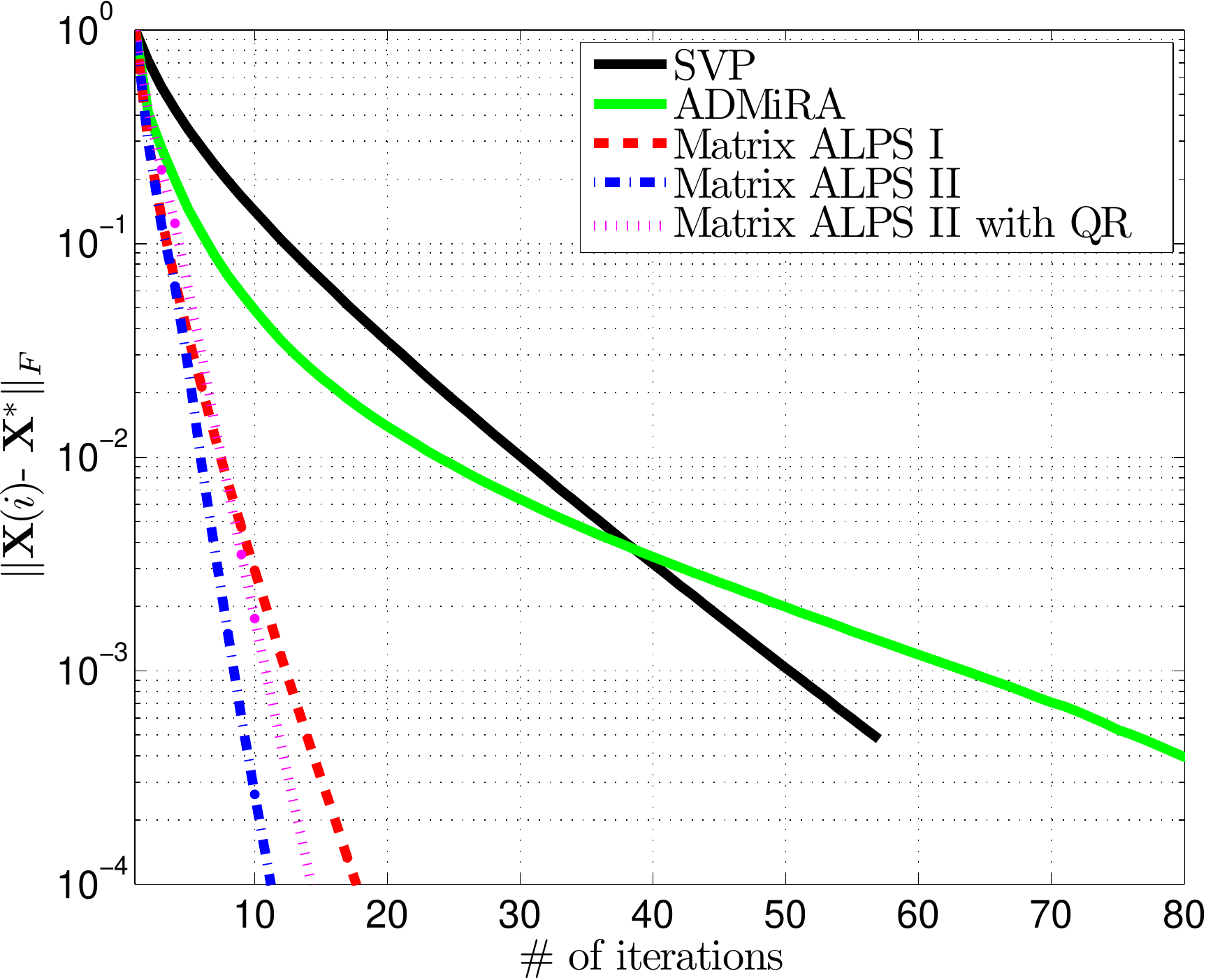}\label{fig:2c}}}
\end{tabular}
\caption{\small\sl Low rank signal reconstruction using noiselet linear operator. The error curves are the median values across 50 Monte-Carlo realizations over each iteration. For all cases, we assume $p = 0.3mn$. (a) $m = 256$, $n = 512 $, $\rank = 10$ and $\vectornormbig{\noise}_2 = 10^{-3} $. (b) $m = 256$, $n = 512 $, $\rank = 10$ and $\vectornormbig{\noise}_2 = 10^{-4} $.  (c) $m = 256$, $n = 512 $, $\rank = 20$ and $\vectornormbig{\noise}_2 = 0 $. (d) $m = 512$, $n = 1024 $, $\rank = 30$ and $\vectornormbig{\noise}_2 = 0 $. (e) $m = 512$, $n = 1024 $, $\rank = 40$ and $\vectornormbig{\noise}_2 = 0 $.  (f) $m = 1024$, $n = 2048 $, $\rank = 50$ and $\vectornormbig{\noise}_2 = 0 $.   } \label{fig: TableI_fig}
\end{figure*}

\textbf{Robust matrix completion:} We design matrix completion problems in the following way. The signal of interest $\bestsignal \in \mathbb{R}^{\dimension}$ is synthesized as a rank-$\rank$ matrix, factorized as $\bestsignal:=\mathbf{L}\mathbf{R}^T$ with $\vectornormbig{\bestsignal}_F = 1$ where $\mathbf{L} \in \mathbb{R}^{m \times \rank} $ and $\mathbf{R} \in \mathbb{R}^{n \times \rank} $ as defined above. In sequence, we subsample $\bestsignal$ by observing $\numsam = 0.3 mn $ entries, drawn uniformly at random. We denote the set of ordered pairs that represent the coordinates of the observable entries as $ \Omega = \lbrace (i,j): [\bestsignal]_{ij} \text{ is known} \rbrace \subseteq \lbrace 1, \dots, m \rbrace \times \lbrace 1, \dots, n \rbrace $ and let $\boldsymbol{\mathcal{A}}_{\Omega}$ denote the linear operator (mask) that samples a matrix according to $\Omega$. Then, the set of observations satisfies:
\begin{align}
\obs = \boldsymbol{\mathcal{A}}_{\Omega}\bestsignal + \noise, \label{eq:RMC}
\end{align} i.e., the known entries of $\bestsignal$ are structured as a vector $\obs \in \mathbb{R}^{\numsam}$, disturbed by a dense noise vector $\noise \in \mathbb{R}^{\numsam}$ with fixed-energy, which is populated by iid zero-mean Gaussians. 

To demonstrate the reconstruction accuracy and the convergence speeds, we generate various problem configurations (both noisy and noiseless settings), according to (\ref{eq:RMC}). The energy of the additive noise takes values $\vectornormbig{\noise}_2 \in \lbrace 10^{-3}, $ $10^{-4} \rbrace $. All the algorithms are tested for the same signal-matrix-noise realizations. A summary of the results can be found in Tables \ref{table:2}, \ref{table:3} and, \ref{table:4} where we present the median values of reconstruction error, number of iterations and execution time over 50 Monte Carlo iterations. For all cases, we assume $\text{SR} = 0.3 $ and set the maximum number of iterations to 700. Bold font denotes the fastest execution time. Some convergence error curves for specific cases are illustrated in Figures \ref{fig: TableII_fig} and \ref{fig: TableIII_IV_fig}.

In Table \ref{table:2}, LMaFit \cite{LMatFit} implementation has the fastest convergence for small scale problem configuration where $m = 300 $ and $n = 600$. We note that part of LMaFit implementation uses C code for acceleration. GROUSE \cite{GROUSE} is a competitive low-rank recovery method with attractive execution times for the {\it extreme low rank} problem settings due to stochastic gradient descent techniques. Nevertheless, its execution time performance degrades significantly as we increase the rank of $\bestsignal$. Moreover, we observe how randomized low rank projections accelerate the convergence speed where \textsc{Matrix ALPS II} with QR converges faster than \textsc{Matrix ALPS II}. In Tables \ref{table:3} and \ref{table:4}, we increase the problem dimensions. Here, \textsc{Matrix ALPS II} with QR has faster convergence for most of the cases and scales well as the problem size increases. We note that we do not exploit stochastic gradient descent techniques in the recovery process to accelerate convergence which is left for future work.

\begin{table*}[!htp]
\caption{Matrix Completion problem for $m = 300$ and $n = 600$. ``$-$'' depicts no information or not applicable due to time overhead.} {\label{table:2}}
\begin{center}
\begin{tabular}{|c|c|c|c|c|c|c|c|c|c|c|c|c|c|}
\multicolumn{4}{c|}{Configuration} & FR & \multicolumn{3}{|c|}{SVP} & \multicolumn{3}{|c|}{GROUSE} & \multicolumn{3}{|c}{TFOCS} \\
\hline \hline
\multicolumn{1}{c}{$m$} & \multicolumn{1}{c}{$n$} & \multicolumn{1}{c}{$\rank$} & \multicolumn{1}{c|}{$\vectornormbig{\noise}_2$} & & 
\multicolumn{1}{|c}{\rm{iter.}} & \multicolumn{1}{c}{\rm{err.}} & \multicolumn{1}{c|}{\rm{time}} &
\multicolumn{1}{|c}{\rm{iter.}} & \multicolumn{1}{c}{\rm{err.}} & \multicolumn{1}{c|}{\rm{time}} &
\multicolumn{1}{|c}{\rm{iter.}} & \multicolumn{1}{c}{\rm{err.}} & \multicolumn{1}{c}{\rm{time}} \\
\hline\hline
\multicolumn{1}{c}{$300$} & \multicolumn{1}{c}{$600$} & \multicolumn{1}{c}{$5$} & \multicolumn{1}{c|}{$0$} & $0.083$ & 
\multicolumn{1}{|c}{$43$} & \multicolumn{1}{c}{$2.9\cdot 10^{-4}$} & \multicolumn{1}{c|}{$0.59$} &
\multicolumn{1}{|c}{$-$} & \multicolumn{1}{c}{$1.52\cdot10^{-4}$} & \multicolumn{1}{c|}{$0.08$} &
\multicolumn{1}{|c}{$-$} & \multicolumn{1}{c}{$8.69\cdot10^{-5}$} & \multicolumn{1}{c}{$3.36$} \\
\hline
\multicolumn{1}{c}{$300$} & \multicolumn{1}{c}{$600$} & \multicolumn{1}{c}{$5$} & \multicolumn{1}{c|}{$10^{-3}$} & $0.083$ & 
\multicolumn{1}{|c}{$42$} & \multicolumn{1}{c}{$6\cdot 10^{-4}$} & \multicolumn{1}{c|}{$0.65$} &
\multicolumn{1}{|c}{$-$} & \multicolumn{1}{c}{$2\cdot10^{-4}$} & \multicolumn{1}{c|}{$0.082$} &
\multicolumn{1}{|c}{$-$} & \multicolumn{1}{c}{$5\cdot10^{-4}$} & \multicolumn{1}{c}{$3.85$} \\
\hline
\multicolumn{1}{c}{$300$} & \multicolumn{1}{c}{$600$} & \multicolumn{1}{c}{$5$} & \multicolumn{1}{c|}{$10^{-4}$} & $0.083$ & 
\multicolumn{1}{|c}{$43$} & \multicolumn{1}{c}{$3\cdot 10^{-4}$} & \multicolumn{1}{c|}{$0.64$} &
\multicolumn{1}{|c}{$-$} & \multicolumn{1}{c}{$2\cdot10^{-4}$} & \multicolumn{1}{c|}{$0.079$} &
\multicolumn{1}{|c}{$-$} & \multicolumn{1}{c}{$1\cdot10^{-4}$} & \multicolumn{1}{c}{$3.5$} \\
\hline
\multicolumn{1}{c}{$300$} & \multicolumn{1}{c}{$600$} & \multicolumn{1}{c}{$10$} & \multicolumn{1}{c|}{$0$} & $0.165$ & 
\multicolumn{1}{|c}{$54$} & \multicolumn{1}{c}{$4\cdot 10^{-4}$} & \multicolumn{1}{c|}{$0.9$} &
\multicolumn{1}{|c}{$-$} & \multicolumn{1}{c}{$4.5\cdot10^{-6}$} & \multicolumn{1}{c|}{$0.22$} &
\multicolumn{1}{|c}{$-$} & \multicolumn{1}{c}{$2\cdot10^{-4}$} & \multicolumn{1}{c}{$6.43$} \\
\hline
\multicolumn{1}{c}{$300$} & \multicolumn{1}{c}{$600$} & \multicolumn{1}{c}{$10$} & \multicolumn{1}{c|}{$10^{-3}$} & $0.165$ & 
\multicolumn{1}{|c}{$54$} & \multicolumn{1}{c}{$9\cdot 10^{-4}$} & \multicolumn{1}{c|}{$0.89$} &
\multicolumn{1}{|c}{$-$} & \multicolumn{1}{c}{$2\cdot10^{-4}$} & \multicolumn{1}{c|}{$0.16$} &
\multicolumn{1}{|c}{$-$} & \multicolumn{1}{c}{$8\cdot10^{-4}$} & \multicolumn{1}{c}{$7.83$} \\
\hline
\multicolumn{1}{c}{$300$} & \multicolumn{1}{c}{$600$} & \multicolumn{1}{c}{$10$} & \multicolumn{1}{c|}{$10^{-4}$} & $0.165$ & 
\multicolumn{1}{|c}{$54$} & \multicolumn{1}{c}{$4\cdot 10^{-4}$} & \multicolumn{1}{c|}{$0.91$} &
\multicolumn{1}{|c}{$-$} & \multicolumn{1}{c}{$2\cdot10^{-4}$} & \multicolumn{1}{c|}{$0.16$} &
\multicolumn{1}{|c}{$-$} & \multicolumn{1}{c}{$1\cdot10^{-4}$} & \multicolumn{1}{c}{$6.75$} \\
\hline
\multicolumn{1}{c}{$300$} & \multicolumn{1}{c}{$600$} & \multicolumn{1}{c}{$20$} & \multicolumn{1}{c|}{$0$} & $0.326$ & 
\multicolumn{1}{|c}{$85$} & \multicolumn{1}{c}{$8\cdot 10^{-4}$} & \multicolumn{1}{c|}{$2.04$} &
\multicolumn{1}{|c}{$-$} & \multicolumn{1}{c}{$1\cdot10^{-4}$} & \multicolumn{1}{c|}{$0.81$} &
\multicolumn{1}{|c}{$-$} & \multicolumn{1}{c}{$2\cdot10^{-4}$} & \multicolumn{1}{c}{$30.04$} \\
\hline
\multicolumn{1}{c}{$300$} & \multicolumn{1}{c}{$600$} & \multicolumn{1}{c}{$40$} & \multicolumn{1}{c|}{$0$} & $0.637$ & 
\multicolumn{1}{|c}{$241$} & \multicolumn{1}{c}{$3.4\cdot 10^{-3}$} & \multicolumn{1}{c|}{$11.1$} &
\multicolumn{1}{|c}{$-$} & \multicolumn{1}{c}{$3.1 \cdot 10^{-3}$} & \multicolumn{1}{c|}{$13.94$} &
\multicolumn{1}{|c}{$-$} & \multicolumn{1}{c}{$-$} & \multicolumn{1}{c}{$-$} \\

\hline \hline \hline
\multicolumn{4}{c|}{} & & \multicolumn{3}{|c|}{Inexact ALM} & \multicolumn{3}{|c|}{OptSpace} & \multicolumn{3}{|c}{GRASTA} \\
\hline \hline
\multicolumn{1}{c}{$m$} & \multicolumn{1}{c}{$n$} & \multicolumn{1}{c}{$\rank$} & \multicolumn{1}{c|}{$\vectornormbig{\noise}_2$} & & 
\multicolumn{1}{|c}{\rm{iter.}} & \multicolumn{1}{c}{\rm{err.}} & \multicolumn{1}{c|}{\rm{time}} &
\multicolumn{1}{|c}{\rm{iter.}} & \multicolumn{1}{c}{\rm{err.}} & \multicolumn{1}{c|}{\rm{time}} &
\multicolumn{1}{|c}{\rm{iter.}} & \multicolumn{1}{c}{\rm{err.}} & \multicolumn{1}{c}{\rm{time}} \\
\hline\hline
\multicolumn{1}{c}{$300$} & \multicolumn{1}{c}{$600$} & \multicolumn{1}{c}{$5$} & \multicolumn{1}{c|}{$0$} & $0.083$ & 
\multicolumn{1}{|c}{$24$} & \multicolumn{1}{c}{$6.7\cdot 10^{-5}$} & \multicolumn{1}{c|}{$0.47$} &
\multicolumn{1}{|c}{$31$} & \multicolumn{1}{c}{$2.8\cdot 10^{-6}$} & \multicolumn{1}{c|}{$2.41$} &
\multicolumn{1}{|c}{$-$} & \multicolumn{1}{c}{$2.2\cdot 10^{-4}$} & \multicolumn{1}{c}{$2.07$} \\
\hline
\multicolumn{1}{c}{$300$} & \multicolumn{1}{c}{$600$} & \multicolumn{1}{c}{$5$} & \multicolumn{1}{c|}{$10^{-3}$} & $0.083$ & 
\multicolumn{1}{|c}{$24$} & \multicolumn{1}{c}{$6\cdot 10^{-4}$} & \multicolumn{1}{c|}{$0.49$} &
\multicolumn{1}{|c}{$297$} & \multicolumn{1}{c}{$5\cdot 10^{-4}$} & \multicolumn{1}{c|}{$22.82$} &
\multicolumn{1}{|c}{$-$} & \multicolumn{1}{c}{$1\cdot 10^{-4}$} & \multicolumn{1}{c}{$2.07$} \\
\hline
\multicolumn{1}{c}{$300$} & \multicolumn{1}{c}{$600$} & \multicolumn{1}{c}{$5$} & \multicolumn{1}{c|}{$10^{-4}$} & $0.083$ & 
\multicolumn{1}{|c}{$24$} & \multicolumn{1}{c}{$1\cdot 10^{-4}$} & \multicolumn{1}{c|}{$0.49$} &
\multicolumn{1}{|c}{$267$} & \multicolumn{1}{c}{$1\cdot 10^{-4}$} & \multicolumn{1}{c|}{$21.56$} &
\multicolumn{1}{|c}{$-$} & \multicolumn{1}{c}{$8\cdot 10^{-5}$} & \multicolumn{1}{c}{$2.1$} \\
\hline
\multicolumn{1}{c}{$300$} & \multicolumn{1}{c}{$600$} & \multicolumn{1}{c}{$10$} & \multicolumn{1}{c|}{$0$} & $0.165$ & 
\multicolumn{1}{|c}{$26$} & \multicolumn{1}{c}{$1\cdot 10^{-4}$} & \multicolumn{1}{c|}{$0.6$} &
\multicolumn{1}{|c}{$37$} & \multicolumn{1}{c}{$2.3\cdot 10^{-6}$} & \multicolumn{1}{c|}{$8.42$} &
\multicolumn{1}{|c}{$-$} & \multicolumn{1}{c}{$8.6\cdot 10^{-6}$} & \multicolumn{1}{c}{$4.5$} \\
\hline
\multicolumn{1}{c}{$300$} & \multicolumn{1}{c}{$600$} & \multicolumn{1}{c}{$10$} & \multicolumn{1}{c|}{$10^{-3}$} & $0.165$ & 
\multicolumn{1}{|c}{$26$} & \multicolumn{1}{c}{$8\cdot 10^{-4}$} & \multicolumn{1}{c|}{$0.59$} &
\multicolumn{1}{|c}{$304$} & \multicolumn{1}{c}{$8\cdot 10^{-4}$} & \multicolumn{1}{c|}{$66.02$} &
\multicolumn{1}{|c}{$-$} & \multicolumn{1}{c}{$5.5\cdot 10^{-3}$} & \multicolumn{1}{c}{$3.43$} \\
\hline
\multicolumn{1}{c}{$300$} & \multicolumn{1}{c}{$600$} & \multicolumn{1}{c}{$10$} & \multicolumn{1}{c|}{$10^{-4}$} & $0.165$ & 
\multicolumn{1}{|c}{$26$} & \multicolumn{1}{c}{$1\cdot 10^{-4}$} & \multicolumn{1}{c|}{$0.61$} &
\multicolumn{1}{|c}{$304$} & \multicolumn{1}{c}{$1\cdot 10^{-4}$} & \multicolumn{1}{c|}{$65.56$} &
\multicolumn{1}{|c}{$-$} & \multicolumn{1}{c}{$5.3\cdot 10^{-3}$} & \multicolumn{1}{c}{$3.44$} \\
\hline
\multicolumn{1}{c}{$300$} & \multicolumn{1}{c}{$600$} & \multicolumn{1}{c}{$20$} & \multicolumn{1}{c|}{$0$} & $0.326$ & 
\multicolumn{1}{|c}{$44$} & \multicolumn{1}{c}{$3\cdot 10^{-4}$} & \multicolumn{1}{c|}{$1.37$} &
\multicolumn{1}{|c}{$-$} & \multicolumn{1}{c}{$-$} & \multicolumn{1}{c|}{$-$} &
\multicolumn{1}{|c}{$-$} & \multicolumn{1}{c}{$5\cdot 10^{-4}$} & \multicolumn{1}{c}{$10.51$} \\
\hline
\multicolumn{1}{c}{$300$} & \multicolumn{1}{c}{$600$} & \multicolumn{1}{c}{$40$} & \multicolumn{1}{c|}{$0$} & $0.637$ & 
\multicolumn{1}{|c}{$134$} & \multicolumn{1}{c}{$1.6\cdot 10^{-3}$} & \multicolumn{1}{c|}{$7.08$} &
\multicolumn{1}{|c}{$-$} & \multicolumn{1}{c}{$-$} & \multicolumn{1}{c|}{$-$} &
\multicolumn{1}{|c}{$-$} & \multicolumn{1}{c}{$5.2\cdot 10^{-3}$} & \multicolumn{1}{c}{$251.34$} \\

\hline \hline \hline
\multicolumn{4}{c|}{} & & \multicolumn{3}{|c|}{RTRMC} & \multicolumn{3}{|c|}{LMaFit} & \multicolumn{3}{|c}{\textsc{Matrix ALPS I}} \\
\hline \hline
\multicolumn{1}{c}{$m$} & \multicolumn{1}{c}{$n$} & \multicolumn{1}{c}{$\rank$} & \multicolumn{1}{c|}{$\vectornormbig{\noise}_2$} & & 
\multicolumn{1}{|c}{\rm{iter.}} & \multicolumn{1}{c}{\rm{err.}} & \multicolumn{1}{c|}{\rm{time}} &
\multicolumn{1}{|c}{\rm{iter.}} & \multicolumn{1}{c}{\rm{err.}} & \multicolumn{1}{c|}{\rm{time}} &
\multicolumn{1}{|c}{\rm{iter.}} & \multicolumn{1}{c}{\rm{err.}} & \multicolumn{1}{c}{\rm{time}} \\
\hline\hline
\multicolumn{1}{c}{$300$} & \multicolumn{1}{c}{$600$} & \multicolumn{1}{c}{$5$} & \multicolumn{1}{c|}{$0$} & $0.083$ & 
\multicolumn{1}{|c}{$13$} & \multicolumn{1}{c}{$1.2\cdot 10^{-4}$} & \multicolumn{1}{c|}{$0.59$} &
\multicolumn{1}{|c}{$20$} & \multicolumn{1}{c}{$2.2\cdot 10^{-4}$} & \multicolumn{1}{c|}{$\mathbf{0.054}$} &
\multicolumn{1}{|c}{$22$} & \multicolumn{1}{c}{$1.8\cdot 10^{-5}$} & \multicolumn{1}{c}{$0.76$} \\
\hline
\multicolumn{1}{c}{$300$} & \multicolumn{1}{c}{$600$} & \multicolumn{1}{c}{$5$} & \multicolumn{1}{c|}{$10^{-3}$} & $0.083$ & 
\multicolumn{1}{|c}{$13$} & \multicolumn{1}{c}{$1\cdot 10^{-4}$} & \multicolumn{1}{c|}{$0.59$} &
\multicolumn{1}{|c}{$19$} & \multicolumn{1}{c}{$5\cdot 10^{-4}$} & \multicolumn{1}{c|}{$\mathbf{0.049}$} &
\multicolumn{1}{|c}{$37$} & \multicolumn{1}{c}{$7\cdot 10^{-4}$} & \multicolumn{1}{c}{$1.34$} \\
\hline
\multicolumn{1}{c}{$300$} & \multicolumn{1}{c}{$600$} & \multicolumn{1}{c}{$5$} & \multicolumn{1}{c|}{$10^{-4}$} & $0.083$ & 
\multicolumn{1}{|c}{$13$} & \multicolumn{1}{c}{$2\cdot 10^{-4}$} & \multicolumn{1}{c|}{$0.59$} &
\multicolumn{1}{|c}{$21$} & \multicolumn{1}{c}{$1\cdot 10^{-4}$} & \multicolumn{1}{c|}{$\mathbf{0.052}$} &
\multicolumn{1}{|c}{$18$} & \multicolumn{1}{c}{$1\cdot 10^{-4}$} & \multicolumn{1}{c}{$0.61$} \\
\hline
\multicolumn{1}{c}{$300$} & \multicolumn{1}{c}{$600$} & \multicolumn{1}{c}{$10$} & \multicolumn{1}{c|}{$0$} & $0.165$ & 
\multicolumn{1}{|c}{$16$} & \multicolumn{1}{c}{$1.1\cdot 10^{-3}$} & \multicolumn{1}{c|}{$1.03$} &
\multicolumn{1}{|c}{$23$} & \multicolumn{1}{c}{$1\cdot 10^{-4}$} & \multicolumn{1}{c|}{$\mathbf{0.064}$} &
\multicolumn{1}{|c}{$16$} & \multicolumn{1}{c}{$1\cdot 10^{-4}$} & \multicolumn{1}{c}{$0.65$} \\
\hline
\multicolumn{1}{c}{$300$} & \multicolumn{1}{c}{$600$} & \multicolumn{1}{c}{$10$} & \multicolumn{1}{c|}{$10^{-3}$} & $0.165$ & 
\multicolumn{1}{|c}{$17$} & \multicolumn{1}{c}{$1\cdot 10^{-4}$} & \multicolumn{1}{c|}{$1.09$} &
\multicolumn{1}{|c}{$26$} & \multicolumn{1}{c}{$8\cdot 10^{-4}$} & \multicolumn{1}{c|}{$\mathbf{0.077}$} &
\multicolumn{1}{|c}{$30$} & \multicolumn{1}{c}{$1.1\cdot 10^{-3}$} & \multicolumn{1}{c}{$1.16$} \\
\hline
\multicolumn{1}{c}{$300$} & \multicolumn{1}{c}{$600$} & \multicolumn{1}{c}{$10$} & \multicolumn{1}{c|}{$10^{-4}$} & $0.165$ & 
\multicolumn{1}{|c}{$17$} & \multicolumn{1}{c}{$2\cdot 10^{-4}$} & \multicolumn{1}{c|}{$1.09$} &
\multicolumn{1}{|c}{$32$} & \multicolumn{1}{c}{$1\cdot 10^{-4}$} & \multicolumn{1}{c|}{$\mathbf{0.097}$} &
\multicolumn{1}{|c}{$16$} & \multicolumn{1}{c}{$1\cdot 10^{-4}$} & \multicolumn{1}{c}{$0.63$} \\
\hline
\multicolumn{1}{c}{$300$} & \multicolumn{1}{c}{$600$} & \multicolumn{1}{c}{$20$} & \multicolumn{1}{c|}{$0$} & $0.326$ & 
\multicolumn{1}{|c}{$22$} & \multicolumn{1}{c}{$4\cdot 10^{-4}$} & \multicolumn{1}{c|}{$2.99$} &
\multicolumn{1}{|c}{$37$} & \multicolumn{1}{c}{$2\cdot 10^{-4}$} & \multicolumn{1}{c|}{$\mathbf{0.12}$} &
\multicolumn{1}{|c}{$37$} & \multicolumn{1}{c}{$2\cdot 10^{-4}$} & \multicolumn{1}{c}{$2.05$} \\
\hline
\multicolumn{1}{c}{$300$} & \multicolumn{1}{c}{$600$} & \multicolumn{1}{c}{$40$} & \multicolumn{1}{c|}{$0$} & $0.637$ & 
\multicolumn{1}{|c}{$35$} & \multicolumn{1}{c}{$3 \cdot 10^{-5}$} & \multicolumn{1}{c|}{$11.83$} &
\multicolumn{1}{|c}{$233$} & \multicolumn{1}{c}{$4.9\cdot 10^{-4}$} & \multicolumn{1}{c|}{$\mathbf{2.52}$} &
\multicolumn{1}{|c}{$500$} & \multicolumn{1}{c}{$6.5\cdot 10^{-2}$} & \multicolumn{1}{c}{$45.67$} \\

\hline \hline \hline
\multicolumn{4}{c|}{} & & \multicolumn{3}{|c|}{ADMiRA} & \multicolumn{3}{|c|}{\textsc{Matrix ALPS II}} & \multicolumn{3}{|c}{\textsc{Matrix ALPS II} with QR} \\
\hline \hline
\multicolumn{1}{c}{$m$} & \multicolumn{1}{c}{$n$} & \multicolumn{1}{c}{$\rank$} & \multicolumn{1}{c|}{$\vectornormbig{\noise}_2$} & & 
\multicolumn{1}{|c}{\rm{iter.}} & \multicolumn{1}{c}{\rm{err.}} & \multicolumn{1}{c|}{\rm{time}} &
\multicolumn{1}{|c}{\rm{iter.}} & \multicolumn{1}{c}{\rm{err.}} & \multicolumn{1}{c|}{\rm{time}} &
\multicolumn{1}{|c}{\rm{iter.}} & \multicolumn{1}{c}{\rm{err.}} & \multicolumn{1}{c}{\rm{time}} \\
\hline\hline
\multicolumn{1}{c}{$300$} & \multicolumn{1}{c}{$600$} & \multicolumn{1}{c}{$5$} & \multicolumn{1}{c|}{$0$} & $0.083$ & 
\multicolumn{1}{|c}{$59$} & \multicolumn{1}{c}{$5.2\cdot 10^{-5}$} & \multicolumn{1}{c|}{$2.86$} &
\multicolumn{1}{|c}{$10$} & \multicolumn{1}{c}{$1.7\cdot 10^{-5}$} & \multicolumn{1}{c|}{$0.34$} &
\multicolumn{1}{|c}{$14$} & \multicolumn{1}{c}{$3.2\cdot 10^{-5}$} & \multicolumn{1}{c}{$0.45$} \\
\hline
\multicolumn{1}{c}{$300$} & \multicolumn{1}{c}{$600$} & \multicolumn{1}{c}{$5$} & \multicolumn{1}{c|}{$10^{-3}$} & $0.083$ & 
\multicolumn{1}{|c}{$700$} & \multicolumn{1}{c}{$4\cdot 10^{-3}$} & \multicolumn{1}{c|}{$30.96$} &
\multicolumn{1}{|c}{$12$} & \multicolumn{1}{c}{$6\cdot 10^{-4}$} & \multicolumn{1}{c|}{$0.44$} &
\multicolumn{1}{|c}{$24$} & \multicolumn{1}{c}{$6\cdot 10^{-4}$} & \multicolumn{1}{c}{$0.81$} \\
\hline
\multicolumn{1}{c}{$300$} & \multicolumn{1}{c}{$600$} & \multicolumn{1}{c}{$5$} & \multicolumn{1}{c|}{$10^{-4}$} & $0.083$ & 
\multicolumn{1}{|c}{$700$} & \multicolumn{1}{c}{$4.5\cdot 10^{-3}$} & \multicolumn{1}{c|}{$31.45$} &
\multicolumn{1}{|c}{$10$} & \multicolumn{1}{c}{$1\cdot 10^{-4}$} & \multicolumn{1}{c|}{$0.36$} &
\multicolumn{1}{|c}{$14$} & \multicolumn{1}{c}{$1\cdot 10^{-4}$} & \multicolumn{1}{c}{$0.47$} \\
\hline
\multicolumn{1}{c}{$300$} & \multicolumn{1}{c}{$600$} & \multicolumn{1}{c}{$10$} & \multicolumn{1}{c|}{$0$} & $0.165$ & 
\multicolumn{1}{|c}{$47$} & \multicolumn{1}{c}{$1\cdot 10^{-3}$} & \multicolumn{1}{c|}{$2.56$} &
\multicolumn{1}{|c}{$12$} & \multicolumn{1}{c}{$3\cdot 10^{-5}$} & \multicolumn{1}{c|}{$0.48$} &
\multicolumn{1}{|c}{$16$} & \multicolumn{1}{c}{$3.4\cdot 10^{-5}$} & \multicolumn{1}{c}{$0.49$} \\
\hline
\multicolumn{1}{c}{$300$} & \multicolumn{1}{c}{$600$} & \multicolumn{1}{c}{$10$} & \multicolumn{1}{c|}{$10^{-3}$} & $0.165$ & 
\multicolumn{1}{|c}{$700$} & \multicolumn{1}{c}{$1.5\cdot 10^{-3}$} & \multicolumn{1}{c|}{$28.49$} &
\multicolumn{1}{|c}{$19$} & \multicolumn{1}{c}{$9\cdot 10^{-4}$} & \multicolumn{1}{c|}{$0.74$} &
\multicolumn{1}{|c}{$29$} & \multicolumn{1}{c}{$9\cdot 10^{-4}$} & \multicolumn{1}{c}{$0.95$} \\
\hline
\multicolumn{1}{c}{$300$} & \multicolumn{1}{c}{$600$} & \multicolumn{1}{c}{$10$} & \multicolumn{1}{c|}{$10^{-4}$} & $0.165$ & 
\multicolumn{1}{|c}{$700$} & \multicolumn{1}{c}{$1\cdot 10^{-4}$} & \multicolumn{1}{c|}{$31.99$} &
\multicolumn{1}{|c}{$12$} & \multicolumn{1}{c}{$1\cdot 10^{-4}$} & \multicolumn{1}{c|}{$0.49$} &
\multicolumn{1}{|c}{$16$} & \multicolumn{1}{c}{$1\cdot 10^{-4}$} & \multicolumn{1}{c}{$0.54$} \\
\hline
\multicolumn{1}{c}{$300$} & \multicolumn{1}{c}{$600$} & \multicolumn{1}{c}{$20$} & \multicolumn{1}{c|}{$0$} & $0.326$ & 
\multicolumn{1}{|c}{$700$} & \multicolumn{1}{c}{$1.2\cdot 10^{-3}$} & \multicolumn{1}{c|}{$41.86$} &
\multicolumn{1}{|c}{$20$} & \multicolumn{1}{c}{$1\cdot 10^{-4}$} & \multicolumn{1}{c|}{$1.16$} &
\multicolumn{1}{|c}{$23$} & \multicolumn{1}{c}{$1\cdot 10^{-4}$} & \multicolumn{1}{c}{$0.79$} \\
\hline
\multicolumn{1}{c}{$300$} & \multicolumn{1}{c}{$600$} & \multicolumn{1}{c}{$20$} & \multicolumn{1}{c|}{$0$} & $0.326$ & 
\multicolumn{1}{|c}{$-$} & \multicolumn{1}{c}{$-$} & \multicolumn{1}{c|}{$-$} &
\multicolumn{1}{|c}{$72$} & \multicolumn{1}{c}{$2\cdot 10^{-4}$} & \multicolumn{1}{c|}{$7.21$} &
\multicolumn{1}{|c}{$68$} & \multicolumn{1}{c}{$2\cdot 10^{-4}$} & \multicolumn{1}{c}{$2.6$} \\

\hline
\end{tabular}
\end{center}
\end{table*}

\begin{figure*}[!htp]
\centering
\begin{tabular}{ccc}
\centerline{\subfigure[]{\includegraphics[width = 0.4\textwidth]{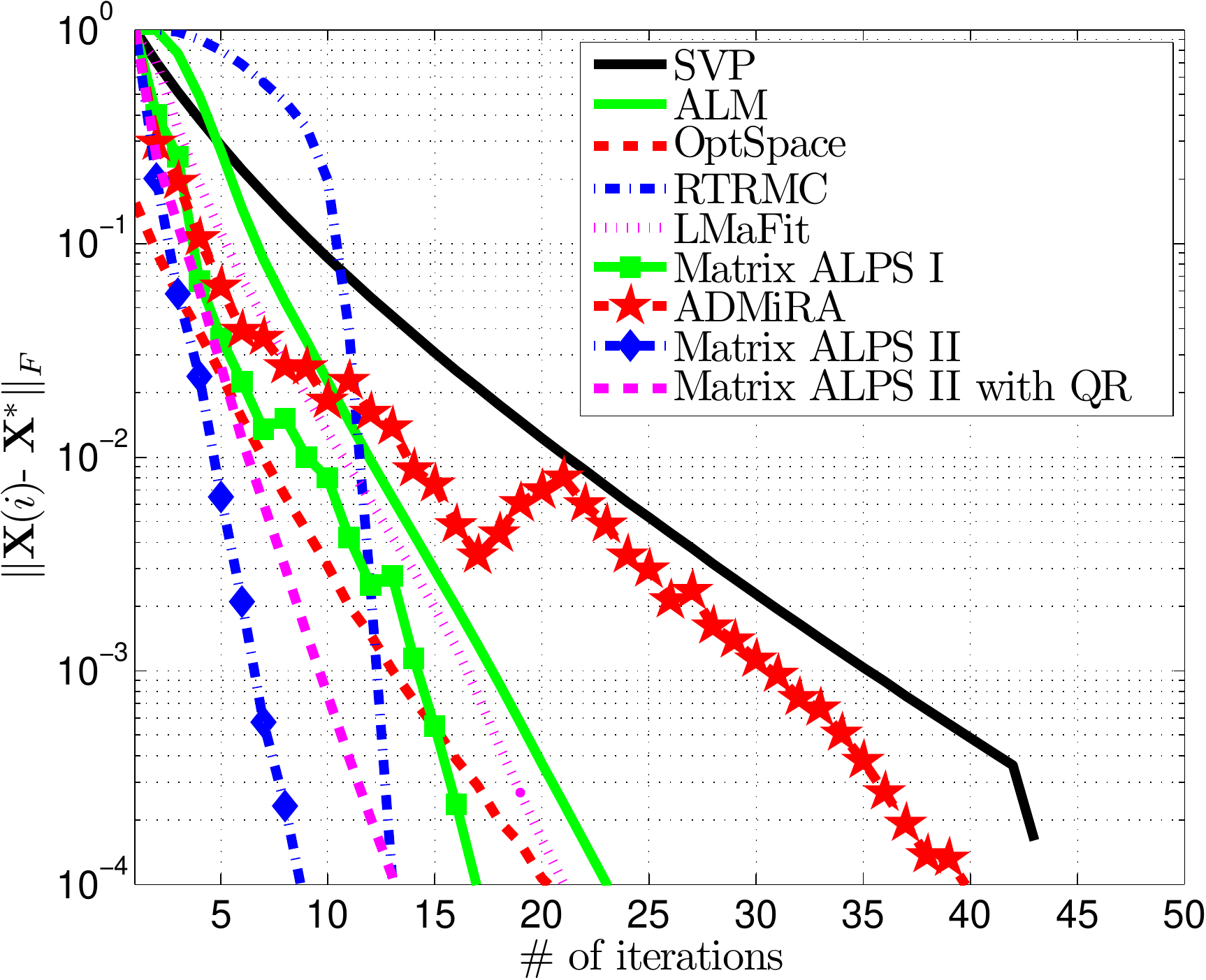}\label{fig:2a}} 
\hspace{1cm}
\subfigure[]{\includegraphics[width = 0.4\textwidth]{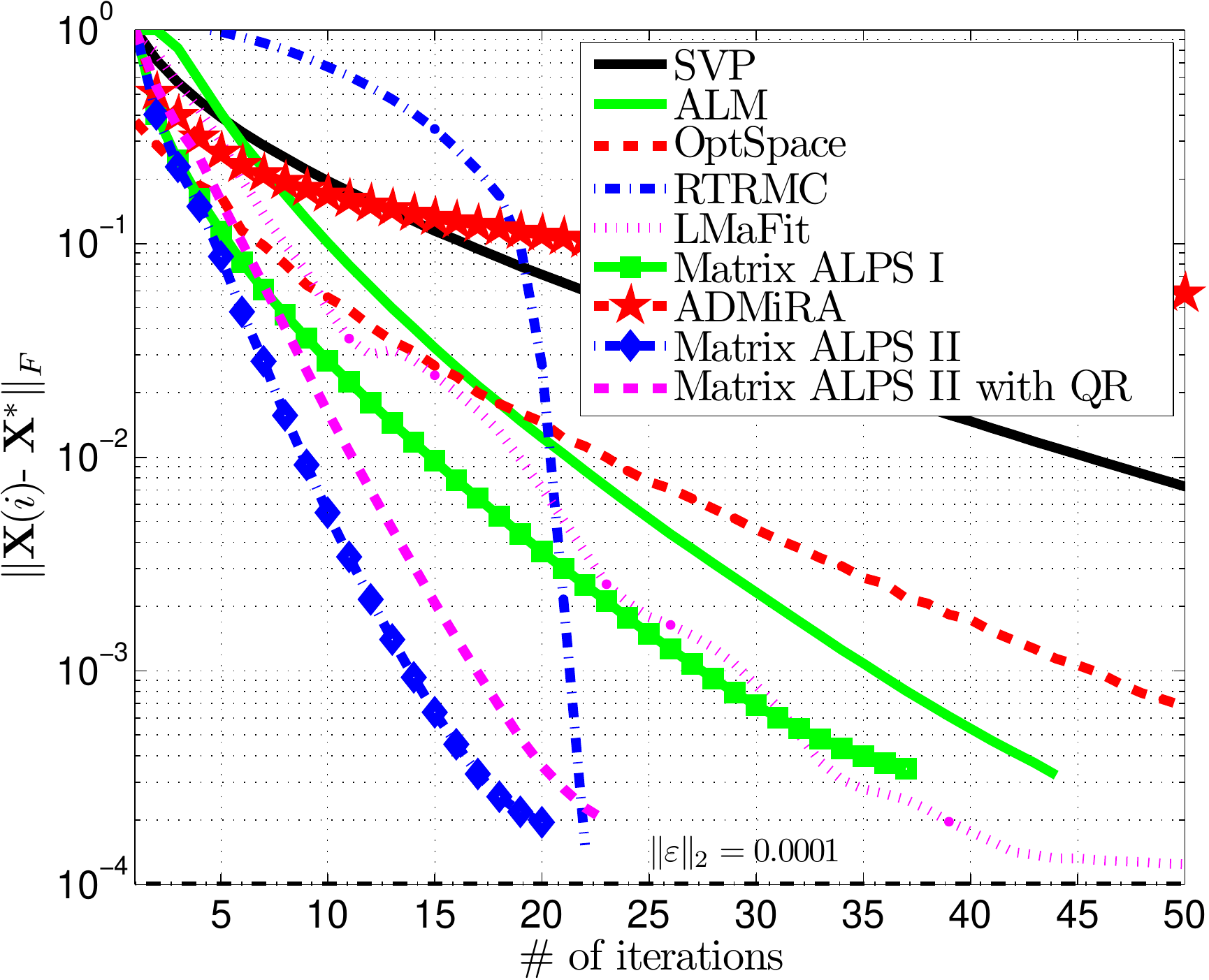}\label{fig:2b}}}
\end{tabular}
\caption{\small\sl Low rank matrix recovery for the matrix completion problem. The error curves are the median values across 50 Monte-Carlo realizations over each iteration. For all cases, we assume $p = 0.3mn$. (a) $m = 300$, $n = 600 $, $\rank = 5$ and $\vectornormbig{\noise}_2 = 0 $. (b) $m = 300$, $n = 600 $, $\rank = 20$ and $\vectornormbig{\noise}_2 = 10^{-4} $. } \label{fig: TableII_fig}
\end{figure*}

\begin{table*} [!htp]
\caption{Matrix Completion problem for $m = 700$ and $n = 1000$. ``$-$'' depicts no information or not applicable due to time overhead.} {\label{table:3}}
\begin{center}
\begin{tabular}{|c|c|c|c|c|c|c|c|c|c|c|c|c|c}
\multicolumn{4}{c|}{Configuration} & FR & \multicolumn{3}{|c|}{SVP} & \multicolumn{3}{|c|}{Inexact ALM} & \multicolumn{3}{|c}{GROUSE} \\
\hline \hline
\multicolumn{1}{c}{$m$}  & \multicolumn{1}{c}{$n$} & \multicolumn{1}{c}{$\rank$} & \multicolumn{1}{c|}{$\vectornormbig{\noise}_2$} & & 
\multicolumn{1}{|c}{\rm{iter.}} & \multicolumn{1}{c}{\rm{err.}} & \multicolumn{1}{c|}{\rm{time}} &
\multicolumn{1}{|c}{\rm{iter.}} & \multicolumn{1}{c}{\rm{err.}} & \multicolumn{1}{c|}{\rm{time}} &
\multicolumn{1}{|c}{\rm{iter.}} & \multicolumn{1}{c}{\rm{err.}} & \multicolumn{1}{c}{\rm{time}} \\
\hline\hline
\multicolumn{1}{c}{$700$} & \multicolumn{1}{c}{$1000$} & \multicolumn{1}{c}{$5$} & \multicolumn{1}{c|}{$0$} & $ 0.04 $ & 
\multicolumn{1}{|c}{$34$} & \multicolumn{1}{c}{$1.9\cdot 10^{-4}$} & \multicolumn{1}{c|}{$1.77$} &
\multicolumn{1}{|c}{$23$} & \multicolumn{1}{c}{$6.5\cdot 10^{-5}$} & \multicolumn{1}{c|}{$1.69$} &
\multicolumn{1}{|c}{$-$} & \multicolumn{1}{c}{$3.5\cdot 10^{-5}$} & \multicolumn{1}{c}{$\mathbf{0.23}$} \\
\hline
\multicolumn{1}{c}{$700$} & \multicolumn{1}{c}{$1000$} & \multicolumn{1}{c}{$5$} & \multicolumn{1}{c|}{$10{-3}$} & $ 0.04 $ & 
\multicolumn{1}{|c}{$34$} & \multicolumn{1}{c}{$4.2\cdot 10^{-4}$} & \multicolumn{1}{c|}{$1.92$} &
\multicolumn{1}{|c}{$23$} & \multicolumn{1}{c}{$3.7\cdot 10^{-4}$} & \multicolumn{1}{c|}{$1.87$} &
\multicolumn{1}{|c}{$-$} & \multicolumn{1}{c}{$3.1\cdot 10^{-4}$} & \multicolumn{1}{c}{$\mathbf{0.24}$} \\
\hline
\multicolumn{1}{c}{$700$} & \multicolumn{1}{c}{$1000$} & \multicolumn{1}{c}{$30$} & \multicolumn{1}{c|}{$0$} & $0.239 $ & 
\multicolumn{1}{|c}{$61$} & \multicolumn{1}{c}{$4.6\cdot 10^{-4}$} & \multicolumn{1}{c|}{$6.39$} &
\multicolumn{1}{|c}{$29$} & \multicolumn{1}{c}{$1.2\cdot 10^{-4}$} & \multicolumn{1}{c|}{$3.91$} &
\multicolumn{1}{|c}{$-$} & \multicolumn{1}{c}{$3.2\cdot 10^{-5}$} & \multicolumn{1}{c}{$3.15$} \\
\hline
\multicolumn{1}{c}{$700$} & \multicolumn{1}{c}{$1000$} & \multicolumn{1}{c}{$30$} & \multicolumn{1}{c|}{$10^{-3}$} & $0.239 $ & 
\multicolumn{1}{|c}{$61$} & \multicolumn{1}{c}{$1.1\cdot 10^{-3}$} & \multicolumn{1}{c|}{$6.33$} &
\multicolumn{1}{|c}{$29$} & \multicolumn{1}{c}{$1\cdot 10^{-3}$} & \multicolumn{1}{c|}{$3.87$} &
\multicolumn{1}{|c}{$-$} & \multicolumn{1}{c}{$8\cdot 10^{-4}$} & \multicolumn{1}{c}{$3.14$} \\
\hline
\multicolumn{1}{c}{$700$} & \multicolumn{1}{c}{$1000$} & \multicolumn{1}{c}{$50$} & \multicolumn{1}{c|}{$0$} & $0.393$ & 
\multicolumn{1}{|c}{$95$} & \multicolumn{1}{c}{$8.5\cdot 10^{-4}$} & \multicolumn{1}{c|}{$14.47$} &
\multicolumn{1}{|c}{$49$} & \multicolumn{1}{c}{$3.2\cdot 10^{-4}$} & \multicolumn{1}{c|}{$9.02$} &
\multicolumn{1}{|c}{$-$} & \multicolumn{1}{c}{$1.3\cdot 10^{-5}$} & \multicolumn{1}{c}{$10.31$} \\
\hline
\multicolumn{1}{c}{$700$} & \multicolumn{1}{c}{$1000$} & \multicolumn{1}{c}{$50$} & \multicolumn{1}{c|}{$10^{-3}$} & $0.393$ & 
\multicolumn{1}{|c}{$95$} & \multicolumn{1}{c}{$1.6\cdot 10^{-3}$} & \multicolumn{1}{c|}{$15.15$} &
\multicolumn{1}{|c}{$49$} & \multicolumn{1}{c}{$1.4\cdot 10^{-3}$} & \multicolumn{1}{c|}{$9.11$} &
\multicolumn{1}{|c}{$-$} & \multicolumn{1}{c}{$8\cdot 10^{-4}$} & \multicolumn{1}{c}{$10.34$} \\
\hline
\multicolumn{1}{c}{$700$} & \multicolumn{1}{c}{$1000$} & \multicolumn{1}{c}{$110$} & \multicolumn{1}{c|}{$0$} & $0.833$ & 
\multicolumn{1}{|c}{$683$} & \multicolumn{1}{c}{$1.2\cdot 10^{-2}$} & \multicolumn{1}{c|}{$253.1$} &
\multicolumn{1}{|c}{$374$} & \multicolumn{1}{c}{$5.8\cdot 10^{-3}$} & \multicolumn{1}{c|}{$152.61$} &
\multicolumn{1}{|c}{$-$} & \multicolumn{1}{c}{$1.2 \cdot 10^{-1}$} & \multicolumn{1}{c}{$110.93$} \\
\hline
\multicolumn{1}{c}{$700$} & \multicolumn{1}{c}{$1000$} & \multicolumn{1}{c}{$110$} & \multicolumn{1}{c|}{$10^{-3}$} & $0.833$ & 
\multicolumn{1}{|c}{$682$} & \multicolumn{1}{c}{$1.3\cdot 10^{-2}$} & \multicolumn{1}{c|}{$256.21$} &
\multicolumn{1}{|c}{$374$} & \multicolumn{1}{c}{$6.8\cdot 10^{-3}$} & \multicolumn{1}{c|}{$154.34$} &
\multicolumn{1}{|c}{$-$} & \multicolumn{1}{c}{$1.05 \cdot 10^{-1}$} & \multicolumn{1}{c}{$111.05$} \\
\hline  \hline \hline

\multicolumn{4}{c|}{} & & \multicolumn{3}{|c|}{LMaFit} & \multicolumn{3}{|c}{\textsc{Matrix ALPS II}} & \multicolumn{3}{|c}{\textsc{Matrix ALPS II} with QR}  \\
\hline \hline
\multicolumn{1}{c}{$m$}  & \multicolumn{1}{c}{$n$} & \multicolumn{1}{c}{$\rank$} & \multicolumn{1}{c|}{$\vectornormbig{\noise}_2$} & & 
\multicolumn{1}{|c}{\rm{iter.}} & \multicolumn{1}{c}{\rm{err.}} & \multicolumn{1}{c|}{\rm{time}} & 
\multicolumn{1}{|c}{\rm{iter.}} & \multicolumn{1}{c}{\rm{err.}} & \multicolumn{1}{c|}{\rm{time}} &
\multicolumn{1}{|c}{\rm{iter.}} & \multicolumn{1}{c}{\rm{err.}} & \multicolumn{1}{c}{\rm{time}}  \\
\hline
\multicolumn{1}{c}{$700$} & \multicolumn{1}{c}{$1000$} & \multicolumn{1}{c}{$5$}  & \multicolumn{1}{c|}{$0$} & $ 0.04 $ & 
\multicolumn{1}{|c}{$24$} & \multicolumn{1}{c}{$7.2 \cdot 10^{-6}$} & \multicolumn{1}{c|}{$0.67$} &
\multicolumn{1}{|c}{$8$} & \multicolumn{1}{c}{$1.5 \cdot 10^{-5}$} & \multicolumn{1}{c}{$1.15$} &
\multicolumn{1}{|c}{$15$} & \multicolumn{1}{c}{$8.3 \cdot 10^{-5}$} & \multicolumn{1}{c}{$1.05$} \\
\hline
\multicolumn{1}{c}{$700$} & \multicolumn{1}{c}{$1000$} & \multicolumn{1}{c}{$5$}  & \multicolumn{1}{c|}{$10^{-3}$} & $ 0.04 $ & 
\multicolumn{1}{|c}{$17$} & \multicolumn{1}{c}{$3.7 \cdot 10^{-4}$} & \multicolumn{1}{c|}{$0.5$} &
\multicolumn{1}{|c}{$10$} & \multicolumn{1}{c}{$4.5 \cdot 10^{-4}$} & \multicolumn{1}{c}{$1.38$} &
\multicolumn{1}{|c}{$15$} & \multicolumn{1}{c}{$3.8 \cdot 10^{-4}$} & \multicolumn{1}{c}{$1.1$} \\
\hline
\multicolumn{1}{c}{$700$} & \multicolumn{1}{c}{$1000$} & \multicolumn{1}{c}{$30$}  & \multicolumn{1}{c|}{$0$} & $ 0.239 $ & 
\multicolumn{1}{|c}{$34$} & \multicolumn{1}{c}{$9.2 \cdot 10^{-6}$} & \multicolumn{1}{c|}{$\mathbf{1.95}$} &
\multicolumn{1}{|c}{$14$} & \multicolumn{1}{c}{$4.5 \cdot 10^{-5}$} & \multicolumn{1}{c}{$3.69$} &
\multicolumn{1}{|c}{$35$} & \multicolumn{1}{c}{$1.1 \cdot 10^{-4}$} & \multicolumn{1}{c}{$2.6$} \\
\hline
\multicolumn{1}{c}{$700$} & \multicolumn{1}{c}{$1000$} & \multicolumn{1}{c}{$30$}  & \multicolumn{1}{c|}{$10^{-3}$} & $ 0.239 $ & 
\multicolumn{1}{|c}{$30$} & \multicolumn{1}{c}{$1 \cdot 10^{-3}$} & \multicolumn{1}{c|}{$\mathbf{1.71}$} &
\multicolumn{1}{|c}{$25$} & \multicolumn{1}{c}{$1.1 \cdot 10^{-3}$} & \multicolumn{1}{c}{$6.1$} &
\multicolumn{1}{|c}{$35$} & \multicolumn{1}{c}{$1 \cdot 10^{-3}$} & \multicolumn{1}{c}{$2.61$} \\
\hline
\multicolumn{1}{c}{$700$} & \multicolumn{1}{c}{$1000$} & \multicolumn{1}{c}{$50$}  & \multicolumn{1}{c|}{$0$} & $ 0.393 $ & 
\multicolumn{1}{|c}{$53$} & \multicolumn{1}{c}{$2.7 \cdot 10^{-5}$} & \multicolumn{1}{c|}{$4.59$} &
\multicolumn{1}{|c}{$25$} & \multicolumn{1}{c}{$8.6 \cdot 10^{-5}$} & \multicolumn{1}{c}{$8.87$} &
\multicolumn{1}{|c}{$57$} & \multicolumn{1}{c}{$1.6 \cdot 10^{-5}$} & \multicolumn{1}{c}{$\mathbf{4.47}$} \\
\hline
\multicolumn{1}{c}{$700$} & \multicolumn{1}{c}{$1000$} & \multicolumn{1}{c}{$50$}  & \multicolumn{1}{c|}{$10^{-3}$} & $ 0.393 $ & 
\multicolumn{1}{|c}{$52$} & \multicolumn{1}{c}{$1.4 \cdot 10^{-3}$} & \multicolumn{1}{c|}{$4.53$} &
\multicolumn{1}{|c}{$40$} & \multicolumn{1}{c}{$1.6 \cdot 10^{-3}$} & \multicolumn{1}{c}{$14.38$} &
\multicolumn{1}{|c}{$57$} & \multicolumn{1}{c}{$1.4 \cdot 10^{-3}$} & \multicolumn{1}{c}{$\mathbf{4.49}$} \\
\hline
\multicolumn{1}{c}{$700$} & \multicolumn{1}{c}{$1000$} & \multicolumn{1}{c}{$110$}  & \multicolumn{1}{c|}{$0$} & $ 0.833 $ & 
\multicolumn{1}{|c}{$584$} & \multicolumn{1}{c}{$9 \cdot 10^{-4}$} & \multicolumn{1}{c|}{$101.95$} &
\multicolumn{1}{|c}{$280$} & \multicolumn{1}{c}{$8 \cdot 10^{-4}$} & \multicolumn{1}{c}{$214.93$} &
\multicolumn{1}{|c}{$553$} & \multicolumn{1}{c}{$7 \cdot 10^{-4}$} & \multicolumn{1}{c}{$\mathbf{51.72}$} \\
\hline
\multicolumn{1}{c}{$700$} & \multicolumn{1}{c}{$1000$} & \multicolumn{1}{c}{$110$}  & \multicolumn{1}{c|}{$10^{-3}$} & $ 0.833 $ & 
\multicolumn{1}{|c}{$584$} & \multicolumn{1}{c}{$3.7 \cdot 10^{-3}$} & \multicolumn{1}{c|}{$102.15$} &
\multicolumn{1}{|c}{$336$} & \multicolumn{1}{c}{$4.7 \cdot 10^{-3}$} & \multicolumn{1}{c}{$261.98$} &
\multicolumn{1}{|c}{$551$} & \multicolumn{1}{c}{$3.7 \cdot 10^{-3}$} & \multicolumn{1}{c}{$\mathbf{51.62}$} \\
\hline
\end{tabular}
\end{center}
\end{table*}

\begin{table*} [!htp]
\caption{Matrix Completion problem for $m = 500$ and $n = 2000$. ``$-$'' depicts no information or not applicable due to time overhead.} {\label{table:4}}
\begin{center}
\begin{tabular}{|c|c|c|c|c|c|c|c|c|c|c|c|c|c}
\multicolumn{4}{c|}{Configuration} & FR & \multicolumn{3}{|c|}{SVP} & \multicolumn{3}{|c|}{Inexact ALM} & \multicolumn{3}{|c}{GROUSE} \\
\hline \hline
\multicolumn{1}{c}{$m$}  & \multicolumn{1}{c}{$n$} & \multicolumn{1}{c}{$\rank$} & \multicolumn{1}{c|}{$\vectornormbig{\noise}_2$} & & 
\multicolumn{1}{|c}{\rm{iter.}} & \multicolumn{1}{c}{\rm{err.}} & \multicolumn{1}{c|}{\rm{time}} &
\multicolumn{1}{|c}{\rm{iter.}} & \multicolumn{1}{c}{\rm{err.}} & \multicolumn{1}{c|}{\rm{time}} &
\multicolumn{1}{|c}{\rm{iter.}} & \multicolumn{1}{c}{\rm{err.}} & \multicolumn{1}{c}{\rm{time}} \\
\hline\hline
\multicolumn{1}{c}{$500$} & \multicolumn{1}{c}{$2000$} & \multicolumn{1}{c}{$30$} & \multicolumn{1}{c|}{$0$} & $ 0.083 $ & 
\multicolumn{1}{|c}{$64$} & \multicolumn{1}{c}{$5.3\cdot 10^{-4}$} & \multicolumn{1}{c|}{$10.18$} &
\multicolumn{1}{|c}{$32$} & \multicolumn{1}{c}{$1.9\cdot 10^{-4}$} & \multicolumn{1}{c|}{$6.47$} &
\multicolumn{1}{|c}{$-$} & \multicolumn{1}{c}{$1.6\cdot 10^{-4}$} & \multicolumn{1}{c}{$\mathbf{2.46}$} \\
\hline
\multicolumn{1}{c}{$500$} & \multicolumn{1}{c}{$2000$} & \multicolumn{1}{c}{$30$} & \multicolumn{1}{c|}{$10^{-3}$} & $ 0.083 $ & 
\multicolumn{1}{|c}{$64$} & \multicolumn{1}{c}{$1.1\cdot 10^{-3}$} & \multicolumn{1}{c|}{$6.69$} &
\multicolumn{1}{|c}{$32$} & \multicolumn{1}{c}{$1\cdot 10^{-3}$} & \multicolumn{1}{c|}{$4.51$} &
\multicolumn{1}{|c}{$-$} & \multicolumn{1}{c}{$6\cdot 10^{-4}$} & \multicolumn{1}{c}{$\mathbf{1.94}$} \\
\hline
\multicolumn{1}{c}{$500$} & \multicolumn{1}{c}{$2000$} & \multicolumn{1}{c}{$30$} & \multicolumn{1}{c|}{$10^{-4}$} & $ 0.083 $ & 
\multicolumn{1}{|c}{$64$} & \multicolumn{1}{c}{$5.4\cdot 10^{-4}$} & \multicolumn{1}{c|}{$10.14$} &
\multicolumn{1}{|c}{$32$} & \multicolumn{1}{c}{$2.2\cdot 10^{-4}$} & \multicolumn{1}{c|}{$6.51$} &
\multicolumn{1}{|c}{$-$} & \multicolumn{1}{c}{$1.6\cdot 10^{-4}$} & \multicolumn{1}{c}{$\mathbf{2.46}$} \\
\hline
\multicolumn{1}{c}{$500$} & \multicolumn{1}{c}{$2000$} & \multicolumn{1}{c}{$50$} & \multicolumn{1}{c|}{$0$} & $ 0.408 $ & 
\multicolumn{1}{|c}{$103$} & \multicolumn{1}{c}{$1.1\cdot 10^{-4}$} & \multicolumn{1}{c|}{$15.74$} &
\multicolumn{1}{|c}{$54$} & \multicolumn{1}{c}{$5\cdot 10^{-4}$} & \multicolumn{1}{c|}{$10.8$} &
\multicolumn{1}{|c}{$-$} & \multicolumn{1}{c}{$8\cdot 10^{-5}$} & \multicolumn{1}{c}{$7.32$} \\
\hline
\multicolumn{1}{c}{$500$} & \multicolumn{1}{c}{$2000$} & \multicolumn{1}{c}{$50$} & \multicolumn{1}{c|}{$10^{-3}$} & $ 0.408 $ & 
\multicolumn{1}{|c}{$103$} & \multicolumn{1}{c}{$1.8\cdot 10^{-3}$} & \multicolumn{1}{c|}{$24.97$} &
\multicolumn{1}{|c}{$54$} & \multicolumn{1}{c}{$1.55\cdot 10^{-3}$} & \multicolumn{1}{c|}{$16.14$} &
\multicolumn{1}{|c}{$-$} & \multicolumn{1}{c}{$9\cdot 10^{-4}$} & \multicolumn{1}{c}{$8.6$} \\
\hline
\multicolumn{1}{c}{$500$} & \multicolumn{1}{c}{$2000$} & \multicolumn{1}{c}{$50$} & \multicolumn{1}{c|}{$10^{-4}$} & $ 0.408 $ & 
\multicolumn{1}{|c}{$102$} & \multicolumn{1}{c}{$1.1\cdot 10^{-3}$} & \multicolumn{1}{c|}{$24.85$} &
\multicolumn{1}{|c}{$54$} & \multicolumn{1}{c}{$5\cdot 10^{-4}$} & \multicolumn{1}{c|}{$16.17$} &
\multicolumn{1}{|c}{$-$} & \multicolumn{1}{c}{$7\cdot 10^{-5}$} & \multicolumn{1}{c}{$8.59$} \\
\hline
\multicolumn{1}{c}{$500$} & \multicolumn{1}{c}{$2000$} & \multicolumn{1}{c}{$80$} & \multicolumn{1}{c|}{$0$} & $ 0.645 $ & 
\multicolumn{1}{|c}{$239$} & \multicolumn{1}{c}{$3.5\cdot 10^{-3}$} & \multicolumn{1}{c|}{$92.91$} &
\multicolumn{1}{|c}{$134$} & \multicolumn{1}{c}{$1.7\cdot 10^{-3}$} & \multicolumn{1}{c|}{$59.33$} &
\multicolumn{1}{|c}{$-$} & \multicolumn{1}{c}{$1\cdot 10^{-4}$} & \multicolumn{1}{c}{$79.64$} \\
\hline
\multicolumn{1}{c}{$500$} & \multicolumn{1}{c}{$2000$} & \multicolumn{1}{c}{$80$} & \multicolumn{1}{c|}{$10^{-3}$} & $ 0.645 $ & 
\multicolumn{1}{|c}{$239$} & \multicolumn{1}{c}{$4.2\cdot 10^{-3}$} & \multicolumn{1}{c|}{$94.86$} &
\multicolumn{1}{|c}{$134$} & \multicolumn{1}{c}{$2.8\cdot 10^{-3}$} & \multicolumn{1}{c|}{$60.68$} &
\multicolumn{1}{|c}{$-$} & \multicolumn{1}{c}{$1\cdot 10^{-4}$} & \multicolumn{1}{c}{$79.98$} \\
\hline
\multicolumn{1}{c}{$500$} & \multicolumn{1}{c}{$2000$} & \multicolumn{1}{c}{$80$} & \multicolumn{1}{c|}{$10^{-4}$} & $ 0.645 $ & 
\multicolumn{1}{|c}{$239$} & \multicolumn{1}{c}{$3.6\cdot 10^{-3}$} & \multicolumn{1}{c|}{$93.95$} &
\multicolumn{1}{|c}{$134$} & \multicolumn{1}{c}{$1.8\cdot 10^{-3}$} & \multicolumn{1}{c|}{$60.76$} &
\multicolumn{1}{|c}{$-$} & \multicolumn{1}{c}{$1\cdot 10^{-4}$} & \multicolumn{1}{c}{$79.48$} \\
\hline
\multicolumn{1}{c}{$500$} & \multicolumn{1}{c}{$2000$} & \multicolumn{1}{c}{$100$} & \multicolumn{1}{c|}{$0$} & $ 0.8 $ & 
\multicolumn{1}{|c}{$523$} & \multicolumn{1}{c}{$1.1\cdot 10^{-2}$} & \multicolumn{1}{c|}{$259.13$} &
\multicolumn{1}{|c}{$307$} & \multicolumn{1}{c}{$6\cdot 10^{-3}$} & \multicolumn{1}{c|}{$173.14$} &
\multicolumn{1}{|c}{$-$} & \multicolumn{1}{c}{$4.5\cdot 10^{-2}$} & \multicolumn{1}{c}{$143.41$} \\
\hline
\multicolumn{1}{c}{$500$} & \multicolumn{1}{c}{$2000$} & \multicolumn{1}{c}{$100$} & \multicolumn{1}{c|}{$10^{-3}$} & $ 0.8 $ & 
\multicolumn{1}{|c}{$525$} & \multicolumn{1}{c}{$1.2\cdot 10^{-2}$} & \multicolumn{1}{c|}{$262.19$} &
\multicolumn{1}{|c}{$308$} & \multicolumn{1}{c}{$7\cdot 10^{-3}$} & \multicolumn{1}{c|}{$176.04$} &
\multicolumn{1}{|c}{$-$} & \multicolumn{1}{c}{$5.2\cdot 10^{-2}$} & \multicolumn{1}{c}{$142.85$} \\
\hline
\multicolumn{1}{c}{$500$} & \multicolumn{1}{c}{$2000$} & \multicolumn{1}{c}{$100$} & \multicolumn{1}{c|}{$10^{-4}$} & $ 0.8 $ & 
\multicolumn{1}{|c}{$523$} & \multicolumn{1}{c}{$1.1\cdot 10^{-2}$} & \multicolumn{1}{c|}{$262.11$} &
\multicolumn{1}{|c}{$307$} & \multicolumn{1}{c}{$6\cdot 10^{-3}$} & \multicolumn{1}{c|}{$170.47$} &
\multicolumn{1}{|c}{$-$} & \multicolumn{1}{c}{$5.1\cdot 10^{-2}$} & \multicolumn{1}{c}{$144.78$} \\
\hline \hline \hline

\multicolumn{4}{c|}{} & & \multicolumn{3}{|c|}{LMaFit} & \multicolumn{3}{|c}{\textsc{Matrix ALPS II}} & \multicolumn{3}{|c}{\textsc{Matrix ALPS II} with QR}  \\
\hline \hline
\multicolumn{1}{c}{$m$}  & \multicolumn{1}{c}{$n$} & \multicolumn{1}{c}{$\rank$} & \multicolumn{1}{c|}{$\vectornormbig{\noise}_2$} & & 
\multicolumn{1}{|c}{\rm{iter.}} & \multicolumn{1}{c}{\rm{err.}} & \multicolumn{1}{c|}{\rm{time}} &
\multicolumn{1}{|c}{\rm{iter.}} & \multicolumn{1}{c}{\rm{err.}} & \multicolumn{1}{c|}{\rm{time}} &
\multicolumn{1}{|c}{\rm{iter.}} & \multicolumn{1}{c}{\rm{err.}} & \multicolumn{1}{c}{\rm{time}}  \\
\hline\hline
\multicolumn{1}{c}{$500$} & \multicolumn{1}{c}{$2000$} & \multicolumn{1}{c}{$30$}  & \multicolumn{1}{c|}{$0$} & $ 0.083 $ & 
\multicolumn{1}{|c}{$37$} & \multicolumn{1}{c}{$1.3 \cdot 10^{-5}$} & \multicolumn{1}{c|}{$3.05$} &
\multicolumn{1}{|c}{$13$} & \multicolumn{1}{c}{$3.1 \cdot 10^{-5}$} & \multicolumn{1}{c}{$4.84$} &
\multicolumn{1}{|c}{$37$} & \multicolumn{1}{c}{$1.2 \cdot 10^{-5}$} & \multicolumn{1}{c}{$4.04$} \\
\hline
\multicolumn{1}{c}{$500$} & \multicolumn{1}{c}{$2000$} & \multicolumn{1}{c}{$30$}  & \multicolumn{1}{c|}{$10^{-3}$} & $ 0.083 $ & 
\multicolumn{1}{|c}{$37$} & \multicolumn{1}{c}{$1 \cdot 10^{-3}$} & \multicolumn{1}{c|}{$2.52$} &
\multicolumn{1}{|c}{$22$} & \multicolumn{1}{c}{$1.1 \cdot 10^{-3}$} & \multicolumn{1}{c}{$5.35$} &
\multicolumn{1}{|c}{$37$} & \multicolumn{1}{c}{$1 \cdot 10^{-3}$} & \multicolumn{1}{c}{$3.32$} \\
\hline
\multicolumn{1}{c}{$500$} & \multicolumn{1}{c}{$2000$} & \multicolumn{1}{c}{$30$}  & \multicolumn{1}{c|}{$10^{-4}$} & $ 0.083 $ & 
\multicolumn{1}{|c}{$35$} & \multicolumn{1}{c}{$1 \cdot 10^{-4}$} & \multicolumn{1}{c|}{$2.86$} &
\multicolumn{1}{|c}{$13$} & \multicolumn{1}{c}{$1.3 \cdot 10^{-4}$} & \multicolumn{1}{c}{$4.85$} &
\multicolumn{1}{|c}{$37$} & \multicolumn{1}{c}{$1.6 \cdot 10^{-4}$} & \multicolumn{1}{c}{$4.05$} \\
\hline
\multicolumn{1}{c}{$500$} & \multicolumn{1}{c}{$2000$} & \multicolumn{1}{c}{$50$}  & \multicolumn{1}{c|}{$0$} & $0.408 $ & 
\multicolumn{1}{|c}{$60$} & \multicolumn{1}{c}{$6 \cdot 10^{-5}$} & \multicolumn{1}{c|}{$6.06$} &
\multicolumn{1}{|c}{$22$} & \multicolumn{1}{c}{$1 \cdot 10^{-4}$} & \multicolumn{1}{c}{$7.6$} &
\multicolumn{1}{|c}{$60$} & \multicolumn{1}{c}{$2 \cdot 10^{-4}$} & \multicolumn{1}{c}{$\mathbf{5.67}$} \\
\hline
\multicolumn{1}{c}{$500$} & \multicolumn{1}{c}{$2000$} & \multicolumn{1}{c}{$50$}  & \multicolumn{1}{c|}{$10^{-3}$} & $ 0.408 $ & 
\multicolumn{1}{|c}{$60$} & \multicolumn{1}{c}{$1.4 \cdot 10^{-3}$} & \multicolumn{1}{c|}{$7.26$} &
\multicolumn{1}{|c}{$36$} & \multicolumn{1}{c}{$1.6 \cdot 10^{-3}$} & \multicolumn{1}{c}{$19.64$} &
\multicolumn{1}{|c}{$59$} & \multicolumn{1}{c}{$1.6 \cdot 10^{-3}$} & \multicolumn{1}{c}{$\mathbf{6.91}$} \\
\hline
\multicolumn{1}{c}{$500$} & \multicolumn{1}{c}{$2000$} & \multicolumn{1}{c}{$50$}  & \multicolumn{1}{c|}{$10^{-4}$} & $ 0.408 $ & 
\multicolumn{1}{|c}{$60$} & \multicolumn{1}{c}{$2 \cdot 10^{-4}$} & \multicolumn{1}{c|}{$7.29$} &
\multicolumn{1}{|c}{$22$} & \multicolumn{1}{c}{$2 \cdot 10^{-4}$} & \multicolumn{1}{c}{$11.87$} &
\multicolumn{1}{|c}{$59$} & \multicolumn{1}{c}{$2 \cdot 10^{-4}$} & \multicolumn{1}{c}{$\mathbf{6.75}$} \\
\hline
\multicolumn{1}{c}{$500$} & \multicolumn{1}{c}{$2000$} & \multicolumn{1}{c}{$80$}  & \multicolumn{1}{c|}{$0$} & $ 0.645 $ & 
\multicolumn{1}{|c}{$183$} & \multicolumn{1}{c}{$3 \cdot 10^{-4}$} & \multicolumn{1}{c|}{$33.65$} &
\multicolumn{1}{|c}{$61$} & \multicolumn{1}{c}{$2 \cdot 10^{-4}$} & \multicolumn{1}{c}{$49.53$} &
\multicolumn{1}{|c}{$151$} & \multicolumn{1}{c}{$3 \cdot 10^{-4}$} & \multicolumn{1}{c}{$\mathbf{18.66}$} \\
\hline
\multicolumn{1}{c}{$500$} & \multicolumn{1}{c}{$2000$} & \multicolumn{1}{c}{$80$}  & \multicolumn{1}{c|}{$10^{-3}$} & $ 0.645 $ & 
\multicolumn{1}{|c}{$183$} & \multicolumn{1}{c}{$2.3 \cdot 10^{-3}$} & \multicolumn{1}{c|}{$33.48$} &
\multicolumn{1}{|c}{$92$} & \multicolumn{1}{c}{$2.4 \cdot 10^{-3}$} & \multicolumn{1}{c}{$75.51$} &
\multicolumn{1}{|c}{$151$} & \multicolumn{1}{c}{$2.3 \cdot 10^{-3}$} & \multicolumn{1}{c}{$\mathbf{18.87}$} \\
\hline
\multicolumn{1}{c}{$500$} & \multicolumn{1}{c}{$2000$} & \multicolumn{1}{c}{$80$}  & \multicolumn{1}{c|}{$10^{-4}$} & $ 0.645 $ & 
\multicolumn{1}{|c}{$183$} & \multicolumn{1}{c}{$3 \cdot 10^{-4}$} & \multicolumn{1}{c|}{$33.47$} &
\multicolumn{1}{|c}{$61$} & \multicolumn{1}{c}{$4 \cdot 10^{-4}$} & \multicolumn{1}{c}{$49.52$} &
\multicolumn{1}{|c}{$151$} & \multicolumn{1}{c}{$3 \cdot 10^{-4}$} & \multicolumn{1}{c}{$\mathbf{18.92}$} \\
\hline
\multicolumn{1}{c}{$500$} & \multicolumn{1}{c}{$2000$} & \multicolumn{1}{c}{$100$}  & \multicolumn{1}{c|}{$0$} & $ 0.8 $ & 
\multicolumn{1}{|c}{$519$} & \multicolumn{1}{c}{$1.5 \cdot 10^{-3}$} & \multicolumn{1}{c|}{$115.11$} &
\multicolumn{1}{|c}{$148$} & \multicolumn{1}{c}{$4 \cdot 10^{-4}$} & \multicolumn{1}{c}{$153.74$} &
\multicolumn{1}{|c}{$429$} & \multicolumn{1}{c}{$7 \cdot 10^{-4}$} & \multicolumn{1}{c}{$\mathbf{55.1}$} \\
\hline
\multicolumn{1}{c}{$500$} & \multicolumn{1}{c}{$2000$} & \multicolumn{1}{c}{$100$}  & \multicolumn{1}{c|}{$10^{-3}$} & $ 0.8 $ & 
\multicolumn{1}{|c}{$529$} & \multicolumn{1}{c}{$3.6 \cdot 10^{-3}$} & \multicolumn{1}{c|}{$117.7$} &
\multicolumn{1}{|c}{$228$} & \multicolumn{1}{c}{$3.7 \cdot 10^{-3}$} & \multicolumn{1}{c}{$239.92$} &
\multicolumn{1}{|c}{$427$} & \multicolumn{1}{c}{$3.4 \cdot 10^{-3}$} & \multicolumn{1}{c}{$\mathbf{55.7}$} \\
\hline
\multicolumn{1}{c}{$500$} & \multicolumn{1}{c}{$2000$} & \multicolumn{1}{c}{$100$}  & \multicolumn{1}{c|}{$10^{-3}$} & $ 0.8 $ & 
\multicolumn{1}{|c}{$520$} & \multicolumn{1}{c}{$1.6 \cdot 10^{-3}$} & \multicolumn{1}{c|}{$116.66$} &
\multicolumn{1}{|c}{$148$} & \multicolumn{1}{c}{$6 \cdot 10^{-4}$} & \multicolumn{1}{c}{$154.46$} &
\multicolumn{1}{|c}{$428$} & \multicolumn{1}{c}{$8 \cdot 10^{-4}$} & \multicolumn{1}{c}{$\mathbf{55.07}$} \\
\hline
\end{tabular}
\end{center}
\end{table*}

\begin{figure*}[!htp]
\centering
\begin{tabular}{ccc}
\centerline{\subfigure[]{\includegraphics[width = 0.33\textwidth]{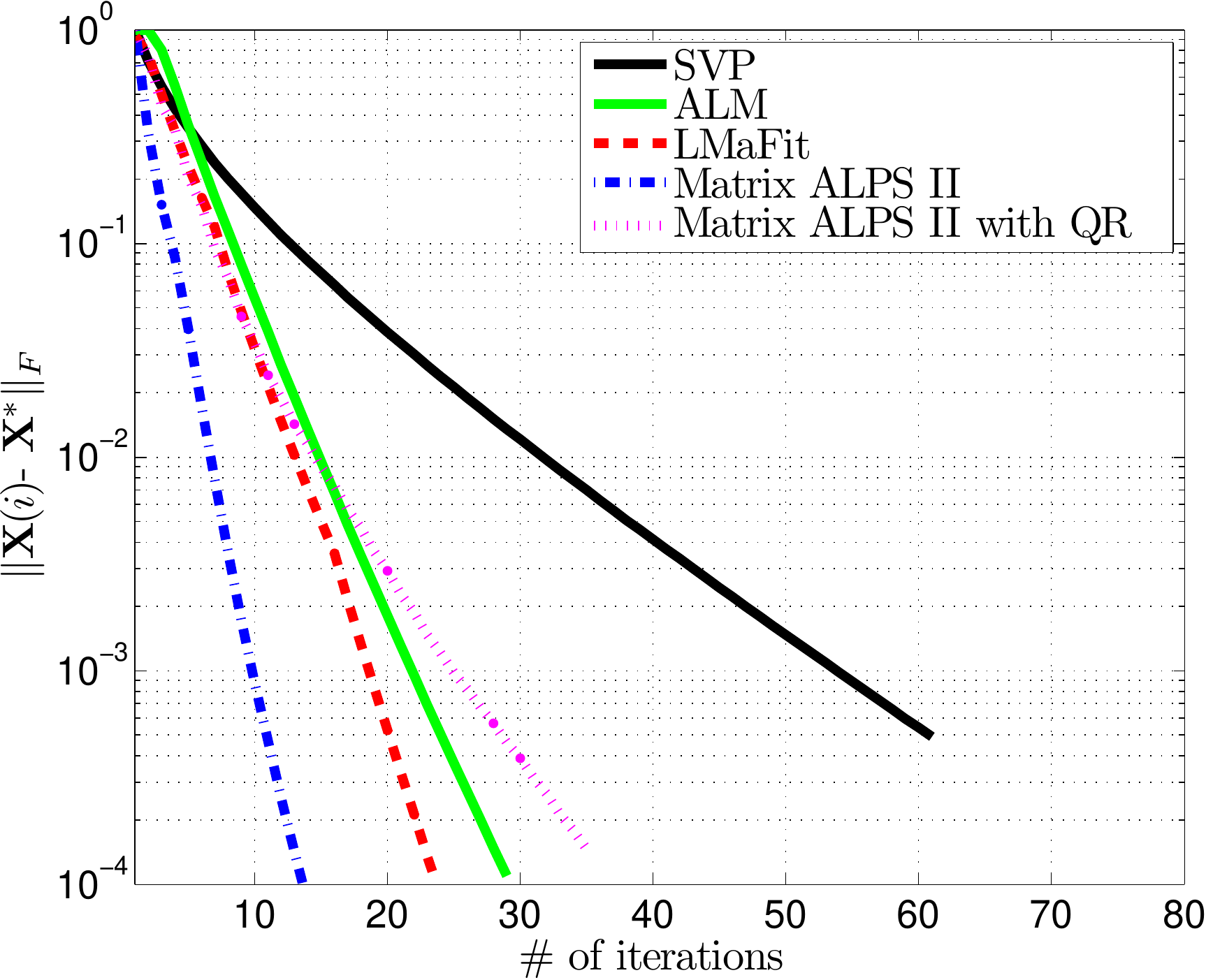}\label{fig:2a}} 
\hfill
\subfigure[]{\includegraphics[width = 0.33\textwidth]{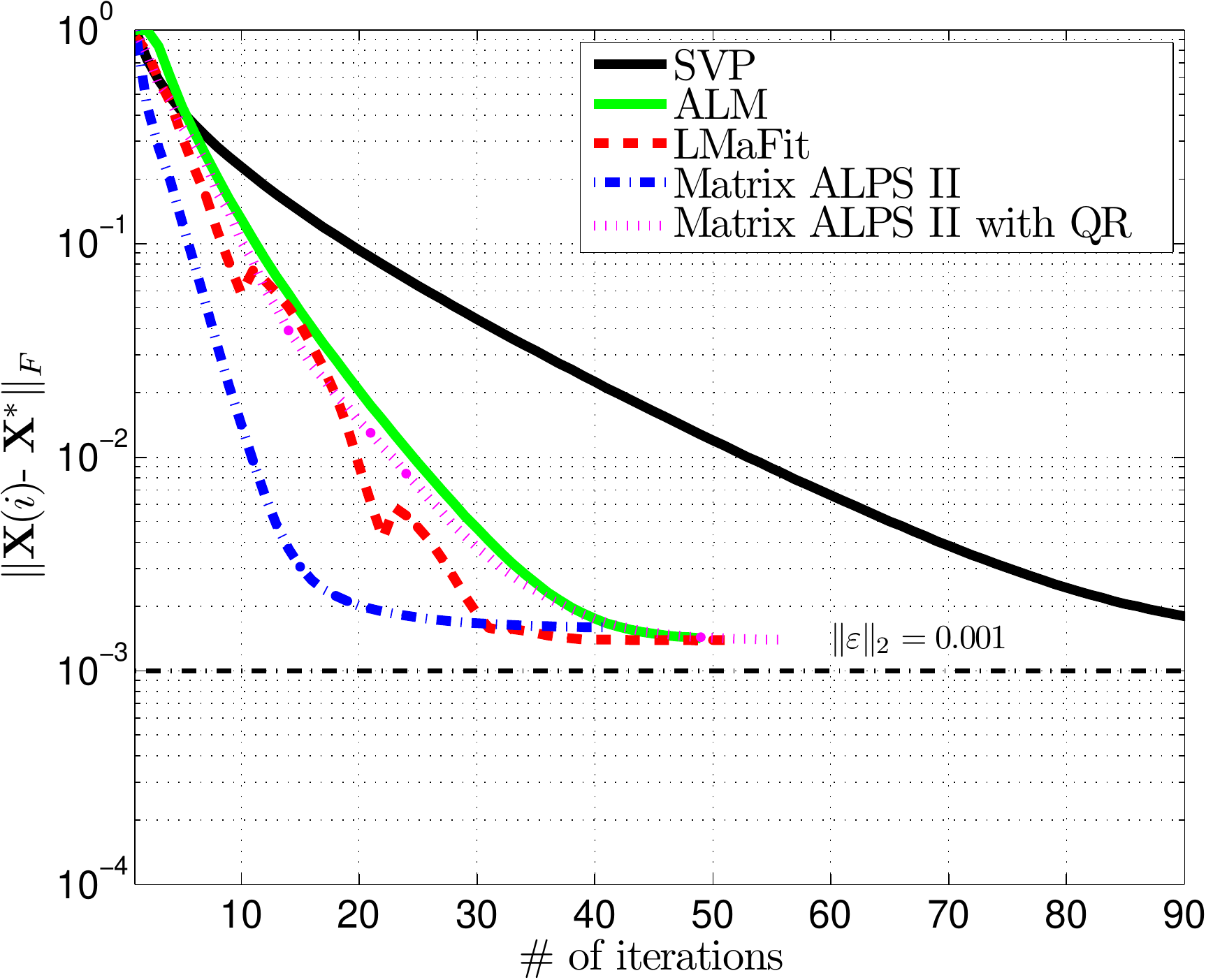}\label{fig:2b}}
\hfill 
\subfigure[]{\includegraphics[width = 0.33\textwidth]{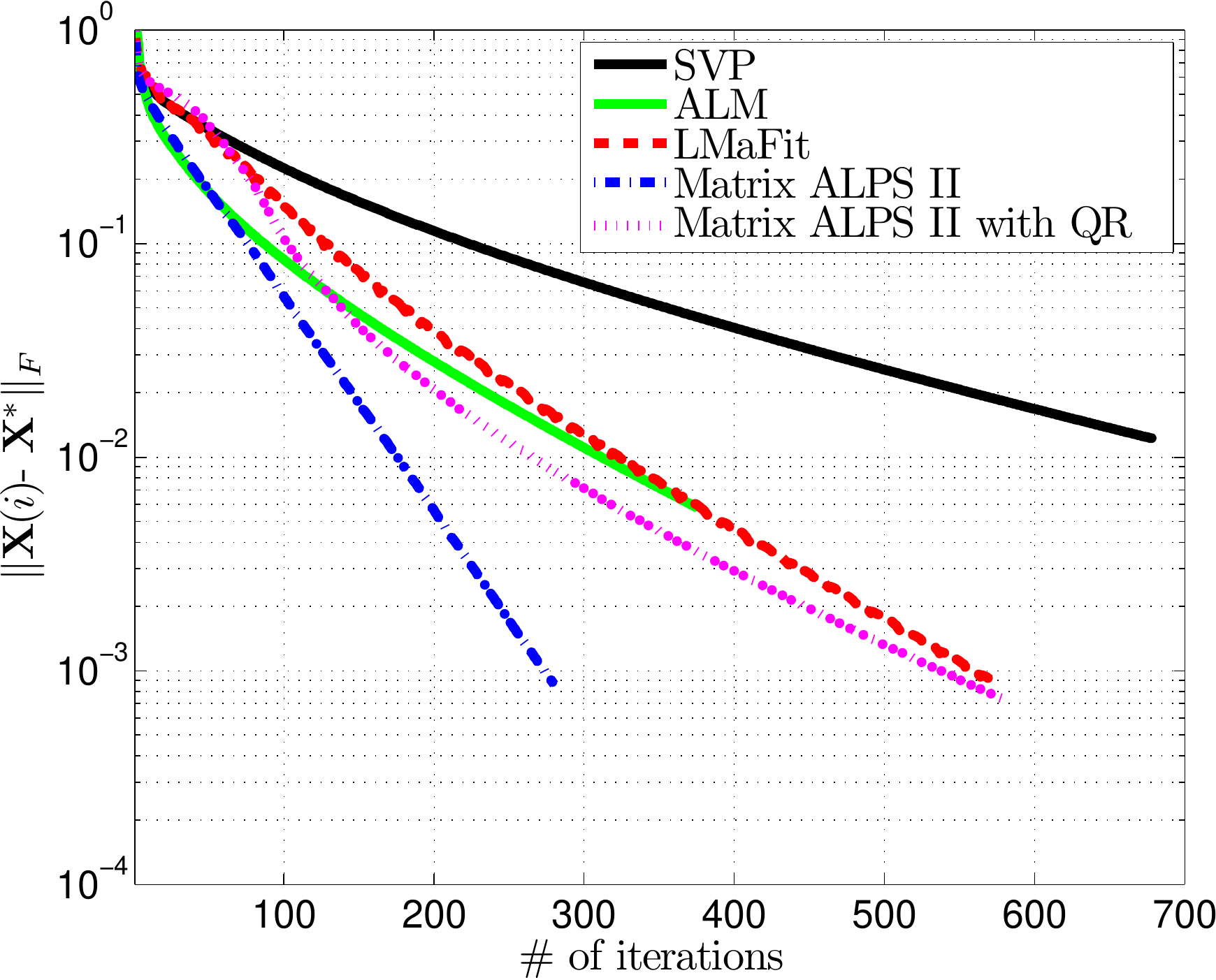}\label{fig:2c}}} \\
\centerline{\subfigure[]{\includegraphics[width = 0.33\textwidth]{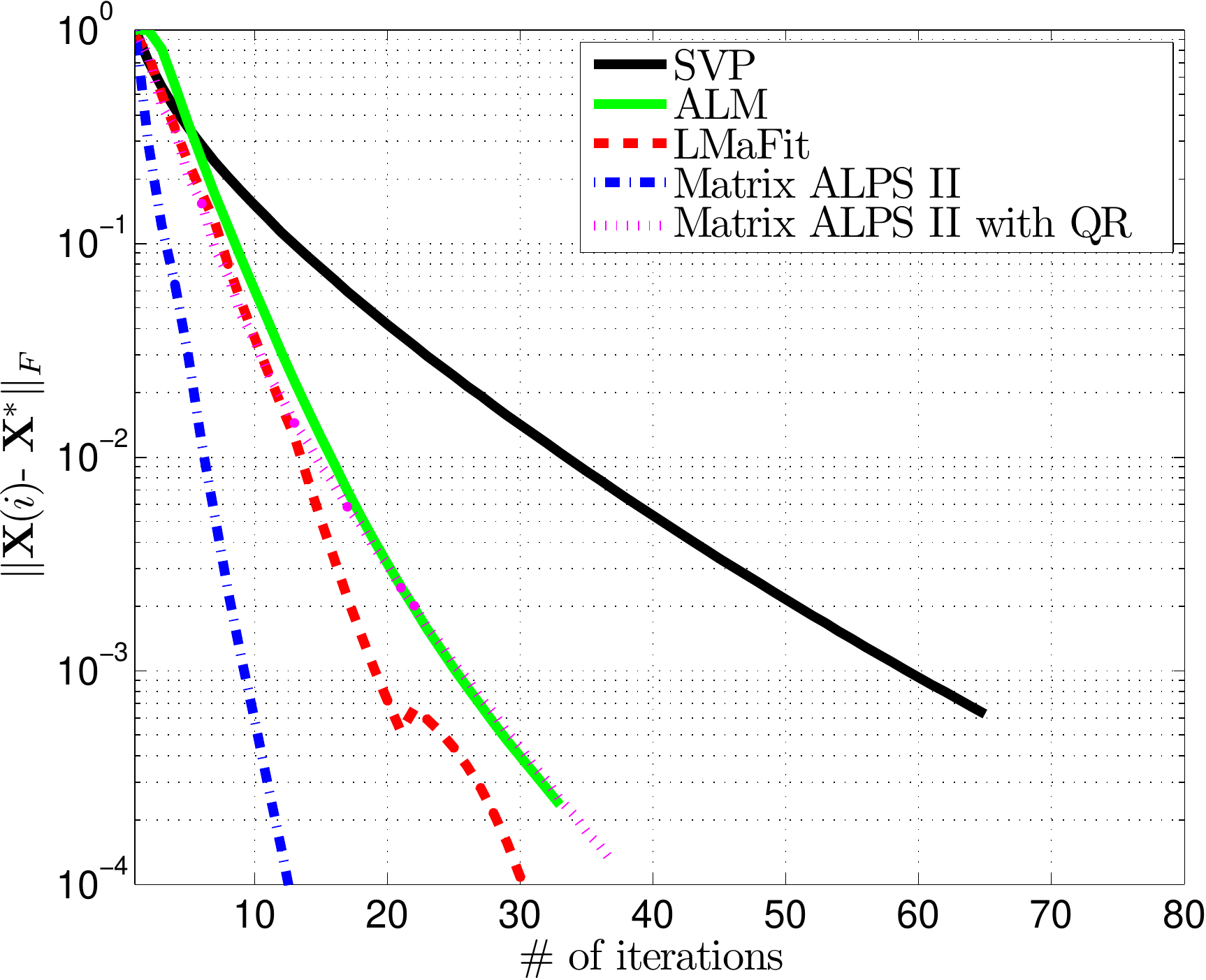}\label{fig:2a}} 
\hfill
\subfigure[]{\includegraphics[width = 0.33\textwidth]{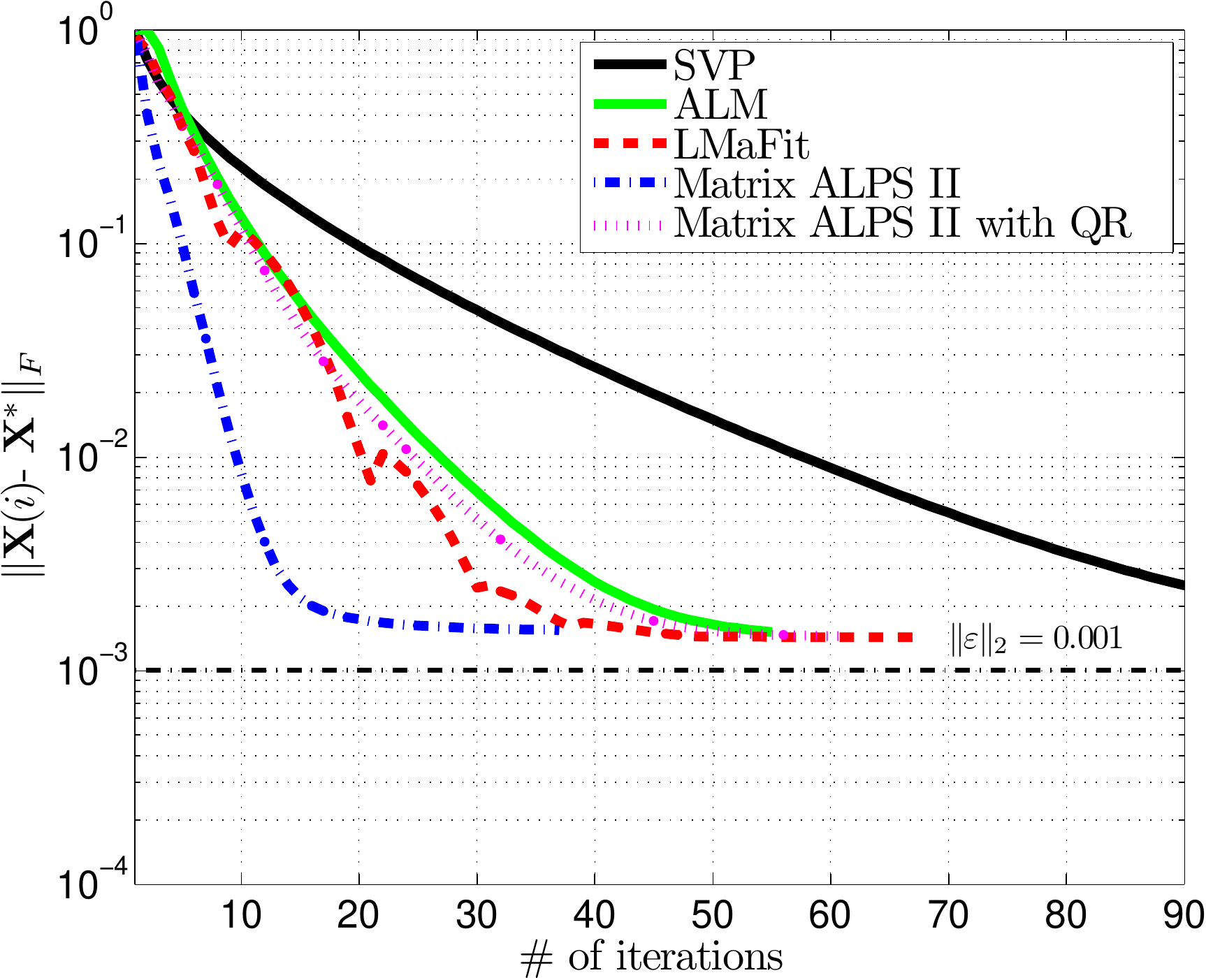}\label{fig:2b}}
\hfill 
\subfigure[]{\includegraphics[width = 0.33\textwidth]{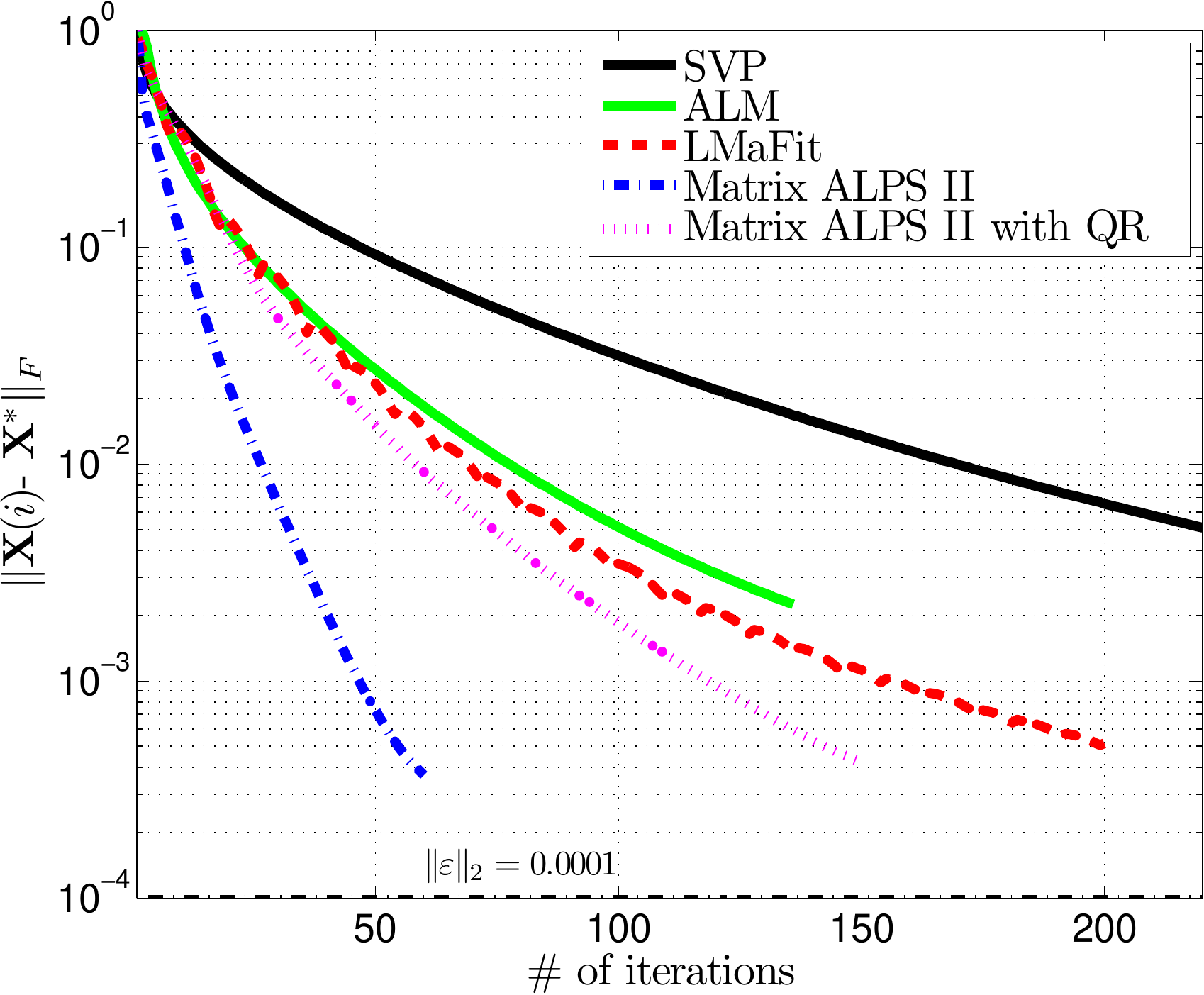}\label{fig:2c}}}
\end{tabular}
\caption{\small\sl Low rank matrix recovery for the matrix completion problem. The error curves are the median values across 50 Monte-Carlo realizations over each iteration. For all cases, we assume $p = 0.3mn$. (a) $m = 700$, $n = 1000 $, $\rank = 30$ and $\vectornormbig{\noise}_2 = 0 $. (b) $m = 700$, $n = 1000 $, $\rank = 50$ and $\vectornormbig{\noise}_2 = 10^{-3} $. (c) $m = 700$, $n = 1000 $, $\rank = 110$ and $\vectornormbig{\noise}_2 = 0 $. (d) $m = 500$, $n = 2000 $, $\rank = 10$ and $\vectornormbig{\noise}_2 = 0 $. (e) $m = 500$, $n = 2000 $, $\rank = 50$ and $\vectornormbig{\noise}_2 = 10^{-3} $. (f) $m = 500$, $n = 2000 $, $\rank = 80$ and $\vectornormbig{\noise}_2 = 10^{-4} $.  } \label{fig: TableIII_IV_fig}
\end{figure*}

\subsection{Real data}

We use real data images to highlight the reconstruction performance of the proposed schemes. To this end, we perform grayscale image denoising from an incomplete set of observed pixels---similar experiments can be found in \cite{LMatFit}. Based on the matrix completion setting, we observe a limited number of pixels from the original image and perform a low rank approximation based only on the set of measurements. While the true underlying image might not be low-rank, we apply our solvers to obtain low-rank approximations.

Figures \ref{fig:real1} and \ref{fig:real2} depict the reconstruction results. In the first test case, we use a $512 \times 512$ grayscale image as shown in the top left corner of Figure \ref{fig:real1}. For this case, we observe only the $35\%$ of the total number of pixels, randomly selected---a realization is depicted in the top right plot in Figure \ref{fig:real1}. In sequel, we fix the desired rank to $\rank = 40$. The best rank-$40$ approximation using SVD is shown in the top middle of Figure \ref{fig:real1} where the full set of pixels is observed. Given a fixed common tolerance and the same stopping criteria, Figure \ref{fig:real1} shows the recovery performance achieved by a range of algorithms under consideration for 10 Monte-Carlo realizations.  We repeat the same experiment for the second image in Figure \ref{fig:real2}. Here, the size of the image is $256 \times 256$, the desired rank is set to $\rank = 30$ and we observe the $33\%$ of the image pixels. In constrast to the image denoising procedure above, we measure the reconstruction error of the computed solutions with respect to the {\it best rank-$30$ approximation} of the true image. In both cases,  we note that \textsc{Matrix ALPS II} has a better phase transition performance as compared to the rest of the algorithms.
\begin{figure*}[!htp]
\centering
\begin{minipage}{0.28\linewidth}
\centering \normalsize{Original}
\end{minipage} 
\begin{minipage}{0.28\linewidth}
\centering \normalsize{Low Rank Approximation}
\end{minipage}
\begin{minipage}{0.28\linewidth}
\centering \normalsize{Observed Image}
\end{minipage} \vspace{0.1cm}\\

\includegraphics[width = 0.28\linewidth]{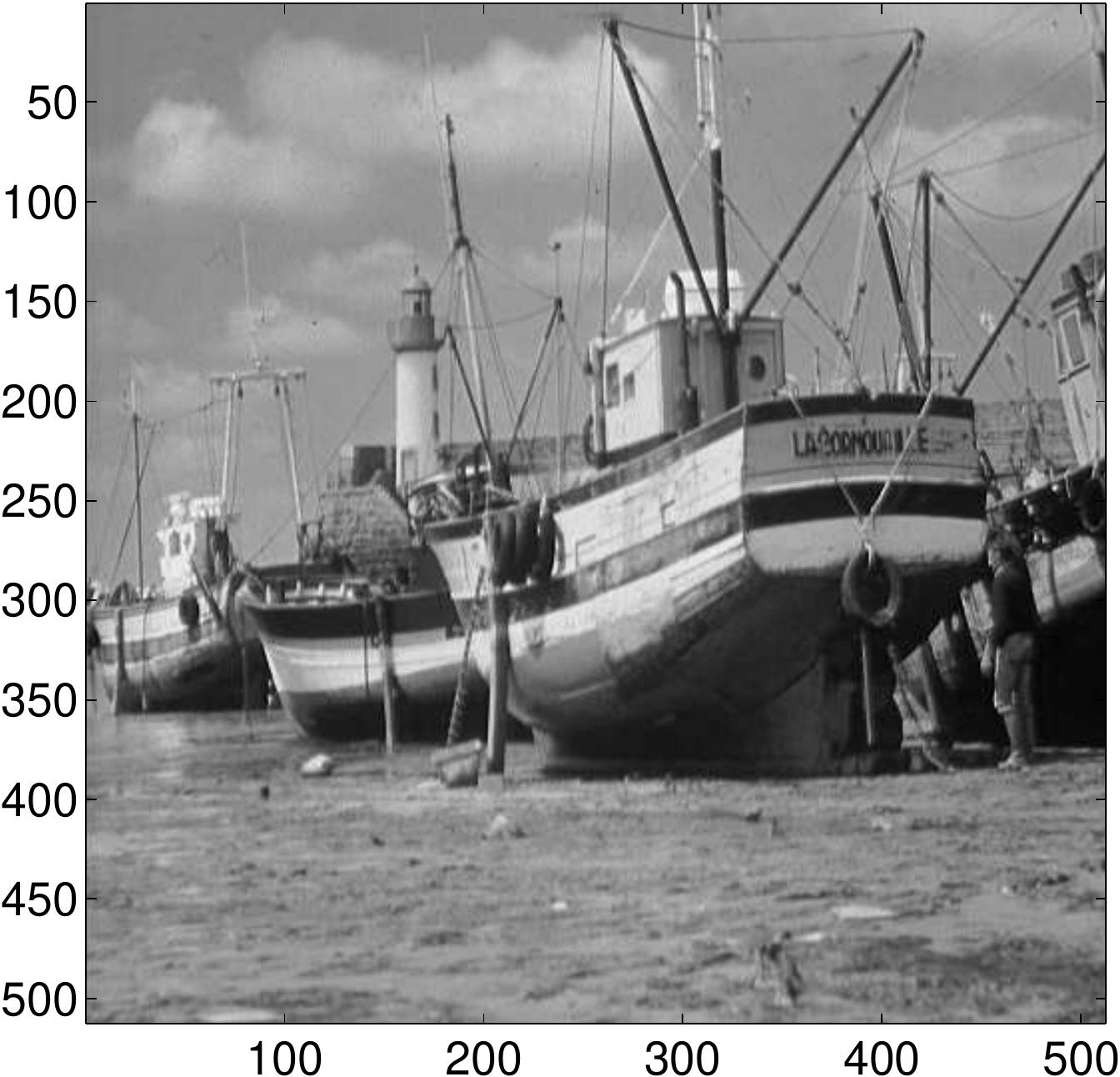} 
\includegraphics[width = 0.28\linewidth]{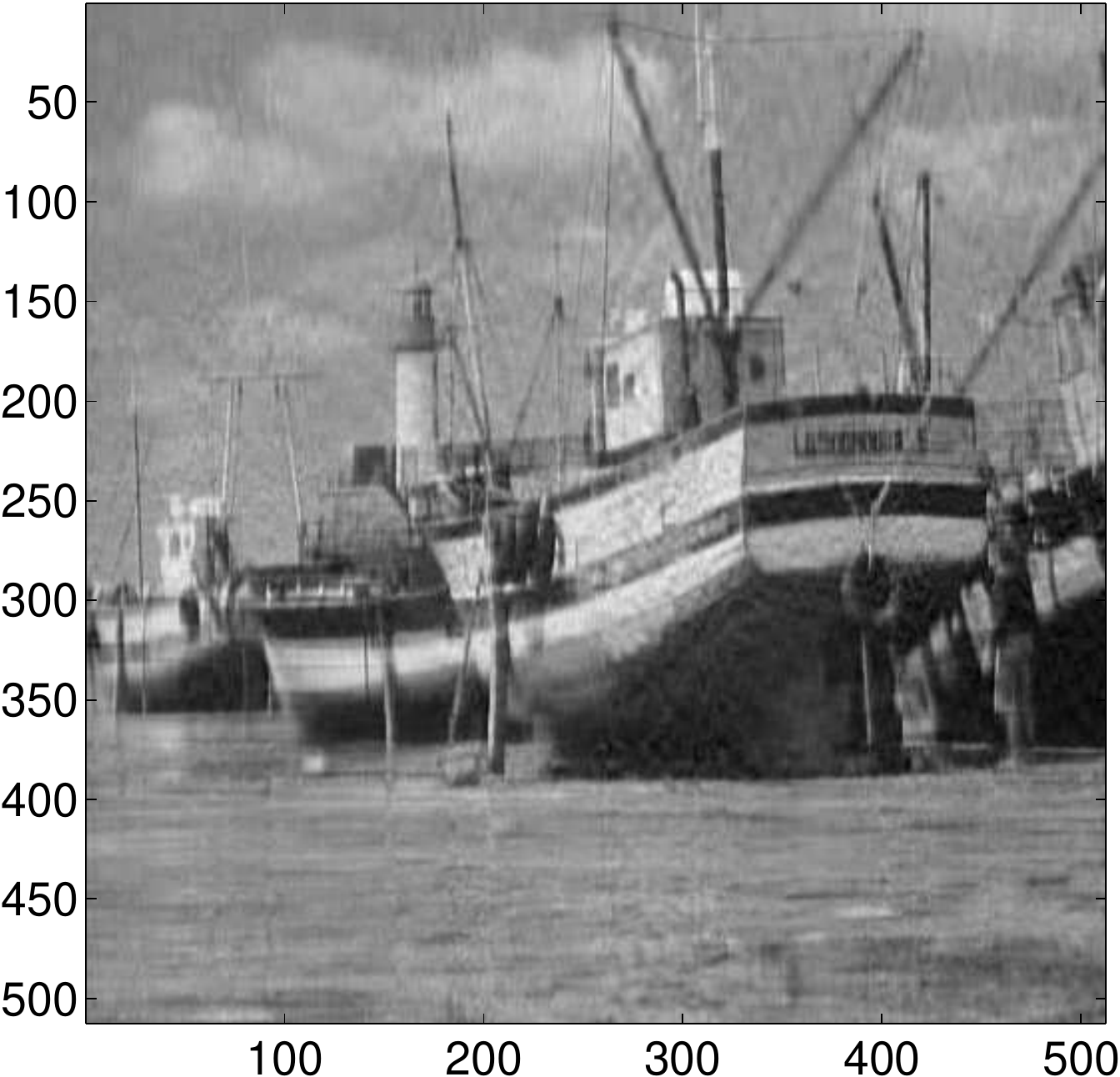} 
\includegraphics[width = 0.28\linewidth]{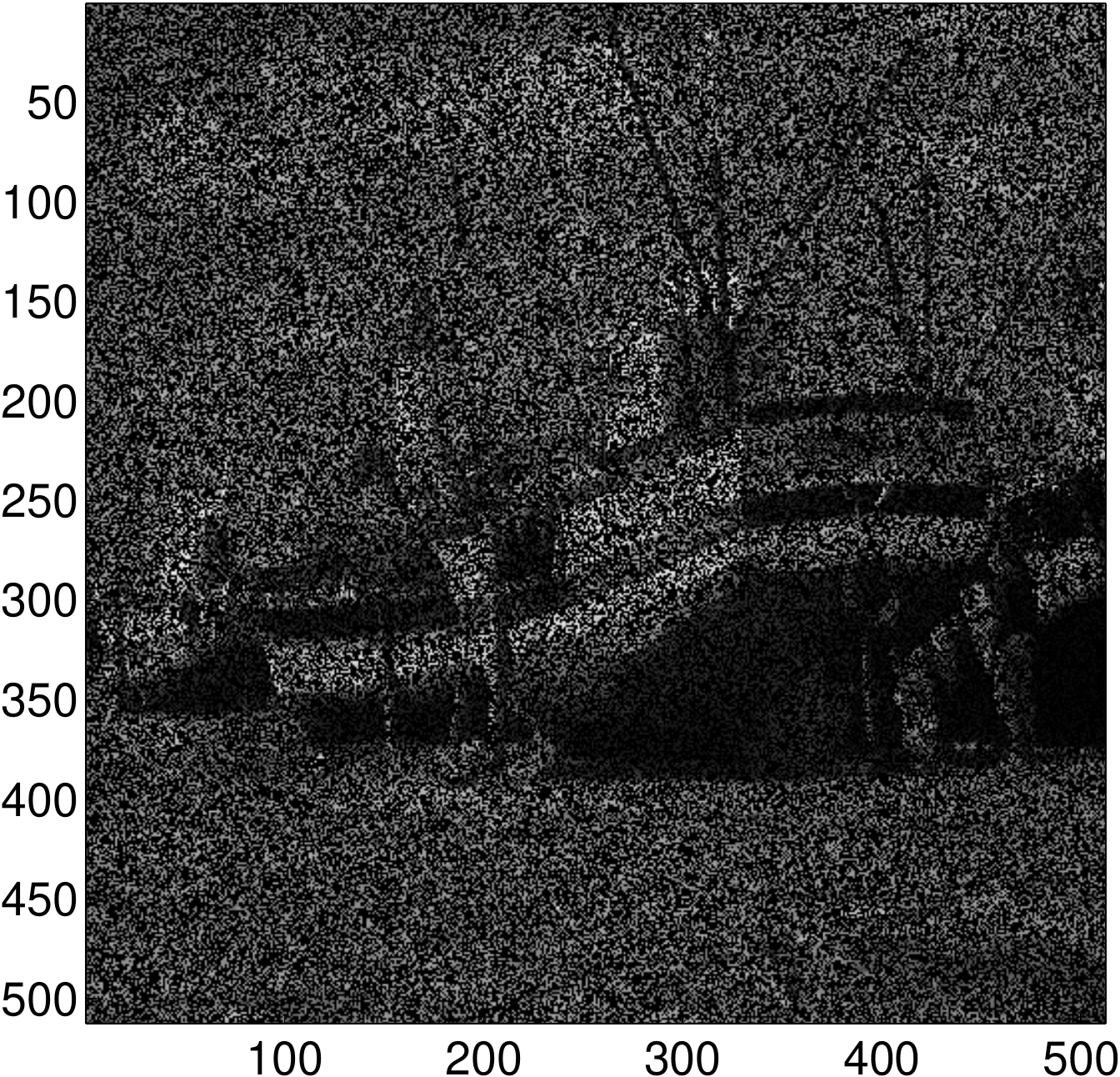}\\

\centering
\begin{minipage}{0.28\linewidth}
\centering \normalsize{SVP - $16.36$ dB}
\end{minipage} 
\begin{minipage}{0.28\linewidth}
\centering \normalsize{ALM - $16.37$ dB}
\end{minipage}
\begin{minipage}{0.28\linewidth}
\centering \normalsize{LMaFit - $16.43$ dB}
\end{minipage} \vspace{0.1cm}\\

\includegraphics[width = 0.28\linewidth]{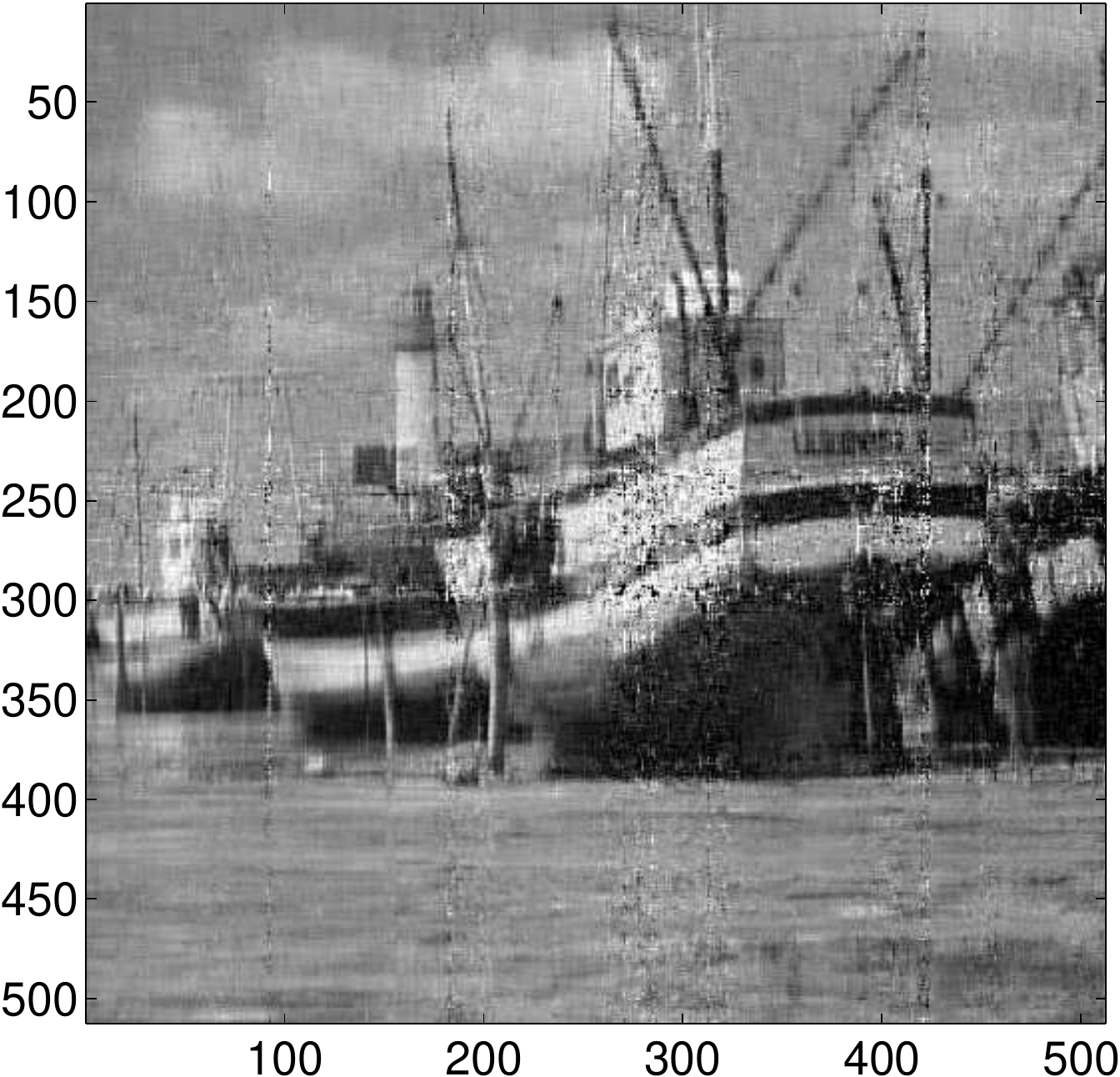} 
\includegraphics[width = 0.28\linewidth]{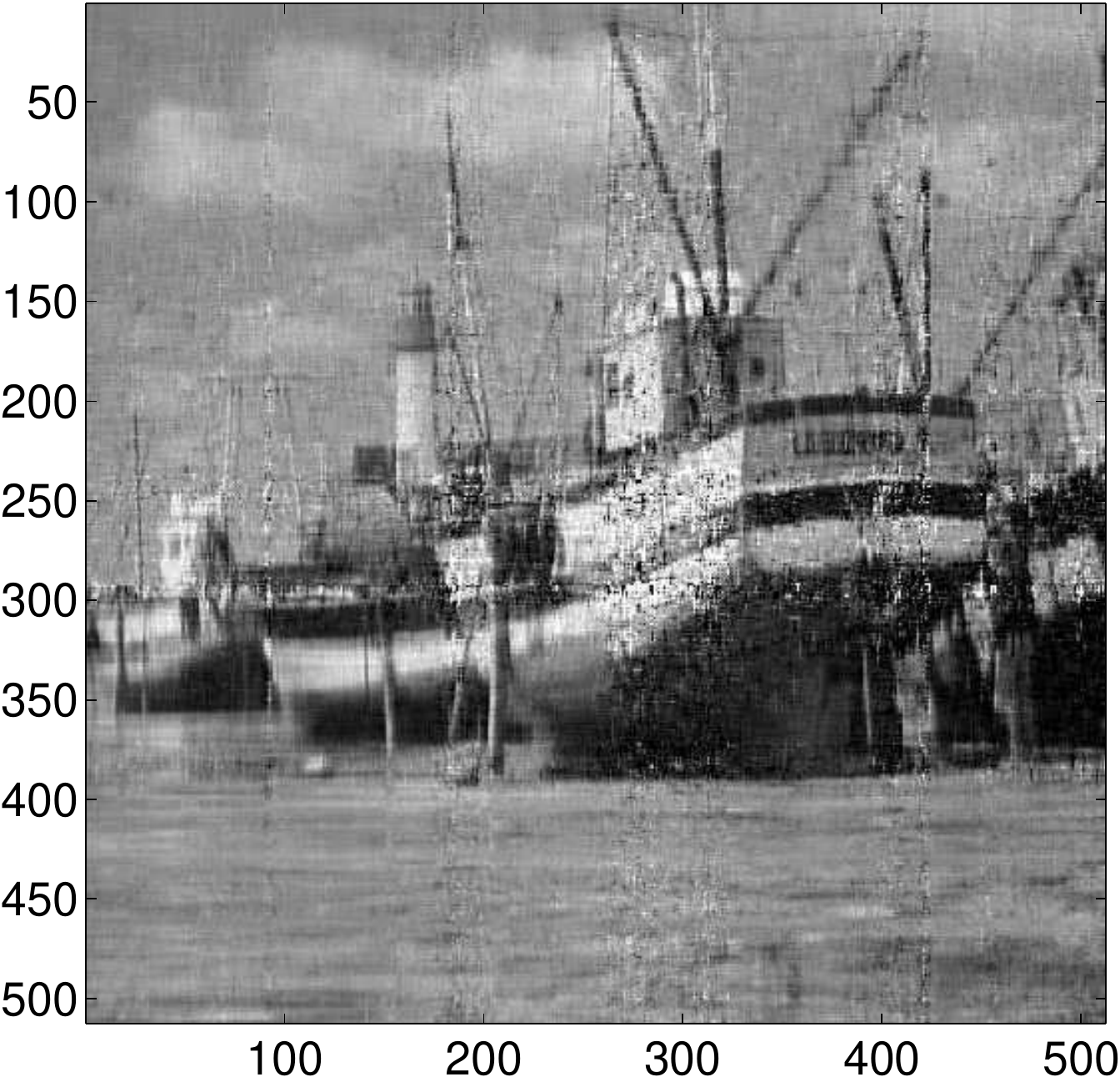} 
\includegraphics[width = 0.28\linewidth]{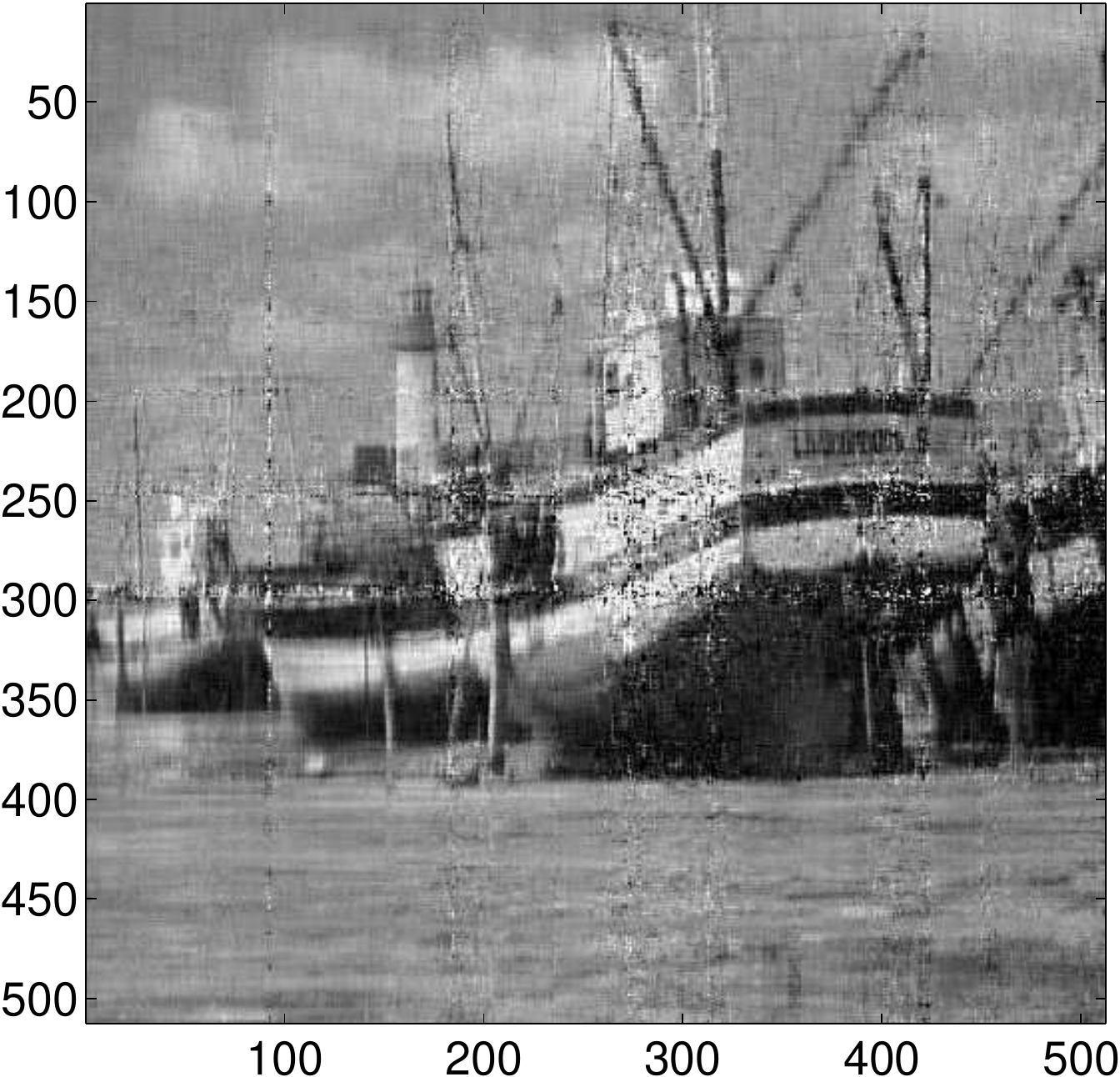} \\

\centering
\begin{minipage}{0.28\linewidth}
\centering \normalsize{\textsc{\textsc{Matrix ALPS I}} - $17.86$ dB}
\end{minipage} 
\begin{minipage}{0.28\linewidth}
\centering \normalsize{ADMiRA - $18.08$ dB}
\end{minipage}
\begin{minipage}{0.28\linewidth}
\centering \normalsize{\textsc{\textsc{Matrix ALPS II}} - $18.35$ dB}
\end{minipage} \vspace{0.1cm}\\

\includegraphics[width = 0.28\linewidth]{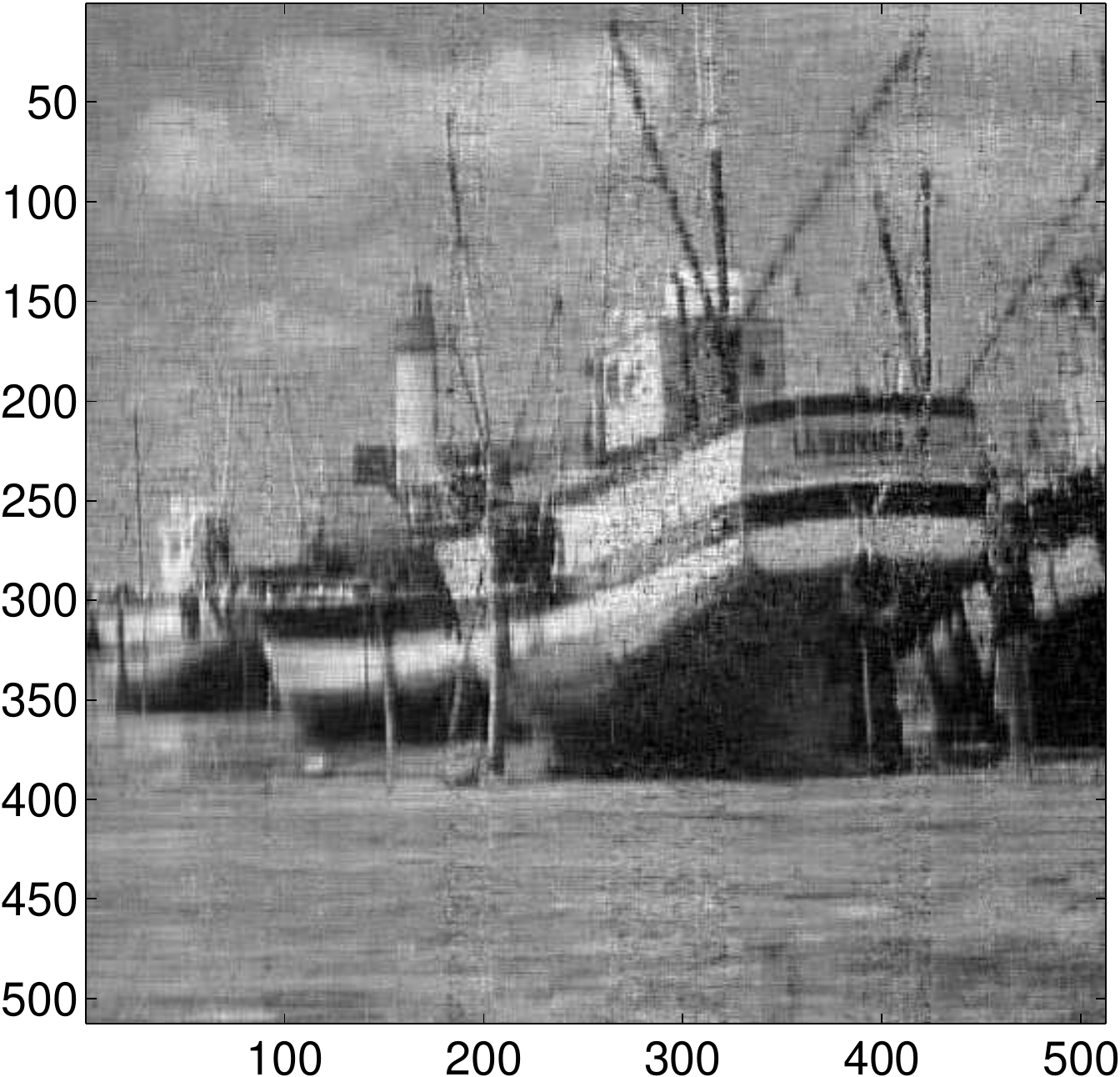} 
\includegraphics[width = 0.28\linewidth]{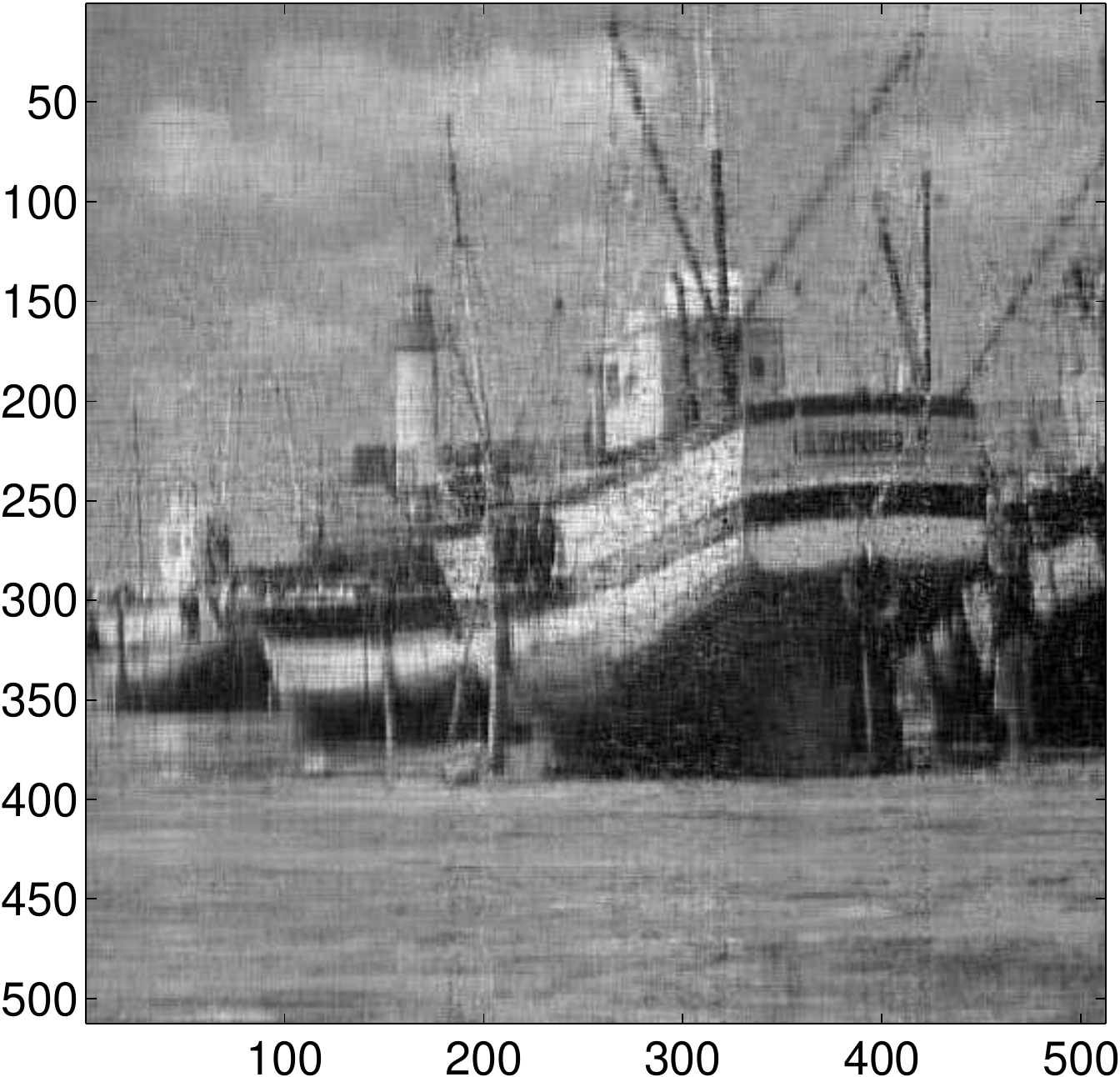} 
\includegraphics[width = 0.28\linewidth]{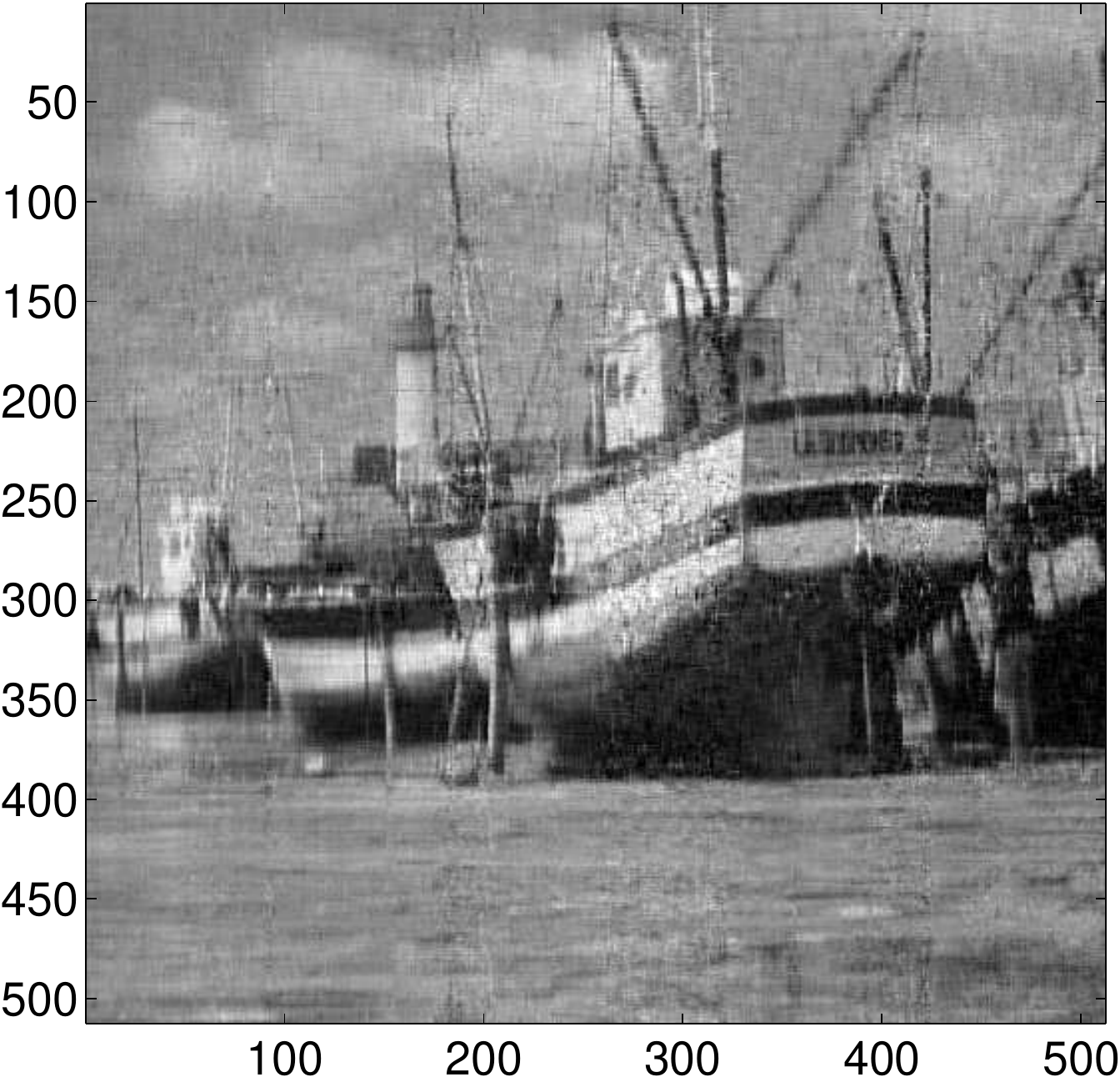}\\
\caption{\small{Reconstruction performance in image denoising settings. The image size is $512 \times 512$ and the desired rank is preset to $\rank = 40$. We observe $35\%$ of the pixels of the true image. We depict the median reconstruction error with respect to the true image in dB over $10$ Monte Carlo realizations.}} {\label{fig:real1}}
\end{figure*}

\begin{figure*}[!htp]
\centering
\begin{minipage}{0.28\linewidth}
\centering \normalsize{Original}
\end{minipage} 
\begin{minipage}{0.28\linewidth}
\centering \normalsize{Low Rank Approximation}
\end{minipage}
\begin{minipage}{0.28\linewidth}
\centering \normalsize{Observed Image}
\end{minipage} \vspace{0.1cm}\\

\includegraphics[width = 0.28\linewidth]{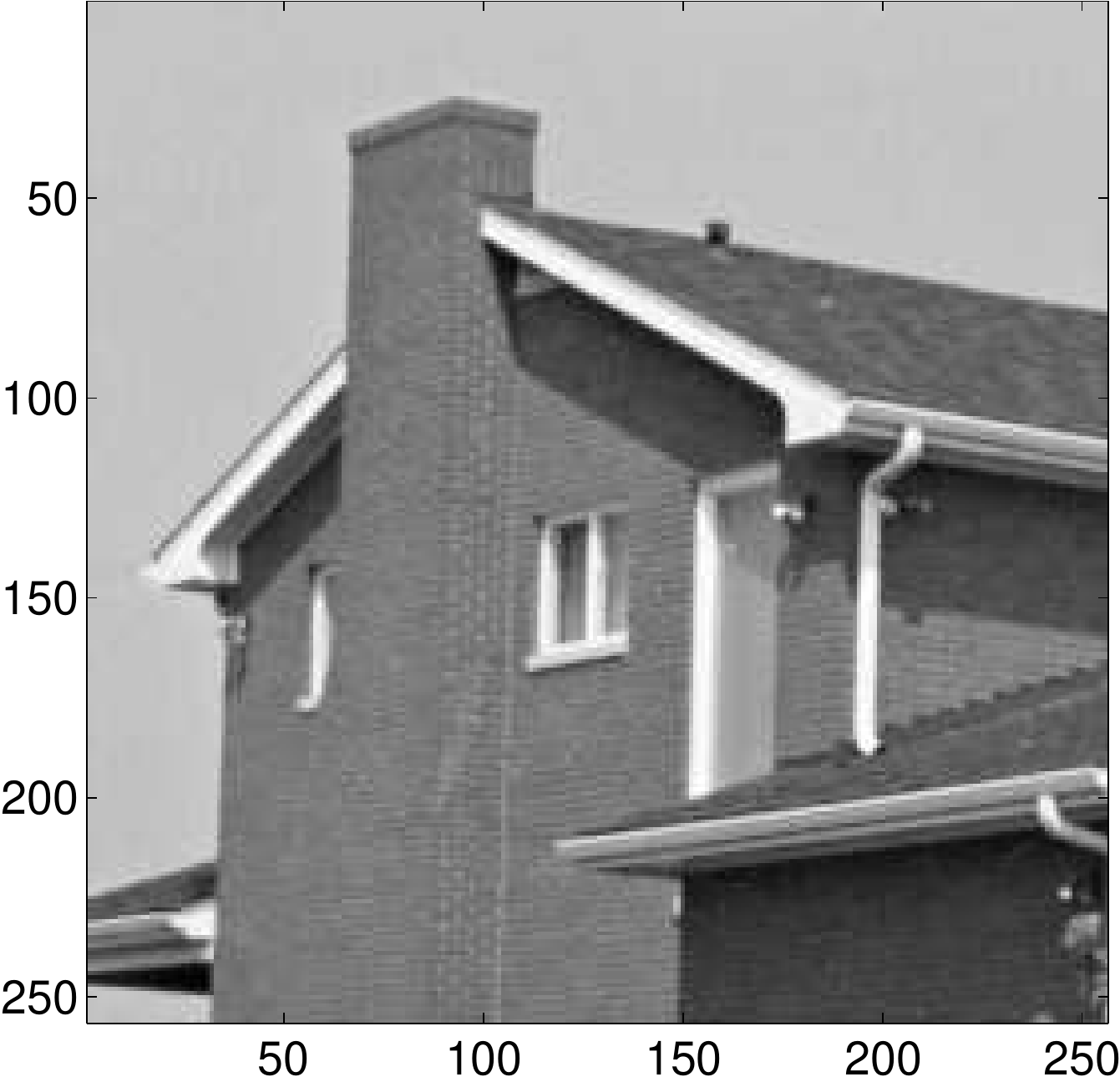} 
\includegraphics[width = 0.28\linewidth]{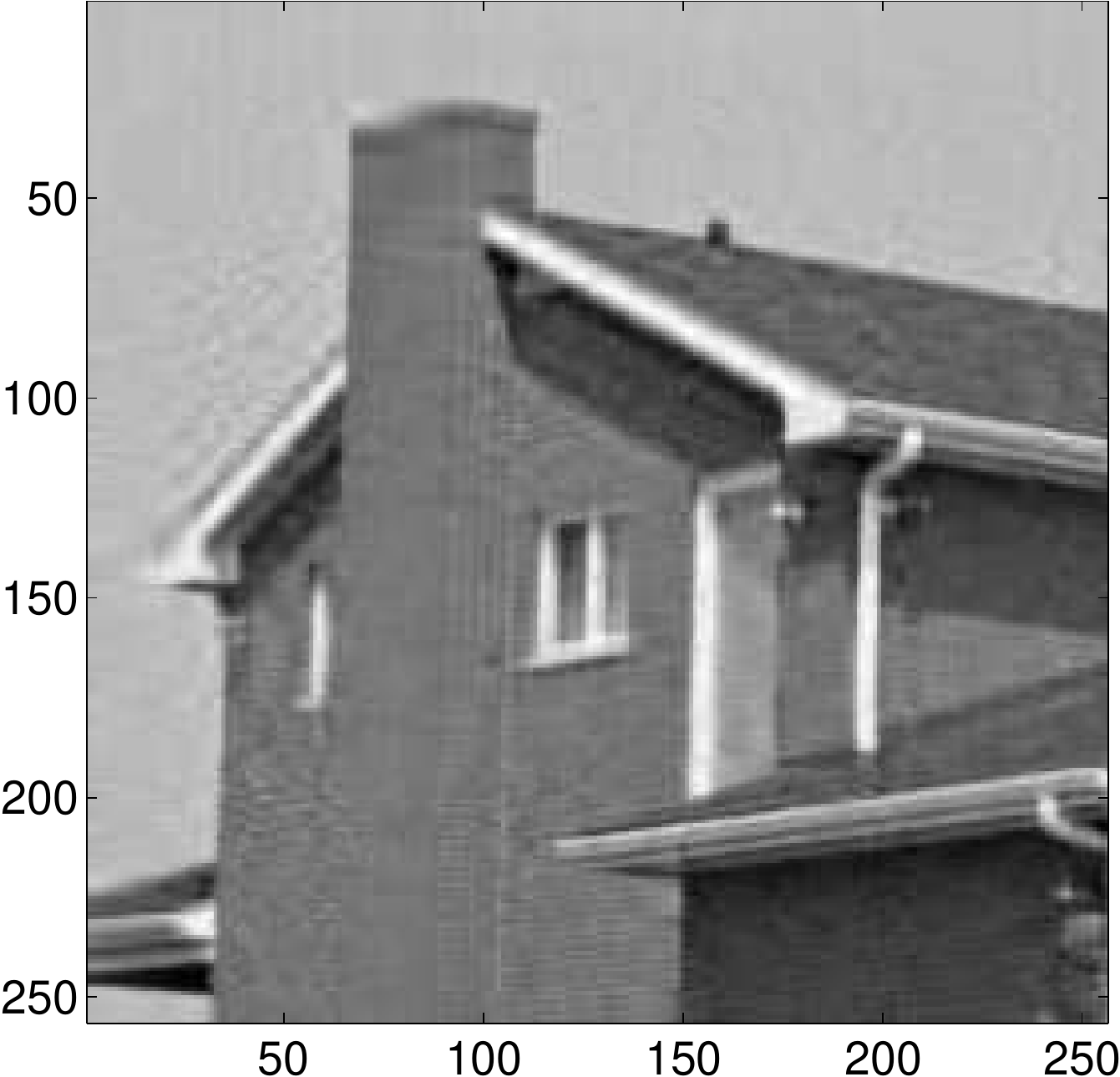} 
\includegraphics[width = 0.28\linewidth]{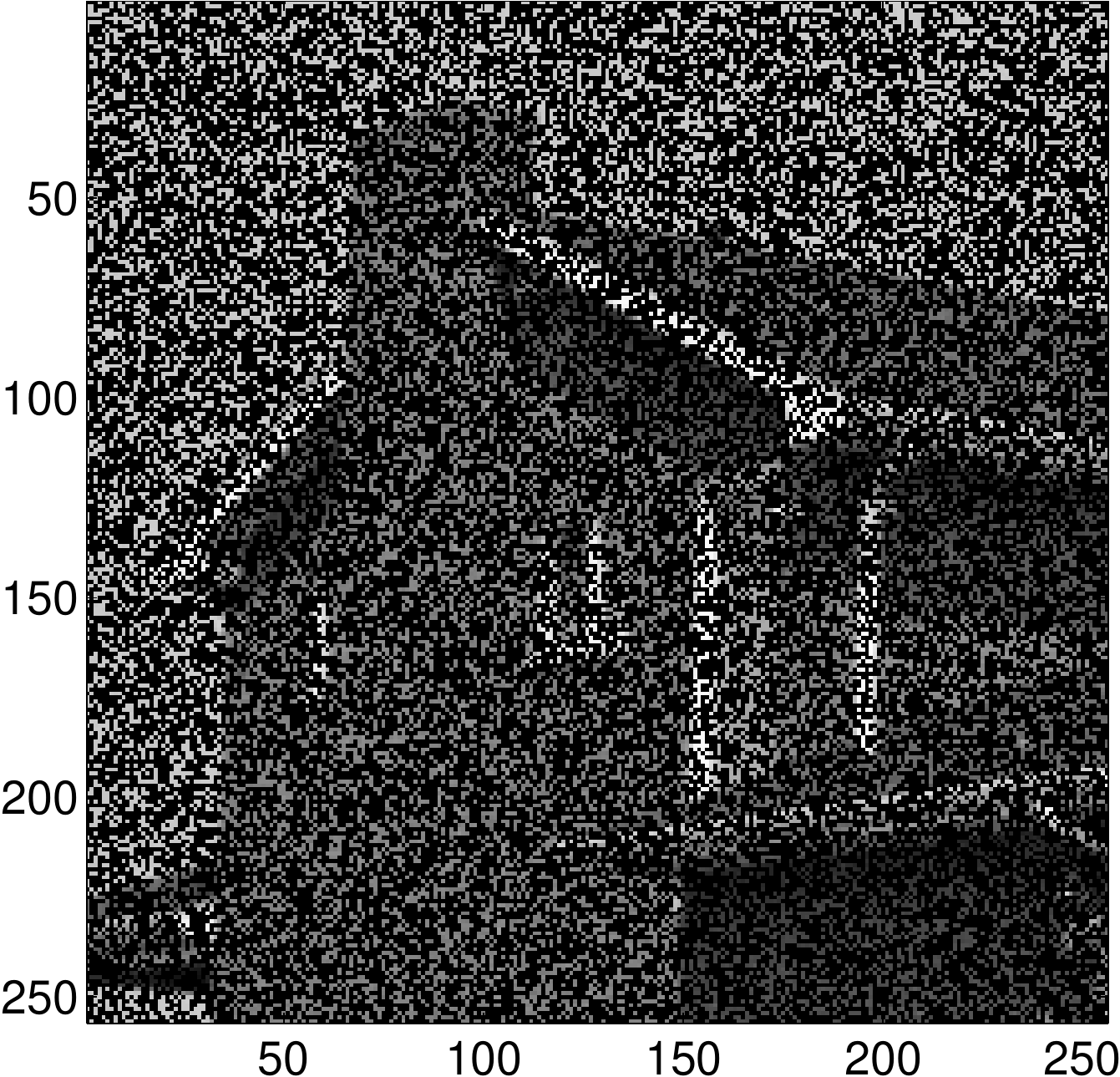}\\

\centering
\begin{minipage}{0.28\linewidth}
\centering \normalsize{SVP - $12.84$ dB}
\end{minipage} 
\begin{minipage}{0.28\linewidth}
\centering \normalsize{ALM - $57.79$ dB}
\end{minipage}
\begin{minipage}{0.28\linewidth}
\centering \normalsize{LMaFit - $12.57$ dB}
\end{minipage} \vspace{0.1cm}\\

\includegraphics[width = 0.28\linewidth]{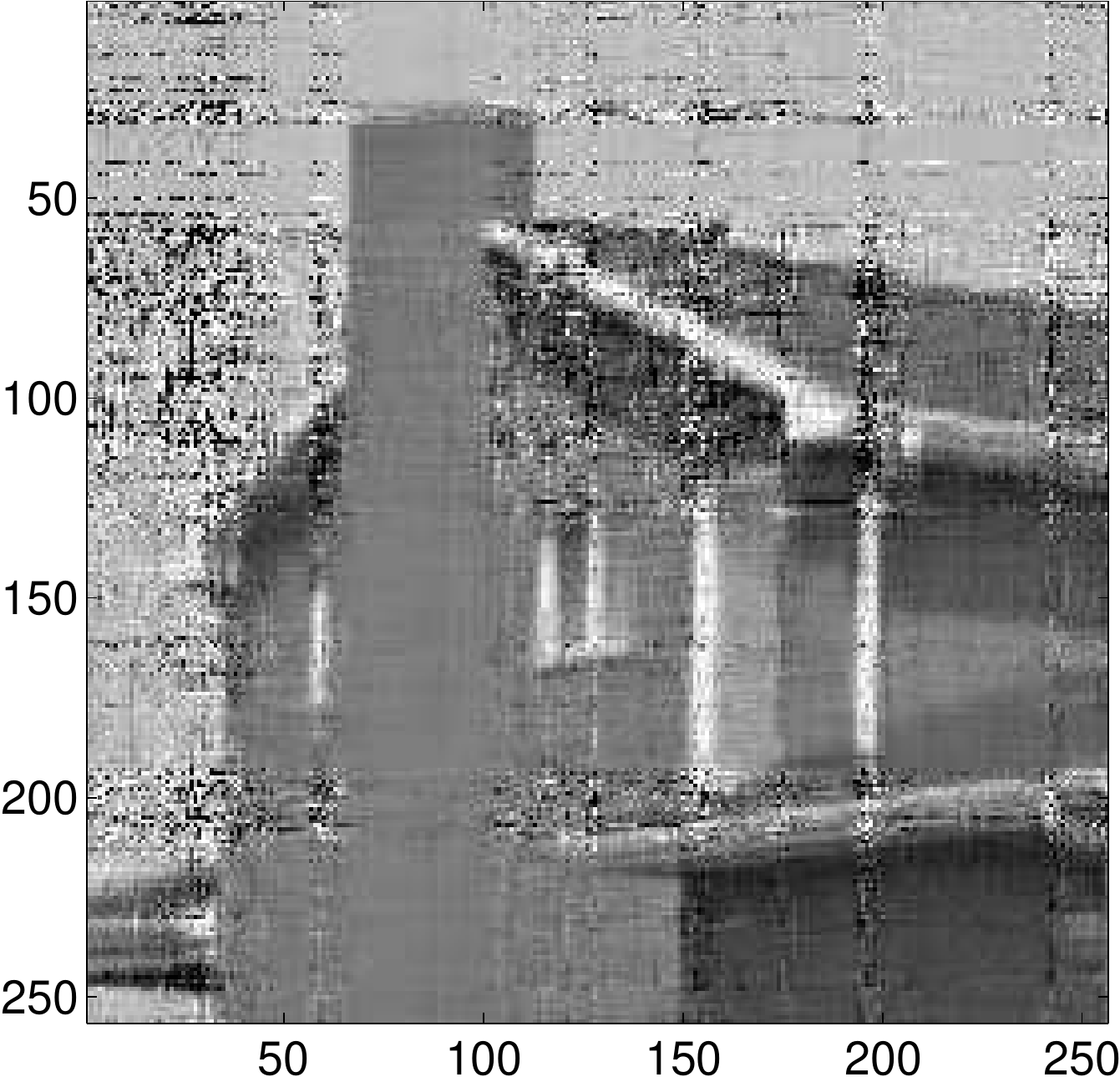} 
\includegraphics[width = 0.28\linewidth]{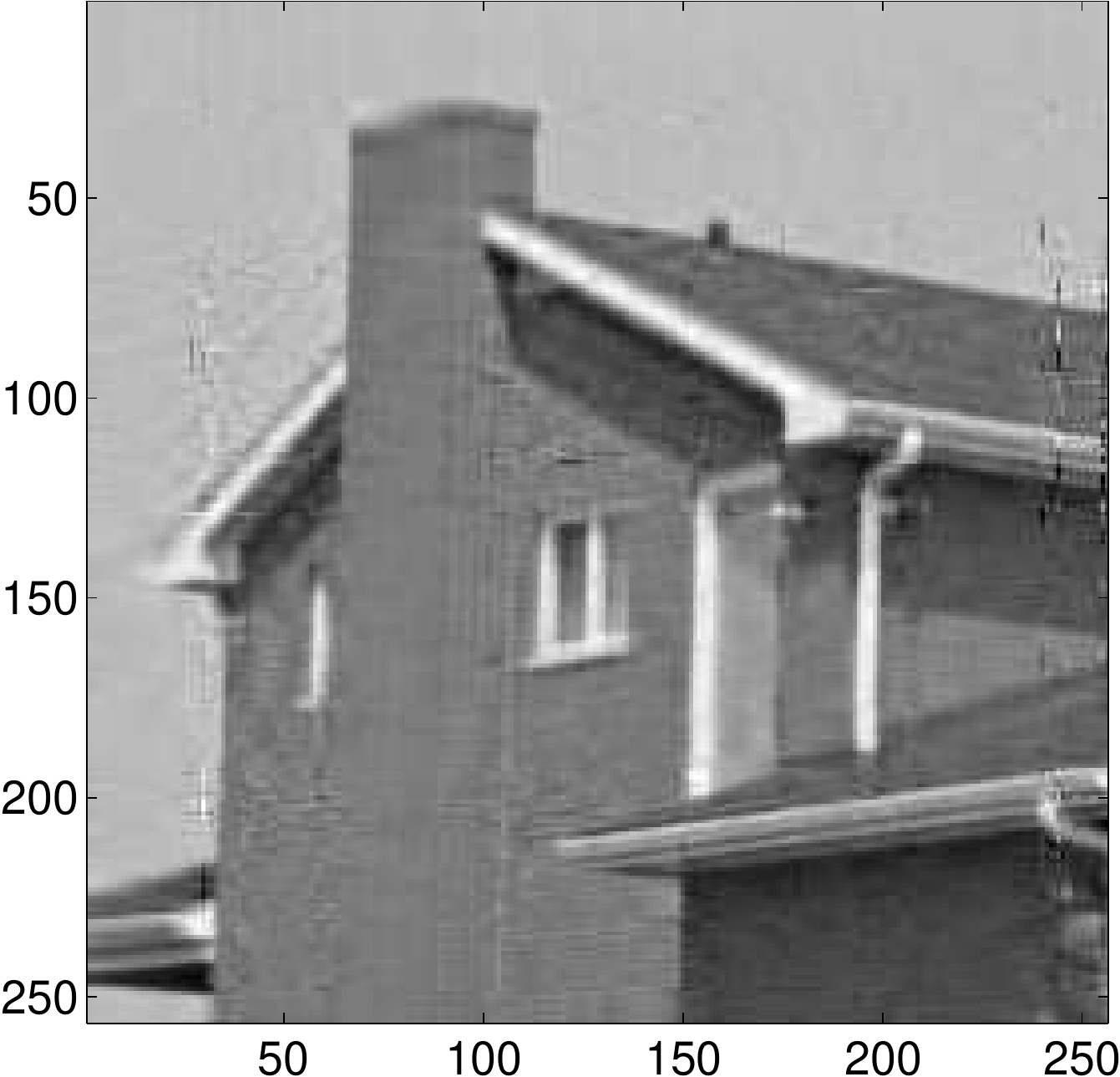} 
\includegraphics[width = 0.28\linewidth]{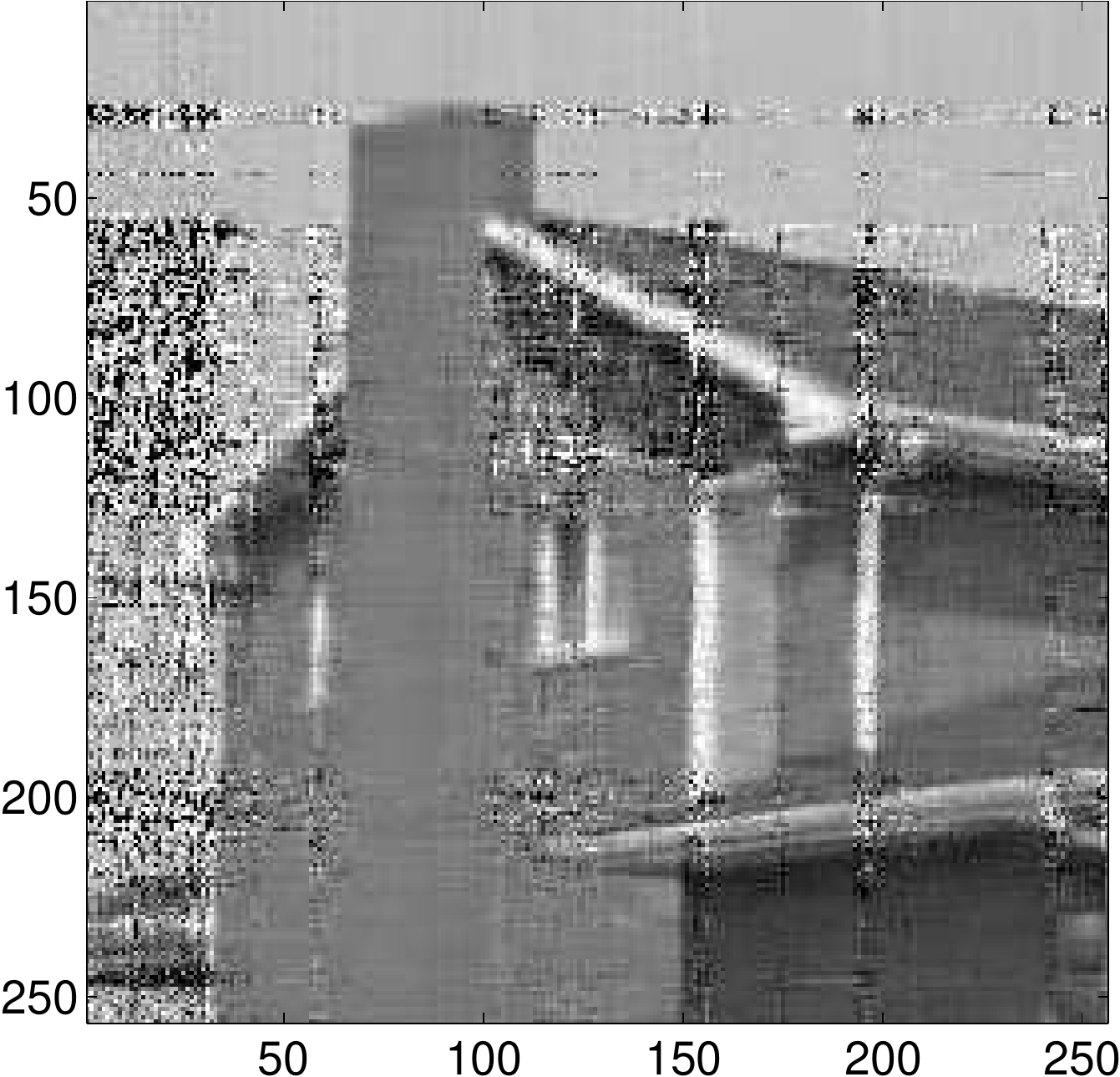} \\

\centering
\begin{minipage}{0.28\linewidth}
\centering \normalsize{\textsc{\textsc{Matrix ALPS I}} - $17.55$ dB}
\end{minipage} 
\begin{minipage}{0.28\linewidth}
\centering \normalsize{ADMiRA - $20.56$ dB}
\end{minipage}
\begin{minipage}{0.28\linewidth}
\centering \normalsize{\textsc{\textsc{Matrix ALPS II}} - $70.86$ dB}
\end{minipage} \vspace{0.1cm}\\

\includegraphics[width = 0.28\linewidth]{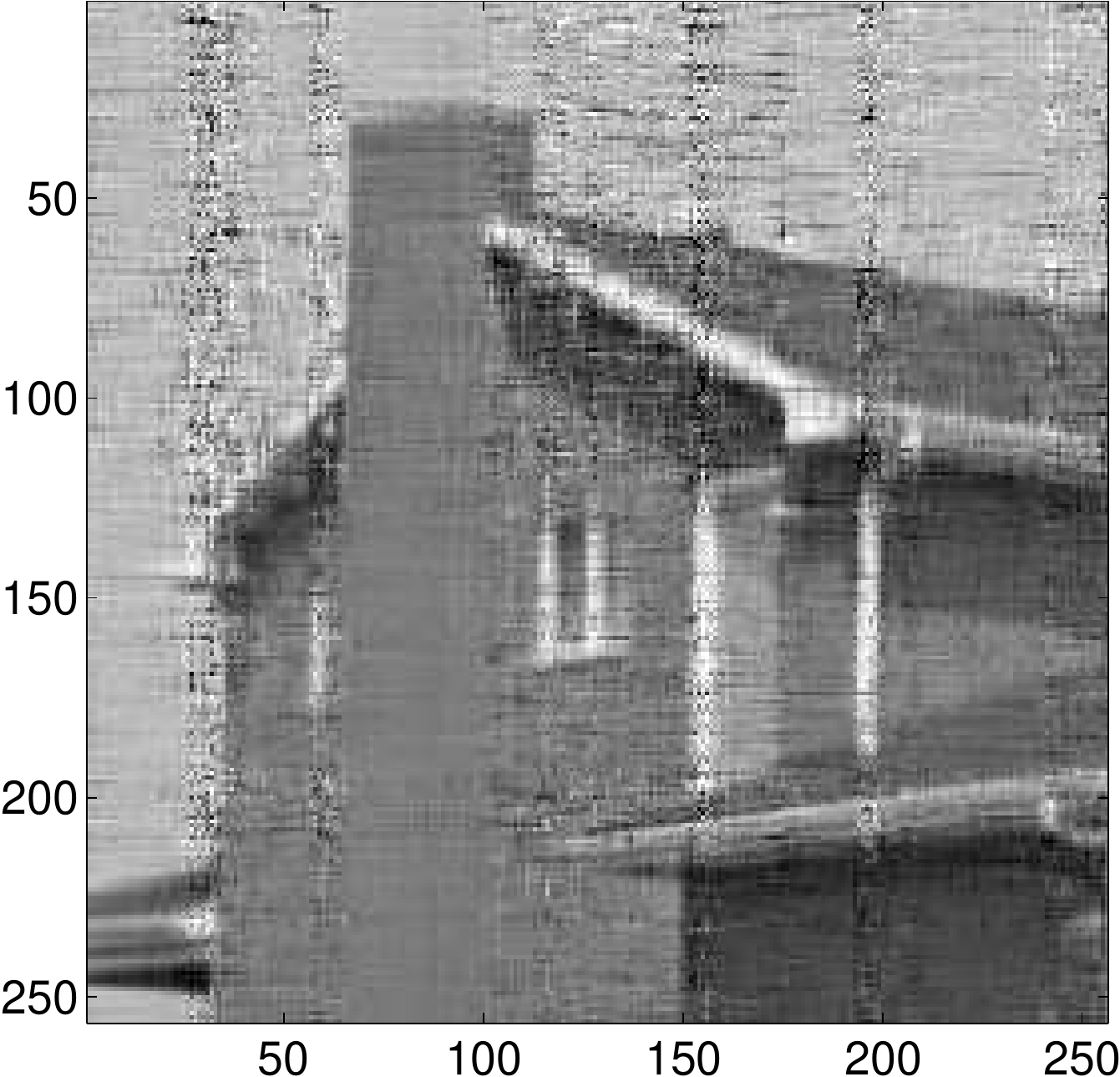} 
\includegraphics[width = 0.28\linewidth]{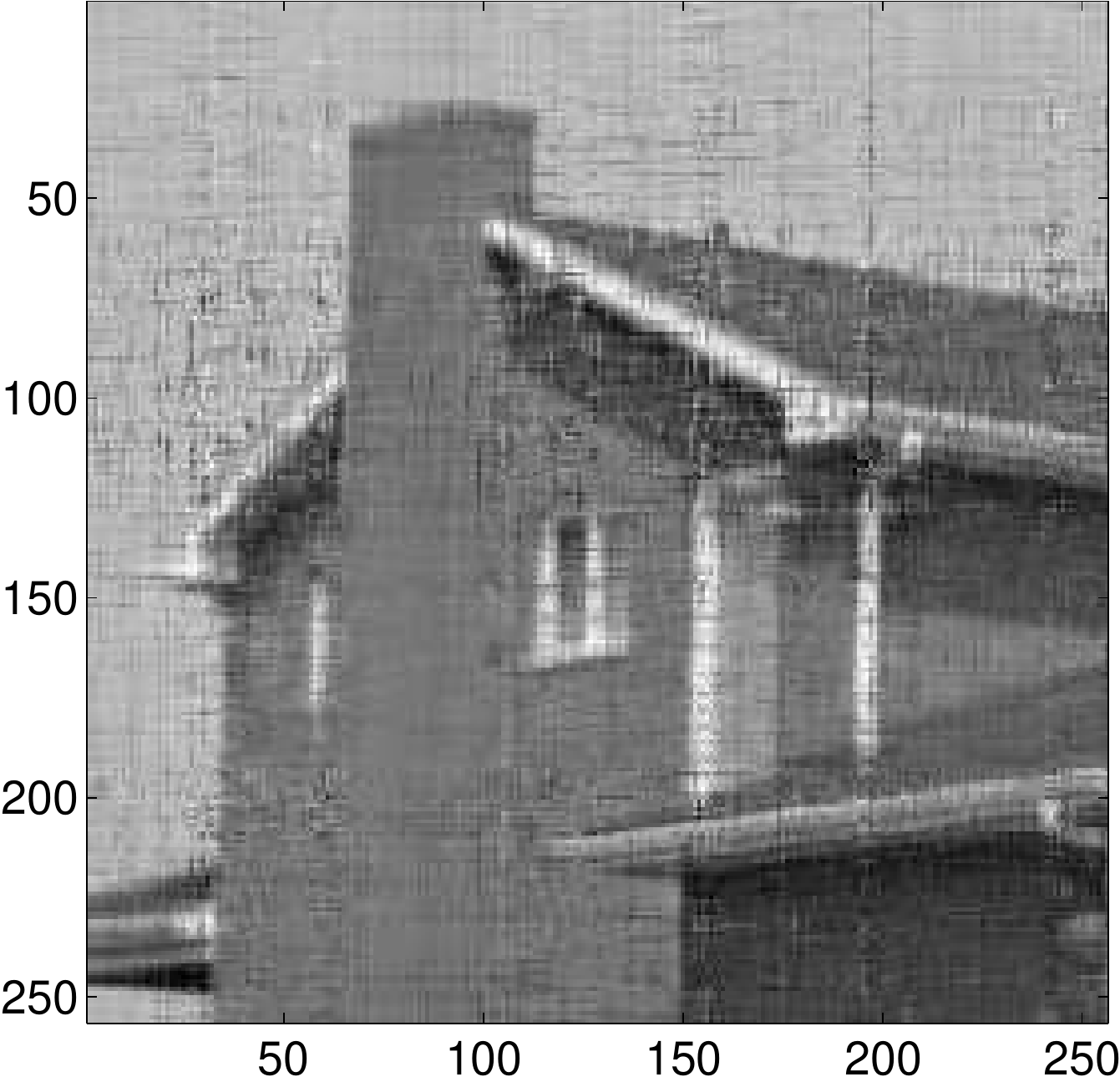} 
\includegraphics[width = 0.28\linewidth]{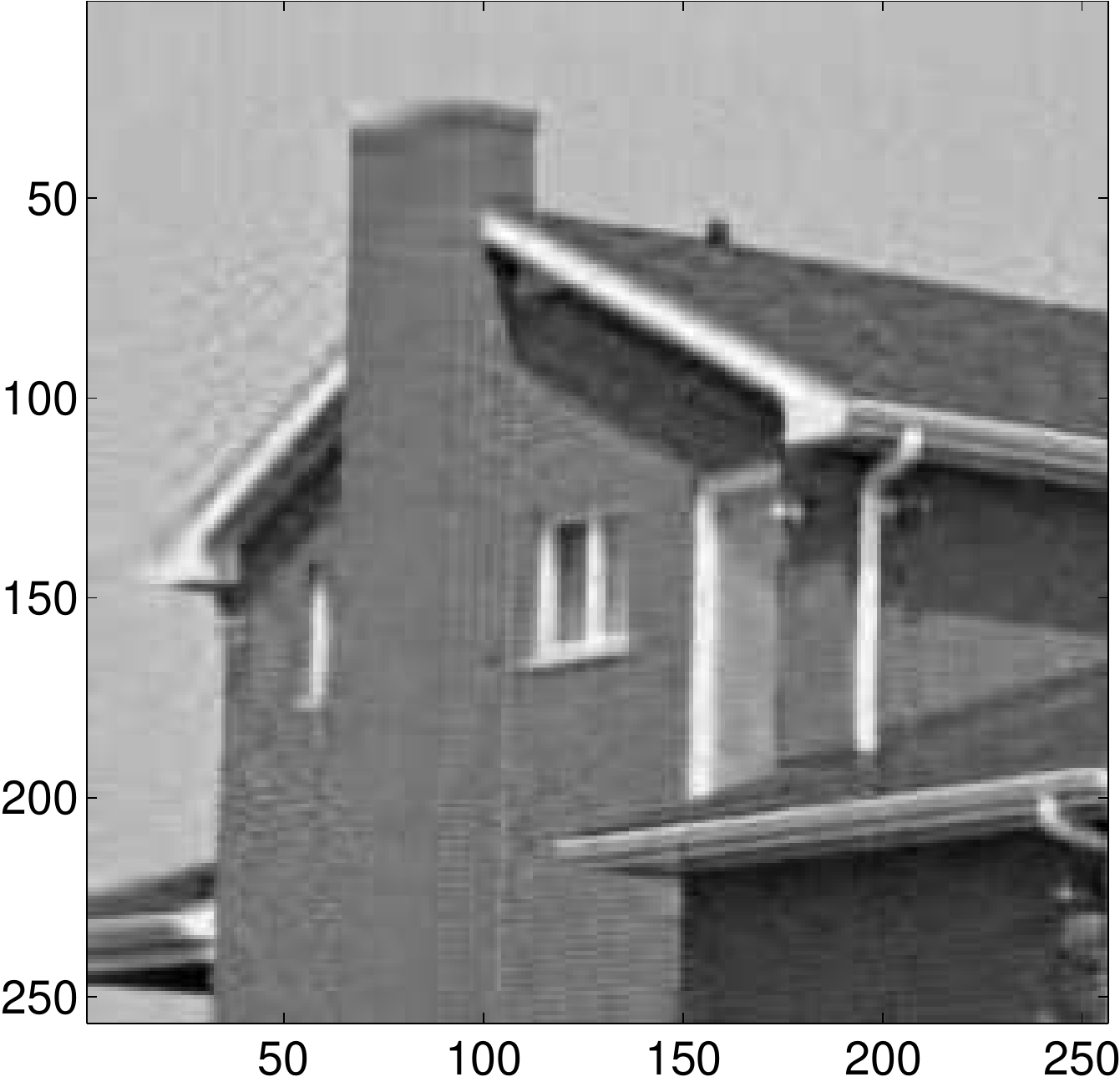}\\
\caption{\small{Reconstruction performance in image denoising settings. The image size is $256 \times 256$ and the desired rank is preset to $\rank = 30$. We observe $33\%$ of the pixels of the best rank-$ 30 $ approximation of the image. We depict the median reconstruction with respect to the best rank-$30$ approximation in dB over $10$ Monte Carlo realizations}} {\label{fig:real2}}
\end{figure*}

\section{Discussion}{\label{sec: conc}}

In this paper, we present new strategies and review existing ones for hard thresholding methods to recover low-rank matrices from dimensionality reducing, linear projections. Our discussion revolves around four basic building blocks that exploit the problem structure to reduce computational complexity without sacrificing stability.  

In theory, constant $ \mu_i $ selection schemes are accompanied with strong RIP constant conditions but empirical evidence reveal signal reconstruction vulnerabilities. While convergence derivations of adaptive schemes are characterized by weaker bounds, the performance gained by this choice in terms of convergence rate, is quite significant. Memory-based methods lead to convergence speed with (almost) no extra cost on the complexity of hard thresholding methods---we provide theoretical evidence for convergence for simple cases but more theoretical justification is needed to generalize this part as future work. Lastly, further estimate refinement over low rank subspaces using gradient update steps or pseudoinversion optimization techniques provides signal reconstruction efficacy, but more computational power is needed per iteration. 

We connect $\epsilon$-approximation low-rank revealing schemes with first-order gradient descent algorithms to solve general affine rank minimization problems; to the best of our knowledge, this is the first attempt to theoretically characterize the performance of iterative greedy algorithms with $\epsilon$-approximation schemes. In all cases, experimental results illustrate the effectiveness of the proposed schemes on different problem configurations.

\section*{Acknowledgments}
This work was supported in part by the European Commission under Grant MIRG-268398, ERC Future Proof, SNF 200021-132548 and DARPA KeCoM program \#11-DARPA-1055. VC also would like to acknowledge Rice University for his Faculty Fellowship.

\appendix

\section{Appendix}
\begin{remark}
Let $\signal \in \mathbb{R}^{\dimension}$ with SVD: $\signal = \boldsymbol{U} \boldsymbol{\Sigma} \boldsymbol{V}^T$, and $\boldsymbol{Y} \in \mathbb{R}^{\dimension}$ with SVD: $\boldsymbol{Y} = \widetilde{\boldsymbol{U}} \widetilde{\boldsymbol{\Sigma}} \widetilde{\boldsymbol{V}}^T$. Assume two sets: $i) $ $\mathcal{S}_1 = \lbrace \boldsymbol{u}_i\boldsymbol{u}_i^T: $ $~i \in \mathcal{I}_1 \rbrace $ where $\boldsymbol{u}_i $ is the $i$-th singular vector of $\signal$ and $\mathcal{I}_1 \subseteq \lbrace 1, \dots, $ $\text{rank}(\signal) \rbrace $ and, $ii)$ $\mathcal{S}_2 = \lbrace \boldsymbol{u}_i\boldsymbol{u}_i^T, \tilde{\boldsymbol{u}_j} \tilde{\boldsymbol{u}_j}^T $ $ :~i \in \mathcal{I}_2,~j \in \mathcal{I}_3 \rbrace $ where $\tilde{\boldsymbol{u}_i} $ is the $i$-th singular vector of $\boldsymbol{Y}$, $\mathcal{I}_1 \subseteq \mathcal{I}_2 \subseteq \lbrace 1, \dots, \text{rank}(\signal) \rbrace $ and, $\mathcal{I}_3 \subseteq \lbrace 1, \dots, \text{rank}(\boldsymbol{Y}) \rbrace $. We observe that the subspaces defined by $\boldsymbol{u}_i\boldsymbol{u}_i^T$ and $\tilde{\boldsymbol{u}_j} \tilde{\boldsymbol{u}_j}^T$ are not necessarily orthogonal. 

To this end, let $\widehat{\mathcal{S}}_2 = \text{ortho}(\mathcal{S}_2)$; this operation can be easily computed via SVD. Then, the following commutativity property holds true for any matrix $ \boldsymbol{W} \in \mathbb{R}^{\dimension} $:
\begin{align}
\mathcal{P}_{\mathcal{S}_1} \mathcal{P}_{\widehat{\mathcal{S}}_2} \boldsymbol{W} = \mathcal{P}_{\widehat{\mathcal{S}}_2} \mathcal{P}_{\mathcal{S}_1} \boldsymbol{W}.
\end{align} 
\end{remark}
\subsection{Proof of Lemma \ref{lemma:act_subspace_exp}}

Given $ \mathcal{X}^\ast \leftarrow \mathcal{P}_{\rank}(\bestsignal) $ using SVD factorization, we define the following quantities: $ \mathcal{S}_i \leftarrow \mathcal{X}_i \cup \mathcal{D}_i,~ \mathcal{S}^\ast_i \leftarrow \text{ortho}\left(\mathcal{X}_i \cup \mathcal{X}^\ast\right) $. Then, given the structure of the sets $\mathcal{S}_i$ and $\mathcal{S}_i^{\ast}$
\begin{align}
\mathcal{P}_{\mathcal{S}_i} \mathcal{P}_{(\mathcal{S}_i^\ast)^{\bot}} = \mathcal{P}_{\mathcal{D}_i} \mathcal{P}_{(\mathcal{X}^\ast \cup \mathcal{X}_i)^{\bot}},  \label{ser:eq:00}.
\end{align} and 
\begin{align}
\mathcal{P}_{\mathcal{S}^\ast_i} \mathcal{P}_{\mathcal{S}_i^{\bot}} = \mathcal{P}_{\mathcal{X}^\ast} \mathcal{P}_{(\mathcal{D}_i \cup \mathcal{X}_i)^{\bot}}
\end{align} Since the subspace defined in $ \mathcal{D}_i $ is the best rank-$ \rank $ subspace, orthogonal to the subspace spanned by $ \mathcal{X}_i $, the following holds true:
\begin{align}
\vectornormbig{\mathcal{P}_{\mathcal{D}_i} \mathcal{P}_{\mathcal{X}_i^{\bot}} \nabla f(\signal(i))}_F^2 &\geq \vectornormbig{\mathcal{P}_{\mathcal{X}^\ast} \mathcal{P}_{\mathcal{X}_i^{\bot}} \nabla f(\signal(i))}_F^2 \Rightarrow \nonumber \\
\vectornormbig{\mathcal{P}_{\mathcal{S}_i} \nabla f(\signal(i))}_F^2 &\geq \vectornormbig{\mathcal{P}_{\mathcal{S}_i^\ast} \nabla f(\signal(i))}_F^2 \nonumber
\end{align} Removing the common subspaces in $ \mathcal{S}_i $ and $ \mathcal{S}_i^\ast $ by the commutativity property of the projection operation and using the shortcut $\mathcal{P}_{\mathcal{A} \setminus \mathcal{B}} \equiv \mathcal{P}_{\mathcal{A}} \mathcal{P}_{\mathcal{B}^{\bot}} $ for sets $\mathcal{A}, ~\mathcal{B}$, we get:
\begin{align}
&\vectornormbig{\mathcal{P}_{\mathcal{S}_i \setminus \mathcal{S}_i^\ast} \nabla f(\signal(i))}_F^2 \geq \vectornormbig{\mathcal{P}_{\mathcal{S}_i^\ast \setminus \mathcal{S}_i} \nabla f(\signal(i))}_F^2 \Rightarrow \nonumber \\
&\vectornormbig{\mathcal{P}_{\mathcal{S}_i \setminus \mathcal{S}_i^\ast} \sensing^\ast \sensing (\bestsignal - \signal(i)) + \mathcal{P}_{\mathcal{S}_i \setminus \mathcal{S}_i^\ast} \sensing^\ast \noise}_F \geq \nonumber \\ &\vectornormbig{\mathcal{P}_{\mathcal{S}_i^\ast \setminus \mathcal{S}_i} \sensing^\ast \sensing (\bestsignal - \signal(i)) + \mathcal{P}_{\mathcal{S}_i^\ast \setminus \mathcal{S}_i} \sensing^\ast \noise}_F \label{ser:eq:01}
\end{align} Next, we assume that $\mathcal{P}_{(\mathcal{A} \setminus \mathcal{B})^{\bot}}$ denotes the orthogonal projection onto the subspace spanned by $\mathcal{P}_{\mathcal{A}} \mathcal{P}_{\mathcal{B}^{\bot}}$. Then, on the left hand side of (\ref{ser:eq:01}), we have:
\begin{align}
\vectornormbig{&\mathcal{P}_{\mathcal{S}_i \setminus \mathcal{S}_i^\ast} \sensing^\ast \sensing (\bestsignal - \signal(i)) + \mathcal{P}_{\mathcal{S}_i \setminus \mathcal{S}_i^\ast} \sensing^\ast \noise}_F \nonumber \\
&\stackrel{(i)}{\leq} \vectornormbig{\mathcal{P}_{\mathcal{S}_i \setminus \mathcal{S}_i^\ast} \sensing^\ast \sensing (\bestsignal - \signal(i))}_F + \vectornormbig{\mathcal{P}_{\mathcal{S}_i \setminus \mathcal{S}_i^\ast} \sensing^\ast \noise}_F \nonumber \\
&\stackrel{(ii)}{=} \vectornormbig{\mathcal{P}_{\mathcal{S}_i \setminus \mathcal{S}_i^\ast} (\bestsignal - \signal(i)) + \mathcal{P}_{\mathcal{S}_i \setminus \mathcal{S}_i^\ast} \sensing^\ast \sensing (\bestsignal - \signal(i))}_F \nonumber \\ &+ \vectornormbig{\mathcal{P}_{\mathcal{S}_i \setminus \mathcal{S}_i^\ast} \sensing^\ast \noise}_F \nonumber \\
&\stackrel{(iii)}{=} \big\|(\id - \mathcal{P}_{\mathcal{S}_i \setminus \mathcal{S}_i^\ast}\sensing^\ast \sensing \mathcal{P}_{\mathcal{S}_i \setminus \mathcal{S}_i^\ast}) (\bestsignal - \signal(i)) \nonumber \\ &+ \mathcal{P}_{\mathcal{S}_i \setminus \mathcal{S}_i^\ast} \sensing^\ast \sensing \mathcal{P}_{(\mathcal{S}_i \setminus \mathcal{S}_i^\ast)^\bot} (\bestsignal - \signal(i))\big\|_F + \vectornormbig{\mathcal{P}_{\mathcal{S}_i \setminus \mathcal{S}_i^\ast} \sensing^\ast \noise}_F \nonumber \\
&\leq \vectornormbig{(\id - \mathcal{P}_{\mathcal{S}_i \setminus \mathcal{S}_i^\ast}\sensing^\ast \sensing \mathcal{P}_{\mathcal{S}_i \setminus \mathcal{S}_i^\ast}) (\bestsignal - \signal(i))}_F \nonumber \\ &+ \vectornormbig{\mathcal{P}_{\mathcal{S}_i \setminus \mathcal{S}_i^\ast} \sensing^\ast \sensing \mathcal{P}_{(\mathcal{S}_i \setminus \mathcal{S}_i^\ast)^\bot} (\bestsignal - \signal(i))}_F + \vectornormbig{\mathcal{P}_{\mathcal{S}_i \setminus \mathcal{S}_i^\ast} \sensing^\ast \noise}_F \nonumber 
\end{align}
\begin{align}
&\stackrel{(iv)}{\leq} \delta_{3\rank}\vectornormbig{\bestsignal - \signal(i)}_F + \vectornormbig{\mathcal{P}_{\mathcal{S}_i \setminus \mathcal{S}_i^\ast} \sensing^\ast \noise}_F \nonumber \\ &+ \vectornormbig{\mathcal{P}_{\mathcal{S}_i \setminus \mathcal{S}_i^\ast} \sensing^\ast \sensing \mathcal{P}_{(\mathcal{S}_i \setminus \mathcal{S}_i^\ast)^\bot} (\bestsignal - \signal(i))}_F \nonumber \\
&\stackrel{(v)}{\leq} \delta_{3\rank}\vectornormbig{\bestsignal - \signal(i)}_F + \vectornormbig{\mathcal{P}_{\mathcal{S}_i \setminus \mathcal{S}_i^\ast} \sensing^\ast \noise}_F \nonumber \\ &+ \delta_{3\rank}\vectornormbig{\mathcal{P}_{(\mathcal{S}_i \setminus \mathcal{S}_i^\ast)^\bot} (\bestsignal - \signal(i))}_F \nonumber 
\end{align}
\begin{align}
&\stackrel{(vi)}{\leq} 2\delta_{3\rank}\vectornormbig{\bestsignal - \signal(i)}_F + \vectornormbig{\mathcal{P}_{\mathcal{S}_i \setminus \mathcal{S}_i^\ast} \sensing^\ast \noise}_F \label{ser:eq:02}
\end{align} where $ (i) $ due to triangle inequality over Frobenius metric norm, $ (ii) $ since $ \mathcal{P}_{\mathcal{S}_i \setminus \mathcal{S}_i^\ast} (\signal(i) - \bestsignal) = \mathbf{0} $, $ (iii) $ by using the fact that $ \signal(i) - \bestsignal := \mathcal{P}_{\mathcal{S}_i \setminus \mathcal{S}_i^\ast}(\signal(i) - \bestsignal) + \mathcal{P}_{(\mathcal{S}_i \setminus \mathcal{S}_i^\ast)^\bot}(\signal(i) - \bestsignal) $, $ (iv) $ due to Lemma \ref{lemma:3}, $ (v) $ due to Lemma \ref{lemma:4} and $ (vi) $ since $ \vectornormbig{\mathcal{P}_{(\mathcal{S}_i \setminus \mathcal{S}_i^\ast)^\bot} (\bestsignal - \signal(i))}_F \leq \vectornormbig{\signal(i) - \bestsignal}_F $.

For the right hand side of (\ref{ser:eq:01}), we calculate:
\begin{align}
&\vectornormbig{\mathcal{P}_{\mathcal{S}_i^\ast \setminus \mathcal{S}_i} \sensing^\ast \sensing (\bestsignal - \signal(i)) + \mathcal{P}_{\mathcal{S}_i^\ast \setminus \mathcal{S}_i} \sensing^\ast \noise}_F \nonumber \\
&\geq \vectornormbig{\mathcal{P}_{\mathcal{S}_i^\ast \setminus \mathcal{S}_i} (\bestsignal - \signal(i))}_F \nonumber \\ &- \vectornormbig{\mathcal{P}_{\mathcal{S}_i^\ast \setminus \mathcal{S}_i} \sensing^\ast \sensing \mathcal{P}_{(\mathcal{S}_i^\ast \setminus \mathcal{S}_i)^{\bot}} (\bestsignal - \signal(i))}_F \nonumber \\ &- \vectornormbig{(\mathcal{P}_{\mathcal{S}_i^\ast \setminus \mathcal{S}_i} \sensing^\ast \sensing \mathcal{P}_{\mathcal{S}_i^\ast \setminus \mathcal{S}_i} - \id)(\bestsignal - \signal(i))}_F - \vectornormbig{ \mathcal{P}_{\mathcal{S}_i^\ast \setminus \mathcal{S}_i}\sensing^\ast \noise}_F \nonumber \\
&\geq \vectornormbig{\mathcal{P}_{\mathcal{S}_i^\ast \setminus \mathcal{S}_i} (\bestsignal - \signal(i))}_F - 2\delta_{2\rank}\vectornormbig{\signal(i) - \bestsignal}_F \nonumber \\ &- \vectornormbig{ \mathcal{P}_{\mathcal{S}_i^\ast \setminus \mathcal{S}_i}\sensing^\ast \noise}_F \label{ser:eq:03}
\end{align} by using Lemmas \ref{lemma:3} and \ref{lemma:4}. Combining (\ref{ser:eq:02}) and (\ref{ser:eq:03}) in (\ref{ser:eq:01}), we get:
\begin{align}
\vectornormbig{\mathcal{P}_{\mathcal{X}^\ast \setminus \mathcal{S}_i}\bestsignal}_F &\leq (2\delta_{2\rank} + 2\delta_{3\rank})\vectornormbig{\signal(i) - \bestsignal}_F \nonumber \\ &+ \sqrt{2(1+\delta_{2\rank})}\vectornormbig{\noise}_2. \nonumber
\end{align}

\subsection{Proof of Theorem \ref{thm:mALPS0}}

Let $ \mathcal{X}^\ast \leftarrow \mathcal{P}_{\rank}(\bestsignal) $ be a set of orthonormal, rank-1 matrices that span the range of $ \bestsignal $. In Algorithm 1, $ \boldsymbol{W}(i) \leftarrow \mathcal{P}_{\rank}(\boldsymbol{V}(i)) $. Thus:
\begin{align}
\vectornormbig{\boldsymbol{W}(i) - \boldsymbol{V}(i)}_F^2 &\leq \vectornormbig{\bestsignal - \boldsymbol{V}(i)}_F^2 \Rightarrow \nonumber \\
\vectornormbig{\boldsymbol{W}(i) - \bestsignal + \bestsignal - \boldsymbol{V}(i)}_F^2 &\leq \vectornormbig{\bestsignal - \boldsymbol{V}(i)}_F^2 \Rightarrow \nonumber \\
\vectornormbig{\boldsymbol{W}(i) - \bestsignal}_F^2 &\leq 2\langle \boldsymbol{W}(i) - \bestsignal, \boldsymbol{V}(i) - \bestsignal \rangle \label{eq:mALPS0:00}
\end{align}

From Algorithm 1, $ i) ~\boldsymbol{V}(i) \in \text{span}(\mathcal{S}_i) $, $ ii) $ $ \signal(i) \in \text{span}( \mathcal{S}_i) $ and $ iii) $ $ \boldsymbol{W}(i) \in \text{span}(\mathcal{S}_i) $. We define $ \mathcal{E} \leftarrow \text{ortho}(\mathcal{S}_i \cup \mathcal{X}^{\ast}) $ where $ \text{rank}(\text{span}(\mathcal{E})) \leq 3\rank $ and let $ \mathcal{P}_{\mathcal{E}} $ be the orthogonal projection onto the subspace defined by $ \mathcal{E} $. 

Since $ \boldsymbol{W}(i) - \bestsignal \in \text{span}(\mathcal{E}) $ and $ \boldsymbol{V}(i) - \bestsignal \in \text{span}(\mathcal{E}) $, the following hold true:
\begin{align}
\boldsymbol{W}(i) &- \bestsignal = \mathcal{P}_{\mathcal{E}} (\boldsymbol{W}(i) - \bestsignal)~~\text{and}~~ \nonumber \\ \boldsymbol{V}(i) &- \bestsignal = \mathcal{P}_{\mathcal{E}} (\boldsymbol{V}(i) - \bestsignal). \nonumber
\end{align}

Then, (\ref{eq:mALPS0:00}) can be written as:
\begin{align}
&\vectornormbig{\boldsymbol{W}(i) - \bestsignal}_F^2 \leq 2\langle \mathcal{P}_{\mathcal{E}}(\boldsymbol{W}(i) - \bestsignal), \mathcal{P}_{\mathcal{E}}(\boldsymbol{V}(i) - \bestsignal) \rangle \Rightarrow \nonumber \\
&= \underbrace{2\langle \mathcal{P}_{\mathcal{E}}(\boldsymbol{W}(i) - \bestsignal), \mathcal{P}_{\mathcal{E}}(\signal(i) - \bestsignal - \mu_i \mathcal{P}_{\mathcal{S}_i}\sensing^\ast \sensing (\signal(i) - \bestsignal)) \rangle}_{\doteq A} \nonumber \\ &+ \underbrace{2\mu_i\langle \mathcal{P}_{\mathcal{E}}(\boldsymbol{W}(i) - \bestsignal), \mathcal{P}_{\mathcal{E}}\mathcal{P}_{\mathcal{S}_i}(\sensing^\ast \noise) \rangle}_{\doteq B}  \label{eq:mALPS0:01}
\end{align}

In B, we observe:
\begin{align}
B &:= 2\mu_i\langle \mathcal{P}_{\mathcal{E}}(\boldsymbol{W}(i) - \bestsignal), \mathcal{P}_{\mathcal{E}}\mathcal{P}_{\mathcal{S}_i}(\sensing^\ast \noise) \rangle \nonumber \\ &\stackrel{(i)}{=} 2\mu_i\langle \boldsymbol{W}(i) - \bestsignal, \mathcal{P}_{\mathcal{S}_i}(\sensing^\ast \noise) \rangle \nonumber \\
&\stackrel{(ii)}{\leq} 2\mu_i \vectornormbig{\boldsymbol{W}(i) - \bestsignal}_F \vectornormbig{\mathcal{P}_{\mathcal{S}_i}(\sensing^\ast \noise)}_F \nonumber \\
&\stackrel{(iii)}{\leq} 2\mu_i \sqrt{1+\delta_{2\rank}}\vectornormbig{\boldsymbol{W}(i) - \bestsignal}_F \vectornormbig{\noise}_2 \label{eq:mALPS0:02}
\end{align} where $ (i) $ holds since $ \mathcal{P}_{\mathcal{S}_i} \mathcal{P}_{\mathcal{E}} = \mathcal{P}_{\mathcal{E}}\mathcal{P}_{\mathcal{S}_i} = \mathcal{P}_{\mathcal{S}_i} $ for $ \text{span}(\mathcal{S}_i) \in \text{span}(\mathcal{E}) $, $ (ii) $ is due to Cauchy-Schwarz inequality and, $ (iii) $ is easily derived using Lemma \ref{lemma:1}.

In A, we perform the following motions:
\begin{align}
A &:= 2\langle \boldsymbol{W}(i) - \bestsignal, \mathcal{P}_{\mathcal{E}}(\signal(i) - \bestsignal) - \mu_i \mathcal{P}_{\mathcal{S}_i}\sensing^\ast \sensing \mathcal{P}_{\mathcal{E}} (\signal(i) - \bestsignal) \rangle \nonumber \\
&\stackrel{(i)}{=} 2\langle \boldsymbol{W}(i) - \bestsignal, \mathcal{P}_{\mathcal{E}}(\signal(i) - \bestsignal) \nonumber \\ &-\mu_i \mathcal{P}_{\mathcal{S}_i}\sensing^\ast \sensing \big[\mathcal{P}_{\mathcal{S}_i} + \mathcal{P}_{\mathcal{S}_i^{\bot}}\big]\mathcal{P}_{\mathcal{E}} (\signal(i) - \bestsignal) \rangle \nonumber \\
& = 2\langle \boldsymbol{W}(i) - \bestsignal, (\id - \mu_i\mathcal{P}_{\mathcal{S}_i}\sensing^\ast \sensing \mathcal{P}_{\mathcal{S}_i}) \mathcal{P}_{\mathcal{E}} (\signal(i) - \bestsignal) \rangle \nonumber \nonumber \\ &- 2\mu_i \langle \boldsymbol{W}(i) - \bestsignal, \mathcal{P}_{\mathcal{S}_i}\sensing^\ast \sensing \mathcal{P}_{\mathcal{S}_i^{\bot}} \mathcal{P}_{\mathcal{E}} (\signal(i) - \bestsignal) \rangle \nonumber \\
&\stackrel{(ii)}{\leq} 2\vectornormbig{\boldsymbol{W}(i) - \bestsignal}_F \vectornormbig{ (\id - \mu_i \mathcal{P}_{\mathcal{S}_i}\sensing^\ast \sensing \mathcal{P}_{\mathcal{S}_i}) \mathcal{P}_{\mathcal{E}} (\signal(i) - \bestsignal)}_F \nonumber \\ &+ 2\mu_i \vectornormbig{\boldsymbol{W}(i) - \bestsignal}_F \vectornormbig{\mathcal{P}_{\mathcal{S}_i}\sensing^\ast \sensing \mathcal{P}_{\mathcal{S}_i^{\bot}}\mathcal{P}_{\mathcal{E}}(\signal(i) - \bestsignal)}_F
\label{eq:mALPS:04}
\end{align} where $ (i) $ is due to $ \mathcal{P}_{\mathcal{E}}(\signal(i) - \bestsignal) := \mathcal{P}_{\mathcal{S}_i} \mathcal{P}_{\mathcal{E}}(\signal(i) - \bestsignal) + \mathcal{P}_{\mathcal{S}_i^{\bot}} \mathcal{P}_{\mathcal{E}}(\signal(i) - \bestsignal) $ and $ (ii) $ follows from Cauchy-Schwarz inequality. 
Since $ \frac{1}{1+\delta_{2\rank}} \leq \mu_i \leq \frac{1}{1-\delta_{2\rank}} $, Lemma \ref{lemma:3} implies:
\begin{align}
\lambda(\id - \mu_i\mathcal{P}_{\mathcal{S}_i}\sensing^\ast \sensing \mathcal{P}_{\mathcal{S}_i}) &\in \Bigg[1 - \frac{1-\delta_{2\rank}}{1+\delta_{2\rank}}, \frac{1+\delta_{2\rank}}{1-\delta_{2\rank}} - 1\Bigg] \nonumber \\ &\leq \frac{2\delta_{2\rank}}{1-\delta_{2\rank}}. \nonumber
\end{align} and thus:
\begin{align}
\vectornormbig{(\id - \mu_i\mathcal{P}_{\mathcal{S}_i}\sensing^\ast \sensing& \mathcal{P}_{\mathcal{S}_i})\mathcal{P}_{\mathcal{E}}(\signal(i) - \bestsignal)}_F \nonumber \\ &\leq \frac{2\delta_{2\rank}}{1-\delta_{2\rank}} \vectornormbig{\mathcal{P}_{\mathcal{E}}(\signal(i) - \bestsignal)}_F. \nonumber
\end{align} Furthermore, according to Lemma \ref{lemma:4}:
\begin{align}
\vectornormbig{\mathcal{P}_{\mathcal{S}_i}\sensing^\ast \sensing \mathcal{P}_{\mathcal{S}_i^{\bot}}\mathcal{P}_{\mathcal{E}}(\signal(i) - \bestsignal)}_F \leq  \delta_{3\rank} \vectornormbig{\mathcal{P}_{\mathcal{S}_i^{\bot}}\mathcal{P}_{\mathcal{E}}(\signal(i) - \bestsignal)}_F \nonumber
\end{align} since $ \text{rank}(\mathcal{P}_{\mathcal{K}}\signal) \leq 3\rank, ~\forall \signal \in \mathbb{R}^{\dimension} $ for $\mathcal{K} \leftarrow \text{ortho}(\mathcal{E} \cup \mathcal{S}_i)$. Since $ \mathcal{P}_{\mathcal{S}_i^{\bot}}\mathcal{P}_{\mathcal{E}}(\signal(i) - \bestsignal) = \mathcal{P}_{\mathcal{X}^\ast \setminus (\mathcal{D}_i \cup \mathcal{X}_i)}\bestsignal $ where 
\begin{align}
\mathcal{D}_i \leftarrow \mathcal{P}_{k}\left(\mathcal{P}_{\mathcal{X}_i^{\bot}}\nabla f(\signal(i))\right), \nonumber
\end{align} then:
\begin{align}
&\vectornormbig{\mathcal{P}_{\mathcal{S}_i^{\bot}}\mathcal{P}_{\mathcal{E}}(\signal(i) - \bestsignal)}_F = \vectornormbig{\mathcal{P}_{\mathcal{X}^\ast \setminus (\mathcal{D}_i \cup \mathcal{X}_i)}\bestsignal}_F \nonumber \\ &\leq (2\delta_{2\rank} + 2\delta_{3\rank})\vectornormbig{\signal(i) - \bestsignal}_F + \sqrt{2(1+\delta_{2\rank})}\vectornormbig{\noise}_2, \nonumber
\end{align} using Lemma \ref{lemma:act_subspace_exp}. Combining the above in (\ref{eq:mALPS:04}), we compute:
\begin{align}
A 
&\leq \Big(\frac{4\delta_{2\rank}}{1-\delta_{2\rank}} + (2\delta_{2\rank} + 2\delta_{3\rank})\frac{2\delta_{3\rank}}{1-\delta_{2\rank}}\Big)\vectornormbig{\boldsymbol{W}(i) - \bestsignal}_F \cdot \nonumber \\ & \vectornormbig{\signal(i) - \bestsignal}_F + \frac{2\delta_{3\rank}}{1-\delta_{2\rank}}  \vectornormbig{\boldsymbol{W}(i) - \bestsignal}_F \sqrt{2(1+\delta_{2\rank})}\vectornormbig{\noise}_2 \label{eq:mALPS0:03}
\end{align}

Combining (\ref{eq:mALPS0:02}) and (\ref{eq:mALPS0:03}) in (\ref{eq:mALPS0:01}), we get:
\begin{align}
&\vectornormbig{\boldsymbol{W}(i) - \bestsignal}_F \nonumber \\ &\leq \Big(\frac{4\delta_{2\rank}}{1-\delta_{2\rank}} + (2\delta_{2\rank} + 2\delta_{3\rank})\frac{2\delta_{3\rank}}{1-\delta_{2\rank}}\Big) \vectornormbig{\signal(i) - \bestsignal}_F \nonumber \\ &+ \Big(\frac{2\sqrt{1+\delta_{2\rank}}}{1 - \delta_{2\rank}} + \frac{2\delta_{3\rank}}{1-\delta_{2\rank}}  \sqrt{2(1+\delta_{2\rank})}\Big) \vectornormbig{\noise}_2 \label{eq:mALPS0:04}
\end{align}

Focusing on steps 5 and 6 of Algorithm 1, we perform similar motions to obtain:
\begin{align}
\vectornormbig{\signal(i+1) - \bestsignal}_F 
&\leq \Big( \frac{1 + 2\delta_{2\rank}}{1-\delta_{2\rank}}\Big)\vectornormbig{\boldsymbol{W}(i) - \bestsignal}_F \nonumber \\ &+ \frac{\sqrt{1+\delta_{\rank}}}{1-\delta_{\rank}} \vectornormbig{\noise}_2
\label{eq:mALPS0:05}
\end{align} 
Combining the recursions in (\ref{eq:mALPS0:04}) and (\ref{eq:mALPS0:05}), we finally compute:
\begin{align}
&\vectornormbig{\signal(i+1) - \bestsignal}_F \leq \rho\vectornormbig{\signal(i) - \bestsignal}_F + \gamma\vectornormbig{\noise}_2, \nonumber
\end{align} for $\rho:=\Big( \frac{1 + 2\delta_{2\rank}}{1-\delta_{2\rank}}\Big)\Big(\frac{4\delta_{2\rank}}{1-\delta_{2\rank}} + (2\delta_{2\rank} + 2\delta_{3\rank})\frac{2\delta_{3\rank}}{1-\delta_{2\rank}}\Big)$ and 
\begin{align}
\gamma &:= \Bigg(\Big( \frac{1 + 2\delta_{2\rank}}{1-\delta_{2\rank}}\Big)\Big(\frac{2\sqrt{1+\delta_{2\rank}}}{1 - \delta_{2\rank}} + \frac{2\delta_{3\rank}}{1-\delta_{2\rank}}  \sqrt{2(1+\delta_{2\rank})}\Big) \nonumber \\ &+ \frac{\sqrt{1+\delta_{\rank}}}{1-\delta_{\rank}} \Bigg) \nonumber
\end{align} For the convergence parameter $ \rho $, further compute:
\begin{align}
&\Big( \frac{1 + 2\delta_{2\rank}}{1-\delta_{2\rank}}\Big)\Big(\frac{4\delta_{2\rank}}{1-\delta_{2\rank}} + (2\delta_{2\rank} + 2\delta_{3\rank})\frac{2\delta_{3\rank}}{1-\delta_{2\rank}}\Big) \nonumber \\ &\leq \frac{1+2\delta_{3\rank}}{(1-\delta_{3\rank})^2} \big( 4\delta_{3\rank} + 8 \delta_{3\rank}^2\big) =: \hat{\rho}.
\end{align} for $ \delta_{\rank} \leq \delta_{2\rank} \leq \delta_{3\rank}$. Calculating the roots of this expression, we easily observe that $ \rho < \hat{\rho} < 1 $ for $ \delta_{3\rank} < 0.1235 $.


\subsection{Proof of Theorem \ref{thm:mALPS5}}

Before we present the proof of Theorem \ref{thm:mALPS5}, we list a series of lemmas that correspond to the motions Algorithm 2 performs.

\begin{lemma}{\label{lemma:greedy}}[Error norm reduction via least-squares optimization] Let $ \mathcal{S}_i $ be a set of orthonormal, rank-1 matrices that span a rank-2$ \rank $ subspace in $ \mathbb{R}^{\dimension} $. Then, the least squares solution $ \boldsymbol{V}(i) $ given by:
\begin{align}
\boldsymbol{V}(i) \leftarrow \argmin_{\boldsymbol{V}: \boldsymbol{V} \in \text{span}(\mathcal{S}_i)} \vectornormbig{\obs - \sensing \boldsymbol{V}}_2^2,  \label{eq:mALPS5:00}
\end{align} satisfies:
\begin{align}
\vectornormbig{\boldsymbol{V}(i) - \bestsignal}_F &\leq \frac{1}{\sqrt{1-\delta_{3\rank}^2(\sensing)}} \vectornormbig{\mathcal{P}_{\mathcal{S}_i^{\bot}}(\boldsymbol{V}(i) - \bestsignal)}_F \nonumber \\ &+ \frac{\sqrt{1+\delta_{2\rank}}}{1-\delta_{3\rank}} \vectornormbig{\noise}_2. \label{eq:mALPS5:01}
\end{align}
\end{lemma}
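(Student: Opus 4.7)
The strategy is to combine first-order optimality of the restricted least-squares problem with the R-RIP toolbox (Lemmas \ref{lemma:1}--\ref{lemma:4}), using an orthogonal decomposition of the error along $\mathcal{S}_i$ and its complement. Let $h := \boldsymbol{V}(i) - \bestsignal$ and split $h = h_{\parallel} + h_{\bot}$ with $h_{\parallel} := \mathcal{P}_{\mathcal{S}_i} h$ and $h_{\bot} := \mathcal{P}_{\mathcal{S}_i^{\bot}} h = -\mathcal{P}_{\mathcal{S}_i^{\bot}} \bestsignal$ (since $\boldsymbol{V}(i) \in \text{span}(\mathcal{S}_i)$). Note that $h_{\parallel}$ lies in a rank-$2\rank$ subspace and $h_{\bot}$ has rank at most $\rank$; their union therefore spans a rank-$3\rank$ subspace. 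By Pythagoras, $\|h\|_F^2 = \|h_{\parallel}\|_F^2 + \|h_{\bot}\|_F^2$, so everything reduces to bounding $\|h_{\parallel}\|_F$ in terms of $\|h_{\bot}\|_F$ and $\|\noise\|_2$.

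To get such a bound, I would invoke Lemma \ref{lemma:5} on the convex quadratic $f(\boldsymbol{V}) = \|\obs - \sensing \boldsymbol{V}\|_2^2$ over the linear subspace $\text{span}(\mathcal{S}_i)$: since the feasible set is a subspace the variational inequality sharpens to an equality, yielding the normal equation $\mathcal{P}_{\mathcal{S}_i} \sensing^{\ast}(\obs - \sensing \boldsymbol{V}(i)) = 0$. Substituting $\obs = \sensing \bestsignal + \noise$ and decomposing gives
\begin{equation*}
\mathcal{P}_{\mathcal{S}_i} \sensing^{\ast} \sensing \mathcal{P}_{\mathcal{S}_i} h_{\parallel} \;=\; \mathcal{P}_{\mathcal{S}_i} \sensing^{\ast} \noise \;-\; \mathcal{P}_{\mathcal{S}_i} \sensing^{\ast} \sensing\, h_{\bot}.
\end{equation*}
Then Lemma \ref{lemma:2} (rank-$2\rank$ input) gives $\|\mathcal{P}_{\mathcal{S}_i}\sensing^{\ast}\sensing\mathcal{P}_{\mathcal{S}_i} h_{\parallel}\|_F \geq (1-\delta_{2\rank})\|h_{\parallel}\|_F$; Lemma \ref{lemma:4} (union of $\mathcal{S}_i$ and a basis of $h_\bot$'s support has rank $\leq 3\rank$) bounds the cross-term by $\delta_{3\rank}\|h_\bot\|_F$; and Lemma \ref{lemma:1} bounds the noise piece by $\sqrt{1+\delta_{2\rank}}\|\noise\|_2$. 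Triangle inequality on the displayed identity then produces an affine bound $\|h_{\parallel}\|_F \leq \alpha \|h_{\bot}\|_F + \beta \|\noise\|_2$ with $\alpha \approx \delta_{3\rank}/(1-\delta_{2\rank})$ and $\beta \approx \sqrt{1+\delta_{2\rank}}/(1-\delta_{2\rank})$.

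Finally, I would substitute this affine bound into $\|h\|_F = \sqrt{\|h_{\parallel}\|_F^2 + \|h_{\bot}\|_F^2}$ and invoke the elementary inequality $\sqrt{(\alpha b + \beta)^2 + b^2} \leq \sqrt{1+\alpha^2}\, b + \beta$, which follows since $\alpha \leq \sqrt{1+\alpha^2}$. The resulting bound has the advertised structure: a multiplicative factor on $\|h_{\bot}\|_F$ plus a noise term scaling like $\|\noise\|_2/(1-\delta_{3\rank})$.

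\textbf{Main obstacle.} Matching the precise leading constant $\frac{1}{\sqrt{1-\delta_{3\rank}^2}}$ stated in \eqref{eq:mALPS5:01} is the delicate part: a direct Pythagorean recombination of the affine bound above produces $\sqrt{1+\delta_{3\rank}^2/(1-\delta_{2\rank})^2}$, which is slightly looser than $1/\sqrt{1-\delta_{3\rank}^2}$. Tightening it requires an alternative route that exploits the orthogonality built into the least-squares residual -- namely, the normal equation implies $\sensing h \perp \sensing\bigl(\text{span}(\mathcal{S}_i)\bigr)$, so $\|\sensing h_{\bot}\|_2^2 = \|\sensing h\|_2^2 + \|\sensing h_{\parallel}\|_2^2$. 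Applying R-RIP separately to the three norms on this identity (rank $\leq \rank$ for $h_\bot$, rank $\leq 2\rank$ for $h_\parallel$, rank $\leq 3\rank$ for $h$) and treating the noise piece $\Pi_{\sensing(\mathcal{S}_i)}\noise$ as an additive perturbation of $\sensing h$ is what I expect to produce the sharper denominator $\sqrt{1-\delta_{3\rank}^2}$ and the advertised noise constant $\sqrt{1+\delta_{2\rank}}/(1-\delta_{3\rank})$.
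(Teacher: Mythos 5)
Your overall architecture (Pythagorean split of $h = \boldsymbol{V}(i)-\bestsignal$, optimality of the restricted least-squares solution, and the R-RIP lemmas) is the right one, and your normal-equation route does yield a correct bound of the advertised \emph{form}. But it does not prove the lemma as stated, and you have correctly diagnosed where: inverting the Gram operator $\mathcal{P}_{\mathcal{S}_i}\sensing^\ast\sensing\mathcal{P}_{\mathcal{S}_i}$ produces the coefficient $\delta_{3\rank}/(1-\delta_{2\rank})$ on $\vectornormbig{h_{\bot}}_F$, and Pythagorean recombination then gives $\sqrt{1+\delta_{3\rank}^2/(1-\delta_{2\rank})^2}$ rather than $1/\sqrt{1-\delta_{3\rank}^2}$. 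The missing idea is that the paper never bounds $\vectornormbig{h_{\parallel}}_F$ in terms of $\vectornormbig{h_{\bot}}_F$ at all. Instead, starting from $\vectornormbig{h_{\parallel}}_F^2 = \langle h, h_{\parallel}\rangle$ and adding the nonpositive quantity $-\langle \sensing\boldsymbol{V}(i)-\obs, \sensing h_{\parallel}\rangle$ from the variational inequality (Lemma \ref{lemma:5}), one gets $\vectornormbig{h_{\parallel}}_F^2 \leq |\langle h, (\id - \mathcal{P}_{\mathcal{E}}\sensing^\ast\sensing\mathcal{P}_{\mathcal{E}})h_{\parallel}\rangle| + \langle \noise, \sensing h_{\parallel}\rangle$ with $\mathcal{E}=\text{ortho}(\mathcal{S}_i\cup\mathcal{X}^\ast)$ of rank at most $3\rank$; treating $\id-\mathcal{P}_{\mathcal{E}}\sensing^\ast\sensing\mathcal{P}_{\mathcal{E}}$ as a single contraction of norm $\delta_{3\rank}$ (Lemma \ref{lemma:3} with $\mu=1$) gives the \emph{self-referential} bound $\vectornormbig{h_{\parallel}}_F \leq \delta_{3\rank}\vectornormbig{h}_F + \sqrt{1+\delta_{2\rank}}\vectornormbig{\noise}_2$, with the full error on the right and a bare $\delta_{3\rank}$. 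Substituting this into $\vectornormbig{h}_F^2 = \vectornormbig{h_{\parallel}}_F^2 + \vectornormbig{h_{\bot}}_F^2$ yields a quadratic inequality in the unknown $\vectornormbig{h}_F$ whose largest root is exactly $\frac{1}{\sqrt{1-\delta_{3\rank}^2}}\vectornormbig{h_{\bot}}_F + \frac{\sqrt{1+\delta_{2\rank}}}{1-\delta_{3\rank}}\vectornormbig{\noise}_2$. That root-solving step is the sole source of the $\sqrt{1-\delta_{3\rank}^2}$ denominator.

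Your proposed repair via the energy identity $\vectornormbig{\sensing h_{\bot}}_2^2 = \vectornormbig{\sensing h}_2^2 + \vectornormbig{\sensing h_{\parallel}}_2^2$ does not recover the constant either: applying R-RIP termwise and eliminating $\vectornormbig{h_{\parallel}}_F^2$ via Pythagoras gives (in the noiseless case, with all constants equal to $\delta$) $\vectornormbig{h}_F \leq (1-\delta)^{-1/2}\vectornormbig{h_{\bot}}_F$, which is strictly weaker than $(1-\delta^2)^{-1/2}\vectornormbig{h_{\bot}}_F$. So the second route you sketch, as described, would fail to close the gap; the quadratic-root argument above is the step you would need.
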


\begin{proof}
We observe that $ \vectornormbig{\boldsymbol{V}(i) - \bestsignal}_F^2 $ is decomposed as follows:
\begin{align}
\vectornormbig{\boldsymbol{V}(i) - \bestsignal}_F^2 = \vectornormbig{\mathcal{P}_{\mathcal{S}_i} (\boldsymbol{V}(i) - \bestsignal)}_F^2 + \vectornormbig{\mathcal{P}_{\mathcal{S}_i^{\bot}}(\boldsymbol{V}(i) - \bestsignal)}_F^2. \label{eq:mALPS5:02}
\end{align}
In (\ref{eq:mALPS5:00}), $ \boldsymbol{V}(i) $ is the minimizer over the low-rank subspace spanned by $ \mathcal{S}_i $ with $ \text{rank}(\text{span}(\mathcal{S}_i)) \leq 2\rank $. Using the optimality condition (Lemma \ref{lemma:5}) over the convex set $ \Theta = \lbrace \signal: \text{span}(\signal) \in \mathcal{S}_i \rbrace $, we have:
\begin{align}
&\langle \nabla f(\boldsymbol{V}(i)), \mathcal{P}_{\mathcal{S}_i}(\bestsignal - \boldsymbol{V}(i)) \rangle \geq 0 \Rightarrow \nonumber \\ &\langle \sensing \boldsymbol{V}(i) - \obs, \sensing \mathcal{P}_{\mathcal{S}_i}(\boldsymbol{V}(i) - \bestsignal) \rangle \leq 0. \label{eq:mALPS5:03a}
\end{align} for $ \mathcal{P}_{\mathcal{S}_i}\bestsignal \in \text{span}(\mathcal{S}_i) $.
Given condition (\ref{eq:mALPS5:03a}), the first term on the right hand side of (\ref{eq:mALPS5:02}) becomes: 
\begin{align}
&\vectornormbig{\mathcal{P}_{\mathcal{S}_i}(\boldsymbol{V}(i) - \bestsignal)}_F^2 \nonumber \\ &= \langle \boldsymbol{V}(i) - \bestsignal, \mathcal{P}_{\mathcal{S}_i}(\boldsymbol{V}(i) - \bestsignal) \rangle \nonumber \\ 
                            &\stackrel{(\ref{eq:mALPS5:03a})}{\leq} \langle \boldsymbol{V}(i) - \bestsignal, \mathcal{P}_{\mathcal{S}_i}(\boldsymbol{V}(i) - \bestsignal) \rangle \nonumber \\ &- \langle \sensing \boldsymbol{V}(i) - \obs, \sensing \mathcal{P}_{\mathcal{S}_i}(\boldsymbol{V}(i) - \bestsignal) \rangle \nonumber\\ 
                            &\leq | \langle \boldsymbol{V}(i) - \bestsignal, (\id - \sensing^\ast \sensing)\mathcal{P}_{\mathcal{S}_i}(\boldsymbol{V}(i) - \bestsignal) \rangle | \nonumber \\ &+ \langle \noise, \sensing \mathcal{P}_{\mathcal{S}_i}(\boldsymbol{V}(i) - \bestsignal) \rangle  \label{eq:mALPS5:08}
\end{align} Focusing on the term $ | \langle \boldsymbol{V}(i) - \bestsignal, (\id - \sensing^\ast \sensing)\mathcal{P}_{\mathcal{S}_i}(\boldsymbol{V}(i) - \bestsignal) \rangle | $, we derive the following:
\begin{align}
&| \langle \boldsymbol{V}(i) - \bestsignal, (\id - \sensing^\ast \sensing)\mathcal{P}_{\mathcal{S}_i}(\boldsymbol{V}(i) - \bestsignal) \rangle | \nonumber \\ 
&= | \langle \boldsymbol{V}(i) - \bestsignal, \mathcal{P}_{\mathcal{S}_i}(\boldsymbol{V}(i) - \bestsignal) \rangle  \nonumber \\ &- \langle \boldsymbol{V}(i) - \bestsignal, \sensing^\ast \sensing\mathcal{P}_{\mathcal{S}_i}(\boldsymbol{V}(i) - \bestsignal) \rangle | \nonumber\\
&\stackrel{(i)}{=} | \langle \mathcal{P}_{\mathcal{S}_i \cup \mathcal{X}^\ast}(\boldsymbol{V}(i) - \bestsignal), \mathcal{P}_{\mathcal{S}_i}(\boldsymbol{V}(i) - \bestsignal) \rangle  \nonumber \\ &- \langle \sensing \mathcal{P}_{\mathcal{S}_i \cup \mathcal{X}^\ast} (\boldsymbol{V}(i) - \bestsignal), \sensing \mathcal{P}_{\mathcal{S}_i}(\boldsymbol{V}(i) - \bestsignal) \rangle | \nonumber \\
&\stackrel{(ii)}{=} | \langle \mathcal{P}_{\mathcal{S}_i \cup \mathcal{X}^\ast}(\boldsymbol{V}(i) - \bestsignal), \mathcal{P}_{\mathcal{S}_i \cup \mathcal{X}^\ast}\mathcal{P}_{\mathcal{S}_i}(\boldsymbol{V}(i) - \bestsignal) \rangle  \nonumber \\ &- \langle \sensing \mathcal{P}_{\mathcal{S}_i \cup \mathcal{X}^\ast} (\boldsymbol{V}(i) - \bestsignal), \sensing \mathcal{P}_{\mathcal{S}_i \cup \mathcal{X}^\ast} \mathcal{P}_{\mathcal{S}_i}(\boldsymbol{V}(i) - \bestsignal) \rangle | \nonumber \\
&= | \langle \boldsymbol{V}(i) - \bestsignal, (\id - \mathcal{P}_{\mathcal{S}_i \cup \mathcal{X}^\ast}\sensing^\ast \sensing\mathcal{P}_{\mathcal{S}_i \cup \mathcal{X}^\ast})\mathcal{P}_{\mathcal{S}_i}(\boldsymbol{V}(i) - \bestsignal) \rangle |  \nonumber
\end{align} where $ (i) $ follows from the facts that $ \boldsymbol{V}(i) - \bestsignal \in \text{span}(\text{ortho}(\mathcal{S}_i \cup \mathcal{X}^\ast)) $ and thus $ \mathcal{P}_{\mathcal{S}_i \cup \mathcal{X}^{\ast}}(\boldsymbol{V}(i) - \bestsignal) = \boldsymbol{V}(i) - \bestsignal $ and $ (ii) $ is due to $ \mathcal{P}_{\mathcal{S}_i \cup \mathcal{X}^\ast} \mathcal{P}_{\mathcal{S}_i} = \mathcal{P}_{\mathcal{S}_i} $ since $ \text{span}(\mathcal{S}_i) \subseteq \text{span}(\text{ortho}(\mathcal{S}_i \cup \mathcal{X}^\ast)) $. Then, (\ref{eq:mALPS5:08}) becomes:
\begin{align}
&\vectornormbig{\mathcal{P}_{\mathcal{S}_i}(\boldsymbol{V}(i) - \bestsignal)}_F^2 \nonumber \\ &\leq | \langle \boldsymbol{V}(i) - \bestsignal, (\id - \mathcal{P}_{\mathcal{S}_i \cup \mathcal{X}^\ast}\sensing^\ast \sensing\mathcal{P}_{\mathcal{S}_i \cup \mathcal{X}^\ast})\mathcal{P}_{\mathcal{S}_i}(\boldsymbol{V}(i) - \bestsignal) \rangle |  \nonumber \\ &+ \langle \noise, \sensing \mathcal{P}_{\mathcal{S}_i}(\boldsymbol{V}(i) - \bestsignal) \rangle \nonumber \\
					&\stackrel{(i)}{\leq} \vectornormbig{\boldsymbol{V}(i) - \bestsignal}_F\vectornormbig{ (\id - \mathcal{P}_{\mathcal{S}_i \cup \mathcal{X}^\ast}\sensing^\ast \sensing\mathcal{P}_{\mathcal{S}_i \cup \mathcal{X}^\ast})\mathcal{P}_{\mathcal{S}_i}(\boldsymbol{V}(i) - \bestsignal)}_F \nonumber \\ &+ \vectornormbig{\mathcal{P}_{\mathcal{S}_i}\sensing^\ast \noise}_F \vectornormbig{\mathcal{P}_{\mathcal{S}_i}(\boldsymbol{V}(i) - \bestsignal)}_F \nonumber \\
                    &\stackrel{(ii)}{\leq} \delta_{3\rank} \vectornormbig{\mathcal{P}_{\mathcal{S}_i}(\boldsymbol{V}(i) - \bestsignal)}_F \vectornormbig{\boldsymbol{V}(i) - \bestsignal}_F \nonumber \\ &+ \sqrt{1+\delta_{2\rank}} \vectornormbig{\mathcal{P}_{\mathcal{S}_i}(\boldsymbol{V}(i) - \bestsignal)}_F \vectornormbig{\noise}_2,  \label{eq:mALPS5:09}                           
\end{align} where $ (i) $ comes from Cauchy-Swartz inequality and $ (ii) $ is due to Lemmas \ref{lemma:1} and \ref{lemma:3}. Simplifying the above quadratic expression, we obtain: 
\begin{align}
\vectornormbig{\mathcal{P}_{\mathcal{S}_i}(\boldsymbol{V}(i) - \bestsignal)}_F \leq \delta_{3\rank} \vectornormbig{\boldsymbol{V}(i) - \bestsignal}_F + \sqrt{1+\delta_{2\rank}} \vectornormbig{\noise}_2. \label{eq:mALPS5:10}                           
\end{align}

As a consequence, (\ref{eq:mALPS5:02}) can be upper bounded by:
\begin{align}
\vectornormbig{\boldsymbol{V}(i) - \bestsignal}_F^2 &\leq \big(\delta_{3\rank} \vectornormbig{\boldsymbol{V}(i) - \bestsignal}_F + \sqrt{1+\delta_{2\rank}} \vectornormbig{\noise}_2\big)^2 \nonumber \\ &+ \vectornormbig{\mathcal{P}_{\mathcal{S}_i^{\bot}}(\boldsymbol{V}(i) - \bestsignal)}_F^2. \label{eq:mALPS5:11}                           
\end{align}

We form the quadratic polynomial for this inequality assuming as unknown variable the quantity $ \vectornormbig{\boldsymbol{V}(i) - \bestsignal}_F $. Bounding by the largest root of the resulting polynomial, we get:
\begin{align}
\vectornormbig{\boldsymbol{V}(i) - \bestsignal}_F &\leq \frac{1}{\sqrt{1-\delta_{3\rank}^2(\sensing)}} \vectornormbig{\mathcal{P}_{\mathcal{S}_i^{\bot}}(\boldsymbol{V}(i) - \bestsignal)}_F \nonumber \\ &+ \frac{\sqrt{1+\delta_{2\rank}}}{1-\delta_{3\rank}} \vectornormbig{\noise}_2.\label{eq:mALPS5:12a}
\end{align} 
\end{proof}

The following Lemma characterizes how subspace {\it pruning} affects the recovered energy:

\begin{lemma}{\label{lemma:comb_selection}}[Best rank-$ \rank $ subspace selection] Let $ \boldsymbol{V}(i) \in \mathbb{R}^{\dimension} $ be a rank-$ 2\rank $ proxy matrix in the subspace spanned by $ \mathcal{S}_i $ and let $ \signal(i+1) \leftarrow \mathcal{P}_{\rank}(\boldsymbol{V}(i)) $ denote the best rank-$ \rank $ approximation to $ \boldsymbol{V}(i) $, according to (\ref{eq:svd_proj}). Then:
\begin{align}
\vectornormbig{\signal(i+1)  - \boldsymbol{V}(i)}_F &\leq \vectornormbig{\mathcal{P}_{\mathcal{S}_i}(\boldsymbol{V}(i) - \bestsignal)}_F \leq \vectornormbig{\boldsymbol{V}(i) - \bestsignal}_F. \label{eq:mALPS5:13}
\end{align}
\end{lemma}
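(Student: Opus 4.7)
The plan is to handle the two inequalities separately, with the first (sharp) inequality requiring the bulk of the work. For the second inequality, I will simply invoke Pythagoras: since $\boldsymbol{V}(i) \in \text{span}(\mathcal{S}_i)$, the term $\mathcal{P}_{\mathcal{S}_i^{\bot}}\boldsymbol{V}(i)$ vanishes, so $\mathcal{P}_{\mathcal{S}_i}(\boldsymbol{V}(i) - \bestsignal)$ and $\mathcal{P}_{\mathcal{S}_i^{\bot}}\bestsignal$ are Frobenius-orthogonal components of $\boldsymbol{V}(i) - \bestsignal$; the bound then follows by discarding the non-negative $\vectornormbig{\mathcal{P}_{\mathcal{S}_i^{\bot}}\bestsignal}_F^2$ term.

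For the first inequality, the plan is a block decomposition of $\boldsymbol{V}(i) - \mathcal{P}_{\mathcal{S}_i}\bestsignal$ followed by an Eckart--Young argument restricted to the ``diagonal'' block. Let $\mathcal{U}$ and $\mathcal{V}$ denote the left and right singular subspaces associated to $\mathcal{S}_i$ as in Definition \ref{def:svd_proj}, so that (\ref{eq:newproj}) gives $\mathcal{P}_{\mathcal{S}_i}\bestsignal = \mathcal{P}_{\mathcal{U}}\bestsignal + \bestsignal\mathcal{P}_{\mathcal{V}} - \mathcal{P}_{\mathcal{U}}\bestsignal\mathcal{P}_{\mathcal{V}}$. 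The containment $\boldsymbol{V}(i) \in \text{span}(\mathcal{S}_i)$ forces the column and row spaces of $\boldsymbol{V}(i)$ into $\mathcal{U}$ and $\mathcal{V}$ respectively, so $\boldsymbol{V}(i) = \mathcal{P}_{\mathcal{U}}\boldsymbol{V}(i)\mathcal{P}_{\mathcal{V}}$ and hence $\mathcal{P}_{\mathcal{S}_i}(\boldsymbol{V}(i) - \bestsignal) = \boldsymbol{V}(i) - \mathcal{P}_{\mathcal{S}_i}\bestsignal$. Setting $C := \mathcal{P}_{\mathcal{U}}\bestsignal\mathcal{P}_{\mathcal{V}}$, I will rewrite the difference as
\begin{align*}
\boldsymbol{V}(i) - \mathcal{P}_{\mathcal{S}_i}\bestsignal &= (\boldsymbol{V}(i) - C) - \mathcal{P}_{\mathcal{U}}\bestsignal\mathcal{P}_{\mathcal{V}^{\bot}} \\
&\quad - \mathcal{P}_{\mathcal{U}^{\bot}}\bestsignal\mathcal{P}_{\mathcal{V}},
\end{align*}
whose three summands inhabit the Frobenius-orthogonal blocks $(\mathcal{U},\mathcal{V})$, $(\mathcal{U},\mathcal{V}^{\bot})$, $(\mathcal{U}^{\bot},\mathcal{V})$. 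Their squared norms therefore add, and dropping the two cross-block pieces already yields $\vectornormbig{\mathcal{P}_{\mathcal{S}_i}(\boldsymbol{V}(i) - \bestsignal)}_F \geq \vectornormbig{\boldsymbol{V}(i) - C}_F$.

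To finish, I will observe that $\text{rank}(C) \leq \text{rank}(\bestsignal) \leq \rank$, and that both $\boldsymbol{V}(i)$ and its best rank-$\rank$ truncation $\signal(i+1)$ live in the $(\mathcal{U},\mathcal{V})$ block since the singular vectors of $\boldsymbol{V}(i)$ inherit its column/row-space restrictions. Applying Eckart--Young to $\boldsymbol{V}(i)$ restricted to this block (where $C$ is admissible as a competing rank-$\rank$ matrix) gives $\vectornormbig{\boldsymbol{V}(i) - C}_F \geq \vectornormbig{\boldsymbol{V}(i) - \signal(i+1)}_F$, closing the argument. The main subtlety is precisely this restricted Eckart--Young step: the naive witness $\boldsymbol{Y} = \bestsignal$ in the ambient space only yields the weaker bound $\vectornormbig{\bestsignal - \boldsymbol{V}(i)}_F$, and it is the block decomposition that lets me trade this for the sharper $\vectornormbig{\mathcal{P}_{\mathcal{S}_i}(\boldsymbol{V}(i) - \bestsignal)}_F$ by isolating the $(\mathcal{U},\mathcal{V})$-block contribution and using that the off-diagonal leakage $\mathcal{P}_{\mathcal{U}}\bestsignal\mathcal{P}_{\mathcal{V}^{\bot}} + \mathcal{P}_{\mathcal{U}^{\bot}}\bestsignal\mathcal{P}_{\mathcal{V}}$ can only help (not hurt) the lower bound.
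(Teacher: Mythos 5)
Your proof is correct, but it takes a genuinely different route from the paper's. The paper's argument is a two-line appeal to the optimality of $\signal(i+1)$: for any rank-$\rank$ matrix $\signal \in \text{span}(\mathcal{S}_i)$ one has $\vectornormbig{\signal(i+1) - \boldsymbol{V}(i)}_F \leq \vectornormbig{\signal - \boldsymbol{V}(i)}_F$, and the paper plugs in $\signal := \mathcal{P}_{\mathcal{S}_i}\bestsignal$, using $\mathcal{P}_{\mathcal{S}_i}\boldsymbol{V}(i) = \boldsymbol{V}(i)$ to rewrite the right-hand side as $\vectornormbig{\mathcal{P}_{\mathcal{S}_i}(\boldsymbol{V}(i) - \bestsignal)}_F$. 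This is shorter than your block decomposition, but it silently assumes that $\mathcal{P}_{\mathcal{S}_i}\bestsignal$ is itself an admissible rank-$\rank$ competitor, which is not automatic: with $\mathcal{P}_{\mathcal{S}_i}$ as in (\ref{eq:newproj}), $\mathcal{P}_{\mathcal{S}_i}\bestsignal = \mathcal{P}_{\mathcal{U}}\bestsignal + \mathcal{P}_{\mathcal{U}^{\bot}}\bestsignal\mathcal{P}_{\mathcal{V}}$ is a sum of two rank-$\leq\rank$ matrices and can have rank up to $2\rank$. Your detour through $C := \mathcal{P}_{\mathcal{U}}\bestsignal\mathcal{P}_{\mathcal{V}}$ repairs exactly this: $C$ genuinely has rank at most $\rank$, so Eckart--Young applies to it, and the Frobenius-orthogonality of the $(\mathcal{U},\mathcal{V})$, $(\mathcal{U},\mathcal{V}^{\bot})$, $(\mathcal{U}^{\bot},\mathcal{V})$ blocks lets you pay nothing for replacing $\mathcal{P}_{\mathcal{S}_i}\bestsignal$ by $C$ in the comparison. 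The one hypothesis you lean on that the paper's version does not is the identity $\boldsymbol{V}(i) = \mathcal{P}_{\mathcal{U}}\boldsymbol{V}(i)\mathcal{P}_{\mathcal{V}}$; this holds when $\boldsymbol{V}(i)$ lies in the literal linear span of the $2\rank$ orthonormal rank-one matrices in $\mathcal{S}_i$ (as it does for the iterates of Algorithms 1 and 2), but not for an arbitrary element of the range of the operator in (\ref{eq:newproj}), so it is worth stating explicitly which reading of $\text{span}(\mathcal{S}_i)$ you are using. The second inequality is handled essentially identically in both arguments, via non-expansiveness of the orthogonal projection.
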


\begin{proof}
Since $ \signal(i+1) $ denotes the best rank-$ \rank $ approximation to $ \boldsymbol{V}(i) $, the following inequality holds for any rank-$\rank$ matrix $ \signal \in \mathbb{R}^{\dimension} $ in the subspace spanned by $ \mathcal{S}_i $, i.e. $ \forall \signal \in \text{span}(\mathcal{S}_i) $:
\begin{align}
\vectornormbig{\signal(i+1)  - \boldsymbol{V}(i)}_F \leq \vectornormbig{\signal  - \boldsymbol{V}(i)}_F \label{eq:mALPS5:14}.
\end{align} Since $ \mathcal{P}_{\mathcal{S}_i} \boldsymbol{V}(i) =  \boldsymbol{V}(i) $, the left inequality in (\ref{eq:mALPS5:13}) is satisfied for $ \signal := \mathcal{P}_{\mathcal{S}_i} \bestsignal $ in (\ref{eq:mALPS5:14}). 
\end{proof}

\begin{lemma}{\label{lemma:noname}} Let $ \boldsymbol{V}(i) $ be the least squares solution in Step 2 of the ADMiRA algorithm
and let $ \signal(i+1) $ be a proxy, rank-$ \rank $ matrix to $ \boldsymbol{V}(i) $ according to: $ \signal(i+1) \leftarrow \mathcal{P}_k(\boldsymbol{V}(i)). $ Then, $ \vectornormbig{\signal(i+1)  - \bestsignal}_F $ can be expressed in terms of the distance from $ \boldsymbol{V}(i) $ to $ \bestsignal $ as follows:
\begin{align}
\vectornormbig{\signal(i+1) - \bestsignal}_F &\leq \sqrt{1 + 3\delta_{3\rank}^2} \vectornormbig{\boldsymbol{V}(i) - \bestsignal}_F \nonumber \\ &+ \sqrt{1 + 3\delta_{3\rank}^2} \sqrt{\frac{3(1+\delta_{2\rank})}{1 + 3\delta_{3\rank}^2}}\vectornormbig{\noise}_2. \label{eq:73}
\end{align}
\end{lemma}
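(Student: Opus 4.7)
The plan is to combine a Pythagorean decomposition with respect to $\text{span}(\mathcal{S}_i)$ with the estimates already proved in Lemmas \ref{lemma:greedy} and \ref{lemma:comb_selection}. Since $\boldsymbol{V}(i) \in \text{span}(\mathcal{S}_i)$ and $\signal(i+1)\leftarrow\mathcal{P}_\rank(\boldsymbol{V}(i))$ is the truncated SVD of $\boldsymbol{V}(i)$, we also have $\signal(i+1) \in \text{span}(\mathcal{S}_i)$. Therefore $\mathcal{P}_{\mathcal{S}_i^{\bot}}(\signal(i+1)-\bestsignal) = -\mathcal{P}_{\mathcal{S}_i^{\bot}}\bestsignal$, and orthogonality of the two projections gives
\begin{align}
\vectornormbig{\signal(i+1) - \bestsignal}_F^2 = \vectornormbig{\mathcal{P}_{\mathcal{S}_i^{\bot}}\bestsignal}_F^2 + \vectornormbig{\signal(i+1) - \mathcal{P}_{\mathcal{S}_i}\bestsignal}_F^2. \nonumber
\end{align}

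Next I would control both summands in terms of the auxiliary quantity $\vectornormbig{\mathcal{P}_{\mathcal{S}_i}(\boldsymbol{V}(i) - \bestsignal)}_F$. The triangle inequality together with Lemma \ref{lemma:comb_selection} and the identity $\vectornormbig{\boldsymbol{V}(i) - \mathcal{P}_{\mathcal{S}_i}\bestsignal}_F = \vectornormbig{\mathcal{P}_{\mathcal{S}_i}(\boldsymbol{V}(i) - \bestsignal)}_F$ (which uses $\boldsymbol{V}(i)\in\text{span}(\mathcal{S}_i)$) yields $\vectornormbig{\signal(i+1) - \mathcal{P}_{\mathcal{S}_i}\bestsignal}_F \leq 2\vectornormbig{\mathcal{P}_{\mathcal{S}_i}(\boldsymbol{V}(i) - \bestsignal)}_F$. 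A second Pythagorean split applied to $\boldsymbol{V}(i) - \bestsignal$ produces $\vectornormbig{\mathcal{P}_{\mathcal{S}_i^{\bot}}\bestsignal}_F^2 = \vectornormbig{\boldsymbol{V}(i)-\bestsignal}_F^2 - \vectornormbig{\mathcal{P}_{\mathcal{S}_i}(\boldsymbol{V}(i)-\bestsignal)}_F^2$. Substituting both back collapses the decomposition into the clean bound
\begin{align}
\vectornormbig{\signal(i+1) - \bestsignal}_F^2 \leq \vectornormbig{\boldsymbol{V}(i) - \bestsignal}_F^2 + 3\vectornormbig{\mathcal{P}_{\mathcal{S}_i}(\boldsymbol{V}(i) - \bestsignal)}_F^2. \nonumber
\end{align}

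The concluding step is to insert the intermediate estimate (\ref{eq:mALPS5:10}) derived inside the proof of Lemma \ref{lemma:greedy}, namely $\vectornormbig{\mathcal{P}_{\mathcal{S}_i}(\boldsymbol{V}(i) - \bestsignal)}_F \leq \delta_{3\rank}\vectornormbig{\boldsymbol{V}(i) - \bestsignal}_F + \sqrt{1+\delta_{2\rank}}\vectornorm{\noise}_2$, and to simplify. Writing $a := \vectornormbig{\boldsymbol{V}(i)-\bestsignal}_F$ and $c := \sqrt{1+\delta_{2\rank}}\vectornorm{\noise}_2$, the right-hand side becomes $(1+3\delta_{3\rank}^2)a^2 + 6\delta_{3\rank}ac + 3c^2$. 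I would then dominate this expression by the perfect square $\bigl(\sqrt{1+3\delta_{3\rank}^2}\,a + \sqrt{3}\,c\bigr)^2$, which reduces to the elementary inequality $36\delta_{3\rank}^2 \leq 12 + 36\delta_{3\rank}^2$. Taking square roots and using $\sqrt{3(1+\delta_{2\rank})} = \sqrt{1+3\delta_{3\rank}^2}\sqrt{3(1+\delta_{2\rank})/(1+3\delta_{3\rank}^2)}$ gives (\ref{eq:73}) in the stated form.

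The only delicate point in the plan is this final algebraic rearrangement: one has to keep the cross term $6\delta_{3\rank}ac$ intact rather than crudely applying $(x+y)^2\le 2x^2+2y^2$, since any looser split spoils the sharp coefficient $\sqrt{1+3\delta_{3\rank}^2}$ that later drives the contraction condition $\delta_{3\rank}<0.2267$ in Theorem \ref{thm:mALPS5}. Every other ingredient is a direct chaining of results already established in the preceding lemmas.
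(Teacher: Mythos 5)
Your proof is correct and reaches exactly the stated bound, but it travels a different road than the paper's. The paper expands $\vectornormbig{\signal(i+1)-\bestsignal}_F^2 = \vectornormbig{\boldsymbol{V}(i)-\bestsignal}_F^2 + \vectornormbig{\boldsymbol{V}(i)-\signal(i+1)}_F^2 - 2\langle \boldsymbol{V}(i)-\bestsignal,\,\boldsymbol{V}(i)-\signal(i+1)\rangle$ and then must bound the cross term by re-running the R-RIP/optimality machinery of Lemma \ref{lemma:greedy} a second time (its inequality (\ref{eq:24}), an analogue of (\ref{eq:mALPS5:09}) with $\signal(i+1)$ in place of $\bestsignal$), before invoking Lemma \ref{lemma:comb_selection} and (\ref{eq:mALPS5:10}). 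You instead avoid any fresh inner-product estimate: your two Pythagorean splits with respect to $\text{span}(\mathcal{S}_i)$, combined with the triangle inequality and Lemma \ref{lemma:comb_selection}, give the purely geometric bound $\vectornormbig{\signal(i+1)-\bestsignal}_F^2 \le \vectornormbig{\boldsymbol{V}(i)-\bestsignal}_F^2 + 3\vectornormbig{\mathcal{P}_{\mathcal{S}_i}(\boldsymbol{V}(i)-\bestsignal)}_F^2$, and only then do you import (\ref{eq:mALPS5:10}), the one place where the least-squares optimality and R-RIP enter. Pleasingly, after substituting $P:=\vectornormbig{\mathcal{P}_{\mathcal{S}_i}(\boldsymbol{V}(i)-\bestsignal)}_F \le \delta_{3\rank}a+c$ both routes collapse to the identical quadratic $(1+3\delta_{3\rank}^2)a^2+6\delta_{3\rank}ac+3c^2$, and your perfect-square domination $6\delta_{3\rank}\le 2\sqrt{3}\sqrt{1+3\delta_{3\rank}^2}$ is precisely the paper's ``completing the squares'' step, so no sharpness is lost and the constant driving $\delta_{3\rank}<0.2267$ is preserved. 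Your argument is shorter and arguably cleaner since it recycles (\ref{eq:mALPS5:10}) rather than re-deriving a companion estimate; both proofs share the same implicit premise (also needed by the paper to write $\boldsymbol{V}(i)-\signal(i+1)=\mathcal{P}_{\mathcal{S}_i}(\boldsymbol{V}(i)-\signal(i+1))$) that the truncated SVD of a matrix in $\text{span}(\mathcal{S}_i)$ stays in $\text{span}(\mathcal{S}_i)$, so you incur no additional assumption there.
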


\begin{proof}
We observe the following
\begin{align}
\vectornormbig{\signal(i+1)  - \bestsignal}_F^2 &= \vectornormbig{\signal(i+1)  - \boldsymbol{V}(i) + \boldsymbol{V}(i) - \bestsignal}_F^2 \nonumber \\ 
									   &= \vectornormbig{\boldsymbol{V}(i) - \bestsignal}_F^2 + \vectornormbig{\boldsymbol{V}(i) - \signal(i+1) }_F^2 \nonumber \\ &- 2\langle \boldsymbol{V}(i) - \bestsignal, \boldsymbol{V}(i) - \signal(i+1)  \rangle. \label{eq:18}
\end{align}
Focusing on the right hand side of expression (\ref{eq:18}), $ \langle \boldsymbol{V}(i) - \bestsignal, \boldsymbol{V}(i) - \signal(i+1)  \rangle = \langle \boldsymbol{V}(i) - \bestsignal, \mathcal{P}_{\mathcal{S}_i}(\boldsymbol{V}(i) - \signal(i+1) ) \rangle $ can be similarly analysed as in Lemma 10 where we obtain the following expression:
\begin{align}
&|\langle \boldsymbol{V}(i) - \bestsignal, \mathcal{P}_{\mathcal{S}_i}(\boldsymbol{V}(i) - \signal(i+1) ) \rangle | \nonumber \\ &\leq \delta_{3\rank} \vectornormbig{\boldsymbol{V}(i) - \bestsignal}_F \vectornormbig{\boldsymbol{V}(i) - \signal(i+1) }_F \nonumber \\ &+ \sqrt{1+\delta_{2\rank}} \vectornormbig{\boldsymbol{V}(i) - \signal(i+1) }_F \vectornormbig{\noise}_2. \label{eq:24}
\end{align}

Now, expression (\ref{eq:18}) can be further transformed as:
\begin{align}
\vectornormbig{\signal(i+1)  - \bestsignal}_F^2 
										&\stackrel{(i)}{\leq} \vectornormbig{\boldsymbol{V}(i) - \bestsignal}_F^2 + \vectornormbig{\boldsymbol{V}(i) - \signal(i+1) }_F^2 \nonumber \\ &+  2(\delta_{3\rank} \vectornormbig{\boldsymbol{V}(i) - \bestsignal}_F \vectornormbig{\boldsymbol{V}(i) - \signal(i+1) }_F \nonumber \\ &+ \sqrt{1+\delta_{2\rank}} \vectornormbig{\boldsymbol{V}(i) - \signal(i+1) }_F \vectornormbig{\noise}_2) \label{eq:25b}										                                       
\end{align} where $ (i) $ is due to (\ref{eq:24}). Using Lemma \ref{lemma:comb_selection}, we further have:
\begin{align}
\vectornormbig{\signal(i+1)  - \bestsignal}_F^2 &\leq  \vectornormbig{\boldsymbol{V}(i) - \bestsignal}_F^2 + \vectornormbig{\mathcal{P}_{\mathcal{S}_i}(\boldsymbol{V}(i) - \bestsignal)}_F^2 \nonumber \\ &+  2\Big(\delta_{3\rank} \vectornormbig{\boldsymbol{V}(i) - \bestsignal}_F \vectornormbig{\mathcal{P}_{\mathcal{S}_i}(\boldsymbol{V}(i) - \bestsignal)}_F \nonumber \\ &+ \sqrt{1+\delta_{2\rank}} \vectornormbig{\mathcal{P}_{\mathcal{S}_i}(\boldsymbol{V}(i) - \bestsignal)}_F \vectornormbig{\noise}_2\Big) \label{eq:mALPS5:20}
\end{align} Furthermore, replacing $ \vectornormbig{\mathcal{P}_{\mathcal{S}_i}(\bestsignal - \boldsymbol{V}(i))}_F $ with its upper bound defined in (\ref{eq:mALPS5:10}), we get:
\begin{align}
&\vectornormbig{\signal(i+1)  - \bestsignal}_2^2 \nonumber \\
															   &\stackrel{(i)}{\leq} \Big(1 + 3\delta_{3\rank}^2\Big)\Bigg(\vectornormbig{\boldsymbol{V}(i) - \bestsignal}_2 + \sqrt{\frac{3(1+\delta_{2\rank})}{1 + 3\delta_{3\rank}^2}}\vectornormbig{\noise}\Bigg)^2 \label{eq:73a}
\end{align} where $ (i) $ is obtained by completing the squares and eliminating negative terms. 
\end{proof}

Applying basic algebra tools in (\ref{eq:73}) and (\ref{eq:mALPS5:01}), we get:
\begin{align}
&\vectornormbig{\signal(i+1) - \bestsignal}_F \leq \sqrt{\frac{1+3\delta_{3\rank}^2}{1-\delta_{3\rank}^2}}\vectornormbig{\mathcal{P}_{\mathcal{S}_i^{\bot}}(\boldsymbol{V}(i) - \bestsignal)}_F \nonumber \\ &+ \Big(\frac{\sqrt{1+3\delta_{3\rank}^2}}{1-\delta_{3\rank}} + \sqrt{3}\Big)\sqrt{1+\delta_{2\rank}}\vectornormbig{\noise}_2. \nonumber
\end{align} 

Since $ \boldsymbol{V}(i) \in \text{span}(\mathcal{S}_i) $, we observe $ \mathcal{P}_{\mathcal{S}_i^{\bot}}(\boldsymbol{V}(i) - \bestsignal) = -\mathcal{P}_{\mathcal{S}_i^{\bot}} \bestsignal = -\mathcal{P}_{\mathcal{X}^\ast \setminus (\mathcal{D}_i \cup \mathcal{X}_i)} \bestsignal $. Then, using Lemma \ref{lemma:act_subspace_exp}, we obtain:
\begin{align}
&\vectornormbig{\signal(i+1) - \bestsignal}_F \nonumber \\ 
&\leq \big(2\delta_{2\rank} + 2\delta_{3\rank}\big)\sqrt{\frac{1+3\delta_{3\rank}^2}{1-\delta_{3\rank}^2}}  \vectornormbig{\bestsignal - \signal(i)}_F \nonumber \\ &+ \Bigg[\sqrt{\frac{1+3\delta_{3\rank}^2}{1-\delta_{3\rank}^2}} \sqrt{2(1+\delta_{3\rank})} \nonumber \\ &+ \Big(\frac{\sqrt{1+3\delta_{3\rank}^2}}{1-\delta_{3\rank}} + \sqrt{3}\Big)\sqrt{1+\delta_{2\rank}}\Bigg] \vectornormbig{\noise}_2
\end{align}

Given $ \delta_{2\rank} \leq \delta_{3\rank} $, $ \rho $ is upper bounded by $\rho < 4\delta_{3\rank}\sqrt{\frac{1+3\delta_{3\rank}}{1-\delta_{3\rank}^2}} $. Then, $4\delta_{3\rank}\sqrt{\frac{1+3\delta_{3\rank}}{1-\delta_{3\rank}^2}} < 1 \Leftrightarrow \delta_{3\rank} < 0.2267.$


\subsection{Proof of Theorem \ref{thm:mALPS0:memory}}

Let $ \mathcal{X}^\ast \leftarrow \mathcal{P}_{\rank}(\bestsignal) $ be a set of orthonormal, rank-1 matrices that span the range of $ \bestsignal $. In Algorithm 3, $ \signal(i+1) $ is the best rank-$ \rank $ approximation of $ \boldsymbol{V}(i) $. Thus:
\begin{align}
\vectornormbig{\signal(i+1) - \boldsymbol{V}(i)}_F^2 &\leq \vectornormbig{\bestsignal - \boldsymbol{V}(i)}_F^2 \Rightarrow \nonumber \\
\vectornormbig{\signal(i+1) - \bestsignal}_F^2 &\leq 2\langle \signal(i+1) - \bestsignal, \boldsymbol{V}(i) - \bestsignal \rangle \label{eq:mALPS0_memory:00}
\end{align}

From Algorithm 3, $ i) ~\boldsymbol{V}(i) \in \text{span}(\mathcal{S}_i) $, $ ii) $ $ \boldsymbol{Q}_i \in \text{span}( \mathcal{S}_i) $ and $ iii) $ $ \boldsymbol{W}(i) \in \text{span}(\mathcal{S}_i) $. We define $ \mathcal{E} \leftarrow \text{ortho}(\mathcal{S}_i \cup \mathcal{X}^{\ast}) $ where we observe $ \text{rank}(\text{span}(\mathcal{E})) \leq 4\rank $ and let $ \mathcal{P}_{\mathcal{E}} $ be the orthogonal projection onto the subspace defined by $ \mathcal{E} $. 

Since $ \signal(i+1) - \bestsignal \in \text{span}(\mathcal{E}) $ and $ \boldsymbol{V}(i) - \bestsignal \in \text{span}(\mathcal{E}) $, the following hold true:
\begin{align}
\signal(i+1) - \bestsignal = \mathcal{P}_{\mathcal{E}} (\signal(i+1) - \bestsignal), \nonumber
\end{align} and,
\begin{align}
\boldsymbol{V}(i) - \bestsignal = \mathcal{P}_{\mathcal{E}} (\boldsymbol{V}(i) - \bestsignal). \nonumber 
\end{align} 

\begin{figure*}[!htp]
\begin{align}
g(i+1) &\leq \left[b_1\Big(\frac{\alpha (1+\tau_i) + \sqrt{\Delta}}{2} \Big)^{i+1} + b_2\Big(\frac{\alpha (1+\tau_i) - \sqrt{\Delta}}{2} \Big)^{i+1}\right]\vectornormbig{\signal(0) - \bestsignal}_F \nonumber \\ 
&\leq \left[(b_1 + b_2)\Big(\frac{\alpha (1+\tau_i) + \sqrt{\Delta}}{2} \Big)^{i+1}\right]\vectornormbig{\signal(0) - \bestsignal}_F \label{eq:mALPS_memory:09}
\end{align} 
\hrulefill
\end{figure*}

Then, (\ref{eq:mALPS0_memory:00}) can be written as:
\begin{align}
&\vectornormbig{\signal(i+1) - \bestsignal}_F^2 \nonumber \\
&\leq 2\langle \mathcal{P}_{\mathcal{E}}(\signal(i+1) - \bestsignal), \mathcal{P}_{\mathcal{E}}(\boldsymbol{V}(i) - \bestsignal) \rangle \nonumber \\
 &= 2\langle \mathcal{P}_{\mathcal{E}}(\signal(i+1) - \bestsignal), \mathcal{P}_{\mathcal{E}}\left(\boldsymbol{Q}_i + \mu_i \mathcal{P}_{\mathcal{S}_i} \sensing^\ast \sensing (\bestsignal - \boldsymbol{Q}_i) - \bestsignal\right) \rangle \nonumber \\
&\stackrel{(i)}{=} 2\langle \signal(i+1) - \bestsignal, \mathcal{P}_{\mathcal{E}}(\boldsymbol{Q}_i - \bestsignal) \nonumber \\ &- \mu_i \mathcal{P}_{\mathcal{S}_i}\sensing^\ast \sensing \big[\mathcal{P}_{\mathcal{S}_i} + \mathcal{P}_{\mathcal{S}_i^{\bot}}\big]\mathcal{P}_{\mathcal{E}} (\boldsymbol{Q}_i - \bestsignal) \rangle \nonumber \\
& = 2\langle \signal(i+1) - \bestsignal, (\id - \mu_i\mathcal{P}_{\mathcal{S}_i}\sensing^\ast \sensing \mathcal{P}_{\mathcal{S}_i}) \mathcal{P}_{\mathcal{E}} (\boldsymbol{Q}_i - \bestsignal) \rangle \nonumber \\ &- 2\mu_i \langle \signal(i+1) - \bestsignal, \mathcal{P}_{\mathcal{S}_i}\sensing^\ast \sensing \mathcal{P}_{\mathcal{S}_i^{\bot}} \mathcal{P}_{\mathcal{E}} (\boldsymbol{Q}_i - \bestsignal) \rangle \nonumber \\
&\stackrel{(ii)}{\leq} 2\vectornormbig{\signal(i+1) - \bestsignal}_F \vectornormbig{ (\id - \mu_i \mathcal{P}_{\mathcal{S}_i}\sensing^\ast \sensing \mathcal{P}_{\mathcal{S}_i}) \mathcal{P}_{\mathcal{E}} (\boldsymbol{Q}_i - \bestsignal)}_F \nonumber \\ &+ 2\mu_i \vectornormbig{\signal(i+1) - \bestsignal}_F \vectornormbig{\mathcal{P}_{\mathcal{S}_i}\sensing^\ast \sensing \mathcal{P}_{\mathcal{S}_i^{\bot}}\mathcal{P}_{\mathcal{E}}(\boldsymbol{Q}_i - \bestsignal)}_F \label{eq:mALPS_memory:04}
\end{align} where $ (i) $ is due to $ \mathcal{P}_{\mathcal{E}}(\boldsymbol{Q}_i - \bestsignal) := \mathcal{P}_{\mathcal{S}_i} \mathcal{P}_{\mathcal{E}}(\boldsymbol{Q}_i - \bestsignal) + \mathcal{P}_{\mathcal{S}_i^{\bot}} \mathcal{P}_{\mathcal{E}}(\boldsymbol{Q}_i - \bestsignal) $ and $ (ii) $ follows from Cauchy-Schwarz inequality. 
Since $ \frac{1}{1+\delta_{3\rank}} \leq \mu_i \leq \frac{1}{1-\delta_{3\rank}} $, Lemma \ref{lemma:3} implies:
\begin{align}
\lambda(\id - \mu_i\mathcal{P}_{\mathcal{S}_i}\sensing^\ast \sensing \mathcal{P}_{\mathcal{S}_i}) \in \Bigg[1 - \frac{1-\delta_{3\rank}}{1+\delta_{3\rank}}, \frac{1+\delta_{3\rank}}{1-\delta_{3\rank}} - 1\Bigg] \leq \frac{2\delta_{3\rank}}{1-\delta_{3\rank}}. \nonumber
\end{align} and thus:
\begin{align}
\vectornormbig{(\id - \mu_i\mathcal{P}_{\mathcal{S}_i}\sensing^\ast \sensing \mathcal{P}_{\mathcal{S}_i})&\mathcal{P}_{\mathcal{E}}(\boldsymbol{Q}_i - \bestsignal)}_F \nonumber \\ &\leq \frac{2\delta_{3\rank}}{1-\delta_{3\rank}} \vectornormbig{\mathcal{P}_{\mathcal{E}}(\boldsymbol{Q}_i - \bestsignal)}_F. \nonumber
\end{align} Furthermore, according to Lemma \ref{lemma:4}:
\begin{align}
\vectornormbig{\mathcal{P}_{\mathcal{S}_i}\sensing^\ast \sensing \mathcal{P}_{\mathcal{S}_i^{\bot}}\mathcal{P}_{\mathcal{E}}(\boldsymbol{Q}_i - \bestsignal)}_F \leq  \delta_{4\rank} \vectornormbig{\mathcal{P}_{\mathcal{S}_i^{\bot}}\mathcal{P}_{\mathcal{E}}(\boldsymbol{Q}_i - \bestsignal)}_F \nonumber
\end{align} since $ \text{rank}(\mathcal{P}_{\mathcal{K}}\boldsymbol{Q}) \leq 4\rank, ~\forall \boldsymbol{Q} \in \mathbb{R}^{\dimension} $ where $\mathcal{K} \leftarrow \text{ortho}(\mathcal{E} \cup \mathcal{S}_i)$. Since $ \mathcal{P}_{\mathcal{S}_i^{\bot}}\mathcal{P}_{\mathcal{E}}(\boldsymbol{Q}_i - \bestsignal) = \mathcal{P}_{\mathcal{X}^\ast \setminus (\mathcal{D}_i \cup \mathcal{X}_i)}\bestsignal $ where 
\begin{align}
\mathcal{D}_i \leftarrow\mathcal{P}_{k}\left(\mathcal{P}_{\mathcal{Q}_i^{\bot}}\nabla f(\boldsymbol{Q}_i)\right), \nonumber
\end{align} then:
\begin{align}
\vectornormbig{\mathcal{P}_{\mathcal{S}_i^{\bot}}\mathcal{P}_{\mathcal{E}}(\boldsymbol{Q}_i - \bestsignal)}_F &= \vectornormbig{\mathcal{P}_{\mathcal{X}^\ast \setminus (\mathcal{D}_i \cup \mathcal{X}_i)}\bestsignal}_F \leq (2\delta_{3\rank} \nonumber \\ &+ 2\delta_{4\rank})\vectornormbig{\boldsymbol{Q}_i - \bestsignal}_F,
\end{align} using Lemma \ref{lemma:act_subspace_exp}. Using the above in (\ref{eq:mALPS_memory:04}), we compute:
\begin{align}
\vectornormbig{\signal(i+1) &- \bestsignal}_F \nonumber \\ &\leq \Big(\frac{4\delta_{3\rank}}{1-\delta_{3\rank}} + (2\delta_{3\rank} + 2\delta_{4\rank})\frac{2\delta_{3\rank}}{1-\delta_{3\rank}}\Big) \vectornormbig{\boldsymbol{Q}_i - \bestsignal}_F \label{eq:mALPS0_memory:04}
\end{align}

Furthermore:
\begin{align}
\vectornormbig{\boldsymbol{Q}_i - \bestsignal}_F &= \vectornormbig{\signal(i) + \tau_i(\signal(i) - \signal(i-1))}_F \nonumber \\ &= \vectornormbig{(1+\tau_i)(\signal(i) - \bestsignal) + \tau_i(\bestsignal - \signal(i-1))}_F \nonumber \\ &\leq (1+\tau_i)\vectornormbig{\signal(i) - \bestsignal}_F + \tau_i\vectornormbig{\signal(i-1) - \bestsignal}_F \label{eq:mALPS_memory:05}
\end{align} Combining (\ref{eq:mALPS0_memory:04}) and (\ref{eq:mALPS_memory:05}), we get:
\begin{align}
&\vectornormbig{\signal(i+1) - \bestsignal}_F \nonumber \\ &\leq (1+\tau_i)\Big(\frac{4\delta_{3\rank}}{1-\delta_{3\rank}} + (2\delta_{3\rank} + 2\delta_{4\rank})\frac{2\delta_{3\rank}}{1-\delta_{3\rank}}\Big) \vectornormbig{\signal(i) - \bestsignal}_F \nonumber \\ &+ \tau_i\Big(\frac{4\delta_{3\rank}}{1-\delta_{3\rank}} + (2\delta_{3\rank} + 2\delta_{4\rank})\frac{2\delta_{3\rank}}{1-\delta_{3\rank}}\Big) \vectornormbig{\signal(i-1) - \bestsignal}_F \label{eq:mALPS_memory:06}
\end{align} Let $ \alpha:= \frac{4\delta_{3\rank}}{1-\delta_{3\rank}} + (2\delta_{3\rank} + 2\delta_{4\rank})\frac{2\delta_{3\rank}}{1-\delta_{3\rank}} $ and $ g(i) := \vectornormbig{\signal(i+1) - \bestsignal}_F $. Then, (\ref{eq:mALPS_memory:06}) defines the following homogeneous recurrence:
\begin{align}
g(i+1) - \alpha(1+\tau_i)g(i) + \alpha \tau_i g(i-1) \leq 0 \label{eq:mALPS_memory:07}
\end{align} Using the {\it method of characteristic roots} to solve the above recurrence, we assume that the homogeneous linear recursion has solution of the form $ g(i) = r^i $ for $ r \in \mathbb{R} $. Thus, replacing $ g(i) = r^i $ in (\ref{eq:mALPS_memory:07}) and factoring out $ r^{(i-2)} $, we form the following characteristic polynomial:
\begin{align}
r^2 - \alpha (1 + \tau_i)r - \alpha \tau_i \leq 0 \label{eq:mALPS_memory:08}
\end{align} Focusing on the worst case where (\ref{eq:mALPS_memory:08}) is satisfied with equality, we compute the roots $ r_{1,2} $ of the quadratic characteristic polynomial as:
\begin{align}
r_{1,2} = \frac{\alpha (1+\tau_i) \pm \sqrt{\Delta}}{2}, ~\text{where}~ \Delta := \alpha^2(1+\tau_i)^2 + 4\alpha \tau_i. \nonumber
\end{align} Then, as a general solution, we combine the above roots with unknown coefficients $ b_1, b_2 $ to obtain (\ref{eq:mALPS_memory:09}).
Using the initial condition $ g(0) := \vectornormbig{\signal(0) - \bestsignal}_F \stackrel{\signal(0) = \mathbf{0}}{=} \vectornormbig{\bestsignal}_F = 1 $, we get $ b_1 + b_2 = 1 $. Thus, we conclude to the following recurrence:
\begin{align}
\vectornormbig{\signal(i+1) - \bestsignal}_F \leq \Big(\frac{\alpha (1+\tau_i) + \sqrt{\Delta}}{2} \Big)^{i+1}. \nonumber
\end{align}


\subsection{Proof of Lemma \ref{lemma:appr_act_subspace_exp}}

Let 
$\mathcal{D}_i^{\epsilon} \leftarrow \mathcal{P}_{\rank}^{\epsilon}(\mathcal{P}_{\mathcal{X}_i^{\bot}} \nabla f(\signal(i))) $ 
and 
$\mathcal{D}_i \leftarrow \mathcal{P}_{\rank}(\mathcal{P}_{\mathcal{X}_i^{\bot}} \nabla f(\signal(i))).$
Using Definition \ref{def:appr_svd}, the following holds true:
\begin{align}
\vectornormbig{\mathcal{P}_{\mathcal{D}_i^\epsilon} \nabla f(\signal(i))& - \nabla f(\signal(i))}_F^2 \nonumber \\ &\leq (1 + \epsilon) \vectornormbig{\mathcal{P}_{\mathcal{D}_i} \nabla f(\signal(i)) - \nabla f(\signal(i))}_F^2. \label{eq:appr_act:00}
\end{align}
Furthermore, we observe:
\begin{align}
&\vectornormbig{\nabla f(\signal(i))}_F^2 = \vectornormbig{\nabla f(\signal(i))}_F^2 \Leftrightarrow \nonumber \\ 
&\vectornormbig{\mathcal{P}_{\mathcal{D}_i^\epsilon} \nabla f(\signal(i))}_F^2 + \vectornormbig{\mathcal{P}_{(\mathcal{D}_i^\epsilon)^{\bot}} \nabla f(\signal(i))}_F^2 = \nonumber \\  &\vectornormbig{\mathcal{P}_{\mathcal{X}^\ast\setminus \mathcal{X}_i} \nabla f(\signal(i))}_F^2 + \vectornormbig{\mathcal{P}_{(\mathcal{X}^\ast\setminus \mathcal{X}_i)^{\bot}} \nabla f(\signal(i))}_F^2 \label{eq:appr_act:01}
\end{align} Here, we use the notation defined in the proof of Lemma 6. Since $ \mathcal{P}_{\mathcal{D}_i} \nabla f(\signal(i)) $ is the best rank-$ \rank $ approximation to $ \nabla f(\signal(i)) $, we have:
\begin{align}
&\vectornormbig{\mathcal{P}_{\mathcal{D}_i} \nabla f(\signal(i)) - \nabla f(\signal(i))}_F^2 \leq \nonumber \\ &\vectornormbig{\mathcal{P}_{\mathcal{X}^\ast\setminus \mathcal{X}_i} \nabla f(\signal(i)) - \nabla f(\signal(i))}_F^2 \Leftrightarrow \nonumber \\
&\vectornormbig{\mathcal{P}_{\mathcal{D}_i^\bot} \nabla f(\signal(i))}_F^2 \leq \vectornormbig{\mathcal{P}_{(\mathcal{X}^\ast\setminus \mathcal{X}_i)^\bot} \nabla f(\signal(i))}_F^2 \Leftrightarrow \nonumber \\
&(1+\epsilon)\vectornormbig{\mathcal{P}_{\mathcal{D}_i^\bot} \nabla f(\signal(i))}_F^2 \leq (1+\epsilon)\vectornormbig{\mathcal{P}_{(\mathcal{X}^\ast\setminus \mathcal{X}_i)^\bot} \nabla f(\signal(i))}_F^2 \label{eq:appr_act:02}
\end{align} where $ \text{rank}(\text{span}(\text{ortho}(\mathcal{X}^\ast \setminus \mathcal{X}_i))) \leq \rank $. Using (\ref{eq:appr_act:00}) in (\ref{eq:appr_act:02}), the following series of inequalities are observed:
\begin{align}
\vectornormbig{\mathcal{P}_{(\mathcal{D}_i^\epsilon)^{\bot}} \nabla f(\signal(i))}_F^2 &\leq (1+\epsilon)\vectornormbig{\mathcal{P}_{\mathcal{D}_i^\bot} \nabla f(\signal(i))}_F^2 \nonumber \\ &\leq (1+\epsilon)\vectornormbig{\mathcal{P}_{(\mathcal{X}^\ast\setminus \mathcal{X}_i)^\bot} \nabla f(\signal(i))}_F^2 \label{eq:appr_act:03}
\end{align} Now, in (\ref{eq:appr_act:01}), we compute the series of inequalities in (\ref{eq:start})-(\ref{eq:appr_act:04a}).
\begin{figure*}[!htp]
\begin{align}
\vectornormbig{\mathcal{P}_{\mathcal{D}_i^\epsilon} \nabla f(\signal(i))}_F^2 + \vectornormbig{\mathcal{P}_{(\mathcal{D}_i^\epsilon)^{\bot}} \nabla f(\signal(i))}_F^2 &= \vectornormbig{\mathcal{P}_{\mathcal{X}^\ast\setminus \mathcal{X}_i} \nabla f(\signal(i))}_F^2 + \vectornormbig{\mathcal{P}_{(\mathcal{X}^\ast\setminus \mathcal{X}_i)^{\bot}} \nabla f(\signal(i))}_F^2 \stackrel{(\ref{eq:appr_act:02})}{\Leftrightarrow} \label{eq:start}\\
\vectornormbig{\mathcal{P}_{\mathcal{D}_i^\epsilon} \nabla f(\signal(i))}_F^2 + (1+\epsilon)\vectornormbig{\mathcal{P}_{(\mathcal{X}^\ast\setminus \mathcal{X}_i)^\bot} \nabla f(\signal(i))}_F^2 &\geq \vectornormbig{\mathcal{P}_{\mathcal{X}^\ast\setminus \mathcal{X}_i} \nabla f(\signal(i))}_F^2 + \vectornormbig{\mathcal{P}_{(\mathcal{X}^\ast\setminus \mathcal{X}_i)^{\bot}} \nabla f(\signal(i))}_F^2 \Leftrightarrow \nonumber \\
\vectornormbig{\mathcal{P}_{\mathcal{D}_i^\epsilon} \nabla f(\signal(i))}_F^2 + \epsilon\vectornormbig{\mathcal{P}_{(\mathcal{X}^\ast\setminus \mathcal{X}_i)^\bot} \nabla f(\signal(i))}_F^2 &\geq \vectornormbig{\mathcal{P}_{\mathcal{X}^\ast\setminus \mathcal{X}_i} \nabla f(\signal(i))}_F^2 \Leftrightarrow \nonumber \\
\vectornormbig{\mathcal{P}_{\mathcal{D}_i^\epsilon} \nabla f(\signal(i))}_F^2 + \vectornormbig{\mathcal{P}_{\mathcal{X}_i} \nabla f(\signal(i))}_F^2 + \epsilon\vectornormbig{\mathcal{P}_{(\mathcal{X}^\ast\setminus \mathcal{X}_i)^\bot} \nabla f(\signal(i))}_F^2 &\geq \vectornormbig{\mathcal{P}_{\mathcal{X}^\ast\setminus \mathcal{X}_i} \nabla f(\signal(i))}_F^2 + \vectornormbig{\mathcal{P}_{\mathcal{X}_i} \nabla f(\signal(i))}_F^2 \Leftrightarrow \nonumber \\
\vectornormbig{\mathcal{P}_{\mathcal{S}_i} \nabla f(\signal(i))}_F^2 + \epsilon\vectornormbig{\mathcal{P}_{(\mathcal{X}^\ast\setminus \mathcal{X}_i)^\bot} \nabla f(\signal(i))}_F^2 &\geq \vectornormbig{\mathcal{P}_{\mathcal{S}_i^\ast} \nabla f(\signal(i))}_F^2 \Leftrightarrow \nonumber \\
\vectornormbig{\mathcal{P}_{\mathcal{S}_i \setminus \mathcal{S}_i^\ast} \nabla f(\signal(i))}_F^2 + \epsilon\vectornormbig{\mathcal{P}_{(\mathcal{X}^\ast\setminus \mathcal{X}_i)^\bot} \nabla f(\signal(i))}_F^2 &\geq \vectornormbig{\mathcal{P}_{\mathcal{S}_i^\ast \setminus \mathcal{S}_i} \nabla f(\signal(i))}_F^2 \Leftrightarrow \nonumber \\
\vectornormbig{\mathcal{P}_{\mathcal{S}_i \setminus \mathcal{S}_i^\ast} \sensing^\ast(\obs - \sensing \signal(i))}_F^2 + \epsilon\vectornormbig{\mathcal{P}_{(\mathcal{X}^\ast\setminus \mathcal{X}_i)^\bot} \sensing^\ast(\obs - \sensing \signal(i))}_F^2 &\geq \vectornormbig{\mathcal{P}_{\mathcal{S}_i^\ast \setminus \mathcal{S}_i} \sensing^\ast(\obs - \sensing \signal(i))}_F^2 \Leftrightarrow \nonumber \\
\vectornormbig{\mathcal{P}_{\mathcal{S}_i \setminus \mathcal{S}_i^\ast} \sensing^\ast(\obs - \sensing \signal(i))}_F + \sqrt{\epsilon}\vectornormbig{\mathcal{P}_{(\mathcal{X}^\ast\setminus \mathcal{X}_i)^\bot} \sensing^\ast(\obs - \sensing \signal(i))}_F &\geq \vectornormbig{\mathcal{P}_{\mathcal{S}_i^\ast \setminus \mathcal{S}_i} \sensing^\ast(\obs - \sensing \signal(i))}_F \label{eq:appr_act:04a}
\end{align}
\hrulefill
\end{figure*}
Focusing on $ \vectornormbig{\mathcal{P}_{\mathcal{X}^\ast\setminus \mathcal{X}_i}^\bot \sensing^\ast(\obs - \sensing \signal(i))}_F $, we observe:
\begin{align}
&\vectornormbig{\mathcal{P}_{(\mathcal{X}^\ast\setminus \mathcal{X}_i)^\bot} \sensing^\ast(\obs - \sensing \signal(i))}_F = \nonumber \\ 
&\vectornormbig{\mathcal{P}_{(\mathcal{X}^\ast\setminus \mathcal{X}_i)^\bot} \sensing^\ast(\sensing \bestsignal + \noise  - \sensing \signal(i))}_F \leq \nonumber \\ &\vectornormbig{\mathcal{P}_{(\mathcal{X}^\ast\setminus \mathcal{X}_i)^\bot} \sensing^\ast\sensing (\bestsignal - \signal(i))}_F + \vectornormbig{\mathcal{P}_{\mathcal{X}^\ast\setminus \mathcal{X}_i}^\bot \sensing^\ast \noise}_F \leq \nonumber \\ 
&\vectornormbig{\sensing^\ast\sensing (\bestsignal - \signal(i))}_F + \vectornormbig{\sensing^\ast \noise}_F \leq 2\lambda
\label{eq:appr_act:04}
\end{align} 

Moreover, we know the following hold true from Lemma \ref{lemma:act_subspace_exp}:
\begin{align}
\vectornormbig{\mathcal{P}_{\mathcal{S}_i \setminus \mathcal{S}_i^\ast} &\sensing^\ast \sensing (\bestsignal - \signal(i)) + \mathcal{P}_{\mathcal{S}_i \setminus \mathcal{S}_i^\ast} \sensing^\ast \noise}_F \nonumber \\ &\leq 2\delta_{3\rank}\vectornormbig{\bestsignal - \signal(i)}_F + \vectornormbig{\mathcal{P}_{\mathcal{S}_i \setminus \mathcal{S}_i^\ast} \sensing^\ast \noise}_F \label{eq:appr_act:06}
\end{align} and
\begin{align}
\vectornormbig{\mathcal{P}_{\mathcal{S}_i^\ast \setminus \mathcal{S}_i} &\sensing^\ast \sensing (\bestsignal - \signal(i)) + \mathcal{P}_{\mathcal{S}_i^\ast \setminus \mathcal{S}_i} \sensing^\ast \noise}_F \nonumber \\ &\geq \vectornormbig{\mathcal{P}_{\mathcal{S}_i^\ast \setminus \mathcal{S}_i} (\bestsignal - \signal(i))}_F - 2\delta_{2\rank}\vectornormbig{\signal(i) - \bestsignal}_F \nonumber \\ &- \vectornormbig{ \mathcal{P}_{\mathcal{S}_i^\ast \setminus \mathcal{S}_i}\sensing^\ast \noise}_F \label{eq:appr_act:07}
\end{align} Combining (\ref{eq:appr_act:04})-(\ref{eq:appr_act:07}) in (\ref{eq:appr_act:04a}), we obtain:
\begin{align}
\vectornormbig{\mathcal{P}_{\mathcal{S}_i^\ast \setminus \mathcal{S}_i}&\bestsignal}_F = \vectornormbig{\mathcal{P}_{\mathcal{X}^\ast \setminus \mathcal{S}_i}\bestsignal}_F \nonumber \\ 
&\leq \big(2\delta_{2\rank} + 2\delta_{3\rank}\big)\vectornormbig{\signal(i) - \bestsignal}_F + \sqrt{2(1+\delta_{2\rank})}\vectornormbig{\noise}_2 \nonumber \\ &+ 2\lambda \sqrt{\epsilon} \nonumber \end{align}

\subsection{Proof of Theorem \ref{thm:mALPS0_appr}}

To prove Theorem \ref{thm:mALPS0_appr}, we combine the following series of lemmas for each step of Algorithm 1.

\begin{lemma}{\label{lemma:appr_greedy}}[Error norm reduction via gradient descent] Let $ \mathcal{S}_i \leftarrow \text{ortho}(\mathcal{X}_i \cup \mathcal{D}_i^{\epsilon}) $ be a set of orthonormal, rank-1 matrices that span a rank-2$ \rank $ subspace in $ \mathbb{R}^{\dimension} $. Then (\ref{eq:appr_mALPS5:01}) holds.
\begin{figure*}[!htp]
\begin{align}
\vectornormbig{\boldsymbol{V}(i) - \bestsignal}_F &\leq \Bigg [ \Big( 1+ \frac{\delta_{3\rank}}{1-\delta_{2\rank}}\Big)\Big(2\delta_{2\rank} + 2\delta_{3\rank} + \delta_{\rank})\Big) + \frac{2\delta_{2\rank}}{1-\delta_{2\rank}}\Bigg] \vectornormbig{\signal(i) - \bestsignal}_F \nonumber \\
&+ \Big[ \big(1+ \frac{\delta_{3\rank}}{1-\delta_{2\rank}}\big)\sqrt{2(1+\delta_{2\rank})} + \frac{\sqrt{1+\delta_{2\rank}}}{1-\delta_{2\rank}}\Big]\vectornormbig{\noise}_2 + \big(1+ \frac{\delta_{3\rank}}{1-\delta_{2\rank}} \big)2\lambda \sqrt{\epsilon}. \label{eq:appr_mALPS5:01}
\end{align}
\hrulefill
\end{figure*}
\end{lemma}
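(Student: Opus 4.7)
The plan is to exploit the explicit form of $\boldsymbol{V}(i)$ given by the gradient step, decompose the residual $\boldsymbol{V}(i)-\bestsignal$ into its components parallel and orthogonal to $\text{span}(\mathcal{S}_i)$, and control each piece using the R-RIP lemmas of Section~\ref{sec:prel} together with the $\epsilon$-approximate active subspace expansion bound of Lemma~\ref{lemma:appr_act_subspace_exp}. Starting from $\boldsymbol{V}(i) = \signal(i) - \frac{\mu_i}{2}\mathcal{P}_{\mathcal{S}_i}\nabla f(\signal(i))$ and $\obs=\sensing\bestsignal+\noise$, I first rewrite
\begin{align*}
\boldsymbol{V}(i)-\bestsignal = (\signal(i)-\bestsignal) + \mu_i\mathcal{P}_{\mathcal{S}_i}\sensing^{\ast}\sensing(\bestsignal-\signal(i)) + \mu_i\mathcal{P}_{\mathcal{S}_i}\sensing^{\ast}\noise.
\end{align*}
Since $\signal(i),\boldsymbol{V}(i)\in\text{span}(\mathcal{S}_i)$, the orthogonal-complement component collapses to $\mathcal{P}_{\mathcal{S}_i^{\bot}}(\boldsymbol{V}(i)-\bestsignal)=-\mathcal{P}_{\mathcal{S}_i^{\bot}}\bestsignal$, so the Pythagorean identity combined with $\sqrt{a^2+b^2}\le a+b$ yields
\begin{align*}
\|\boldsymbol{V}(i)-\bestsignal\|_F \leq \|\mathcal{P}_{\mathcal{S}_i}(\boldsymbol{V}(i)-\bestsignal)\|_F + \|\mathcal{P}_{\mathcal{S}_i^{\bot}}\bestsignal\|_F.
\end{align*}

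Next I would split $\bestsignal-\signal(i)=\mathcal{P}_{\mathcal{S}_i}(\bestsignal-\signal(i))+\mathcal{P}_{\mathcal{S}_i^{\bot}}\bestsignal$ inside the expression for $\boldsymbol{V}(i)-\bestsignal$, which rearranges the in-subspace component as
\begin{align*}
\mathcal{P}_{\mathcal{S}_i}(\boldsymbol{V}(i)-\bestsignal) &= (\id - \mu_i\mathcal{P}_{\mathcal{S}_i}\sensing^{\ast}\sensing\mathcal{P}_{\mathcal{S}_i})\mathcal{P}_{\mathcal{S}_i}(\signal(i)-\bestsignal) \\ &\quad + \mu_i\mathcal{P}_{\mathcal{S}_i}\sensing^{\ast}\sensing\mathcal{P}_{\mathcal{S}_i^{\bot}}\bestsignal + \mu_i\mathcal{P}_{\mathcal{S}_i}\sensing^{\ast}\noise.
\end{align*}
Using the step-size bound $\mu_i\leq(1-\delta_{2\rank})^{-1}$ from (\ref{eq:additional:1}), Lemma~\ref{lemma:3} controls the first term by $\frac{2\delta_{2\rank}}{1-\delta_{2\rank}}\|\signal(i)-\bestsignal\|_F$; Lemma~\ref{lemma:4}, applied with $\mathcal{S}_1=\mathcal{S}_i$ and $\mathcal{S}_2\supseteq\text{ortho}(\mathcal{S}_i\cup\mathcal{X}^{\ast})$ (so that $\mathcal{P}_{\mathcal{S}_i^{\bot}}\bestsignal\in\text{span}(\mathcal{S}_2)$ and the combined rank is at most $3\rank$), yields $\frac{\delta_{3\rank}}{1-\delta_{2\rank}}\|\mathcal{P}_{\mathcal{S}_i^{\bot}}\bestsignal\|_F$ for the cross term; and Lemma~\ref{lemma:1} bounds the noise term by $\frac{\sqrt{1+\delta_{2\rank}}}{1-\delta_{2\rank}}\|\noise\|_2$.

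Substituting these three bounds into the split from Step~1 exposes $\|\mathcal{P}_{\mathcal{S}_i^{\bot}}\bestsignal\|_F$ both as a standalone summand and (multiplied by $\frac{\delta_{3\rank}}{1-\delta_{2\rank}}$) inside the in-subspace estimate. Lemma~\ref{lemma:appr_act_subspace_exp} then supplies the required control
\begin{align*}
\|\mathcal{P}_{\mathcal{S}_i^{\bot}}\bestsignal\|_F \leq (2\delta_{2\rank}+2\delta_{3\rank})\|\signal(i)-\bestsignal\|_F + \sqrt{2(1+\delta_{2\rank})}\|\noise\|_2 + 2\lambda\sqrt{\epsilon},
\end{align*}
and inserting this bound in both places, factoring out the common $\bigl(1+\frac{\delta_{3\rank}}{1-\delta_{2\rank}}\bigr)$, and grouping terms according to $\|\signal(i)-\bestsignal\|_F$, $\|\noise\|_2$, and $\lambda\sqrt{\epsilon}$ delivers (\ref{eq:appr_mALPS5:01}). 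The main obstacle is the careful subspace-dimension bookkeeping in the R-RIP invocations: in particular, one must verify that the combined span of $\mathcal{S}_i$ and the support of $\mathcal{P}_{\mathcal{S}_i^{\bot}}\bestsignal$ has rank at most $3\rank$ so that $\delta_{3\rank}$ (rather than a larger-index constant) appears in Lemma~\ref{lemma:4}, and that the $\epsilon$-dependence enters only additively through the slack $\lambda\sqrt{\epsilon}$ of Lemma~\ref{lemma:appr_act_subspace_exp} rather than multiplying the contraction coefficient of $\|\signal(i)-\bestsignal\|_F$.
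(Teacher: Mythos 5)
Your proposal is correct and follows essentially the same route as the paper's own proof: the orthogonal decomposition of $\boldsymbol{V}(i)-\bestsignal$ along $\mathcal{S}_i$ and $\mathcal{S}_i^{\bot}$, the three-way bound on the in-subspace component via Lemmas \ref{lemma:1}, \ref{lemma:3} and \ref{lemma:4} with the step-size bounds, and the substitution of Lemma \ref{lemma:appr_act_subspace_exp} for $\vectornormbig{\mathcal{P}_{\mathcal{S}_i^{\bot}}\bestsignal}_F$ in both places where it appears. The coefficients you obtain by factoring out $\big(1+\tfrac{\delta_{3\rank}}{1-\delta_{2\rank}}\big)$ match the quantity $\hat{\rho}$ derived in the paper's proof.
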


\begin{proof} We observe the following:
\begin{align}
\vectornormbig{\boldsymbol{V}(i) - \bestsignal}_F^2 = \vectornormbig{\mathcal{P}_{\mathcal{S}_i}(\boldsymbol{V}(i) - \bestsignal)}_F^2 + \vectornormbig{\mathcal{P}_{\mathcal{S}_i^\bot}(\boldsymbol{V}(i) - \bestsignal)}_F^2 \label{eq:appr_greedy:00}
\end{align}
The following equations hold true:
\begin{align}
\vectornormbig{\mathcal{P}_{\mathcal{S}_i^\bot}(\boldsymbol{V}(i) - \bestsignal)}_F^2 &= \vectornormbig{\mathcal{P}_{\mathcal{S}_i^\bot} \bestsignal}_F^2 = \vectornormbig{\mathcal{P}_{\mathcal{X}^\ast \setminus \mathcal{S}_i} \bestsignal}_F^2 \nonumber 
\end{align} Furthermore, we compute:
\begin{align}
&\vectornormbig{\mathcal{P}_{\mathcal{S}_i}(\boldsymbol{V}(i) - \bestsignal)}_F = \vectornormbig{\mathcal{P}_{\mathcal{S}_i}(\signal(i) - \frac{\mu_i}{2}\mathcal{P}_{\mathcal{S}_i}\nabla f(\signal(i)) - \bestsignal)}_F \nonumber \\
&= \vectornormbig{\mathcal{P}_{\mathcal{S}_i}(\signal(i) - \bestsignal) - \mu_i \mathcal{P}_{\mathcal{S}_i}\sensing^\ast \sensing (\signal(i) - \bestsignal) + \mu_i \mathcal{P}_{\mathcal{S}_i}\sensing^\ast \noise }_F \nonumber \\
&\leq \vectornormbig{(\id - \mu_i \mathcal{P}_{\mathcal{S}_i}\sensing^\ast \sensing \mathcal{P}_{\mathcal{S}_i} \mathcal{P}_{\mathcal{S}_i}(\signal(i) - \bestsignal)}_F \nonumber \\ &+ \mu_i \vectornormbig{\mathcal{P}_{\mathcal{S}_i}\sensing^\ast \sensing \mathcal{P}_{\mathcal{S}_i^\bot}(\signal(i) - \bestsignal)}_F + \mu_i \vectornormbig{\mathcal{P}_{\mathcal{S}_i}\sensing^\ast \noise}_F \nonumber \\
&\stackrel{(i)}{\leq} \frac{2\delta_{2\rank}}{1-\delta_{2\rank}}\vectornormbig{\mathcal{P}_{\mathcal{S}_i}(\signal(i) - \bestsignal)}_F + \frac{\delta_{3\rank}}{1-\delta_{2\rank}} \vectornormbig{\mathcal{P}_{\mathcal{S}_i^\bot}(\signal(i) - \bestsignal)}_F \nonumber \\ &+ \frac{\sqrt{1+\delta_{2\rank}}}{1-\delta_{2\rank}}\vectornormbig{\noise}_2 \label{eq:appr_greedy:02}
\end{align} where $ (i) $ is due to Lemmas \ref{lemma:1}, \ref{lemma:3}, \ref{lemma:4} and $ \frac{1}{1+\delta_{2\rank}} \leq \mu_i \leq \frac{1}{1-\delta_{2\rank}} $.

Using the subadditivity property of the square root in (\ref{eq:appr_greedy:00}), (\ref{eq:appr_greedy:02}), Lemma \ref{lemma:appr_act_subspace_exp} and the fact that $ \vectornormbig{\mathcal{P}_{\mathcal{S}_i}(\signal(i) - \bestsignal)}_F \leq \vectornormbig{\signal(i) - \bestsignal}_F $, we obtain:
\begin{align}
&\vectornormbig{\boldsymbol{V}(i) - \bestsignal}_F \leq \vectornormbig{\mathcal{P}_{\mathcal{S}_i}(\boldsymbol{V}(i) - \bestsignal)}_F + \vectornormbig{\mathcal{P}_{\mathcal{S}_i^\bot}(\boldsymbol{V}(i) - \bestsignal)}_F \nonumber \\
&\leq \hat{\rho} \vectornormbig{\signal(i) - \bestsignal}_F + \big(1+ \frac{\delta_{3\rank}}{1-\delta_{2\rank}} \big)\sqrt{\epsilon}\vectornormbig{\mathcal{P}_{\mathcal{X}^\ast \setminus \mathcal{X}_i}^\bot \sensing^\ast \noise}_F \nonumber \\
&+ \Big[ \big(1+ \frac{\delta_{3\rank}}{1-\delta_{2\rank}}\big)\sqrt{2(1+\delta_{2\rank})} + \frac{\sqrt{1+\delta_{2\rank}}}{1-\delta_{2\rank}}\Big]\vectornormbig{\noise}_2 \label{eq:mALPS0_appr:00}
\end{align} where $\hat{\rho} := \Big( 1+ \frac{\delta_{3\rank}}{1-\delta_{2\rank}}\Big)\Big(2\delta_{2\rank} + 2\delta_{3\rank}\Big) + \frac{2\delta_{2\rank}}{1-\delta_{2\rank}}$
\end{proof}

We exploit Lemma \ref{lemma:appr_comb_selection} to obtain the following inequalities:
\begin{align}
\vectornormbig{\widehat{\boldsymbol{W}}_i - \bestsignal}_F &= \vectornormbig{\widehat{\boldsymbol{W}}_i - \boldsymbol{V}(i) + \boldsymbol{V}(i) - \bestsignal}_F \nonumber \\ &\leq \vectornormbig{\widehat{\boldsymbol{W}}_i - \boldsymbol{V}(i)}_F + \vectornormbig{\boldsymbol{V}(i) - \bestsignal}_F \nonumber \\ &\leq (1+\epsilon)\vectornormbig{\boldsymbol{W}(i) - \boldsymbol{V}(i)}_F + \vectornormbig{\boldsymbol{V}(i) - \bestsignal}_F \nonumber \\ &\leq (2 + \epsilon)\vectornormbig{\boldsymbol{V}(i) - \bestsignal}_F \label{eq:mALPS0_appr:01}
\end{align} where the last inequality holds since $ \boldsymbol{W}(i) $ is the best rank-$ \rank $ matrix estimate of $ \boldsymbol{V}(i) $ and, thus, $ \vectornormbig{\boldsymbol{W}(i) - \boldsymbol{V}(i)}_F \leq \vectornormbig{\boldsymbol{V}(i) - \bestsignal}_F $.

Following similar motions for steps 6 and 7 in Matrix ALPS I, we obtain:
\begin{align}
\vectornormbig{\signal(i+1) - \bestsignal}_F &\leq \big(1 + \frac{2\delta_{\rank}}{1-\delta_{\rank}} + \frac{\delta_{2\rank}}{1-\delta_{\rank}} \big)\vectornormbig{\widehat{\boldsymbol{W}}_i - \bestsignal}_F \nonumber \\ &+ \frac{\sqrt{1+\delta_{\rank}}}{1-\delta_{\rank}}\vectornormbig{\noise}_2 \label{eq:mALPS0_appr:04}
\end{align} Combining (\ref{eq:mALPS0_appr:04}), (\ref{eq:mALPS0_appr:01}) and (\ref{eq:mALPS0_appr:00}), we obtain the desired inequality.

\bibliographystyle{unsrt}
\bibliography{recipes}

\end{document}